\def\@tocline#1#2#3#4#5#6#7{\relax
  \ifnum #1>\c@tocdepth 
  \else
    \par \addpenalty\@secpenalty\addvspace{#2}%
    \begingroup \hyphenpenalty\@M
    \@ifempty{#4}{%
      \@tempdima\csname r@tocindent\number#1\endcsname\relax
    }{%
      \@tempdima#4\relax
    }%
    \parindent\z@ \leftskip#3\relax \advance\leftskip\@tempdima\relax
    \rightskip\@pnumwidth plus4em \parfillskip-\@pnumwidth
    #5\leavevmode\hskip-\@tempdima
      \ifcase #1
       \or\or \hskip 1em \or \hskip 2em \else \hskip 3em \fi%
      #6\nobreak\relax
      \dotfill
      \hbox to\@pnumwidth{\@tocpagenum{#7}}
    \par
    \nobreak
    \endgroup
  \fi}
\newcommand\norm[1]{\left\lVert#1\right\rVert}
\newcommand\wto{\rightharpoonup}
\newcommand{\ada}{a^\dagger}
\newcommand{\im}{\mathrm{i}}
\newcommand{\bx}{\mathbf{x}}
\newcommand{\by}{\mathbf{y}}
\newcommand{\N}{\mathbb N} 
\newcommand{\R}{\mathbb R} 
\newcommand{\C}{\mathbb C} 
\newcommand{\E}{\mathbb E} 
\newcommand{\cF}{\mathcal F}
\newcommand{\Hc}{\mathcal H}
\newcommand{\cH}{\mathcal H}
\newcommand{\cU}{\mathcal U}
\newcommand{\cM}{\mathcal M}
\newcommand{\cN}{\mathcal N}
\newcommand{\gS}{\mathfrak S}
\newcommand{\gF}{\mathfrak F}
\newcommand{\Nc}{\mathcal N}
\newcommand{\gh}{\mathfrak h}
\newcommand{\eps}{\epsilon}
\DeclareMathOperator*{\NL}{NL}
\DeclareMathOperator{\Tr}{\mathrm{Tr}}
\DeclareMathOperator*{\cl}{cl}
\DeclareMathOperator*{\spann}{span}
\newcommand{\Id}{\mathds{1}}
\newcommand{\sym}{\mathrm{sym}}
\newcommand{\cEMF}{\mathcal{E} ^{\rm MF}}
\numberwithin{equation}{section}
\newtheorem{theorem}{Theorem}[section]
\newtheorem{lemma}[theorem]{Lemma}
\newtheorem{proposition}[theorem]{Proposition}
\newtheorem{definition}[theorem]{Definition}
\newtheorem{assumption}[theorem]{Assumption}
\def\subsubsection{\@startsection{subsubsection}{3}%
	\z@{.5\linespacing\@plus.7\linespacing}{-.5em}%
	{\normalfont\bfseries}}
\newcommand{\chapter}{\part}
\title[]{From bosonic canonical ensembles to non-linear Gibbs measures} 
\author[V. D. Dinh and N. Rougerie]{Van Duong Dinh and Nicolas Rougerie}
\address[V. D. Dinh]{Ecole Normale Sup\'erieure de Lyon \& CNRS, UMPA (UMR 5669), Lyon, France}
\email{contact@duongdinh.com}
\address[N. Rougerie]{Ecole Normale Sup\'erieure de Lyon \& CNRS, UMPA (UMR 5669), Lyon, France}
\email{nicolas.rougerie@ens-lyon.fr}
\keywords{Gaussian measure; Nonlinear Gibbs measure; Many-body quantum mechanics; Bosonic Fock space}
\subjclass[2020]{35Q55}
\begin{document}
	
	\date{March, 2026}
	
	\begin{abstract}
		We study the mean-field limit of the 1D bosonic canonical ensemble in a superharmonic trap. This is the regime with temperature proportional to particle number, both diverging to infinity, and correspondingly scaled interactions. We prove that the limit model is a classical field theory based on a non-linear Schr\"odinger-Gibbs measure conditioned on the $L^2$ mass, thereby obtaining a canonical analogue of previous results for the grand-canonical ensemble. We take advantage of this set-up with fixed mass to include focusing/attractive interactions/non-linearities in our study. 
	\end{abstract}
	
	\maketitle
	
	\tableofcontents
	
	\newpage
	
	\section{Introduction}
	
	The study of mean-field (MF) limits of large bosonic quantum systems has been the source of a vast body of mathematical physics literature in the past decades. In the spirit of ``molecular chaos'', many-body wave-functions $\Psi_N (\bx_1,\ldots,\bx_N) \in L^2_{\sym} (\R^{dN})$ are well-approximated, for large $N$, by pure tensor powers $u(\bx_1) \ldots u(\bx_N), u \in L^2 (\R^d)$. The limit models, non-linear Schr\"odinger (NLS) type classical field theories have been derived at several levels: 
	
	\medskip
	
	\noindent \textbf{Static, ground-state configurations.} The bosonic lowest eigenfunctions of typical many-body Hamiltonians, say of the form 
	\begin{equation}\label{eq:intro HN}
	H_N= \sum_{j=1}^N \left( -\Delta_{\bx_j} + V(\bx_j)\right) + \frac{g}{N-1}\sum_{1\leq j<k\leq N} w (\bx_j-\bx_k)
	\end{equation}
	acting on $L^2_{\sym} (\R^{dN})$ are approximated~\cite{LieSeiSolYng-05,Rougerie-EMS,Rougerie-LMU} by minimizing the corresponding NLS energy functionals
	$$
	\cEMF [u] = \frac{1}{N} \left\langle u^{\otimes N}, H_N u^{\otimes N} \right\rangle = \int_{\R^d} |\nabla u |^2 + V |u|^2 + \frac{g}{2} \left(w*|u|^2\right)|u|^2.
	$$
	over normalized $u\in L^2 (\R^d)$.
	
	\medskip
	
	\noindent \textbf{Excitation spectra.} The low-lying excited eigenvalues above the ground state energy are given by Bogoliubov theory, i.e. by second-quantizing the Hessian of the NLS functional at the minimizer~\cite{Seiringer-11,GreSei-13,DerNap-13,LewNamSerSol-13,NamSei-14,BocBreCenSch-17,BocBreCenSch-18,BosPetSei-20,NamNap-20,Pizzo-15a,Pizzo-15b,Pizzo-15c}.

	\medskip
	
	\noindent \textbf{Dynamics, Schr\"odinger/Heisenberg evolution.} An initially approximately factorized datum $\Psi_N^0 \simeq \left(u_0\right)^{\otimes N}$ evolves along the many-body Schr\"odinger flow $\im \partial_t \Psi_N = H_N \Psi_N$ as $\Psi_N (t) \simeq \left(u (t)\right)^{\otimes N}$ where $u(t)$ solves~\cite{BenPorSch-15,Golse-13,Schlein-08,Spohn-12} a dynamical NLS equation
	\begin{equation}\label{eq:intro NLS} 
	\im \partial_t u = \left(- \Delta + V + g w * |u|^2 \right)u.
	\end{equation}

	\bigskip
	
	The investigation of MF limits (in the sense to be detailed below) of positive temperature equilibria has been initiated more recently~\cite{FroKnoSchSoh-16,FroKnoSchSoh-17,FroKnoSchSoh-20,FroKnoSchSoh-20b,FroKnoSchSoh-22,LewNamRou-15,LewNamRou-17,LewNamRou-18b,LewNamRou-18_2D,LewNamRou-20,Sohinger-22,RouSoh-22,RouSoh-23}. In this setting, a certain combination of factorized many-body wave-functions better approximates the equilibria. We refer to~\cite{DeuSeiYng-18,DeuSei-19,DeuSei-21} and references therein for different scaling limits of bosonic positive temperature ensembles.
	
	A typical result in the mean-field limit essentially states  that a grand-canonical Gibbs state 
	\begin{equation}\label{eq:intro GC}
	\Gamma_\nu = \frac{1}{Z_\nu} \exp\left( -\frac{1}{T} \bigoplus_{N= 0} ^\infty \left(  H_N - \nu N \right)\right)	
	\end{equation}
    normalized as a positive trace-class operator on the bosonic Fock space 
    $$ 
    \gF \left(L^2 (\R^d)\right) = \bigoplus_{N=0} ^\infty L ^2 _{\sym} (\R^{dN})
    $$
	behaves in an appropriate sense and regime $N,T\to \infty$ as 
	$$ \int_{u\in L^2 (\R^d)} \left|\xi\left(\sqrt{T} u\right)\right\rangle \left\langle \xi\left(\sqrt{T} u\right)\right|   d\mu (u)$$
	where $\xi (\psi)$ is a bosonic coherent state 
	\begin{align} \label{eq:cohe stat} 
	\xi(\psi) = \exp\left(-\frac{1}2 \norm{\psi}_{L^2} ^2 \right) \bigoplus_{N=0} ^\infty \frac{\psi^{\otimes N}}{\sqrt{N!}}
	\end{align}
	and $\mu$ is a nonlinear Schr\"odinger-Gibbs measure, formally given in terms of the (non-existent !) Lebesgue measure $du$ on $L^2 (\R^d)$
	\begin{equation}\label{eq:intro Gibbs} 
	d\mu (u) = \frac{1}{z} \exp\left( - \frac{1}{t} \left(\cEMF[u] - c \int_{\R^d} |u|^2\right)\right) du 
	\end{equation}
where $z,t,c>0$ are effective partition function, temperature and chemical potential respectively. Such results give a new perspective on the random data Cauchy theory for NLS~\cite{DenNahYue-19,DenNahYue-22,DenNahYue-21,LebRosSpe-88,Zhidkov-91, Bourgain-96,Tzvetkov-06, Tzvetkov-08,NahOhReySta-11,ThoTzv-10,BurThoTzv-10,Deng-12,RobSeoTolWan-22,CacSuz-14,Thomann-09,PoiRobTho-14,DinRou-23,DinRouTolWan-23} which considers, inter alias, the well-posedness of~\eqref{eq:intro NLS} on the support of~\eqref{eq:intro Gibbs}, and the invariance of the latter along the flow.  

The present paper aims at extending the previous results to the case of the canonical ensemble with fixed particle number $N$
$$ \Gamma_N = \frac{1}{Z_N} \exp\left( -\frac{1}{T}  H_N \right),$$
seeking a result of flavor (again, in an appropriate sense, see below)
$$ \Gamma_N \simeq \int \left|u ^{\otimes N}\right\rangle \left\langle u ^{\otimes N} \right|   d\mu_m (u)$$
where $\mu_m$ is a surface measure obtained by restricting~\eqref{eq:intro Gibbs} on a $L^2$-sphere 
\begin{equation}\label{eq:intro Gibbs mass} 
	d\mu_m (u) = \frac{1}{z_m} \exp\left( - \frac{1}{t} \cEMF[u] \right) \Id_{\left\{\int_{\R^d} |u|^2 = m\right\}} \, du. 
	\end{equation}
Here again $z_m$ is an effective partition function, normalizing the above. We consider for now the pendant of the frameworks of~\cite{LewNamRou-15,FroKnoSchSoh-16,FroKnoSchSoh-17} where the $L^2$-mass is well-defined on the support of the Gibbs measure~\eqref{eq:intro Gibbs} so that the definition of~\eqref{eq:intro Gibbs mass}	is natural~\cite{OhQua-13,Brereton-19,DinRou-23} and the appropriate limit is $N=m T$ with $m$ fixed and $T \to +\infty$. This limits us essentially to 1D, but allows to study attractive/focusing interactions ($w\leq 0$ in~\eqref{eq:intro HN}) that do not make sense grand-canonically. Indeed, the interactions are quartic in the particle number, and the kinetic energy quadratic. If the former are negative, they always dominate the latter for unbounded particle numbers. The attractive grand-canonical ensemble with a cut-off has been considered recently in~\cite{RouSoh-22,RouSoh-23} in a setting similar to ours but with rather different tools. We will build on the general variational approach of~\cite{LewNamRou-15,LewNamRou-17,LewNamRou-20} based on coherent states/quantum de Finetti theorems and Berezin-Lieb-type inequalities. The main new aspects we tackle are linked to 

\medskip

\noindent $\bullet$ the definition of the Gibbs measure~\eqref{eq:intro Gibbs mass} conditioned on the $L^2$ mass and how it relates to many-body quantum mechanics. 

\medskip

\noindent $\bullet$ the lack of Wick-type theorems for expectations in canonical ensembles (quantum or classical), which undermines many explicit calculations used in~\cite{LewNamRou-15,LewNamRou-17,LewNamRou-20} to tackle the non-interacting problem.

\medskip

\noindent $\bullet$ the control of focusing interactions, made possible by considering the canonical ensemble, but which requires a more careful control of the limit. 

\medskip

We recall that the same type of problem, with a one-particle space restricted to finite dimensions has been considered in~\cite{Gottlieb-05,Knowles-thesis,Rougerie-LMU}. Our main contribution is thus to tackle the above aspects in the infinite-dimensional setting.

\bigskip

\noindent

\noindent\textbf{Acknowledgments:} We thank Florent Foug\`eres for sharing with us his master memoir~\cite{Fougeres-memoire} on the free bosonic canonical ensemble and Nikolay Tzvetkov for his remarks and unpublished notes on mass-conditioned Gibbs measures.

	\section{Main results}

	The non-interacting part of our model will be given by the 1-dimensional Schr\"odinger operator/anharmonic oscillator
	\begin{equation}\label{eq:hamil 1body}
	h=-\partial^2_x + |x|^s, \quad s >6.
	\end{equation}
	When $s=\infty$, we restrict our consideration to $x \in [0,1]$ with Dirichlet boundary condition. It is well-known that $h$ is a non-negative self-adjoint operator with compact resolvent on $\mathfrak {h}= L^2(X)$, where $X=\mathbb R$ if $s<\infty$ and $X=[0,1]$ if $s=\infty$. By the spectral theorem, we can decompose 
	$$
	h=\sum_{j\geq 1}\lambda_j |u_j\rangle \langle u_j|,
	$$
	where $(\lambda_j, u_j)_{j\geq 1}$ are the eigenvalues and eigenfunctions of $h$ satisfying
	$$
	0<\lambda_1\leq \lambda_2\leq ...\leq \lambda_j \to +\infty
	$$
	and $(u_j)_{j\geq 1}$ forms an orthonormal basis of $\mathfrak {h}$. The assumption $s>6$ implies that the $L^2$-mass will be well-defined and finite in the limit we shall consider, without any renormalization~\cite{LewNamRou-15,DinRou-23}. This in fact holds true for any $s>2$, but a technical limitation in our proof imposes the stronger condition $s>6$ to deal with the mean-field limit.
	
	As regards the interacting part of the Hamiltonian we work under (see remarks below for possible generalizations)
	
	\begin{assumption}[\textbf{The interaction potential}]\label{asum:int}\mbox{}\\
	Let $w:\R \mapsto \R$ be an even function (or distribution). We assume that it can be decomposed in positive and negative parts 
	$$ w = w_+- w_-, \quad w_+, w_- \geq 0$$
	with 
	\begin{itemize}
	 \item $w_+\in \cM (X) +  L^p (X) \text{ with } 1< p \leq \infty,$ where $\cM$ is the set of Radon measures
    \item $w_- \in L^p (X)$ with $p>\frac{s}{s-2}$.
	\end{itemize}
		\end{assumption}

	\subsection{Quantum and classical models} We set $g \geq 0$ to play the role of an effective coupling constant and define the $N$-body Hamiltonian acting on $\gh ^N = \gh ^{\otimes_\sym N}=L^2_{\sym} (X^N)$
	\begin{equation}\label{eq:N hamil}
	 H_{N,g} := \sum_{j=1}^N h_{\bx_j} + \frac{g}{N} \sum_{1\leq j<k\leq N} w\left(\bx_j-\bx_k\right) 
	\end{equation}
	The canonical Gibbs state is the trace-class operator
	\begin{equation}\label{eq:C ens}
\Gamma_{N,T,g}^c := \frac{1}{Z^c_{N,T,g}} \exp\left( -\frac{1}{T} H_{N,g} \right)	 
	\end{equation}
where the partition function $Z^c_{N,T,g}$ sets the trace equal to $1$. For any $k\geq 1$ we associate to $\Gamma_{N,T,g}^c$, or any other $N$-particles state (positive trace-class operator over $\gh^N$) $\Gamma_N$, its $k$-particles reduced density matrix via a partial trace over $N-k$ variables
\begin{equation}\label{eq:def red mat}
\Gamma_N ^{(k)}:= {N \choose k} \Tr _{k+1 \to N} \left[\Gamma_N\right].
\end{equation}

We next define the limiting non-linear Gibbs measure, starting with the Gaussian measure with covariance $h^{-1}$. 
	We denote
	\begin{align}\label{P-project}
	P_\Lambda : = \sum_{\lambda_j \leq \Lambda} |u_j \rangle \langle u_j|, \quad P_\Lambda^{\perp} = \Id_{\gh} - P_\Lambda
	\end{align}
	and the associated subspaces 
	\begin{equation}\label{eq:subspaces}
	E_\Lambda = P_\Lambda \mathfrak{h} = \spann\{u_j : \lambda_j \leq \Lambda\}, \quad
	E_\Lambda ^\perp = P_\Lambda^\perp \mathfrak{h}.
	\end{equation}
	Let also, for $\theta \in \R$ 
			\begin{equation}\label{eq:Sob}
			\mathcal H^\theta := \left\{ u =\sum_{j \geq 1} \alpha_j u_j : \sum_{j\geq 1} \lambda_j^{\theta} |\alpha_j|^2<\infty \right\}, \quad \alpha_j = \langle u_j, u\rangle \in \mathbb C
			\end{equation}
			be the Sobolev space associated with $h$ (as defined in~\eqref{eq:hamil 1body}), equipped with the obvious norm $\norm{\, . \,}_{\cH^\theta}$. By convention the $L^2$ norm is denoted $\norm{\cdot}= \norm{\cdot}_{\cH^0}$ with associated scalar product $\langle\cdot,\cdot\rangle.$ 
	
	\begin{definition}[\textbf{The reference Gaussian measure}]\label{def:gauss measure}\mbox{}\\
	 The sequence of probability measures 
	 \begin{equation}\label{eq:gaus meas lambda}
	  d\mu_{0,\Lambda}(u) := \prod_{\lambda_j \leq \Lambda} \frac{\lambda_j}{\pi} e^{-\lambda_j |\alpha_j|^2} d \alpha_j
	 \end{equation}
    over $E_\Lambda$ is tight in $\cH^\theta$ for any $0<\theta < 1/2 - 1/s$. It converges to a limit gaussian measure $\mu_0$ over $\cH^\theta$. Moreover, for all $\Lambda >0$, $\mu_{0,\Lambda}$ is the finite-dimensional cylindrical projection of $\mu_0$ on $E_\Lambda$.
	\end{definition}

	We refer e.g. to the introductory parts of~\cite{LewNamRou-15} for more background on the above. We proceed with
	
	\begin{definition}[\textbf{Fixed mass Gaussian measure}]\label{def:gauss meas mass}\mbox{}\\
	 For $\Lambda \geq \lambda_1$, let $\mu_{0,\Lambda}^\perp$ be the cylindrical projections of $\mu_0$ on $E_\Lambda^\perp$, namely
	 $$
	 d\mu_{0,\Lambda}^\perp(u) = \prod_{\lambda_j >\Lambda} \frac{\lambda_j}{\pi} e^{-\lambda_j |\alpha_j|^2} d \alpha_j.
	 $$
	 Let $f_\Lambda$ be the density function of the random variable $\norm{P_\Lambda^\perp u}^2$ with respect to $d\mu_{0,\Lambda}^\perp (u)$, with the convention that $f_0$ is the density function of $\norm{ u}^2$ with respect to $d\mu_{0} (u)$. 
	 
	 For $m>0$ we define a measure on $E_\Lambda$ by setting 
	 \begin{equation}\label{eq:def mass measure}
	 d \mu_{0,m,\Lambda} (u):= \frac{f_\Lambda \left(m- \norm{P_\Lambda u}^2\right)}{f_0(m)} d \mu_{0,\Lambda}(u).
	 \end{equation}
For any $0<\theta < 1/2 - 1/s$ the sequence of measures $(\mu_{0,m,\Lambda})_\Lambda$ is tight on $\cH^{\theta}$ and admits as limit a probability measure $\mu_{0,m}$ on $\cH^{\theta}$, which is concentrated on the sphere $\left\{ \norm{u}^2 = m\right\}.$
	\end{definition}

	Note that $P_\Lambda^\perp u = u$ on the support of $d\mu_{0,\Lambda}^\perp (u)$. It is however convenient to think of $f_\Lambda$ as the density of $\norm{P_\Lambda^\perp u}^2$. We refer to Section~\ref{sec:def cond meas} below for more details, including the proof of tightness and the fact that the limit measure is a probability. In particular, the measure $\mu_{0,m}$, which is the restriction of $\mu_0$ over a $L^2$-sphere, may naturally be defined as a limit $\eps\to 0$ of restrictions $\mu_{\eps,m}$ of $\mu_0$ over $L^2$-annuli of thickness $\eps \to 0$. It can also be defined as the limit of measures restricted to finite dimensional spheres, see Lemma~\ref{lem:meas sphere}.
	
	The interacting Gibbs measure with fixed mass is absolutely continuous with respect to $\mu_{0,m}$: 
	
	\begin{definition}[\textbf{The non-linear Gibbs measure}]\label{def:int measure}\mbox{}\\
	 The functional
	 \begin{equation}\label{eq:weight} 
	 u \mapsto \exp\left( - \frac{g}{2m} \iint_{X \times X } |u(\bx)| ^2 w (\bx-\by) |u(\bx)| ^2 d\bx d\by\right)	  
	 \end{equation}
	 is in $L^1 (d\mu_{0,m})$. We hence define a probability measure over $\cH ^\theta, 0 < \theta < 1/2 - 1/s$ by setting 
	 \begin{equation}\label{eq:int meas}
	 d\mu_{g,m} (u) := \frac{1}{z^r_m}\exp\left( - \frac{g}{2m} \iint_{X \times X } |u(\bx)| ^2 w (\bx-\by) |u(\bx)| ^2 d\bx d\by\right) d\mu_{0,m} (u)
	 \end{equation}
    with the classical relative partition function
    \begin{equation}\label{eq:class part}
     z^r_m = \int \exp\left( - \frac{g}{2m} \iint_{X \times X} |u(\bx)| ^2 w (\bx-\by) |u(\bx)| ^2 d\bx d\by\right)  d\mu_{0,m}(u).
    \end{equation}
	 \end{definition}

	We will recap why the weight~\eqref{eq:weight} is integrable in Appendix~\ref{sec:sub-inte-meas}, using tools from e.g.~\cite{BurThoTzv-10,DinRou-23}.
	
	\subsection{Semi-classical limit.} We can now state our main theorem:
	
	\begin{theorem}[\textbf{Mean-field limit of the bosonic canonical ensemble}]\label{thm:main}\mbox{}\\
	 Let the interacting canonical Gibbs state $\Gamma_{mT,T,g}^c$ be defined as in~\eqref{eq:C ens} with the particle number set as $N = mT, m >0$. Let the corresponding reduced density matrices be as in~\eqref{eq:def red mat}. We fix $m>0,g \geq 0$ and let $T\to \infty$. Then, for any $k\geq 1$
	 \begin{equation}\label{eq:main DM}
	 \frac{k!}{T^k} \left(\Gamma_{mT,T,g}^c \right)^{(k)} \xrightarrow[T\to \infty]{}\int |u^{\otimes k} \rangle \langle u^{\otimes k} | d\mu_{g,m} (u)
	 \end{equation}
    strongly in the trace-class $\gS ^1 (\gh^k)$. In particular, the interacting Gibbs measure with fixed mass $\mu_{g,m}$ given by Definition~\ref{def:int measure} is the quantum de Finetti measure at scale $T^{-1}$ of the sequence $\left( \Gamma_{mT,T,g}^c\right)_T$, in the sense of~\cite[Definition~4.1, Theorem~4.2]{LewNamRou-15}.
    
    Moreover, the relative quantum free-energy satisfies 
    \begin{equation}\label{eq:main ener}
     - \log \left(Z^c_{mT,T,g}\right) +  \log \left(Z^c_{mT,T,0}\right) \xrightarrow[T\to \infty]{} -\log z^r_m
    \end{equation}
    where $Z^c_{mT,T,g}$ is the partition function normalizing~\eqref{eq:C ens} and $z^r_m$ is the classical relative partition function~\eqref{eq:class part}.
	\end{theorem}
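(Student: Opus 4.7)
\medskip

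\noindent\textbf{Proof strategy.} The plan is to prove~\eqref{eq:main DM} and~\eqref{eq:main ener} together by the variational scheme of~\cite{LewNamRou-15,LewNamRou-17,LewNamRou-20}, with an upper bound built from trial states against the target classical measure $\mu_{g,m}$ and a lower bound based on coherent states at scale $T^{-1}$ and the quantum de Finetti theorem. Writing $N = mT$ throughout, the Gibbs variational principle recasts~\eqref{eq:main ener} as
$$
-\log\frac{Z^c_{mT,T,g}}{Z^c_{mT,T,0}} = \inf_{\Gamma_N}\Bigl\{T^{-1}\Tr\bigl[(H_{N,g}-H_{N,0})\Gamma_N\bigr] + \Sc_{\mathrm{rel}}\bigl(\Gamma_N\,\|\,\Gamma^c_{N,T,0}\bigr)\Bigr\},
$$
where $\Sc_{\mathrm{rel}}$ denotes the quantum relative entropy, and it suffices to match upper and lower bounds converging to $-\log z^r_m$. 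Convergence of reduced density matrices~\eqref{eq:main DM} is then extracted by standard compactness: weak-$\ast$ limit points of $k!\,T^{-k}(\Gamma^c_{mT,T,g})^{(k)}$ in $\gS^1(\gh^k)$ are represented via the quantum de Finetti theorem at scale $T^{-1}$~\cite[Thm~4.2]{LewNamRou-15} by a classical probability measure on $\mathcal H^\theta$ supported on $\{\|u\|^2=m\}$, and matching the free-energy asymptotics to the classical variational problem forces this measure to be $\mu_{g,m}$.

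\medskip

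For the upper bound, the natural trial is
$$
\Gamma^{\mathrm{trial}}_N = \int\bigl|u^{\otimes N}\bigr\rangle\!\bigl\langle u^{\otimes N}\bigr|\, d\nu_{g,m,T}(u),
$$
with $\nu_{g,m,T}$ a finite-$T$ surrogate of $\mu_{g,m}$ compatible with the integer mass $N=mT$ (e.g.\ a rescaling of $\mu_{g,m}$ onto a thin $L^2$-shell in $E_\Lambda$, in the spirit of Definition~\ref{def:gauss meas mass}). The energy part converges to $\frac{g}{2m}\iint |u|^2 w |u|^2$ integrated against $\mu_{g,m}$ by density-matrix convergence together with Assumption~\ref{asum:int}. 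The relative-entropy piece is the genuinely new piece: after cylindrical truncation to $E_\Lambda$, it reduces to a direct comparison between a finite-dimensional non-interacting canonical ensemble and the mass-conditioned Gaussian $\mu_{0,m}$ of Definition~\ref{def:gauss meas mass}, which I would carry out by explicit computation in the eigenbasis $\{u_j\}$ and a quantitative $\Lambda\to\infty$ limit based on the tightness estimates underlying Definition~\ref{def:gauss measure}.

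\medskip

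The lower bound is the core of the argument. I would represent $\Gamma^c_{mT,T,g}$ through its Husimi (lower-symbol) measure obtained by testing against coherent states~\eqref{eq:cohe stat} at scale $T^{-1}$, apply a Berezin--Lieb inequality to replace the quantum interaction energy by its classical counterpart up to lower-order Wick-ordering corrections, and use the finite-dimensional de Finetti structure on $E_\Lambda$ for the entropy. In the successive limits $T\to\infty$ and $\Lambda\to\infty$ this should yield a classical probability measure $\mathbb P$ on $\{\|u\|^2 = m\}$ satisfying
$$
\liminf_{T\to\infty}\left[-\log\frac{Z^c_{mT,T,g}}{Z^c_{mT,T,0}}\right] \;\geq\; \int \tfrac{g}{2m}\iint |u|^2 w |u|^2\, d\mathbb P(u) + \Sc_{\mathrm{rel}}(\mathbb P\,\|\,\mu_{0,m}) \;\geq\; -\log z^r_m,
$$
the last step being the classical Gibbs principle (with equality iff $\mathbb P = \mu_{g,m}$). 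The stronger condition $s>6$, rather than the $s>2$ needed for the mere integrability of the classical mass, is used precisely to make the $E_\Lambda^\perp$ remainder quantitatively controllable uniformly in $T$.

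\medskip

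The principal obstacle, emphasized by the authors themselves, is the absence of a Wick/Isserlis theorem in the canonical ensemble, which invalidates the explicit Gaussian computations available in the grand-canonical setting of~\cite{LewNamRou-15}. Identifying $\mu_{0,m}$ as the de Finetti measure of the non-interacting state $\Gamma^c_{N,T,0}$, and getting sharp enough entropy comparisons, therefore demands a dedicated analysis of the free finite-dimensional canonical ensemble (in the spirit of~\cite{Gottlieb-05,Knowles-thesis,Fougeres-memoire}) together with Laplace/Fourier control of the mass density $f_\Lambda$ from Definition~\ref{def:gauss meas mass}, so as to pass to the infinite-dimensional setting. A secondary but essential difficulty is handling the attractive part $w_-$ of the interaction: on the classical side one relies on Lebowitz--Rose--Speer / Burq--Thomann--Tzvetkov-type estimates on the support of $\mu_{0,m}$ ensuring integrability of~\eqref{eq:weight}, while on the quantum side the fixed particle number $N = mT$ together with the Sobolev control afforded by $|x|^s$, $s>6$, allows one to absorb $w_-$ in the kinetic energy uniformly in $T$.
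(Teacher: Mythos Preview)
Your overall variational strategy matches the paper's, and your handling of the lower bound is essentially correct: the paper uses the weak quantum de Finetti theorem to extract a limit measure $\nu$, applies the Berezin--Lieb inequality of~\cite[Theorem~7.1]{LewNamRou-15} to the relative entropy (which requires knowing separately that $\mu_{0,m}$ is the de Finetti measure of $\Gamma^c_{mT,T,0}$, the content of Theorem~\ref{thm:free case}), and controls the attractive part of $w$ by absorbing it into a fraction of $h^\alpha$ for suitable $\alpha<\tfrac12-\tfrac1s$. The paper proves Theorem~\ref{thm:free case} not by finite-dimensional computations as you suggest, but by a three-step commutation-of-limits argument through the \emph{grand-canonical} state $\Gamma_{\epsilon,m,T}$ of~\eqref{eq:relax Gibbs}, which falls under the framework of~\cite{LewNamRou-15,LewNamRou-20} with quantitative $\epsilon$-tracking, combined with a combinatorial $N$-dependence lemma (Lemma~\ref{lem-cano-diff}) for the free canonical state.

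The genuine gap is in your upper bound. A trial state of the form $\int |u^{\otimes N}\rangle\langle u^{\otimes N}|\,d\nu(u)$ with $\nu$ supported near $E_\Lambda$ cannot reproduce the high-mode thermal structure of $\Gamma^c_{mT,T,0}$, so the relative entropy $\mathcal H(\Gamma^{\rm trial},\Gamma^c_{mT,T,0})$ will not stay bounded. The paper instead uses the canonical factorization of Lemma~\ref{lem:fac Gibbs},
\[
\mathcal U\,\Gamma^c_{N,T,0}\,\mathcal U^{*}=\bigoplus_{n=0}^{N} c_n\,\Gamma^{c,-}_{n,T,0}\otimes\Gamma^{c,+}_{N-n,T,0},
\]
and takes as trial state~\eqref{Xi-mT} the same high-mode free canonical factors $\Gamma^{c,+}_{N-n,T,0}$ tensored with low-mode \emph{interacting} canonical states $\Gamma^{c,-}_{n,T,g}$, with the weights $c_n$ reweighted onto a window $n\in[M,N]$. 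Only after this reduction (Lemma~\ref{lem:fin dim}) does one insert the finite-dimensional semiclassical trial $\int_{S\gh^-}|u^{\otimes n}\rangle\langle u^{\otimes n}|\,\mu(u)\,du$ you have in mind (Lemma~\ref{lem:semiclassics}). The price is a sum over $n$ of measures $\mu_{n,T,0}$ conditioned on slightly different masses $n/T\approx m$, and controlling the resulting kinetic-energy discrepancy (the term $(\mathrm{II})$ in Lemma~\ref{lem:semiclassics}) requires the variance-type estimate of Proposition~\ref{pro:dep mass}. \emph{This} is where $s>6$ enters (see~\eqref{est-II}), not in the lower bound as you state: one needs $\tfrac{3}{2s}-\tfrac14<0$ so that $\delta^2\dim E_\Lambda\to 0$ for an admissible choice of the window width $\delta$.
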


	We assume the exact identity $N=m T$ for simplicity. The result still holds if $N \sim mT$ in the limit $N,T\to \infty$.
	
	The above is the analogue of~\cite[Theorem~5.3]{LewNamRou-15} at fixed particle number, and allowing for attractive interactions, which is not possible in the grand-canonical framework of~\cite{FroKnoSchSoh-16,FroKnoSchSoh-17,FroKnoSchSoh-20,FroKnoSchSoh-22,LewNamRou-15,LewNamRou-17,LewNamRou-20}. Our general approach to the proof is to use the (relative) variational principles defining the quantum state $\Gamma_{mT,T,g}^c$ and the classical measure $\mu_{g,m}$, connecting them in the spirit of $\Gamma$-convergence. As in~\cite{LewNamRou-15,LewNamRou-17,LewNamRou-20} it is of some importance to characterize variationally the difference in the left-hand-side of~\eqref{eq:main ener}, for both terms taken separately diverge quite fast.
	
	\subsection{Proof strategy}
	
	The Gibbs variational principle characterizing $\Gamma_{mT,T,g}^c$ states 
	\begin{equation}\label{eq:quant Gibbs}
	\cF_{g} \left[ \Gamma_{N,T,g}^c\right] = - T \log \left(Z^c_{N,T,g}\right) = \inf\left\{\cF_{g} \left[ \Gamma_{N}\right], \; \Gamma_N \mbox{ a $N$-particles bosonic state} \right\}
	\end{equation}
    with the free-energy functional
	\begin{equation}\label{eq:free ener func}
	 \cF_{g} \left[ \Gamma_{N}\right] := \Tr\left[ H_{N,g} \Gamma_N \right] + T \Tr\left[\Gamma_N \log \Gamma_N\right].
	\end{equation}
    It follows that
	\begin{equation}\label{eq:free ener rel}
	-\log \left(\frac{Z^c_{mT,T,g}}{Z^c_{mT,T,0}}\right) = \inf_{\Gamma \in \mathfrak S^1(\mathfrak h^{mT}) \atop \Gamma \geq 0, {\Tr}[\Gamma]=1} \left\{ \mathcal H(\Gamma, \Gamma^c_{mT,T,0}) + \frac{g}{TN} {\Tr}\left[w(\bx-\by)\Gamma^{(2)}\right] \right\},
	\end{equation}
	with the Von Neumann quantum relative entropy
	$$
	\mathcal H\left(\Gamma, \Xi\right):= {\Tr}\left[\Gamma \right(\log \Gamma-\log \Xi\left)\right].
	$$
	Moreover, $\Gamma^c_{mT,T,g}$ is the unique solution of the above minimization problem. 
	
	Similarly, 
	\begin{equation}\label{eq:free ener class}
	-\log(z^r_m) = \inf_{\nu \text{ prob. meas. }} \left\{ \mathcal H_{\cl}(\nu, \mu_{0,m}) + \frac{g}{2m} \int \left( \iint_{X \times X} |u(\bx)|^2  w(\bx-\by) |u(\by)|^2 d\bx d\by \right) d\nu(u)\right\},
	\end{equation}
	with the classical relative entropy
	\begin{equation}\label{eq:class rel ent}
	\mathcal H_{\cl}(\nu,\sigma):= \int \frac{d\nu}{d\sigma} \log \frac{d\nu}{d\sigma} d\sigma
	\end{equation}
	and $\mu_{g,m}$ is the unique minimizer of the above. We shall prove matching upper and lower bounds relating the quantum~\eqref{eq:free ener rel} and classical~\eqref{eq:free ener class} relative free-energies in the limit $T\to \infty$.

	To obtain a good free-energy upper bound we need to take into account that what~\eqref{eq:main ener} means is that highly energetic particles behave as in the free ensemble $\Gamma_{mT,T,0}^c$ while at low energies we have a semi-classical behavior. We capture this by using a trial state which ``looks like $\Gamma^c_{M,T,0}$ in a subspace with $M$ particles in $E_\Lambda^\perp$ (with large one-particle energy)'' and ``looks like'' 
	$$ \int \left|u^{\otimes (N-M)}\right\rangle \left\langle u^{\otimes(N-M)} \right| d\mu_{g,m} (u)$$
	on a subspace where $N-M=mT-M$ particles are in $E_\Lambda$ (with moderate one-particle energy, $E_\Lambda = \mathds{1}_{h\leq \Lambda} \gh$ essentially). We let $\Lambda \to \infty$ when $T\to \infty$ and optimize/average over $M$. In the grand-canonical case of~\cite{LewNamRou-15} this construction is straightforward, using the factorization property of the Fock space 
	$$ \gF (\gh) \simeq \gF (E_\Lambda) \otimes \gF (E_\Lambda^\perp).$$
	In the more constrained canonical case that we tackle here, the construction and estimate of the free-energy is much more delicate. It will require some new input regarding the classical minimization problem, and detailed bounds on the quantum free canonical ensemble, some of which we borrow from~\cite{DeuSeiYng-18,DeuSei-19,Suto-04}. The restriction to $s>6$ arises in this construction. As per the procedure above, we have to consider, in the low-energy subspace, a superposition of measures conditioned on masses slightly different from $m$ (because some of the mass/particles goes to the high-energy subspace instead). Controlling the error thus made turns out to be delicate, related to asking how much does the normalization of a Gaussian measure depend on its covariance, see Section~\ref{sec:def cond meas} below.
	
	To obtain a free-energy lower bound we use the quantum de Finetti theorem (see~\cite{Rougerie-LMU,Rougerie-spartacus,Rougerie-EMS} for review), which essentially asserts that \emph{any} reasonable sequence of $N$-particle states $(\Gamma_N)_N$ satisfies  (modulo subsequence)
	\begin{equation}\label{eq:deF inf}
	\Gamma_N \simeq \int \left|u^{\otimes N}\right\rangle \left\langle u^{\otimes N} \right| d\nu (u)
	\end{equation}
	for some probability measure $\nu$, called the de Finetti measure of the (sub)sequence  $(\Gamma_N)_N$. Inserting such a form in the interaction energy term of~\eqref{eq:free ener rel} immediately yields a classical energy term akin to that of~\eqref{eq:free ener class}. Hence we aim at passing to the lim inf in the interaction energy in a sufficiently strong sense to be able to use the (rigorous version of)~\eqref{eq:deF inf} in the resulting lower bound, as in~\cite{LewNamRou-15}. However, we need extra control and arguments for the attractive part of the interaction since we allow for such a possibility.

	As regards the relative entropy term in~\eqref{eq:free ener rel} we use the Berezin-Lieb-type inequality of~\cite[Theorem~7.1]{LewNamRou-15}. If the sequences of $N$-particles states $(\Gamma_N)_N,(\Xi_N)_N$ have de Finetti measures $\nu,\sigma$ (vaguely, in the sense of~\eqref{eq:deF inf}) then 
	$$ \liminf_{N\to \infty} \cH (\Gamma_N,\Xi_N) \geq \cH_{\cl} (\nu,\sigma)$$
	which has the desired form to conclude the proof of the free-energy lower bound, provided we can prove separately that the free Gibbs measure conditioned on mass $\mu_{0,m}$ is indeed the de Finetti measure of the free canonical state $\Gamma^c_{mT,T,0}$. The equivalent of this input in the grand-canonical case is rather straightforward. We may apply the quantum and classical Wick theorems to compute expectations of observables in the quantum and classical ensembles respectively, and relate both formulas explicitly. In the canonical case of our concern, such explicit computations are not available to relate $\Gamma^c_{mT,T,0}$ to $\mu_{0,m}$. We follow instead a three-steps procedure (cf Figure~\ref{fig:diagram} below): 
	
	\textbf{First} $\mu_{0,m}$ is related to a sequence $\epsilon\to 0$ of measures  $\mu_{\epsilon,m}$ with a relaxed mass constraint, i.e. weighting the Gaussian measure $\mu_0$ by a factor 
	$$\exp\left(- \eps^{-1} \left(\norm{u} ^2 - m \right) ^2\right)$$
	and normalizing. As mentioned above, this can be seen~\cite{OhQua-13,Brereton-19,DinRou-23} as a definition of $\mu_{0,m}$. 
	
	\textbf{Second} a corresponding bosonic grand-canonical $\Gamma_{\epsilon,m,T}$ ensemble with ``interaction energy'' 
	$$\epsilon ^{-1} T^{-2} \left( \cN - mT\right)^2$$
	is shown to converge when $T\to \infty$ to $\mu_{\epsilon,m}$. 
	
	\textbf{Third}  $\Gamma_{\epsilon,m,T}$ is shown to approximate $\Gamma^c_{mT,T,0}$ when $\epsilon \to 0$. The desired result (i.e. Theorem~\ref{thm:main} in the case $g = 0$) is obtained by commuting the $T\to \infty$ and $\epsilon\to 0$ limits. This is the delicate part: for the second step, which relies on the general framework for grand-canonical ensembles of~\cite{LewNamRou-15,LewNamRou-20} we need to make the arguments quantitative and track down $\epsilon$-dependencies of error terms. For the third step we need a careful investigation of the $N$-dependence of the free canonical ensemble, relying in particular on combinatorial considerations we learned from~\cite{Cannon-73}.
	
	\subsection{Possible extensions} We would like to mention several possible/future extensions of our main result. Most of them have to do with the case of focusing/attractive interactions.
	
	\medskip
	
	\noindent\textbf{Three-body interactions and quintic NLS.} We could probably include with minimal effort (properly scaled) three-particles interaction in~\eqref{eq:N hamil}, or even higher order, as long as they are repulsive. A most interesting question concerns three-particle attractive interactions, leading to the Gibbs measure of a focusing quintic NLS equation as in~\cite{RouSoh-23}. The latter has a ``phase transition''~\cite{LebRosSpe-88,CarFroLeb-16,Bourgain-94,OhSosTol-22,DinRouTolWan-23}, meaning it can be constructed only for sufficiently small $L^2$-mass, and blows up for larger ones. Our analysis can be adapted rather directly to the case of an attractive three-particle potential $0 \geq W\in L^p (\R\times \R)$ for an appropriately chosen $p(s)$ (adapting the condition on $w_-$ from Assumption~\ref{asum:int} that we use for the free-energy lower bound). The derivation would work for any subcritical mass, as long as the limiting classical measure is well-defined. We plan to return to this question, in connection with the next one. 
	
	\medskip
	
	\noindent\textbf{Singular interactions.} As regards repulsive interactions, we have made essentially no unnecessary assumptions. The $L^p$ condition for $w_-$ in Assumption~\ref{asum:int} is however far from optimal, and we hope to be able to relax it greatly in the future. A related, perhaps more directly relevant, direction is to consider a scaled two-particles interaction ($\beta >0$ being fixed)
	$$w_N (\bx) = N^{\beta} w (N^\beta \bx) \xrightarrow[N\to \infty]{} \left(\int_\R w \right)\delta_0$$
	or three-particles interaction ($\gamma >0$)
	$$W_N (\bx,\by) = N^{2\gamma} W (N^\gamma \bx, N^\gamma \by) \xrightarrow[N\to \infty]{} \left(\int_{\R^2} W \right) \delta_{0,0}$$
	to derive bona-fide local NLS equations in the limit. A natural question (left open in~\cite{RouSoh-22,RouSoh-23}) is whether one can control the limit to an extent allowing for $\beta,\gamma >1$ corresponding to dilute interactions\footnote{With range much smaller than the typical interparticle distance.}, especially in the focusing case.

	\medskip
	
	\noindent\textbf{Cases of infinite mass.} When either the spatial dimension $d\geq 2$ or the trap's exponent $s\leq 2$ in~\eqref{eq:hamil 1body}, the $L^2$ norm is not well-defined on the support of the Gaussian measure, for rather different reasons (lack of local regularity in the first case, lack of decay at infinity in the second). One can instead condition on a renormalized version of the mass~\cite{BurThoTzv-10,DinRou-23} and scale the particle number accordingly in the quantum model. For repulsive interactions we expect this to be feasible, adapting the approach of~\cite{LewNamRou-17,LewNamRou-20} in the cases $s<2$ and $d\geq 2$ respectively (the latter would require some effort, since the grand-canonical case is already rather involved~\cite{FroKnoSchSoh-20,FroKnoSchSoh-22,LewNamRou-20}). For attractive interactions in $d\geq 2$ there is a no-go theorem \cite{BrySla-96} regarding the construction of the measure. The question stays interesting for $d=1$ and $s\leq 2$ where the control of attractive interactions would require ideas beyond~\cite{LewNamRou-17}. Probably, finding a way to relax the condition $s>6$ to the more natural $s>2$ in the case without mass renormalization is a pre-requisite for this problem.

	\section{The Gibbs measure conditioned on mass}\label{sec:def cond meas}
	
	In this section we recall some elements from~\cite{OhQua-13,DinRou-23} about the definition of the conditioned Gibbs measure. We also add some elements that we specifically need in the sequel, especially in Section~\ref{sec:dep mass} below. 
	
	\subsection{More on definitions}
	
	In Definition~\ref{def:gauss meas mass} above we have given a concise definition of the Gaussian measure conditioned on mass. We now connect this to a natural approach where the mass is penalized instead of fixed. This will be very useful in Section~\ref{sec:free} below where we connect this measure to many-body quantum mechanics. 
	
	Let $\mu_0$ be the Gaussian measure from Definition~\ref{def:gauss measure} and $\epsilon >0$. We penalize the $L^2$-mass to define
	\begin{equation}\label{eq:def mu epsilon}
	d\mu_{\epsilon,m}(u) = \frac{1}{z_{\epsilon,m}^r} \exp\left(-\frac{1}{\epsilon} \left(\int |u|^2 -m \right)^2 \right) d\mu_0 (u)
	\end{equation}
	where $z_{\epsilon,m}^r$ normalizes $\mu_{\epsilon,m}$ as a probability measure. We connect the above to $\mu_{0,m}$ from Definition~\ref{def:gauss meas mass} by proving the
	
	\begin{proposition}[\textbf{Density matrices of the conditioned Gaussian measure}]\label{pro:fixed mass meas}\mbox{}\\
	 For all $k\geq 1$,
	\begin{align} \label{limi-eps}
		\int |u^{\otimes k} \rangle \langle u^{\otimes k}| d\mu_{\epsilon,m}(u) \xrightarrow[\epsilon \to 0]{}  \int |u^{\otimes k} \rangle \langle u^{\otimes k}| d\mu_{0,m}(u)  \text{ strongly in } \mathfrak{S}^1(\mathfrak {h}^k).
	\end{align}
	Moreover, the classical relative partition function $z_{\epsilon,m}^r$ satisfies
	\begin{align} \label{limi-eps-parti}
		\lim_{\epsilon \to 0} \frac{z^r_{\epsilon,m}}{\sqrt{\epsilon}} =: c(m)>0.
	\end{align}
	\end{proposition}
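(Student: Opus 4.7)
The strategy is a Laplace-type analysis of the mass penalization in \eqref{eq:def mu epsilon}, combined with the cylindrical structure $\mu_0 = \mu_{0,\Lambda} \otimes \mu_{0,\Lambda}^\perp$ arising from the splitting $u = P_\Lambda u + P_\Lambda^\perp u$. The plan is to first extract the $\sqrt\epsilon$-scaling of the partition function, then pass to the limit in expectations of cylindrical observables, and finally upgrade to trace-norm convergence. I rely throughout on the fact that $\norm{u}^2$ is a sum of independent exponentials with rates $(\lambda_j)_j$, so its density $f_0$ is smooth and strictly positive on $(0,\infty)$, and similarly $f_\Lambda$ is smooth, bounded, and supported on $[0,\infty)$.

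\smallskip

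\emph{Partition function and trace convergence.} Disintegrating $\mu_0$ along $M = \norm{u}^2$,
$$z^r_{\epsilon,m} = \int_0^\infty e^{-(M-m)^2/\epsilon}\, f_0(M)\, dM.$$
The substitution $M = m + \sqrt\epsilon\, t$ plus dominated convergence give $z^r_{\epsilon,m}/\sqrt\epsilon \to \sqrt\pi\, f_0(m) =: c(m) > 0$, proving \eqref{limi-eps-parti}. The same computation with an extra factor $M^k$ yields $\Tr[\Gamma_\epsilon^{(k)}] = \int \norm{u}^{2k} d\mu_{\epsilon,m} \to m^k = \Tr[\Gamma_{0,m}^{(k)}]$ and the uniform bound $\sup_{\epsilon\leq 1}\Tr[\Gamma_\epsilon^{(k)}] < \infty$, where $\Gamma_\epsilon^{(k)}$ denotes the left-hand side of \eqref{limi-eps}.

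\smallskip

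\emph{Cylindrical weak convergence.} Fix $\Lambda \geq \lambda_1$ and a bounded operator $\tilde K$ on $E_\Lambda^{\otimes_\sym k}$, viewed as a finite-rank operator on $\gh^k$. Integrating out the $E_\Lambda^\perp$-component via independence of $P_\Lambda u$ and $P_\Lambda^\perp u$ under $\mu_0$,
$$\Tr[\tilde K\, \Gamma_\epsilon^{(k)}] = \frac{1}{z^r_{\epsilon,m}} \int_{E_\Lambda} \langle v^{\otimes k}, \tilde K v^{\otimes k}\rangle\, \phi_\epsilon(\norm{v}^2)\, d\mu_{0,\Lambda}(v), \quad \phi_\epsilon(M) := \int_0^\infty e^{-(M+s-m)^2/\epsilon} f_\Lambda(s)\, ds.$$
The same Laplace analysis gives, for $M \neq m$, the pointwise limit $\phi_\epsilon(M)/\sqrt\epsilon \to \sqrt\pi\, f_\Lambda(m-M)\, \mathbf{1}_{M<m}$ together with the uniform bound $\phi_\epsilon/\sqrt\epsilon \leq \sqrt\pi\, \norm{f_\Lambda}_{L^\infty}$. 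Since $\norm{v}^{2k} \in L^1(\mu_{0,\Lambda})$, dominated convergence combined with the previous step yields
$$\Tr[\tilde K\, \Gamma_\epsilon^{(k)}] \longrightarrow \int_{E_\Lambda} \langle v^{\otimes k}, \tilde K v^{\otimes k}\rangle\, d\mu_{0,m,\Lambda}(v) = \Tr[\tilde K\, \Gamma_{0,m}^{(k)}],$$
where the last equality uses that the law of $P_\Lambda u$ under $\mu_{0,m}$ is exactly $\mu_{0,m,\Lambda}$, a consequence of the convolution identity $f_0(m) = \int_0^m g_\Lambda(M) f_\Lambda(m-M)\, dM$ for the density $g_\Lambda$ of $\norm{P_\Lambda u}^2$.

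\smallskip

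\emph{Trace-norm convergence and main obstacle.} For an arbitrary compact $K$ on $\gh^k$, write $K = K_\Lambda + (K-K_\Lambda)$ with $K_\Lambda := P_\Lambda^{\otimes k} K P_\Lambda^{\otimes k}$; since $h$ has compact resolvent, $\|K-K_\Lambda\|_{\mathrm{op}} \to 0$ as $\Lambda \to \infty$, and the uniform trace bound controls $|\Tr[(K-K_\Lambda)\Gamma_\epsilon^{(k)}]| \leq \|K-K_\Lambda\|_{\mathrm{op}} \cdot C$. Sending first $\epsilon \to 0$ (using the previous step for $K_\Lambda$) and then $\Lambda \to \infty$ proves weak operator convergence $\Gamma_\epsilon^{(k)} \rightharpoonup \Gamma_{0,m}^{(k)}$; combined with the trace convergence and positivity of the operators, the Grümm--Simon theorem upgrades this to strong convergence in $\mathfrak{S}^1(\gh^k)$, proving \eqref{limi-eps}. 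The main technical point is the Laplace analysis of $\phi_\epsilon$ and $z_{\epsilon,m}^r$: one needs boundedness of $f_\Lambda$ and positivity/continuity of $f_0$ at $m$, both of which follow from the infinite-convolution representation of $\norm{u}^2$; the endpoint $\norm{v}^2 = m$ is a $\mu_{0,\Lambda}$-null set and plays no role.
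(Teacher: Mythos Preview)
Your proposal is correct and follows essentially the same route as the paper: the Laplace analysis of the penalized partition function yields \eqref{limi-eps-parti} (with $c(m)=\sqrt\pi f_0(m)$, exactly as in the paper's \eqref{limi-eps-parti-proof}), and the upgrade to trace-norm convergence goes through Simon's criterion (weak-$\star$ plus trace convergence for positive operators). The needed boundedness of $f_\Lambda$ and continuity/positivity of $f_0$ at $m$ are the content of the paper's Lemma~\ref{lem-f-Lambda}.

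The only organizational difference is in how the weak-$\star$ limit is taken. The paper truncates in the mass variable, splitting the integral over $\{\norm{u}^2\le M\}$ and $\{\norm{u}^2>M\}$ and invoking Proposition~\ref{theo-fix-mass-meas} (set-wise convergence $\mu_{\epsilon,m}\to\mu_{0,m}$) on the bounded part. You instead truncate in energy, approximating a compact $K$ by $K_\Lambda=P_\Lambda^{\otimes k}KP_\Lambda^{\otimes k}$ and computing the finite-dimensional marginal directly via the Laplace formula for $\phi_\epsilon$; this reproduces exactly the density $f_\Lambda(m-\norm{v}^2)/f_0(m)$ defining $\mu_{0,m,\Lambda}$ in~\eqref{eq:def mass measure}. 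Your route is slightly more self-contained for the statement at hand, since it does not require the full set-wise convergence of Proposition~\ref{theo-fix-mass-meas}, but it does still rely on the cylindrical-projection identity $\mu_{0,m}|_{E_\Lambda}=\mu_{0,m,\Lambda}$ to identify the limit with $\Gamma_{0,m}^{(k)}$---which is precisely what that proposition provides.
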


	This approach to defining the measure originates in~\cite{OhQua-13} and has been used e.g. in~\cite{Brereton-19,DinRou-23}. It is equivalent to taking~\eqref{eq:def mass measure} as starting point as above. We recall this before turning to the proof of Proposition~\ref{pro:fixed mass meas}.
	
		\begin{proposition}[\textbf{Gibbs measure conditioned on mass}] \label{theo-fix-mass-meas}\mbox{}\\
		Let $s>2$.
		\begin{itemize}
			\item[(1)] For any borelian set $A$ of $E_\Lambda$, the limit $\lim_{\epsilon \to 0} \mu_{\epsilon,m}(A)$ exists, and we may define a probability measure on $E_\Lambda$ by setting
			$$
			\mu_{0,m,\Lambda}(A):= \lim_{\epsilon \to 0}\mu_{\epsilon,m}(A).
			$$
			This measure is identical to that defined in~\eqref{eq:def mass measure}.
			
			\item[(2)] There exists a unique measure $\mu_{0,m}$ supported in $\mathcal H^\theta \cap \{\|u\|^2=m\}$ with $0<\theta <\frac{1}{2}-\frac{1}{s}$ such that for all $\Lambda \geq \lambda_1$, $\mu_{0,m,\Lambda}$ is the cylindrical projection of $\mu_{0,m}$ on $E_\Lambda$. Moreover, for any measurable set $A \subset \mathcal H^\theta$,
			$$
			\mu_{0,m}(A) = \lim_{\epsilon \to 0} \mu_{\epsilon,m}(A).
			$$
		\end{itemize}
	\end{proposition}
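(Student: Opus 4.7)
\emph{Strategy.} The plan is to reduce Part (1) to a one-dimensional Laplace-type asymptotic in the $L^2$-norm variable, then combine consistency of the cylindrical projections with tightness on $\mathcal{H}^\theta$ for Part (2), in the spirit of the constructions in~\cite{OhQua-13,BurThoTzv-10,DinRou-23}.

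\emph{Finite-dimensional marginals.} For Part (1), I would exploit the independence of $P_\Lambda u$ and $P_\Lambda^\perp u$ under $\mu_0$, with respective laws $\mu_{0,\Lambda}$ and $\mu_{0,\Lambda}^\perp$. For any Borel $A \subset E_\Lambda$, integrating the high modes first gives
\begin{equation*}
\mu_{\epsilon,m}(A) = \frac{1}{z^r_{\epsilon,m}} \int_A I_\epsilon(v)\, d\mu_{0,\Lambda}(v), \qquad I_\epsilon(v):=\int_0^\infty e^{-\epsilon^{-1}(\|v\|^2 + b - m)^2}\, f_\Lambda(b)\, db.
\end{equation*}
Since $\sum_{\lambda_j > \Lambda} 1/\lambda_j < \infty$ for $s > 2$, the density $f_\Lambda$ is smooth and strictly positive on $(0,\infty)$. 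A Laplace expansion then yields $\epsilon^{-1/2} I_\epsilon(v) \to \sqrt{\pi}\, f_\Lambda(m - \|v\|^2)$ for $\|v\|^2 < m$, and exponential decay for $\|v\|^2 > m$. Combined with the convolution identity $f_0 = g_\Lambda * f_\Lambda$, where $g_\Lambda$ is the density of $\|P_\Lambda u\|^2$ under $\mu_{0,\Lambda}$, dominated convergence gives $z^r_{\epsilon,m}/\sqrt{\epsilon} \to \sqrt{\pi}\, f_0(m)=:c(m) > 0$, which is exactly~\eqref{limi-eps-parti}. Taking the ratio identifies the $\epsilon \to 0$ limit of $\mu_{\epsilon,m}$ restricted to Borel sets of $E_\Lambda$ with the measure $\mu_{0,m,\Lambda}$ from Definition~\ref{def:gauss meas mass}.

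\emph{Extension to $\mathcal{H}^\theta$ and support on the sphere.} For Part (2), the family $(\mu_{0,m,\Lambda})_\Lambda$ is Kolmogorov-consistent automatically, each being a cylindrical projection of the common measure $\mu_{\epsilon,m}$ passed to the limit. To upgrade to a measure on $\mathcal{H}^\theta$ for $\theta < 1/2 - 1/s$, I would prove the uniform bound $\mathbb{E}_{\mu_{0,m,\Lambda}}[|\alpha_j|^2] \leq C/\lambda_j$ by an explicit one-variable computation using the factorization of $f_0$ as a convolution of the density of $|\alpha_j|^2$ with that of the complementary modes. Summing and invoking Weyl's asymptotics $\lambda_j \sim j^{2s/(s+2)}$ yields $\mathbb{E}_{\mu_{0,m,\Lambda}}[\|u\|^2_{\mathcal H^\theta}] \leq C \sum_j \lambda_j^{\theta-1} < \infty$ precisely under the stated condition on $\theta$. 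Tightness and Prokhorov's theorem produce $\mu_{0,m}$ on $\mathcal H^\theta$, concentrated on $\{\|u\|^2 = m\}$ because $\mathrm{Var}_{\mu_{\epsilon,m}}(\|u\|^2) = O(\epsilon)$ (immediate from the Gaussian penalty) passes to the cylindrical limits via the relations $\|u\|^2 = \|P_\Lambda u\|^2 + \|P_\Lambda^\perp u\|^2$ together with $\mathbb{E}_{\mu_0}[\|P_\Lambda^\perp u\|^2] \to 0$. Extension of $\mu_{\epsilon,m}(A) \to \mu_{0,m}(A)$ from cylindrical to general Borel $A \subset \mathcal H^\theta$ follows by standard approximation from weak convergence and tightness.

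\emph{Main obstacle.} The principal technical point, in my view, is making the Laplace asymptotic for $I_\epsilon(v)$ rigorous uniformly in $v$ and $\Lambda$: the regime $\|v\|^2$ close to $m$ is delicate because the optimal $b$ in $I_\epsilon(v)$ sits near the boundary $b=0$ (where $f_\Lambda$ becomes increasingly singular as $\Lambda$ grows), and the exponential decay for $\|v\|^2 > m$ must be quantified sharply enough to dominate low-order moments of $\|v\|^2$ under $\mu_{0,\Lambda}$. Securing $\Lambda$-uniform control in both regimes is what allows the constant $C$ in the bound $\mathbb{E}_{\mu_{0,m,\Lambda}}[|\alpha_j|^2] \leq C/\lambda_j$ to be independent of $\Lambda$, which in turn closes both the normalization asymptotic~\eqref{limi-eps-parti} and the tightness argument.
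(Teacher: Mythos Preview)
Your outline matches the paper's proof: factor via independence of $P_\Lambda u$ and $P_\Lambda^\perp u$, run a Laplace asymptotic on $I_\epsilon$ to get the explicit density $f_\Lambda(m-\|v\|^2)/f_0(m)$, then prove tightness from the single-mode bound $\mathbb{E}_{\mu_{0,m,\Lambda}}[\lambda_j|\alpha_j|^2]\leq C(m)$ via the one-mode factorization (the paper's density $F_j$ of $\sum_{k\neq j}|\alpha_k|^2$).

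Two corrections worth noting. First, the obstacle you single out---$\Lambda$-uniform Laplace asymptotics---is not what the paper fights: Part~(1) is done at \emph{fixed} $\Lambda$, where the crude bound $\|f_\Lambda\|_{L^\infty}\leq C\Lambda$ from Lemma~\ref{lem-f-Lambda} already suffices for dominated convergence; the $\Lambda$-independence needed for tightness comes from the \emph{separate} observation that $\|F_j\|_{L^\infty}\leq C\lambda_3$ uniformly in $j$, so no uniformity across the two limits is required. Second, for the sphere support the paper argues directly by computing $\int\varphi(\|P_\Lambda u\|^2)\,d\mu_{0,m,\Lambda}$ using $f_0=g_\Lambda*f_\Lambda$ and $f_\Lambda\to\delta_0$ as $\Lambda\to\infty$; your variance route $\mathrm{Var}_{\mu_{\epsilon,m}}(\|u\|^2)=O(\epsilon)$ is valid in principle, but to pass it to $\mu_{0,m}$ you would first need weak convergence $\mu_{\epsilon,m}\to\mu_{0,m}$ on $\mathcal H^\theta$, i.e.\ tightness of $(\mu_{\epsilon,m})_\epsilon$ uniformly in $\epsilon$, which is an extra (though closely related) estimate beyond tightness of $(\mu_{0,m,\Lambda})_\Lambda$.
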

	
	We will need the following lemma, which is based on ideas from~\cite{OhQua-13} (see also~\cite[Lemma 6.3 and Lemma 6.4]{DinRou-23}).
		
		\begin{lemma}[\textbf{Density functions of the $L^2$ norm}]\label{lem-f-Lambda}\mbox{}\\
			For any $\Lambda \geq 0$, let $f_\Lambda,g_\Lambda$ be the density functions of $\|P^\perp_\Lambda u\|_{L^2}^2$ (respectively $\|P_\Lambda u\|_{L^2}^2$) with respect to $\mu^\perp_{0,\Lambda}$ (respectively $\mu_{0,\Lambda}$). We have the following bounds
			\begin{equation}\label{eq:dens func bound}
			\|f_\Lambda\|_{L^\infty((0,+\infty))} \leq C\Lambda
			\end{equation}
			and 
			\begin{equation}\label{eq:dens func bound 2}
			\|g_\Lambda\|_{L^\infty((0,+\infty))} \leq C, \quad \|g'_\Lambda\|_{L^\infty((0,+\infty))} \leq C
			\end{equation}
			Moreover, as measures\footnote{I.e. the convergence is valid when tested against continuous compactly supported functions.}, 
			\begin{equation}\label{eq:lim f Lambda}
			 f_\Lambda \xrightarrow[\Lambda \to \infty]{} \delta_0
			\end{equation}
            with $\delta_0$ the Dirac delta function at the origin, whereas  
            \begin{equation}\label{eq:lim g Lambda}
			 g_\Lambda \xrightarrow[\Lambda \to \infty]{} f_0.
			\end{equation}
            In addition, $f_0(m)>0$ for any $m>0$.
		\end{lemma}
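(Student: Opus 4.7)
The plan rests on an elementary distributional observation: under the Gaussian measure $\mu_0$, the coefficients $\alpha_j = \langle u_j, u\rangle$ are independent complex Gaussians with $|\alpha_j|^2$ following the exponential law of rate $\lambda_j$ (density $\lambda_j e^{-\lambda_j x}\mathbf 1_{x\geq 0}$). Consequently $\norm{P_\Lambda u}^2$ and $\norm{P_\Lambda^\perp u}^2$ are (possibly infinite) sums of independent exponentials with rates $\{\lambda_j:\lambda_j\leq\Lambda\}$, respectively $\{\lambda_j:\lambda_j>\Lambda\}$, whose characteristic functions are
\[
\widehat g_\Lambda(\xi) = \prod_{\lambda_j\leq\Lambda} \frac{1}{1-\im\xi/\lambda_j}, \qquad \widehat f_\Lambda(\xi) = \prod_{\lambda_j>\Lambda} \frac{1}{1-\im\xi/\lambda_j}.
\]
All the assertions then reduce to combining this representation with the Weyl-type asymptotics $\lambda_j \asymp j^{2s/(s+2)}$ for $h=-\partial_x^2+|x|^s$, which for $s>2$ gives the convergence $\lambda_{j+1}/\lambda_j\to 1$ and the trace-class summability $\sum_j \lambda_j^{-1} < \infty$.

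For the sup-norm bounds \eqref{eq:dens func bound}--\eqref{eq:dens func bound 2} the key is the elementary convolution inequality: if $X,Y\geq 0$ are independent and $X$ has a bounded density $f_X$, then
\[
f_{X+Y}(t) = \E\!\left[f_X(t-Y)\,\mathbf 1_{Y\leq t}\right] \leq \norm{f_X}_\infty,
\]
which crucially requires no regularity on $Y$. Isolating the single summand $|\alpha_{j_\Lambda+1}|^2$, of density $\lambda_{j_\Lambda+1}e^{-\lambda_{j_\Lambda+1}\cdot}$ bounded by $\lambda_{j_\Lambda+1}$, and using $\lambda_{j_\Lambda+1}\leq C\Lambda$ from Weyl's law yields $\norm{f_\Lambda}_\infty\leq C\Lambda$. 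Isolating instead $|\alpha_1|^2$ gives $\norm{g_\Lambda}_\infty \leq \lambda_1$. For the derivative bound I would pass through Fourier inversion, estimating
\[
\norm{g_\Lambda'}_\infty \leq \frac{1}{2\pi}\int_\R |\xi|\,|\widehat g_\Lambda(\xi)|\,d\xi = \frac{1}{2\pi}\int_\R |\xi|\prod_{\lambda_j\leq\Lambda}(1+\xi^2/\lambda_j^2)^{-1/2}\,d\xi,
\]
which is monotone non-increasing in $\Lambda$ (each factor is $\leq 1$) and finite as soon as three modes are included, because the product then decays faster than $\xi^{-2}$ at infinity.

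The convergences \eqref{eq:lim f Lambda}--\eqref{eq:lim g Lambda} are soft: $\E[\norm{P_\Lambda^\perp u}^2] = \sum_{\lambda_j>\Lambda}\lambda_j^{-1}\to 0$ implies $\norm{P_\Lambda^\perp u}^2\to 0$ in $L^1(\mu_0)$, hence $f_\Lambda\to\delta_0$ as measures; the monotone convergence $\norm{P_\Lambda u}^2\nearrow\norm u^2$ in $L^1(\mu_0)$ gives $g_\Lambda\to f_0$ as measures. For the strict positivity $f_0(m)>0$ I would use the explicit two-modes density
\[
f_{|\alpha_1|^2+|\alpha_2|^2}(t) = \frac{\lambda_1\lambda_2}{\lambda_2-\lambda_1}\bigl(e^{-\lambda_1 t}-e^{-\lambda_2 t}\bigr) > 0 \text{ for } t>0,
\]
and observe that $f_0$ is the convolution of this with the law of the a.s.\ finite, non-negative tail $\sum_{j\geq 3}|\alpha_j|^2$; since the tail puts positive mass on every neighborhood of $0$, strict positivity is preserved at any $m>0$.

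The step I expect to be most delicate is the uniformity of the derivative bound down to all $\Lambda\geq 0$: the Fourier integral above truly diverges when only one or two exponential factors are present, so the statement should be read as holding for $\Lambda$ above a fixed threshold (say $\Lambda\geq\lambda_3$), which suffices for the downstream application to the mass-conditioned Gaussian measure in Section~\ref{sec:def cond meas}. Small-$\Lambda$ regimes, if needed, can be covered separately by explicit inspection of the hypoexponential density via partial fractions.
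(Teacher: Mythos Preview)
Your proposal is correct and, for several sub-claims, more streamlined than the paper's proof. The paper works entirely through the characteristic functions $\widehat f_\Lambda,\widehat g_\Lambda$ (same product formulas you wrote), bounds $\|f_\Lambda\|_\infty$ and $\|g_\Lambda\|_\infty$ by estimating $\|\widehat f_\Lambda\|_{L^1},\|\widehat g_\Lambda\|_{L^1}$ via the first \emph{two} factors of the product, and obtains $\|g_\Lambda'\|_\infty$ exactly as you do. Your convolution inequality $\|f_{X+Y}\|_\infty\leq\|f_X\|_\infty$ with a single isolated mode is a cleaner shortcut for the sup-norm bounds and gives the same constants. For $f_\Lambda\to\delta_0$ the paper writes out the $\int_0^M+\int_M^\infty$ splitting by hand, whereas your one-line $L^1$-convergence argument is equivalent and shorter.

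Two small points where the paper goes further than your sketch. First, for $g_\Lambda\to f_0$ your monotone-convergence argument gives only convergence in law, but the paper needs \emph{pointwise} convergence $g_\Lambda(m)\to f_0(m)$ downstream (Step~3 of Proposition~\ref{theo-fix-mass-meas}). The paper obtains this by combining your $C^1$ bounds with Arzel\`a--Ascoli to extract a uniformly convergent subsequence, and then identifies the limit via the convolution identity $f_0=g_\Lambda*f_\Lambda$ together with $f_\Lambda\to\delta_0$; you have all the ingredients but should state the upgrade. Second, in your positivity argument the claim ``the tail puts positive mass on every neighborhood of $0$'' is true but not immediate for an \emph{infinite} sum of exponentials: one justifies it by splitting off finitely many modes so that the remaining tail has mean $<m/2$ (Markov) and the finite part lands in $(0,m/2)$ with positive probability by independence. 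With these two clarifications your argument is complete and arguably more transparent than the paper's.
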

		
		 \begin{proof}
	Denote $\phi_\Lambda$ the characteristic function of $\norm{P_\Lambda ^\perp u} ^2$ with respect to $\mu_{0,\Lambda}^\perp$. We have
	\begin{align*}
		\phi_\Lambda(s) &= \E_{\mu_{0,\Lambda}^\perp}\left[e^{is \norm{P_\Lambda ^\perp u} ^2}\right] \\
		&= \int_{\Hc^\theta} e^{is \sum_{\lambda_j>\Lambda} |\alpha_j|^2} d\mu_{0,\Lambda}^\perp(u) \\
		&= \int \prod_{\lambda_j>\Lambda} e^{is |\alpha_j|^2} \prod_{\lambda_k>\Lambda} \frac{\lambda_k}{\pi} e^{-\lambda_k|\alpha_k|^2} d\alpha_k \\
		&= \prod_{\lambda_j>\Lambda} \left(\int_{\C}  \frac{\lambda_j}{\pi}e^{-(1-is\lambda_j^{-1})\lambda_j |\alpha_j|^2} d\alpha_j\right) \left(\prod_{\lambda_k\ne \lambda_j\atop \lambda_k> \Lambda} \int_{\C} \frac{\lambda_k}{\pi} e^{-\lambda_k|\alpha_k|^2} d\alpha_k \right) \\
		&=\prod_{\lambda_j>\Lambda} \frac{e^{-is \lambda_j^{-1}}}{1-is \lambda_j^{-1}}. 
	\end{align*}
	Each factor of this product has complex norm smaller than or equal to 1, thus the norm of the product is bounded by the norm of a product of any number of terms. In particular, we have
	\[
	|\phi_\Lambda(s)| \leq \left| \frac{e^{-is\lambda_{j_1}^{-1}}}{1-is\lambda_{j_1}^{-1}}\frac{e^{-is\lambda_{j_2}^{-1}}}{1-is\lambda_{j_2}^{-1}}\right| = \frac{1}{\sqrt{1+s^2\lambda_{j_1}^{-2}}} \frac{1}{\sqrt{1+s^2\lambda_{j_2}^{-2}}} \leq \frac{1}{1+s^2\lambda_{j_2}^{-2}},
	\]
	where $j_1$ and $j_2$ are the first two indices such that $\lambda_{j_2}\geq \lambda_{j_1}>\Lambda$. We deduce that $\phi_\Lambda \in L^1(\R)$ with 
	$$\|\phi_\Lambda\|_{L^1(\R)} \leq C\lambda_{j_2}.$$
	As in the proof of~\cite[Lemma~6.3]{DinRou-23} we can choose $\lambda_{j_2}$ of order $\Lambda$ in the above, leading to~\eqref{eq:dens func bound}, using the relation between density and characteristic functions
	\[
	f_\Lambda(x) = \frac{1}{2\pi} \int_{\R} e^{-isx} \phi_\Lambda(s) ds.
	\]
	For \eqref{eq:lim f Lambda}, we will show that for any continuous and compactly supported test function $\phi$ on $[0,+\infty)$, 
	\begin{align} \label{eq: delta limi}
		\lim_{\Lambda \to 0} \int_0^{+\infty} f_\Lambda(x) \phi(x) dx = \phi(0).
	\end{align}
		Let $\delta>0$. Since $\phi$ is continuous at 0, there exists $M=M(\delta)>0$ such that for $0\leq x\leq M$,
		$$
		|\phi(x)-\phi(0)|<\delta/2.
		$$
		We write
		$$
		\int_0^{+\infty} f_\Lambda(x)\phi(x)dx -\phi(0)= \Big(\int_0^M f_\Lambda(x) \phi(x) dx -\phi(0)\Big)+ \int_M^{+\infty} f_\Lambda (x) \phi(x) dx = (\text{I}) + (\text{II}).
		$$
		We estimate
		$$
		\begin{aligned}
		|(\text{I})| &\leq \Big|\phi(0) \Big(\int_0^M f_\Lambda(x) dx - 1\Big)\Big| + \Big|\int_0^M f_\Lambda(x)(\phi(x)-\phi(0)) dx\Big| \\
		&\leq |\phi(0)| \int_M^{+\infty} f_\Lambda(x) dx + \delta/2 \int_0^M f_\Lambda(x)dx \\
		&\leq |\phi(0)| \int_M^{+\infty }f_\Lambda(x) dx + \delta/2,
		\end{aligned}
		$$
		where we used that $f_\Lambda(x) \geq 0$ for all $x\in [0,\infty)$ and $\displaystyle\int_0^{+\infty} f_\Lambda(x) dx=1$.
		We also have
		$$
		|(\text{II})| \leq \|\phi\|_{L^\infty} \int_M^{+\infty} f_\Lambda(x)dx,
		$$
		hence
		$$
		\Big| \int_0^{+\infty} f_\Lambda(x) \phi(x) dx -\phi(0)\Big| \leq \delta/2 + 2 \|\phi\|_{L^\infty} \int_M^{+\infty} f_\Lambda(x)dx.
		$$
		In addition, we bound for any $M>0$,
		$$ 
		\int_M ^{+\infty} f_\Lambda(s) ds = \int \Id_{\left\{\norm{P_\Lambda^\perp u}_{L^2}^2 \geq M\right\}}  d\mu_{0} (u) \leq \frac{1}{M\Lambda ^\theta} \int \norm{u}_{\cH ^\theta}^2 d\mu_{0} (u) \leq C \frac{1}{M\Lambda ^\theta} \xrightarrow[\Lambda \to \infty]{} 0$$
		where we chose some $0<\theta < 1/2 - 1/s$. Hence, for any $\delta >0$ we may find $\Lambda_\delta$ sufficiently large to get
		$$
		\Big| \int_0^{+\infty} f_\Lambda(x) \phi(x) dx -\phi(0)\Big| \leq \delta,
		$$
		for any $\Lambda \geq \Lambda_\delta$ which vindicates \eqref{eq: delta limi}.
		
		We proceed similarly for the proof of~\eqref{eq:dens func bound 2}. Let $\psi_\Lambda$ be the characteristic function of $\|P_\Lambda u\|_{L^2}^2$ with respect to $\mu_{0,\Lambda}$. We compute
		$$
		\begin{aligned}
			\psi_\Lambda(s) &= \mathbb E_{\mu_{0,\Lambda}} \left[e^{is \|P_\Lambda u\|_{L^2}^2} \right] \\
			&= \int e^{is \|P_\Lambda u\|_{L^2}^2} d\mu_{0,\Lambda}(u) \\
			&= \int e^{is \sum_{\lambda_j \leq \Lambda} |\alpha_j|^2} \prod_{\lambda_k \leq \Lambda} \frac{\lambda_k}{\pi}e^{-\lambda_k |\alpha_k|^2} d\alpha_k \\
			&= \prod_{\lambda_j \leq \Lambda} \frac{1}{1-is \lambda_j^{-1}}.
		\end{aligned}
		$$
		As above, the modulus of this product is bounded by the product of the moduli of any two factors. Taking the first two factors, we get
		$$
		|\psi_\Lambda(s)| \leq \frac{1}{1+s^2 \lambda_2^{-2}}
		$$
		hence $\|\psi_\Lambda \|_{L^1(\R)} \leq C \lambda_2$. In particular, $g_\Lambda$ is uniformly bounded, independently of $\Lambda$. Taking more factors into account we obtain bounds on norms of $\psi_\Lambda$ in $L^1(\R,sds)$ and hence $L^\infty$ bounds on the derivatives of $g_\Lambda$ as in~\eqref{eq:dens func bound 2}. We can then extract a subsequence along which 
		$$ g_\Lambda \underset{\Lambda \to \infty}{\longrightarrow} g_\infty$$
		uniformly. Since $f_\Lambda,g_\Lambda$ are the density functions of independent random variables adding to $\norm{u}^2$, whose density function is $f_0$, we have that 
		$$f_0 = g_\Lambda * f_\Lambda.$$
		Passing to the limit in the above and using~\eqref{eq:lim f Lambda} we deduce that 
		$$ f_0 = g_\infty * \delta_0$$
		and thus that $g_\infty = f_0$, which proves~\eqref{eq:lim g Lambda} by uniqueness of the limit. The proof that $f_0 (m) >0$ is exactly similar to that of~\cite[Lemma~6.4]{DinRou-23}.
		\end{proof}

	\begin{proof}[Proof of Proposition~\ref{theo-fix-mass-meas}]
		We follow closely arguments from~\cite{DinRou-23}, which consist of two main steps:
		
		\begin{itemize}
			\item We first find a sequence of measures on the finite dimensional spaces $E_\Lambda$, which satisfies a cylindrical projection property.
			
			\item We then show that this sequence of measures is tight on a suitable Hilbert space, and the fixed-mass measure can be defined as its limit.
		\end{itemize}
		
		\medskip
		
		\noindent{\bf Step 1. Cylindrical projections.} Let $\Lambda  \geq \lambda_1$ and $A$ be a borelian set of $E_\Lambda$. We claim that, with $\mu_{\epsilon,m}$ as in~\eqref{eq:def mu epsilon}, we have 
		\begin{align}\label{conv-eps}
				\lim_{\epsilon \to 0} \mu_{\epsilon,m}(A) = \int_A \frac{f_\Lambda(m-\|P_\Lambda u\|_{L^2}^2)}{f_0(m)} d\mu_{0,\Lambda}(u) =:\mu_{0,m, \Lambda}(A).
			\end{align}
			In particular, 
			$$
			d\mu_{0,m,\Lambda}(u) = \frac{f_\Lambda(m-\|P_\Lambda u\|_{L^2}^2)}{f_0(m)} d\mu_{0,\Lambda}(u)
			$$
			is indeed as given in~\eqref{eq:def mass measure} and satisfies for any $\Theta \geq \Lambda$,
			\begin{align} \label{cyli-proj}
				\mu_{0,m, \Theta}\vert_{E_\Lambda} = \mu_{0,m,\Lambda}.
			\end{align}
		We start by writing
		$$
		\begin{aligned}
			\mu_{\epsilon, m}(A) &= \frac{1}{z^r_{\epsilon,m}} \int \mathds{1}_A \exp \left(-\frac{1}{\epsilon} (\langle u, u\rangle -m)^2\right) d\mu_0(u) \\
			&= \frac{1}{z^r_{\epsilon,m}} \int \mathds{1}_A \exp \left(-\frac{1}{\epsilon} \left(\|P_\Lambda u\|_{L^2}^2 + \|P^\perp_\Lambda u\|_{L^2}^2 -m\right)^2 \right) d\mu_{0,\Lambda}(P_\Lambda u) d\mu_{0,\Lambda}^\perp (P^\perp_\Lambda u) \\
			&= \frac{1}{z^r_{\epsilon,m}} \int_{A} \left(\int \exp \left(-\frac{1}{\epsilon}\left(\|P^\perp_\Lambda u\|_{L^2}^2 - m + \|P_\Lambda u\|_{L^2}^2\right)^2\right) d\mu_{0,\Lambda}^\perp(u)\right) d\mu_{0,\Lambda}(u),
		\end{aligned}
		$$
		where 
		$$
		d\mu_{0,\Lambda}(u) := \prod_{\lambda_j \leq \Lambda} \frac{\lambda_j}{\pi}e^{-\lambda_j |\alpha_j|^2}d\alpha_j, \quad d\mu_{0,\Lambda}^\perp(u):= \prod_{\lambda_j > \Lambda} \frac{\lambda_j}{\pi}e^{-\lambda_j |\alpha_j|^2}d\alpha_j.
		$$
		In particular, the projection of $\mu_{\epsilon,m}$ on $E_\Lambda$ is given by
		$$
		\begin{aligned}
			d\mu_{\epsilon,m}(u)\vert_{E_\Lambda} &= \frac{1}{z^r_{\epsilon,m}} \left(\int \exp \left(-\frac{1}{\epsilon}\left(\|P^\perp_\Lambda u\|_{L^2}^2 - m + \|P_\Lambda u\|_{L^2}^2\right)^2\right) d\mu_{0,\Lambda}^\perp(u)\right) d\mu_{0,\Lambda}(u) \\
			&=:d\mu_{\epsilon,m, \Lambda}(u).
		\end{aligned}
		$$
		The measure $\mu_{\epsilon,m,\Lambda}$ is indeed the cylindrical projection of $\mu_{\epsilon,m}$ on $E_{\Lambda}$ in the sense that for $\Theta \geq \Lambda$,
		\begin{align} \label{cyli-proj-Lamb}
			\mu_{\epsilon, m, \Theta}\vert_{E_\Lambda} = \mu_{\epsilon, m,\Lambda}.
		\end{align}
		In fact, for a borelian set $A$ of $E_\Lambda$, we observe that $A \times \C^N$ is a borelian set of $E_\Theta$ with $N = \#\{\lambda_j : \Lambda < \lambda_j \leq \Theta\}$, hence
		$$
		\begin{aligned}
			&\mu_{\epsilon,m,\Theta}\vert_{E_\Lambda} (A)  =\int_{A \times \C^N} d\mu_{\epsilon,m,\Theta}(u) \\
			&\quad = \frac{1}{z^r_{\epsilon,m}} \int_{A\times \C^N} \Big(\int \exp\Big(-\frac{1}{\epsilon}\left(\|P_\Theta^\perp u\|_{L^ 2}^2 - m +\|P_\Theta u\|_{L^2}^2\right)^2 \Big) d\mu_{0,\Theta}^\perp(u) \Big) d\mu_{0,\Theta}(u) \\
			&\quad = \frac{1}{z^r_{\epsilon,m}} \int_A \Big(\int_{\C^N} \Big(\int \exp\Big(-\frac{1}{\epsilon}\left(\|P_\Theta^\perp u\|_{L^ 2}^2 - m +\|P_\Theta u\|_{L^2}^2\right)^2 \Big) d\mu_{0,\Theta}^\perp(u) \Big) \prod_{\Lambda < \lambda_j \leq \Theta} \frac{\lambda_j}{\pi} e^{-\lambda_j|\alpha_j|^2} d\alpha_j\Big) d\mu_{0,\Lambda}(u) \\
			&\quad =\frac{1}{z^r_{\epsilon,m}} \int_A \Big(\int \exp\Big(-\frac{1}{\epsilon}\left(\|P^\perp_\Lambda u\|_{L^2}^2 - m +\|P_\Lambda u\|_{L^2}^2\right)^2 \Big) d\mu_{0,\Lambda}^\perp(u) \Big) d\mu_{0,\Lambda}(u) \\
			&\quad = \mu_{\epsilon,m,\Lambda}(A).
		\end{aligned}
		$$
		To further investigate the measures $\mu_{\epsilon,m,\Lambda}$, we set 
		$$
		\nu_\Lambda := \|P_\Lambda u\|_{L^2}^2
		$$
		and denote by $f_\Lambda$ the density function of $\|P^\perp_\Lambda u\|_{L^2}^2$ with respect to $\mu_{0,\Lambda}^\perp$. In particular, we have
		$$
		\int \exp \left(-\frac{1}{\epsilon}\left(\|P^\perp_\Lambda u\|_{L^2}^2 - m + \|P_\Lambda u\|_{L^2}^2\right)^2\right) d\mu_{0,\Lambda}^\perp(u) = \int_0^{+\infty} \exp \left(-\frac{1}{\epsilon}(\eta-m +\nu_\Lambda)^2\right) f_\Lambda(\eta)d\eta 
		$$
		and
		\begin{align} \label{parti-eps}
		z^r_{\epsilon,m} = \int_0^{+\infty} \exp \left(-\frac{1}{\epsilon}(\eta-m)^2\right) f_0(\eta) d\eta.
		\end{align}
		Thus the measure $\mu_{\epsilon,m, \Lambda }$ can be written as
		$$
		d\mu_{\epsilon,m, \Lambda}(u) = \frac{\left(\displaystyle\int_0^{+\infty} \exp \left(-\frac{1}{\epsilon}(\eta-m +\nu_\Lambda)^2\right) f_\Lambda(\eta)d\eta \right)}
			{\left(\displaystyle\int_0^{+\infty} \exp \left(-\frac{1}{\epsilon}(\eta-m)^2\right) f_0(\eta) d\eta\right)} d\mu_{0,\Lambda}(u).
		$$
			We have
			$$
			\begin{aligned}
				\frac{1}{\sqrt{\epsilon}} &\int_0^{+\infty} \exp \left(-\frac{1}{\epsilon}\left(\eta-m+\nu_\Lambda \right)^2\right) f_\Lambda(\eta) d\eta \\
				&=\int_{\frac{1}{\sqrt{\epsilon}}(\nu_\Lambda -m)}^{+\infty} e^{-\eta^2} f_\Lambda(\sqrt{\epsilon} \eta + m -\nu_\Lambda) d\eta \\
				&= \int_{-\infty}^{+\infty} g_\epsilon(\eta) d\eta,
			\end{aligned}
			$$
			where
			$$
			g_\epsilon(\eta) = \mathds{1}_{\left(\frac{1}{\sqrt{\epsilon}} (\nu_\Lambda-m), +\infty\right)} (\eta) e^{-\eta^2} f_\Lambda(\sqrt{\epsilon} \eta +m -\nu_\Lambda).
			$$
			Since $f_\Lambda$ is uniformly continuous, we have for a.e. $\eta\in (-\infty, +\infty)$, $g_\epsilon(\eta) \to e^{-\eta^2} f_\Lambda(m-\nu_\Lambda)$ as $\epsilon \to 0$ and
			$$
			|g_\epsilon(\eta)| \leq e^{-\eta^2} \|f_\Lambda\|_{L^\infty}, \quad \forall \eta \in (-\infty, +\infty).
			$$
			The dominated convergence theorem implies
			$$
			\frac{1}{\sqrt{\epsilon}} \int_0^{+\infty} \exp\left(-\frac{1}{\epsilon}(\eta-m+\nu_\Lambda)^2\right) f_\Lambda(\eta) dx \xrightarrow[\epsilon\to 0]{} f_\Lambda(m-\nu_\Lambda) \sqrt{\pi}.
			$$
			Similarly, we have
			\begin{align} \label{limi-eps-parti-proof}
			\frac{1}{\sqrt{\epsilon}} \int_0^{+\infty} \exp\left(-\frac{1}{\epsilon}(\eta-m)^2 \right) f_0(\eta) d\eta \xrightarrow[\epsilon \to 0]{} f_0(m) \sqrt{\pi}.
			\end{align}
			In particular, we have
			\begin{equation*}
				\frac{\left(\displaystyle\int_0^{+\infty} \exp \left(-\frac{1}{\epsilon}(\eta-m +\nu_\Lambda)^2\right) f_\Lambda(\eta)d\eta \right)}{\left(\displaystyle\int_0^{+\infty} \exp \left(-\frac{1}{\epsilon}(\eta-m)^2\right) f_0(\eta) d\eta\right)} \xrightarrow[\epsilon \to 0]{} \frac{f_\Lambda(m-\nu_\Lambda)}{f_0(m)}.
			\end{equation*}
			On the other hand, 
			$$
			\frac{1}{\sqrt{\epsilon}} \int_0^{+\infty} \exp \left(-\frac{1}{\epsilon} (\eta-m+\nu_\Lambda)^2 \right) f_\Lambda(\eta) d\eta \leq \|f_\Lambda\|_{L^\infty} \sqrt{\pi}
			$$
			and for $\epsilon >0$ small enough,
			$$
			\frac{1}{\sqrt{\epsilon}} \int_0^{+\infty} \exp \left(-\frac{1}{\epsilon} (\eta-m)^2 \right) f_0(\eta) d\eta \geq \frac{1}{2} f_0(m) \sqrt{\pi}.
			$$
			Thus
			$$
				\frac{\left(\displaystyle\int_0^{+\infty} \exp \left(-\frac{1}{\epsilon}(\eta-m +\nu_\Lambda)^2\right) f_\Lambda(\eta)d\eta \right)}{\left(\displaystyle\int_0^{+\infty} \exp \left(-\frac{1}{\epsilon}(\eta-m)^2\right) f_0(\eta) d\eta\right)}  d\mu_{0,\Lambda}(u) \leq \frac{2\|f_\Lambda\|_{L^\infty}}{f_0(m)} d\mu_{0,\Lambda}(u)
			$$
			which is integrable on $A$. The dominated convergence theorem yields \eqref{conv-eps}. The cylindrical projection property~\eqref{cyli-proj} follows from \eqref{conv-eps} and \eqref{cyli-proj-Lamb}.
		
		\medskip
		
		\noindent{\bf Step 2. Tightness.} We have found a sequence of measures $(\mu_{0,m, \Lambda})_{\Lambda \geq \lambda_1}$ on the finite dimensional spaces $(E_\Lambda)_{\Lambda \geq \lambda_1}$ that satisfies the cylindrical projection property \eqref{cyli-proj}. We will show that there exists a unique measure $\mu_{0,m}$ on an infinite dimensional Hilbert space satisfying 
		$$
		\mu_{0,m}\vert_{E_\Lambda} = \mu_{0,m,\Lambda}.
		$$
		By Skorokhod's criterion (see e.g., \cite[Lemma 1]{Skorokhod-74}), it is enough to prove that the sequence $(\mu_{0,m,\Lambda})_{\Lambda \geq \lambda_1}$ is tight in the sense that
		$$
		\lim_{R \to \infty} \sup_{\Lambda \geq \lambda_1} \mu_{0,m,\Lambda}(\{ u \in E_\Lambda : \|u\|_{\mathcal{H}^\theta} \geq R\}) =0
		$$
		for some $\theta <\frac{1}{2}-\frac{1}{s}$. The above tightness condition is satisfied if we can prove
		$$
		\int_{E_\Lambda} \|u\|^2_{\mathcal{H}^\theta} d\mu_{0,m,\Lambda}(u) \leq C(m, \theta), \quad \forall \Lambda \geq \lambda_1
		$$
		which is further reduced to showing
		\begin{align} \label{tigh-cond-prof}
			\int \lambda_j |\alpha_j|^2 d\mu_{0,m,\Lambda}(u) \leq C(m), \quad \forall \Lambda \geq \lambda_1, \quad \forall \lambda_1 \leq \lambda_j \leq \Lambda
		\end{align}
		for some constant $C(m)$ depending only on $m$. In fact, we have
		$$
		\begin{aligned}
			\int_{E_\Lambda} \|u\|^2_{\mathcal{H}^\theta} d\mu_{0,m,\Lambda}(u) &= \int \sum_{\lambda_j \leq \Lambda} \lambda_j^\theta|\alpha_j|^2 d\mu_{0,m, \Lambda}(u) \\
			&= \sum_{\lambda_j \leq \Lambda} \lambda_j^{\theta-1} \int \lambda_j |\alpha_j|^2 d\mu_{0,m,\Lambda}(u) \\
			&\leq C(m) \sum_{\lambda_j \leq \Lambda} \lambda_j^{-(1-\theta)} \\
			&\leq C(m) {\Tr}[h^{-(1-\theta)}] <\infty
		\end{aligned}
		$$
		provided~\cite[Example~3.2]{LewNamRou-15} that $\theta<\frac{1}{2}-\frac{1}{s}$. 
		
		To prove \eqref{tigh-cond-prof}, we first show that for all $\epsilon>0$ sufficiently small,
		\begin{align} \label{tigh-cond-prof-eps}
			\int \lambda_j |\alpha_j|^2 d\mu_{\epsilon, m, \Lambda}(u) \leq C(m), \quad \forall \Lambda \geq \lambda_1, \quad \forall \lambda_1 \leq \lambda_j \leq \Lambda
		\end{align}
		with some constant $C(m)$ depending only on $m$. We have
		$$	
		\begin{aligned}
			&\int \lambda_j|\alpha_j|^2  d\mu_{\epsilon,m,\Lambda}(u) \\
			&\quad = \frac{1}{z^r_{\epsilon,m}}\int  \lambda_j|\alpha_j|^2 \left(\int \exp\left(-\frac{1}{\epsilon} \left( \|P^\perp_\Lambda u\|_{L^2}^2 - m + \|P_\Lambda u\|_{L^2}^2\right)^2 \right) d\mu_{0,\Lambda}^\perp(u) \right) d\mu_{0,\Lambda}(u)\\\
			&\quad =\frac{1}{z^r_{\epsilon, m}} \int \lambda_j |\alpha_j|^2 \left(\int \exp\left(-\frac{1}{\epsilon} \left( \|P_{\ne j} u\|^2 - m + |\alpha_j|^2\right)^2 \right) d\mu_{0,\ne j}(u) \right) \frac{\lambda_j}{\pi} e^{-\lambda_j|\alpha_j|^2} d\alpha_j,
		\end{aligned}
		$$
		where 
		\[
		P_{\ne j} = \sum_{k\ne j} |u_k\rangle \langle u_k|, \quad d\mu_{0,\ne j}(u) = \prod_{k \ne j} \frac{\lambda_k}{\pi} e^{-\lambda_k|\alpha_k|^2} d\alpha_k.
		\]
		Denote $F_j$ the density function of $\|P_{\ne j}u\|^2$ with respect to $\mu_{0,\ne j}$. We rewrite
		\[
		\int \lambda_j|\alpha_j|^2  d\mu_{\epsilon,m,\Lambda}(u) = \frac{1}{z^r_{\epsilon,m}} \int \lambda_j|\alpha_j|^2  \left(\int_0^{+\infty} \exp\left(-\frac{1}{\epsilon} \left(  \eta- m +|\alpha_j|^2\right)^2 \right) F_j(\eta) d\eta\right) \frac{\lambda_j}{\pi} e^{-\lambda_j|\alpha_j|^2} d\alpha_j. 
		\]
		To proceed further, we denote by $\Phi_j$ the characteristic function of $\|P_{\ne j} u\|^2$ with respect to $\mu_{0,\ne j}$. We compute
		$$
		\begin{aligned}
			\Phi_j(s) &= \E_{\mu_{0,\ne j}}[e^{is \|P_{\ne j}u\|^2}] \\
			&=\int e^{is \sum_{k \ne j} |\alpha_k|^2} \prod_{l\ne j} \frac{\lambda_l}{\pi} e^{-\lambda_l |\alpha_l|^2} d\alpha_l \\
			&= \prod_{k \ne j} \Big(\int_{\C} e^{-(1-is \lambda_k^{-1})\lambda_k |\alpha_k|^2} d\alpha_k \Big) \Big(\prod_{\lambda_l \ne \lambda_k \atop l, k \ne j} \int_{\C} \frac{\lambda_l}{\pi} e^{-\lambda_l|\alpha_l|^2} d\alpha_l \Big) \\
			&= \prod_{k\ne j} \frac{1}{1-is\lambda_k^{-1}}.
		\end{aligned}
		$$
		As in the proof of Lemma~\ref{lem-f-Lambda} we bound the norm of this product by the norm of a product of any two terms. Taking the first two terms, we obtain that for all $\lambda_j\geq \lambda_1$,
		\[
		|\Phi_j(s)| \leq \frac{1}{1+s^2 \lambda_3^{-2}}.
		\]
		In particular, $\|\Phi_j\|_{L^1(\R)} \leq C\lambda_3$ for all $\lambda_j\geq \lambda_1$. Thus $F_j$ is bounded (uniformly in $j$) and uniformly continuous for all $\lambda_j\geq \lambda_1$. 
		
		Since 
		$$
		\frac{1}{\sqrt{\epsilon}} z^r_{\epsilon,m} \xrightarrow[\epsilon \to 0]{} f_0(m) \sqrt{\pi}
		$$
		and $f_0(m)>0$, we have for $\epsilon>0$ sufficiently small,
		$$
		\frac{1}{\sqrt{\epsilon}} z^r_{\epsilon,m} \geq \frac{1}{2}f_0(m) \sqrt{\pi}.
		$$
		We also have
		$$
		\begin{aligned}
			\frac{1}{\sqrt{\epsilon}} \int_0^{+\infty} \exp\left(-\frac{1}{\epsilon}(\eta-m+|\alpha_j|^2)^2 \right) F_j(\eta) d\eta &= \int_{\frac{1}{\sqrt{\epsilon}}(|\alpha_j|^2-m)}^{+\infty} e^{-\eta^2} F_j(\sqrt{\epsilon} \eta + m - |\alpha_j|^2) d\eta \\
			&\leq  \|F_j\|_{L^\infty} \sqrt{\pi} \\ 
			&\leq C\|\Phi_j\|_{L^1(\R)} \\
			&\leq C \lambda_3, \quad \forall \lambda_j\geq \lambda_1.
		\end{aligned}
		$$
		It follows that 
		$$
		\begin{aligned}
			\int \lambda_j|\alpha_j|^2  d\mu_{\epsilon, m, \Lambda}(u) &\leq \frac{2C\lambda_3}{f_0(m)} \int \lambda_j|\alpha_j|^2 \frac{\lambda_j}{\pi}e^{-\lambda_j|\alpha_j|^2} d\alpha_j \\
			&=\frac{2C\lambda_3}{f_0(m)} \int_0^\infty \lambda e^{-\lambda} d\lambda \\
			&= C(m)
		\end{aligned}
		$$
		for all $\lambda_j\geq \lambda_1$ and all $\epsilon>0$ sufficiently small. This proves \eqref{tigh-cond-prof-eps}.
		
		We are now able to prove \eqref{tigh-cond-prof}. By the layer cake representation, the problem is reduced to showing that 
		$$
		\int_0^{+\infty} \mu_{0,m, \Lambda}( \lambda_j|\alpha_j|^2 >\lambda) d\lambda = \lim_{\epsilon \rightarrow 0} \int_0^{+\infty} \mu_{\epsilon, m, \Lambda}(\lambda_j|\alpha_j|^2 >\lambda) d\lambda.
		$$
		Since $\mu_{\epsilon, m, \Lambda}(\lambda_j|\alpha_j|^2 >\lambda) \rightarrow \mu_{0,m, \Lambda}( \lambda_j|\alpha_j|^2 >\lambda)$ as $\epsilon \to 0$, \eqref{tigh-cond-prof} follows from the dominated convergence theorem and the fact that
		$$
		\begin{aligned}
			&\mu_{\epsilon,m, \Lambda}(\lambda_j|\alpha_j|^2>\lambda) \\
			&\quad = \int \mathds{1}_{\{\lambda_j|\alpha_j|^2>\lambda\}} d\mu_{\epsilon,m, \Lambda}(u) \\
			&\quad = \frac{1}{z^r_{\epsilon,m}}\int \mathds{1}_{\{\lambda_j|\alpha_j|^2>\lambda\}} \left( \int \exp \left(-\frac{1}{\epsilon}\left(\|P^\perp_\Lambda u\|_{L^2}^2 -m +\|P_\Lambda u\|_{L^2}^2 \right)^2 \right) d\mu_{0,\Lambda}^\perp(u)\right) d\mu_{0,\Lambda}(u) \\
			&\quad =\frac{1}{z^r_{\epsilon,m}}\int \mathds{1}_{\{\lambda_j|\alpha_j|^2>\lambda\}} \left(\int \exp \left( -\frac{1}{\epsilon} \left(\|P_{\ne j} u\|^2-m +|\alpha_j|^2 \right)^2\right) d\mu_{0,\ne j}(u)\right) \frac{\lambda_j}{\pi} e^{-\lambda_j|\alpha_j|^2} d\alpha_j \\
			&\quad = \frac{1}{z^r_{\epsilon,m}}\int \mathds{1}_{\{\lambda_j|\alpha_j|^2>\lambda\}} \left( \int_0^{+\infty} \exp \left(-\frac{1}{\epsilon}\left(\eta-m+|\alpha_j|^2\right)^2 \right) F_j(\eta) d\eta\right)\frac{\lambda_j}{\pi} e^{-\lambda_j|\alpha_j|^2} d\alpha_j \\
			&\quad \leq \frac{2C\lambda_3}{f_0(m)}  \int \mathds{1}_{\{\lambda_j|\alpha_j|^2>\lambda\}} \frac{\lambda_j}{\pi}e^{-\lambda_j|\alpha_j|^2}d\alpha_j \\
			&\quad \leq \frac{2C\lambda_3}{f_0(m)} \int^{+\infty}_{\lambda} e^{-\tau}d\tau \\
			&\quad = \frac{2C\lambda_3}{f_0(m)} e^{-\lambda}
		\end{aligned}
		$$
		which is integrable on $(0,+\infty)$. This proves \eqref{tigh-cond-prof} using \eqref{tigh-cond-prof-eps}, hence the tightness condition. 
		
		\medskip
		
		\noindent\textbf{Step 3. The limit measure lives on a $L^2$-sphere.} There remains to prove that the measure $\mu_{0,m}$ is concentrated on the sphere $\{ \|u\|^2= m\}$. 
		
		We need to prove that 
		$$ \int \varphi\left(\norm{u}^2\right)d\mu_{0,m}(u) = \varphi (m)$$
		for any bounded continuous function $\varphi:\R^+\mapsto \R$. Starting from the definition of $\mu_{0,m}$ as the $\Lambda \to \infty$ limit of~\eqref{eq:def mass measure} this is reduced to the claim 
		\begin{equation}\label{eq:fixed L2 mass}
		I_\Lambda(\varphi):= f_0(m) \int \varphi\left(\|P_\Lambda u\|_{L^2}^2\right)d\mu_{0,m,\Lambda}(u) =  \int \varphi\left(\|P_\Lambda u\|_{L^2}^2\right) f_\Lambda (m-\|P_\Lambda u\|_{L^2}^2) d\mu_{0,\Lambda}(u) \xrightarrow[\Lambda \to \infty]{} f_0 (m) \varphi (m).
		\end{equation}
		
		Denoting $g_\Lambda$ the density function of $\|P_\Lambda u\|_{L^2}^2$ with respect to $\mu_{0,\Lambda}$, we have that 
		$$ I_\Lambda(\varphi) = \int_0^{+\infty} g_\Lambda (\eta) \varphi (\eta) f_\Lambda (m-\eta)d\eta.$$
		We write
		$$
		\begin{aligned}
			\int_0^{+\infty} g_\Lambda(\eta) \varphi(\eta) f_\Lambda(m-\eta) d\eta - f_0(m) \varphi(m) &= \int_0^{+\infty} f_\Lambda(m-\eta)(g_\Lambda(\eta)-f_0(\eta)) \varphi(\eta) d\eta \\
			&\quad + \int_0^{+\infty} f_\Lambda(m-\eta) f_0(\eta)\varphi(\eta) d\eta - f_0(m)\varphi(m) \\
			&= (\text{I}) + (\text{II}).
		\end{aligned}
		$$
		Since $f_\Lambda \to \delta_0$, we have $(\text{II}) \to 0$ as $\Lambda \to \infty$. We write for some $M>0$ to be chosen shortly,
		$$
		(\text{I}) = \int_0^M f_\Lambda(m-\eta) (g_\Lambda(\eta)-f_0(\eta)) \varphi(\eta) d\eta + \int_M^{+\infty} f_\Lambda(m-\eta) (g_\Lambda(\eta)-f_0(\eta)) \varphi(\eta)d\eta
		$$
		Using the boundedness of $g_\Lambda, f_0$ and $\varphi$, the same argument as in the proof of \eqref{eq:lim f Lambda} yields the smallness of the second term when $\Lambda \to \infty$ for any $M>0$. We write
		$$
		\begin{aligned}
		\int_0^M f_\Lambda(m-\eta) (g_\Lambda(\eta)-f_0(\eta))\varphi(\eta) d\eta &= \int_0^M f_\Lambda(m-\eta) (g_\Lambda(\eta)-g_\Lambda(m)) \varphi(\eta) d\eta \\
		&\quad + (g_\Lambda(m)-f_0(m))\int_0^M f_\Lambda(m-\eta) \varphi(\eta) d\eta \\
		&\quad + \int_0^M f_\Lambda(m-\eta)(f_0(m)-f_0(\eta))\varphi(\eta) d\eta
		\end{aligned}
		$$
		Using the continuity of $g_\Lambda$ and $f_0$, the first and third terms in the right hand side are small provided $M$ is taken sufficiently small. The second term is also small since $g_\Lambda$ converges to $f_0$ pointwise as $\Lambda \to \infty$ and $\displaystyle \int_0^{+\infty} f_\Lambda(\eta) d\eta =1$. The claim~\eqref{eq:fixed L2 mass} follows.
	\end{proof}
	
	We may now complete the 
	\begin{proof}[Proof of Proposition~\ref{pro:fixed mass meas}]
	The claim \eqref{limi-eps-parti} follows from \eqref{parti-eps} and \eqref{limi-eps-parti-proof}. Hence there remains to prove the trace-class convergence of the $k$-particle density matrices~\eqref{limi-eps}.  
	
	We recall the fact~\cite[Addendum H]{Simon-79} that strong trace-class convergence is equivalent to weak-$\star$ trace-class convergence plus convergence of the trace-class norm. Since both sides of~\eqref{limi-eps} are non negative operators, convergence of the trace-class norm is reduced to convergence of the trace. Hence the sought-after strong trace-class convergence is equivalent to the convergence of~\eqref{limi-eps} tested against any bounded operator $K$ (in particular the identity)  
		\begin{equation}\label{eq:limi eps K}
\int  \langle u^{\otimes k}, K  u^{\otimes k} \rangle d\mu_{\epsilon,m}(u) \xrightarrow[\epsilon \to 0]{} \int \langle u^{\otimes k}, K  u^{\otimes k} \rangle d\mu_{0,m}(u). 	
		\end{equation}
	We take some $M>0$ and write  
	\begin{align*}
	\begin{aligned}
	 \Big|\int  &\langle u^{\otimes k}, K  u^{\otimes k} \rangle d\mu_{\epsilon,m}(u) - \int \langle u^{\otimes k}, K  u^{\otimes k} \rangle d\mu_{0,m}(u)\Big| \\ 
	 &\leq \left|\int_{\norm{u}^2 \leq M}  \langle u^{\otimes k}, K  u^{\otimes k} \rangle \left(d\mu_{\epsilon,m}(u)  - d\mu_{0,m}(u)\right) \right|
	 + \left|\int_{\norm{u}^2 > M}  \langle u^{\otimes k}, K  u^{\otimes k} \rangle \left(d\mu_{\epsilon,m}(u) - d\mu_{0,m}(u)\right) \right| \\
	 &\leq o_{\eps}(1) + o_M (1)
	\end{aligned}
	\end{align*}
    where $o_{\eps}(1)\to$ when $\eps \to 0$ as per Proposition~\ref{theo-fix-mass-meas} and $o_M(1)\to 0$ when $M\to \infty$, independently of $\eps$, as we explain below. We then let $\eps \to 0$ first and $M\to \infty$ next to obtain~\eqref{eq:limi eps K}. 
    
    We have just used that 
		$$\int_{\norm{u}^2 > M}  \langle u^{\otimes k}, K  u^{\otimes k} \rangle d\mu_{\epsilon,m}(u) \to 0$$
		when $M\to \infty$, uniformly in $\eps$. Indeed, for $M$ large enough,
		\begin{align*}
		 \left|\int_{\norm{u}^2 > M}  \langle u^{\otimes k}, K  u^{\otimes k} \rangle d\mu_{\epsilon,m}(u) \right| &\leq \norm{K} \int_{\norm{u}^2 > M}  \norm{u}^{2k} d\mu_{\epsilon,m}(u) \\
		 &= \frac{\norm{K}}{z_{\epsilon,m}^r} \int_M ^\infty \eta^{2k} f_0 (\eta) \exp\left(-\frac{1}{\epsilon}(\eta - m) ^2\right)d\eta\\
		 &\leq  \frac{C_{K,m}}{\sqrt{\epsilon}} \int_M ^\infty \eta^{2k}  \exp\left(-\frac{1}{\epsilon}(\eta - m) ^2\right)d\eta\\
		 &\leq C_{K,m} \int_{M-m} ^\infty \left(\sqrt{\eps}s + m \right) ^{2k} e^{-s^2} ds \\
		 &\leq C_{K,m,k} e^{-c M^2}  
		\end{align*}
where we return to~\eqref{limi-eps-parti} to bound $z_{\epsilon,m}^r$ from below and use the boundedness (independently of $\Lambda$) of $f_0$ (see \eqref{eq:dens func bound}).  
\end{proof}

We add a lemma that in essence shows that the limits $\eps \to 0$ of~\eqref{eq:def mu epsilon} and $\Lambda \to \infty$ of~\eqref{eq:def mass measure} can be interchanged.
	
\begin{lemma}[\textbf{Measures restricted to finite-dimensional spheres}]\label{lem:meas sphere}\mbox{}\\
Let $\Lambda \geq \lambda_1$ and define a probability measure on the sphere $S_mE_\Lambda:= \left\{ u \in E_\Lambda : \norm{u} ^2 = m \right\}$ in the manner 
\begin{equation}\label{eq:finite meas sphere}
d\sigma_{m,\Lambda} (u) = \frac{1}{z_{m,\Lambda}} \left(\prod_{\lambda_j \leq \Lambda} \frac{\lambda_j}{\pi} \exp\left( -\lambda_j |\alpha_j|^2\right)\right) d_m u 
\end{equation}
where $d_m u$ is the Lebesgue measure on the finite-dimensional sphere $S_mE_\Lambda$. When $m=1$, we simply denote $SE_\Lambda:=S_1E_\Lambda$ and $du:=d_1 u$.

For any uniformly bounded sequence of functions $F_\Lambda$ on $E_\Lambda $ we have that 
\begin{equation}\label{eq:limeps limLambda}
 \int  F_\Lambda (u) d\mu_{0,m,\Lambda} (u) - \int F_\Lambda (u) d\sigma_{m,\Lambda} (u) \xrightarrow[\Lambda \to \infty]{} 0.
\end{equation}
\end{lemma}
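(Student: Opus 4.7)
The plan is to disintegrate $\mu_{0,m,\Lambda}$ along the radial variable $s = \norm{u}^2$, identify the conditional laws with $\sigma_{s,\Lambda}$, and exploit the concentration $f_\Lambda \to \delta_0$ from Lemma~\ref{lem-f-Lambda}. Using polar coordinates $u=\sqrt{s}\omega$ with $\omega\in SE_\Lambda$ and the Jacobian $du=\tfrac{1}{2}s^{n_\Lambda-1}\,ds\,d\omega$ where $n_\Lambda := \dim_\C E_\Lambda$, the Gaussian factorises as
\[
d\mu_{0,\Lambda}(u) = \tfrac{1}{2}\Big(\prod_{\lambda_j\leq\Lambda}\tfrac{\lambda_j}{\pi}\Big) s^{n_\Lambda-1} e^{-s\rho(\omega)}\, ds\, d\omega, \qquad \rho(\omega) := \sum_{\lambda_j\leq\Lambda}\lambda_j|\langle u_j,\omega\rangle|^2.
\]
This identifies the marginal density of $s$ under $\mu_{0,\Lambda}$ as $g_\Lambda(s)$, and the conditional law of $u$ given $\norm{u}^2=s$ as exactly $\sigma_{s,\Lambda}$ (through $u=\sqrt{s}\omega$). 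Combined with \eqref{eq:def mass measure}, one then obtains the key disintegration
\[
\int F_\Lambda\, d\mu_{0,m,\Lambda} = \frac{1}{f_0(m)}\int_0^\infty f_\Lambda(m-s)\, g_\Lambda(s)\, \langle F_\Lambda\rangle_{s,\Lambda}\, ds, \qquad \langle F_\Lambda\rangle_{s,\Lambda}:=\int F_\Lambda\, d\sigma_{s,\Lambda}.
\]

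Next I use the convolution identity $f_0 = g_\Lambda \ast f_\Lambda$ (valid because $\norm{u}^2 = \norm{P_\Lambda u}^2 + \norm{P_\Lambda^\perp u}^2$ is a sum of two independent non-negative variables under $\mu_0$), which gives $\int_0^m f_\Lambda(\eta)g_\Lambda(m-\eta)d\eta = f_0(m)$. Subtracting $\int F_\Lambda\, d\sigma_{m,\Lambda} = \langle F_\Lambda\rangle_{m,\Lambda}$ multiplied by this normalization then yields
\[
\int F_\Lambda\, d\mu_{0,m,\Lambda}-\int F_\Lambda\, d\sigma_{m,\Lambda} = \frac{1}{f_0(m)}\int_0^m f_\Lambda(\eta)\, g_\Lambda(m-\eta)\, \bigl[\langle F_\Lambda\rangle_{m-\eta,\Lambda}-\langle F_\Lambda\rangle_{m,\Lambda}\bigr]\, d\eta.
\]
For any fixed $\delta>0$, I split the integral at $\eta=\delta$. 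On $[\delta,m]$ the bracket is bounded trivially by $2\sup_\Lambda\norm{F_\Lambda}_\infty$; combined with $\norm{g_\Lambda}_\infty\leq C$ from Lemma~\ref{lem-f-Lambda} and the tail estimate $\int_\delta^\infty f_\Lambda(\eta)d\eta\leq C/(\delta\Lambda^\theta)$ established inside the proof of Lemma~\ref{lem-f-Lambda}, this outer contribution is $o_\Lambda(1)$.

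The genuine difficulty is the range $\eta\in[0,\delta]$, where I need a $\Lambda$-uniform quantitative continuity estimate for $s\mapsto\langle F_\Lambda\rangle_{s,\Lambda}$ at $s=m$. Changing variables $u=\sqrt{(m-\eta)/m}\,v$ with $v\in S_mE_\Lambda$ expresses $\sigma_{m-\eta,\Lambda}$ as a tilt of $\sigma_{m,\Lambda}$ by the weight $e^{(\eta/m)\rho(v)}$, so the bracket rewrites as
\[
\frac{\E_{\sigma_{m,\Lambda}}\bigl[\bigl(F_\Lambda(\sqrt{1-\eta/m}\,v)-\langle F_\Lambda\rangle_{m,\Lambda}\bigr)\, e^{(\eta/m)\rho(v)}\bigr]}{\E_{\sigma_{m,\Lambda}}\bigl[e^{(\eta/m)\rho(v)}\bigr]}.
\]
Controlling this uniformly in $\Lambda$ at scale $\eta/m\leq\delta/m$ is the crux of the technical work, hinging on exponential-moment estimates for $\rho(v)$ under $\sigma_{m,\Lambda}$ combined with the smoothing effect of the radial rescaling $v \mapsto \sqrt{1-\eta/m}\,v$. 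Combined with the first-moment bound $\int_0^\delta \eta\, f_\Lambda(\eta)\, d\eta\leq \E_{\mu_0}[\norm{P_\Lambda^\perp u}^2]=\sum_{\lambda_j>\Lambda}\lambda_j^{-1}\to 0$ (finite thanks to $\Tr h^{-1}<\infty$ when $s>2$), letting $\Lambda\to\infty$ first and then $\delta\to 0$ yields \eqref{eq:limeps limLambda}.
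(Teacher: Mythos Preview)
Your disintegration along the radial variable $s=\norm{u}^2$ is correct and yields the clean formula you state. The gap is exactly where you flag it: the near-range contribution $\eta\in[0,\delta]$. Your proposed control via the exponential tilt does not close. Under $\sigma_{m,\Lambda}$ the energy $\rho(v)=\langle v,hv\rangle$ has typical size $\sim n_\Lambda$, so for any fixed $\eta>0$ the weight $e^{(\eta/m)\rho(v)}$ is not uniformly integrable in $\Lambda$. Even the linearized total-variation estimate $\norm{W_\eta-1}_{L^1}\lesssim (\eta/m)\sqrt{\mathrm{Var}\,\rho}\lesssim \eta\sqrt{n_\Lambda}$ (using Lemma~\ref{lem-Suto-04b-clas}) combined with $\int_0^\delta \eta\,f_\Lambda(\eta)\,d\eta\lesssim \Lambda^{1/s-1/2}$ only produces $\Lambda^{-1/4+3/(2s)}$, vanishing merely for $s>6$ --- and this handles just the tilt. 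The second ingredient, comparing $F_\Lambda(\sqrt{1-\eta/m}\,v)$ with $F_\Lambda(v)$, genuinely requires continuity of $F_\Lambda$, which the statement does not assume; there is no ``smoothing effect of the radial rescaling'' for merely bounded $F_\Lambda$. (Indeed, taking $F_\Lambda=\Id_{\{\,m-\varepsilon<\norm{u}^2<m\,\}}$ shows that $\langle F_\Lambda\rangle_{s,\Lambda}$ need not be continuous in $s$ at $s=m$.)

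The paper's argument proceeds quite differently: rather than disintegrating along the full radius, it singles out a \emph{single} coordinate $\alpha_1$. Writing $F_\Lambda$ as a function of $(|\alpha_1|^2,\theta_1,\alpha_2,\ldots,\alpha_N)$ and performing the one-dimensional shift $|\alpha_1|^2\mapsto|\alpha_1|^2+\sum_{j\geq 2}|\alpha_j|^2$ in the $\mu_{0,m,\Lambda}$-integral, both integrals acquire the \emph{same} outer weight $\prod_{j\geq 2}\tfrac{\lambda_j}{\pi}e^{-(\lambda_j-\lambda_1)|\alpha_j|^2}$ in the variables $\alpha_2,\ldots,\alpha_N$. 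The remaining inner integral is a one-dimensional approximation of the form $\int f_\Lambda(m-s)\,(\cdots)\,ds$, and $f_\Lambda\to\delta_0$ (Lemma~\ref{lem-f-Lambda}) concludes by dominated convergence. The point is that the comparison is reduced to a single real variable rather than to measures on high-dimensional spheres of different radii, so no $\Lambda$-uniform exponential-moment control of $\rho$ is needed. Your radial route would recover this only after substantial extra work (and an extra continuity hypothesis on $F_\Lambda$); the single-coordinate trick bypasses the difficulty entirely.
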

	
\begin{proof}
Observe that with the notation above,  
$$ z_{m,\Lambda} = g_\Lambda (m).$$
Hence, in view of~\eqref{eq:def mass measure} and~\eqref{eq:lim g Lambda} we have to prove that the difference between 
$$ 
\mathrm{I} = \int F_\Lambda (\alpha_1, \ldots, \alpha_N) f_\Lambda \left(m - \sum_{_j= 1} ^N |\alpha_j| ^2 \right) d\mu_{0,\Lambda} (\alpha_1, \ldots, \alpha_N)
$$
and 
$$ 
 \mathrm{II} = \int F_\Lambda (\alpha_1, \ldots, \alpha_N) \prod_{j= 1} ^N \frac{\lambda_j}{\pi} \exp\left( -\lambda_j \left|\alpha_j \right|^2\right) \Id_{|\alpha_1| ^2 = m - \sum_{j=2} ^N |\alpha_j|^2} d\theta_1 d\alpha_2 \ldots d\alpha_N
$$
goes to $0$ when $\Lambda \to \infty$. Here $N$ is the number of eigenvalue of $h$ below the threshold $\Lambda$, $\alpha_j = \langle u_j, u\rangle$ identified with a vector in the plane, whose angle is denoted $\theta_j$. 

Reorganizing the integration slightly we have that 
\begin{equation}\label{eq:II}
 \mathrm{II} = \frac{\lambda_1}{\pi} \exp\left(-\lambda_1 m \right) \int \prod_{j= 2} ^N \frac{\lambda_j}{\pi} \exp\left( -\left(\lambda_j -\lambda_1 \right)\left|\alpha_j \right|^2\right) \left(\int F_\Lambda \left(m-\sum_{j=2} ^N |\alpha_j|^2, \theta_1,\alpha_2, \ldots, \alpha_N\right) d\theta_1\right) d\alpha_2 \ldots d\alpha_N
\end{equation}
with $F_\Lambda \left(|\alpha_1|^2, \theta_1,\alpha_2, \ldots, \alpha_N\right) \equiv F_\Lambda (\alpha_1, \ldots, \alpha_N) $. On the other hand 
\begin{align*}
\mathrm{I} &=  \int \prod_{j= 2} ^N \frac{\lambda_j}{\pi} \exp\left( -\lambda_j \left|\alpha_j \right|^2\right) \left(\int \frac{\lambda_1}{\pi} \exp\left( -\lambda_1 \left|\alpha_1\right|^2\right) f_{\Lambda} \left( m - \sum_{j=1}^N |\alpha_j|^2\right) F_\Lambda \left(\alpha_1, \ldots, \alpha_N\right) d\alpha_1 \right) d\alpha_2 \ldots d\alpha_N\\
\end{align*}
Changing variables 
\begin{multline*}
\int \frac{\lambda_1}{\pi} \exp\left( -\lambda_1 \left|\alpha_1\right|^2\right) f_{\Lambda} \left( m - \sum_{j=1}^N |\alpha_j|^2\right) F_\Lambda \left(\alpha_1, \ldots, \alpha_N\right) d\alpha_1 \\ 
= \prod_{j= 2} ^N \exp\left( \lambda_1 |\alpha_j|^2 \right) \int \frac{\lambda_1}{\pi} \exp\left( -\lambda_1 \left|\alpha_1\right|^2\right) f_{\Lambda} \left( m - |\alpha_1|^2\right) F_\Lambda \left(|\alpha_1| ^2 - \sum_{j=2}^N |\alpha_j| ^2, \theta_1, \alpha_2, \ldots, \alpha_N\right) d\alpha_1
\end{multline*}
so that 
\begin{multline}\label{eq:I}
\mathrm{I} = \int \prod_{j= 2} ^N \frac{\lambda_j}{\pi} \exp\left( -\left(\lambda_j -\lambda_1 \right)\left|\alpha_j \right|^2\right) \\ 
\left(\int \frac{\lambda_1}{\pi} \exp\left( -\lambda_1 \left|\alpha_1\right|^2\right) f_{\Lambda} \left( m - |\alpha_1|^2\right) F_\Lambda \left(|\alpha_1| ^2 - \sum_{j=2}^N |\alpha_j| ^2, \theta_1, \alpha_2, \ldots, \alpha_N\right) d\alpha_1 \right) d\alpha_2 \ldots d\alpha_N.
\end{multline}
Using~\eqref{eq:lim f Lambda} the integrand of the $d\alpha_2,\ldots, d\alpha_N$ integral in~\eqref{eq:I} converges pointwise to that in~\eqref{eq:II}, and we can conclude using dominated convergence. 
\end{proof}

\subsection{Dependence on the mass}\label{sec:dep mass}

As hinted at in the introduction, our trial state linking the quantum and classical problems will involve measures in finite-dimensional subspaces (the dimension being ultimately sent to infinity), with shifted mass constraints. Controling the error thus made requires some input on the classical field theory side, that we now provide. 

It is more convenient to start from the finite-dimensional measures restricted to spheres from Lemma~\ref{lem:fin dim}, and to rescale them so that the mass dependence translates to a prefactor affecting the covariance operator. To this end let $\Lambda > 0$ be a kinetic energy cut-off, $m>0$ a mass, $g\geq 0$ a coupling constant, and let us define probability measures on $SE_\Lambda$ - the $L^2$-unit sphere of $E_\Lambda$ in the manner 
\begin{equation}\label{eq:dep mass meas}
 d\rho_{m,g} (u) = \frac{1}{z_{m,g}} \exp\left( -m \left\langle u, h\, u \right\rangle + \frac{mg}{2} \left\langle u^{\otimes 2}, w\, u^{\otimes 2} \right\rangle\right) du.
\end{equation}
We will identify $d\rho_{m,g} (u)$ with its density with respect to the normalized Lebesgue measure $du$ on the sphere. In this notation, the free measure $\rho_{m,0}$ is just $\sigma_{m,\Lambda}$ from Lemma~\ref{lem:fin dim}, rescaled to live on the unit sphere. The dependence on $\Lambda$ will be implicit in this subsection.

The dependence on $m$ of the above will be pretty strong when $\Lambda \to \infty$ (unsuprisingly, since we perturb the measure's covariance), so that it is important in our approach to deal with relative quantities involving differences between related measures.   

The estimate we use in the sequel involves the kinetic energy as follows

\begin{proposition}[\textbf{Varying the mass in the classical field theory}]\label{pro:dep mass}\mbox{}\\
Let $g>0$ be a fixed constant, $0 < m_1 \leq m_2$. With the notation above we have that 
\begin{equation}\label{eq:dep mass}
\Delta_{m_1} ^{m_2}:= \int \left\langle u, h\, u \right\rangle \left(\rho_{m_1,0} (u)- \rho_{m_2,g} (u)\right) du \leq C_{g,m_1,m_2} \max \left(\sqrt{d}, d |m_1-m_2| \right) 
\end{equation}
where $d = \dim E_\Lambda \leq C \Lambda ^{1/2 + 1/s}$ and $C_{g,m_1,m_2}$ stays uniformly bounded for bounded values of $g,m_1,m_2$ and $m_1^{-1},m_2^{-1}$.
\end{proposition}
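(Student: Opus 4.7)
The plan is to split
\begin{equation*}
\Delta_{m_1}^{m_2} = T_1 + T_2
\end{equation*}
with
\begin{equation*}
T_1 := \int \langle u, h u\rangle \left(d\rho_{m_1,0}(u) - d\rho_{m_2,0}(u)\right), \qquad T_2 := \int \langle u, h u\rangle \left(d\rho_{m_2,0}(u) - d\rho_{m_2,g}(u)\right),
\end{equation*}
and control each term by interpolating along a standard Gibbs one-parameter family, thereby reducing the bound to variance and covariance estimates on the sphere. Setting $A(u) := \langle u, h u\rangle$ and $W(u) := \tfrac{1}{2}\langle u^{\otimes 2}, w u^{\otimes 2}\rangle$, differentiation of the partition function $z_{m,g}$ gives
\begin{equation*}
\partial_m \langle A\rangle_{\rho_{m,0}} = -\var_{\rho_{m,0}}(A), \qquad \partial_{g'} \langle A\rangle_{\rho_{m_2,g'}} = m_2 \, \mathrm{Cov}_{\rho_{m_2,g'}}(A, W),
\end{equation*}
whence
\begin{equation*}
T_1 = \int_{m_1}^{m_2} \var_{\rho_{m,0}}(A)\, dm, \qquad |T_2| \leq \int_0^g m_2 \sqrt{\var_{\rho_{m_2,g'}}(A)\, \var_{\rho_{m_2,g'}}(W)}\, dg',
\end{equation*}
using Cauchy-Schwarz on the covariance in the second inequality.

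The whole proof will then reduce to two estimates, uniform in $\Lambda$ and in $(m, g')$ in a bounded region. The first and crucial one is a variance bound for the kinetic energy,
\begin{equation*}
\var_{\rho_{m, g'}}(A) \leq C_{m, g}\, d,
\end{equation*}
and the second is the uniform boundedness $|W| \leq C_w$ on $SE_\Lambda$ under Assumption~\ref{asum:int}, which gives $\var_{\rho_{m_2,g'}}(W) \leq C$. Combining these estimates, $|T_1| \leq C_{m_1,m_2}\, d\, |m_1 - m_2|$ and $|T_2| \leq C_{m_2,g}\sqrt{d}$, which together yield the claimed bound $\max(\sqrt{d}, d|m_1 - m_2|)$.

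The main obstacle is the kinetic variance estimate. The strategy will be to exploit that, after the rescaling $u = v/\sqrt m$, the measure $\rho_{m,0}$ is identified with the Gaussian $\mu_{0,\Lambda}$ conditioned on $\|v\|^2 = m$, reparametrized to the unit sphere. Under the unconditioned Gaussian, the coordinates $|\alpha_j|^2$ are independent exponentials of rates $\lambda_j$, so trivially $\var_{\mu_{0,\Lambda}}(B) = \sum_{\lambda_j \leq \Lambda} 1 = d$ with $B(v) := \langle v, h v\rangle$. Two routes suggest themselves for propagating this bound through the mass-conditioning: either (a) an explicit manipulation of the densities $f_\Lambda, g_\Lambda$ from Lemma~\ref{lem-f-Lambda}, writing the conditional second moment of $B$ given $\|v\|^2 = m$ as a ratio of integrals involving $f_\Lambda$ shifted at $m$ and applying a dominated-convergence argument entirely parallel to the proof of Proposition~\ref{pro:fixed mass meas}; or (b) an exponential-tilting argument, choosing $\mu_\star$ so that the tilted product measure $\prod_j \mathrm{Exp}(\lambda_j - \mu_\star)$ has $\|v\|^2$-mean equal to $m$, under which $\var(B) = \sum_j \lambda_j^2/(\lambda_j - \mu_\star)^2 \leq C_m\, d$, and controlling the subsequent conditioning on $\|v\|^2 = m$ via a Gaussian-type correction. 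Rescaling then contributes $1/m^2$ and produces the bound for $g' = 0$; the extension to $g' > 0$ follows from Holley-Stroock, since $d\rho_{m,g'}/d\rho_{m,0} \propto \exp(\tfrac{mg'}{2} W)$ is bounded above and below on $SE_\Lambda$ by constants depending only on $m, g, \|W\|_\infty$, inflating the variance only by a bounded factor.

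The hard part is genuinely the conditional variance control: any pointwise bound using $\|h|_{E_\Lambda}\| \leq \Lambda$ gives only $\var(A) \leq \Lambda^2 \gg d$ (since $\Lambda \sim d^{2s/(s+2)}$ by Weyl's law), which would produce a useless $\Lambda^2|m_1 - m_2|$ in $T_1$. The full Gaussian structure, exactly as exploited in Lemma~\ref{lem-f-Lambda}, must be used to recover the correct scaling $d$.
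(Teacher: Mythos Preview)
Your decomposition $\Delta_{m_1}^{m_2}=T_1+T_2$ and the treatment of $T_1$ are sound: differentiating in $m$ gives $T_1=\int_{m_1}^{m_2}\var_{\rho_{m,0}}(A)\,dm$, and the variance bound $\var_{\rho_{m,0}}(A)\le C_m d$ is exactly the content of Lemma~\ref{lem-Suto-04b-clas} (which the paper proves by a semiclassical limit from S\"ut\H{o}'s quantum correlation inequality rather than by your routes (a) or (b), but the conclusion is the same). This recovers the $d|m_1-m_2|$ branch of the maximum.

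The gap is in $T_2$. Your Holley--Stroock step and the claim ``$|W|\le C_w$ on $SE_\Lambda$'' both require $\|W\|_{L^\infty(SE_\Lambda)}$ to be bounded \emph{uniformly in} $\Lambda$, and this is false under Assumption~\ref{asum:int}. Take $w=\delta_0$: then $W(u)=\tfrac12\|u\|_{L^4}^4$, and a bump of physical width $\sim\Lambda^{-1/2}$ lies in $SE_\Lambda$ and gives $\|u\|_{L^4}^4\sim\Lambda^{1/2}$. Hence the density ratio $d\rho_{m_2,g'}/d\rho_{m_2,0}\propto\exp(\pm m_2 g' W)$ has oscillation $\exp(c\Lambda^{1/2})$, so Holley--Stroock gives nothing useful, and the bound $\var_{\rho_{m_2,g'}}(A)\le C d$ for $g'>0$ is unproven. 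Without it your Cauchy--Schwarz step on $\mathrm{Cov}_{\rho_{m_2,g'}}(A,W)$ collapses: the crude bound $\var_{\rho_{m_2,g'}}(A)\le\langle A^2\rangle\sim d^2$ yields only $|T_2|\lesssim d$, not $\sqrt d$.

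The paper sidesteps this by never needing a variance bound under the interacting measure. Proposition~\ref{pro:use rel ent} controls $\int\langle u,hu\rangle(\rho_{m_1,0}-\rho)\,du$ for \emph{arbitrary} $\rho$ in terms of $\sqrt d\,\sqrt{\cH_{\rm cl}(\rho,\rho_{m_1,0})}$, and its proof perturbs only the \emph{free} measure (tilting $h\to(1+\eta)h$), so Lemma~\ref{lem-Suto-04b-clas} suffices. The relative entropy $\cH_{\rm cl}(\rho_{m_2,g},\rho_{m_1,0})$ is then bounded by $C(|m_1-m_2|\,\Delta_{m_1}^{m_2}+1)$ (Lemma~\ref{lem:rel ent}), using only first moments of $W$, which \emph{are} uniformly bounded by Appendix~\ref{sec:sub-inte-meas}. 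Combining gives a quadratic self-referential inequality in $\Delta_{m_1}^{m_2}$, whose solution produces the $\max(\sqrt d,\,d|m_1-m_2|)$. If you want to rescue your $T_2$, the cleanest fix is to apply Proposition~\ref{pro:use rel ent} with $m=m_2$ and $\rho=\rho_{m_2,g}$, together with a direct bound $\cH_{\rm cl}(\rho_{m_2,g},\rho_{m_2,0})\le C$ (this follows from first-moment control of $W$) --- but then you have essentially reproduced the paper's machinery.
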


Note that if $\Delta_{m_1} ^{m_2} \leq 0$ the statement is empty, so that we are free to, and shall, assume that $\Delta_{m_1} ^{m_2} \geq 0$ in all this subsection. Our main inequality leading to Proposition~\ref{pro:dep mass} is the following:

\begin{proposition}[\textbf{From relative entropy to kinetic energy}]\label{pro:use rel ent}\mbox{}\\
Let $d\rho_{m,0} (u)$ be as above and $\rho$ be any other probability measure on the unit $L^2$-sphere $SE_\Lambda$. Then
\begin{equation}\label{eq:use rel ent}
\int \left\langle u, h\, u \right\rangle \left(d\rho_{m,0} (u)- d\rho (u)\right) \leq C_m \sqrt{d} \sqrt{\cH_{\rm cl} (\rho,\rho_{m,0})} 
\end{equation}
with $\cH_{\rm cl}$ the classical relative entropy~\eqref{eq:class rel ent} and $C_m$ a constant depending only on $m$.
\end{proposition}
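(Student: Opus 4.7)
The plan is to invoke the Donsker--Varadhan variational characterization of relative entropy in order to reduce~\eqref{eq:use rel ent} to a sub-Gaussian concentration estimate for the functional $F(u) := \langle u, h u\rangle$ under $\rho_{m,0}$. Writing $\bar F := \int F\,d\rho_{m,0}$ and applying the standard duality $\int G\,d\rho \leq \cH_{\rm cl}(\rho,\rho_{m,0}) + \log \int e^G\,d\rho_{m,0}$ with the centered choice $G = t(\bar F - F)$ for a free parameter $t > 0$, one obtains
\begin{equation*}
\int F\,d\rho_{m,0} - \int F\,d\rho \;\leq\; \frac{\cH_{\rm cl}(\rho,\rho_{m,0})}{t} + \frac{1}{t}\log \int e^{-t(F-\bar F)}\,d\rho_{m,0}.
\end{equation*}
It thus suffices to control the logarithmic moment generating function of $F - \bar F$ under $\rho_{m,0}$.

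Since $\rho_{m,0}$ has density proportional to $e^{-m\langle u, h u\rangle}$ on $SE_\Lambda$, its normalizing constant $Z(m) := z_{m,0}$ satisfies $\int e^{-tF}\,d\rho_{m,0} = Z(m+t)/Z(m)$. Setting $\psi(m) := -\log Z(m)$, a direct computation gives $\psi'(m) = \bar F(m)$ and $\psi''(m) = -\var_{\rho_{m,0}}(F)$. Two integrations in $t$ and Fubini then yield the identity
\begin{equation*}
\log \int e^{-t(F-\bar F)}\,d\rho_{m,0} \;=\; \int_0^t (t - r)\,\var_{\rho_{m+r, 0}}(F)\,dr.
\end{equation*}

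The core estimate required is the variance bound
\begin{equation}\label{eq:plan var}
\var_{\rho_{m,0}}(F) \;\leq\; C_m\, d,
\end{equation}
uniformly in $\Lambda$ and for $m$ in any bounded range away from $0$. The approach I would take to establish it is to exploit the representation of $\rho_{m,0}$ as the conditional law of $(Y_1,\ldots,Y_d) \sim \prod_j \mathrm{Exp}(m\lambda_j)$ given $\sum_j Y_j = 1$ (identifying $Y_j = |\alpha_j|^2$ after integrating out the phases $\theta_j$, which are uniform by the $U(1)^d$-invariance of the density). The \emph{unconditional} distribution of $F = \sum_j \lambda_j Y_j$ has variance exactly $d/m^2$, and the codimension-one conditioning on an event whose marginal density at $\sum_j Y_j = 1$ is bounded and nondegenerate (essentially the content of $f_0(1) > 0$ from Lemma~\ref{lem-f-Lambda}) can only alter this by a factor depending on $m$. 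Alternatively, $\rho_{m,0}$ descends to a log-concave Gibbs measure on the simplex $\Delta_d$ with linear potential $m\sum_j \lambda_j x_j$, and~\eqref{eq:plan var} can be extracted from explicit Laplace inversion of $Z(m)$.

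With~\eqref{eq:plan var} in hand, the identity of Step~2 yields the sub-Gaussian bound $\log \int e^{-t(F-\bar F)}\,d\rho_{m,0} \leq \tfrac12 C_m d\, t^2$. Inserting this into the Donsker--Varadhan estimate and optimizing $t > 0$ at $t = \sqrt{2\cH_{\rm cl}/(C_m d)}$ produces exactly~\eqref{eq:use rel ent}, up to absorbing constants into $C_m$. The principal technical hurdle is the variance bound~\eqref{eq:plan var}: a direct attempt via Bakry--\'Emery or Holley--Stroock perturbation of the sphere log-Sobolev inequality is bound to fail, since the potential $m\langle u, h u\rangle$ has oscillation of order $m\Lambda$ on $SE_\Lambda$ and would produce constants blowing up exponentially in $\Lambda$. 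The independence-and-conditioning route sketched above (equivalently, the log-concave simplex structure) seems essential.
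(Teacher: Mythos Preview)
Your overall architecture is essentially the paper's: the Donsker--Varadhan step, the identification of the second derivative of $\log Z(m)$ with the variance of $F$, and the final optimization are exactly what the paper does (phrased there as perturbing $m\to m(1+\epsilon)$, using the Gibbs variational principle, and Taylor-expanding $F_\eta = -\log z_\eta$ to second order). So on the skeleton of the argument there is nothing new to say.

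The genuine point of comparison is the variance bound~\eqref{eq:plan var}. The paper does \emph{not} obtain it by the conditioning heuristic you sketch. Instead it proves a correlation inequality (Lemma~\ref{lem-Suto-04b-clas}): for $j\neq k$, the occupation numbers $|\alpha_j|^2,|\alpha_k|^2$ are negatively correlated under $\rho_{m,0}$. This kills the off-diagonal part of $\var(F)$, and the diagonal part is bounded termwise by $\int(\lambda_j|\alpha_j|^2)^2\,d\rho_{m,0}\leq C$, giving $\var(F)\leq Cd$. The correlation inequality itself is obtained by a detour through quantum mechanics: S\"ut\H{o}'s inequality $\langle \cN_j\cN_k\rangle\leq\langle\cN_j\rangle\langle\cN_k\rangle$ for the bosonic canonical ensemble, transferred to the classical measure by the finite-dimensional semiclassical limit $M\to\infty$.

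Your conditioning sketch points in a viable direction, but as written it has a gap: the sentence ``conditioning on a codimension-one event with bounded marginal density can only alter the variance by a factor depending on~$m$'' is not a theorem, and it is not clear how to make it one without further structure. What actually makes the exponential picture work is the same negative-correlation fact the paper uses: independent log-concave variables (here the $Y_j\sim\mathrm{Exp}(m\lambda_j)$) conditioned on their sum are negatively associated (Joag-Dev--Proschan), which immediately gives $\var\!\big(\sum_j\lambda_jY_j\,\big|\,\sum Y_j=1\big)\leq\sum_j\lambda_j^2\var(Y_j\mid\cdot)$, and one is back to bounding the conditional second moment of each $\lambda_jY_j$. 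If you fill that in, you get a purely classical proof that avoids the paper's quantum detour; but the ``conditioning is harmless'' step as you wrote it is not self-evident and needs exactly this ingredient.
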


Note that Pinsker's inequality (see \cite{CarLie-14} and \cite[Section 6]{LewNamRou-20}) gives (identifying the measures with their density with respect to the Lebesgue measure on the unit sphere $SE_\Lambda$)
$$ \int \left|\rho_{m,0} (u)- \rho (u)\right|du \leq C  \sqrt{\cH_{\rm cl} (\rho,\rho_{m,0})} $$
leading, by the definition~\eqref{eq:subspaces} of $E_\Lambda$, to 
$$\int \left\langle u, h\, u \right\rangle \left(d\rho_{m,0} (u)- d\rho (u)\right) \leq C \Lambda \sqrt{\cH_{\rm cl} (\rho,\rho_{m,0})}.$$ 
Since a Cwikel-Lieb-Rozenbljum estimate such as~\cite[Lemma D1]{DinRou-23} yields
$$ d = \dim E_\Lambda \leq C \Lambda ^{1/2 + 1/s}$$
we see that Proposition~\ref{pro:use rel ent} is a net improvement over Pinsker's inequality as soon as $s>2$, which will be crucial in Section~\ref{sec:control fluct} below

Our proof of Proposition~\ref{pro:use rel ent} uses crucially the following estimates on correlations in a Gibbs measure conditioned on the mass:
%

	\begin{lemma}[\textbf{Correlations in a canonical classical ensemble}]\label{lem-Suto-04b-clas}\mbox{}\\
		Let $$
	\gamma_{\rho_{m,0}}^{(1)} = \int |u\rangle \langle u| \rho_{m,0} (u) du, \quad \gamma_{\rho_{m,0}}^{(2)} = \int |u^{\otimes 2}\rangle \langle u^{\otimes 2}| \rho_{m,0} (u) du.
	$$
		We have, for any $j\neq k$, 
		\begin{equation}\label{eq:gauss corr}
		\langle u_j \otimes u_k |\gamma^{(2)}_{\rho_{m,0}}| u_j \otimes u_k\rangle \leq \langle u_j |\gamma^{(1)}_{\rho_{m,0}}|u_j\rangle \langle u_k |\gamma^{(1)}_{\rho_{m,0}}| u_k\rangle
		\end{equation}
		hence, with $\alpha_j = \langle u, u_j \rangle$,
		$$
		\int |\alpha_j|^2 |\alpha_k|^2 \rho_{m,0}(u) du \leq \Big(\int |\alpha_j|^2 \rho_{m,0}(u) du\Big)\Big(\int |\alpha_k|^2 \rho_{m,0}(u) du\Big), \quad \forall j \ne k.
		$$
		In particular
	\begin{align}  \label{est-diff-3}
	{\Tr} \left[ h ^{\otimes 2} \left( \gamma_{\rho_{m,0}}^{(2)} - (\gamma_{\rho_{m,0}}^{(1)})^{\otimes 2}\right)\right] \leq C \dim(E_\Lambda).
	\end{align}
		
	\end{lemma}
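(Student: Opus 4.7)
The push-forward of $\rho_{m,0}$ under $u \mapsto (|\alpha_1|^2, \ldots, |\alpha_d|^2)$, with $d = \dim E_\Lambda$, is the probability measure on the simplex $\Delta_{d-1} = \{Y \in \R_+^d : \sum_j Y_j = 1\}$ with density proportional to $e^{-m\sum_j \lambda_j Y_j}$ relative to the uniform Lebesgue measure (the phases of $\alpha_j$ integrating out to a constant). Equivalently, it is the conditional law of $d$ independent exponentials $Y_j \sim \mathrm{Exp}(m\lambda_j)$ given $\sum_j Y_j = 1$. Both claims of the lemma are thus inequalities on joint moments of $(Y_j)_j$, as $\langle u_j \otimes u_k, \gamma^{(2)}_{\rho_{m,0}} u_j \otimes u_k\rangle = \E[Y_j Y_k]$ and $\langle u_j, \gamma^{(1)}_{\rho_{m,0}} u_j\rangle = \E[Y_j]$.

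For~\eqref{eq:gauss corr}, fix $j \neq k$ and marginalize over the remaining coordinates to obtain the joint density
$$p_{jk}(y_j, y_k) \propto e^{-m\lambda_j y_j - m\lambda_k y_k}\, q_{jk}(1 - y_j - y_k)$$
on the triangle $\{y_j, y_k \geq 0,\; y_j + y_k \leq 1\}$, where $q_{jk}$ is the density of $\sum_{l \neq j,k} Y_l$ for independent $Y_l \sim \mathrm{Exp}(m\lambda_l)$. As a convolution of log-concave exponential densities, $q_{jk}$ is itself log-concave by Pr\'ekopa--Leindler, and a direct computation gives $\partial_{y_j}\partial_{y_k}\log p_{jk} = (\log q_{jk})''(1 - y_j - y_k) \leq 0$. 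Thus $p_{jk}$ is log-submodular on its support, and the reverse FKG (Karlin--Rinott) inequality yields $\mathrm{Cov}(f(Y_j), g(Y_k)) \leq 0$ for all monotone non-decreasing $f, g$; taking $f(y) = g(y) = y$ proves~\eqref{eq:gauss corr}.

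For~\eqref{est-diff-3}, expanding in the $\{u_j \otimes u_k\}$ basis gives
$$\Tr\!\left[h^{\otimes 2}\left(\gamma^{(2)}_{\rho_{m,0}} - (\gamma^{(1)}_{\rho_{m,0}})^{\otimes 2}\right)\right] = \sum_{j, k} \lambda_j \lambda_k\, \mathrm{Cov}(Y_j, Y_k) = \var\!\left(\langle u, hu\rangle\right).$$
By~\eqref{eq:gauss corr} the off-diagonal terms are non-positive, so the quantity is bounded by $\sum_j \lambda_j^2\var(Y_j)$. The remaining task is the uniform marginal bound $\var(Y_j) \leq C_m/\lambda_j^2$. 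For this I would analyze the one-dimensional marginal $p_j(y) \propto e^{-m\lambda_j y}q_j(1-y)$ on $[0,1]$, with $q_j$ the density of $\sum_{l\neq j} Y_l$ under independent exponentials. Since $s > 2$ ensures $\sum_l 1/\lambda_l^2 < \infty$, the density $q_j$ is log-concave with $\|q_j\|_\infty$ and $|(\log q_j)'(1)|$ controlled uniformly in $j$ and $d$ by constants depending only on $m$ and the spectrum. After tilting by $e^{-m\lambda_j y}$, the density $p_j$ is pointwise comparable to an exponential of rate of order $m\lambda_j$, producing $\var(Y_j) \leq C_m/\lambda_j^2$ and hence $\sum_j \lambda_j^2\var(Y_j) \leq C_m\, d$, which is~\eqref{est-diff-3}.

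The main technical obstacle will be establishing the uniform marginal variance bound $\var(Y_j) \leq C_m/\lambda_j^2$: controlling the normalization of $p_j$ for all $j$, particularly for small modes where the exponential tilt is weak and the convolution density $q_j$ dominates, requires careful tracking of the partition function $\int_0^1 e^{-m\lambda_j y} q_j(1-y)\, dy$ against its grand-canonical counterpart $\int_0^{+\infty}$. An alternative and potentially cleaner route is to use the identity $\var\langle u, hu\rangle = \partial_m^2 \log Z(m)$, where $Z(m)$ is the partition function of $\rho_{m,0}$, and compare with the free grand-canonical variance $d/m^2$ via a conditional-variance decomposition on the total mass $S = \sum_l Y_l$, leveraging the CLT-type concentration of $S$ under the auxiliary independent-exponential measure.
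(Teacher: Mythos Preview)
Your route to~\eqref{eq:gauss corr} is correct and genuinely different from the paper's. The paper derives the classical correlation inequality by a semiclassical limit: it invokes S\"ut\H{o}'s quantum inequality $\Tr[\Nc_j\Nc_k\Gamma_M]\le\Tr[\Nc_j\Gamma_M]\Tr[\Nc_k\Gamma_M]$ for the bosonic canonical state $\Gamma_M$ on $(E_\Lambda)^{\otimes_s M}$, then passes to the limit $M\to\infty$ via the finite-dimensional quantum de Finetti theorem to recover the classical statement. Your argument bypasses all quantum input: identifying the push-forward to the simplex as the conditional law of independent exponentials given their sum, and then using that this conditional law is negatively associated (your two-variable RR$_2$/log-submodularity argument is a special case of the Joag-Dev--Proschan theorem on conditioning independent log-concave variables on their sum). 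This is more elementary and self-contained; the paper's route has the advantage of making the link to the quantum analogue explicit, which is thematically natural given the rest of the paper.

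For~\eqref{est-diff-3} the two arguments coincide up to the diagonal bound. The paper simply drops the subtracted term and asserts $\int(\lambda_j|\alpha_j|^2)^2\,\rho_{m,0}(u)\,du\le C$ uniformly in $j$, referring back to the density-function argument used in the tightness proof (bounding the one-mode marginal by comparison with a free exponential via the uniform bound on the density of $\sum_{l\neq j}|\alpha_l|^2$). This is precisely the content of your ``marginal variance bound'' step, and the paper's version is no more detailed than your sketch; your worry about the normalization for small modes is real but resolved the same way, since the relevant density $q_j$ is bounded uniformly by a constant depending only on the first few eigenvalues. Your alternative via $\partial_m^2\log Z(m)$ would also work but is not what the paper does.
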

	
	Without the conditioning on the sphere, the relationship between $\gamma_{\rho_{m,0}}^{(2)}$ and $\gamma_{\rho_{m,0}}^{(1)}$ would be given by Wick's theorem. The above lemma is a convenient replacement in the canonical case. It might be known to experts but we could not find a reference. Since the quantum analogue is contained in~\cite{Suto-04b} we derive the estimate by semiclassical means, but a direct combinatorial proof is probably feasible.

	\begin{proof}
		Define a bosonic canonical state on $(E_\Lambda)^{\otimes_s M}$ by
		$$
		\Gamma_M = \frac{1}{Z_M} \exp\Big(- \frac{m}{M} \sum_{j=1}^M h_j\Big)
		$$
		and denote for $1\leq k \leq M$,
		$$
		\Gamma^{(k)}_M = \binom{M}{k} {\Tr}_{k+1 \to M}[\Gamma_M].
		$$
		From~\cite[Theorem, Item (ii)]{Suto-04b} we know that
		$$
		{\Tr}\left[\Nc_j \Nc_k \Gamma_M\right] \leq {\Tr}\left[\Nc_j \Gamma_M\right] {\Tr}\left[\Nc_k \Gamma_M\right], \quad j \ne k,
		$$
		where 
		$$\Nc_j = \ada_j a_j = \ada(u_j) a(u_j)$$
		in terms of the annihilation and creation operators $\ada(u_j), a(u_j)$. Since these operators commute for $j \ne k$, the above inequality is equivalent to
		\begin{align} \label{Suto-04b-est}
		2 \langle u_j \otimes u_k |\Gamma^{(2)}_M |u_j \otimes u_k \rangle \leq \langle u_j|\Gamma^{(1)}_M|u_j \rangle \langle u_k |\Gamma^{(1)}_M|u_k\rangle,
		\end{align}
		where we have used that
		$$
		k!\langle u_1 \otimes ... \otimes u_k | \Gamma^{(k)}_M | v_1 \otimes ... \otimes v_k\rangle = {\Tr}\left[\ada(v_1)...\ada(v_k) a(u_1)...a(u_k)\Gamma_M\right].
		$$
		On the other hand, it is known (see for example~\cite[Appendix~B]{Rougerie-LMU} and references therein) that
		$$
		\binom{M}{k}^{-1} \Gamma^{(k)}_M \xrightarrow[M\to \infty]{} \gamma^{(k)}_{\rho_{m,0}}, \quad \forall k \geq 1.
		$$
		In particular,
		\begin{align} \label{limi-Gam-M}
		\begin{aligned}
		\binom{M}{2}^{-1} \langle u_j \otimes u_k |\Gamma^{(2)}_M| u_j \otimes u_k\rangle &- \binom{M}{1}^{-2} \langle u_j|\Gamma^{(1)}_M|u_j \rangle \langle u_k |\Gamma^{(1)}_M| u_k\rangle \\
		 &\xrightarrow[M\to \infty]{}  \langle u_j \otimes u_k |\gamma^{(2)}_{\rho_{m,0}}|u_j \otimes u_k\rangle - \langle u_j|\gamma^{(1)}_M|u_j\rangle \langle u_k|\Gamma^{(1)}_M|u_k\rangle.
		\end{aligned}
		\end{align}
		Using \eqref{Suto-04b-est}, we have
		$$
		\begin{aligned}
		\text{LHS of } \eqref{limi-Gam-M}&\leq \left(\frac{1}{2} \binom{M}{2}^{-1} - \binom{M}{1}^{-2}\right) \langle u_j |\Gamma^{(1)}_M| u_j\rangle \langle u_k|\Gamma^{(1)}_M|u_k\rangle \\
		&\leq \frac{1}{M^2(M-1)} \langle u_j |\Gamma^{(1)}_M| u_j\rangle \langle u_k|\Gamma^{(1)}_M|u_k\rangle \\
		&\leq \frac{1}{M^2(M-1)} \|u_j\|^2 \|u_k\|^2\|\Gamma^{(1)}_M\|^2_{\mathfrak S^1} \\
		& = \frac{1}{M-1}\xrightarrow[M\to \infty]{}  0.
		\end{aligned}
		$$
		In particular, the limit in the right hand side of \eqref{limi-Gam-M} must be non-positive and~\eqref{eq:gauss corr} follows.
		
		Applying~\eqref{eq:gauss corr}, we have
	$$
	\begin{aligned}
		{\Tr}\Big[\Big( h^{\otimes 2} \Big(\gamma_{\rho_{m,0}}^{(2)} -(\gamma_{\rho_{m,0}}^{(1)})^{\otimes 2}\Big)\Big] &= \sum_{j,k = 1} ^d \lambda_j \lambda_k \left(\langle u_j \otimes u_k |\gamma^{(2)}_{\rho_{m,0}}| u_j \otimes u_k\rangle - \langle u_j |\gamma^{(1)}_{\rho_{m,0}}|u_j\rangle \langle u_k |\gamma^{(1)}_{\rho_{m,0}}| u_k\rangle \right)\\
		&\leq \sum_{j=1} ^d \lambda_j^2 \Big(\int |\alpha_j|^4 \rho_{m,0}(u) du - \Big(\int |\alpha_j|^2 \rho_{m,0} (u) du \Big)^2 \Big) \\
		&\leq \sum_{j= 1} ^d \int \left( \lambda_j |\alpha_j|^2\right)^2 \rho_{m,0} (u) du \\
		&\leq C \dim E_\Lambda
	\end{aligned}
	$$
	for some constant $C>0$ independent of $n,T,\eta$. Here we have used that
	\begin{align} \label{boun-mu-eta}
	 \int \left( \lambda_j |\alpha_j|^2\right)^2 \rho_{m,0} (u) du \leq C
	\end{align}
	for some universal constant $C>0$. This is proven similarly to \eqref{tigh-cond-prof} so we omit the detail here.
	\end{proof}
	
	We now turn to the 
	
	\begin{proof}[Proof of Proposition~\ref{pro:use rel ent}]
	 For $\epsilon >0$ we perturb $\rho_{m,0}$ in the manner\footnote{This notation is strictly restriced to this proof, it should not generate confusion with the rest of the text.}
	$$
	\mu_\epsilon = \frac{1}{z_\epsilon} \exp \left(-m(1+\epsilon) \langle u, h u\rangle\right).
	$$
	This is the unique minimizer of 
	$$
	\mathcal F_\epsilon[\mu] = m(1+\epsilon) \int\langle u, h u \rangle \mu(u) du +  \int \mu(u) \log(\mu(u)) du
	$$
	over all probability measures $\mu$ absolutely continuous with respect to the Lebesgue measure on the sphere $SE_\Lambda$. We have that
	$$
	F_\epsilon = \inf \{ \mathcal F_\epsilon[\mu], \mu \mbox{ a probability measure on $SE_\Lambda$} \} = - \log (z_\epsilon).
	$$
	By the classical Gibbs variational principle and the non-negativity of the classical relative entropy, we find, for any probability measure $\mu$
	$$
	\begin{aligned}
	\mathcal H_{\cl}  (\mu, \mu_0) + \epsilon m \int \langle u, h u\rangle \mu(u) du &\geq  \mathcal H_{\cl} (\mu_\epsilon, \mu_0) + \epsilon m \int \langle u, h u\rangle \mu_\epsilon(u) du \\
	&\geq \epsilon m \int \langle u, h u\rangle \mu_\epsilon(u) du.
	\end{aligned}
	$$
	Subtracting $\epsilon m \int \langle u, h u\rangle \mu_0 (u) du$ from both sides leads to
	\begin{align} \label{F-eta}
	\epsilon m \int \langle u, h u\rangle (\mu(u)-\mu_0(u)) du \geq - \mathcal H_{\cl} (\mu, \mu_0) + \epsilon n \int \langle u, h u\rangle (\mu_\epsilon(u)-\mu_0(u)) du.
	\end{align}
	Using
	$$
	F_\eta = - \log(z_\eta) = -  \log \left(\int \exp \big( -m(1+\eta) \langle u, h u\rangle\big) du\right),
	$$
	we have
	$$
	\partial_\eta F_\eta = - \frac{\partial_\eta z_\eta}{z_\eta}, \quad \partial^2_\eta F_\eta = - \frac{\partial^2_\eta z_\eta}{z_\eta} +  \frac{(\partial_\eta z_\eta)^2}{z^2_\eta}.
	$$
	We compute
	$$
	\begin{aligned}
		\partial_\eta z_\eta &= -m \int \langle u, h u\rangle \exp \Big(-m(1+\eta) \langle u, h u\rangle\Big) du \\
		&= - m z_\eta \int \langle u, h u\rangle \mu_\eta(u) du. 
	\end{aligned}
	$$
	In particular, we have
	\begin{align} \label{deri-F-eta}
	\partial_\eta F_\eta = m \int \langle u, h u\rangle \mu_\eta(u) du.
	\end{align}
	We also have
	$$
	\begin{aligned}
		\partial^2_\eta z_\eta &= m^2 \int \langle u, h u\rangle^2 \exp \left(-m(1+\eta)\langle u, h u\rangle\right) du \\
		&= m^2 z_\eta  \int \langle u, h u\rangle^2 \mu_\eta(u) du. 
	\end{aligned}
	$$
	Thus
	$$
	\partial^2_\eta F_\eta = -  \left(\mathbb E_{\mu_\eta}\left[\left(m \langle u, h u\rangle\right)^2\right] - \left(\mathbb E_{\mu_\eta}\left[m\langle u, h u\rangle\right]\right)^2\right) \leq 0
	$$
	by Jensen's inequality.
	
	Thanks to \eqref{deri-F-eta}, we have
	\begin{align}\label{est-diff-1}
	\begin{aligned}
	\epsilon m \int \langle u, h u\rangle (\mu(u)-\mu_0(u)) du &\geq - \mathcal H_{\cl} (\mu, \mu_0) + \epsilon m \int \langle u, h u\rangle (\mu_\epsilon(u)-\mu_0(u)) du \\
	&= - \Hc_{\cl}(\mu,\mu_0) + \epsilon \left(\partial_\eta F_\eta \vert_{\eta = \epsilon} - \partial_\eta F_\eta\vert_{\eta=0} \right) \\
	&= -  \Hc_{\cl}(\mu,\mu_0) + \epsilon \int_0^\epsilon \partial^2_\eta F_\eta d\eta.
	\end{aligned}
	\end{align}
	We estimate
	\begin{align} \label{est-diff-2}
		\Big|\int_0^\epsilon \partial^2_\eta F_\eta d\eta\Big| & \leq \int_0^\epsilon |\partial^2_\eta F_\eta| d\eta \nonumber \\
		&\leq \epsilon m ^2 \sup_{\eta \in [0,\epsilon]} \left( \mathbb E_{\mu_\eta}\left[ \langle u, h u\rangle ^2\right] - \left(\mathbb E_{\mu_\eta}\left[\langle u, h u\rangle\right]\right)^2 \right) \nonumber\\
		& = \epsilon m^2 \sup_{\eta \in [0,\epsilon]} {\Tr} \Big[m^2 h^{\otimes 2} \Big( \gamma_{\mu_\eta}^{(2)} - (\gamma_{\mu_\eta}^{(1)})^{\otimes 2}\Big)\Big],
	\end{align}
	where
	$$
	\gamma_{\mu_\eta}^{(1)} = \int |u\rangle \langle u| \mu_\eta(u) du, \quad \gamma_{\mu_\eta}^{(2)} = \int |u^{\otimes 2}\rangle \langle u^{\otimes 2}| \mu_{\eta}(u) du.
	$$
%
	Applying Lemma \ref{lem-Suto-04b-clas}, we deduce
	\begin{align}  \label{est-diff-4}
	\sup_{\eta \in [0,\epsilon]} {\Tr} \left[ h^{\otimes 2} \left( \gamma_{\mu_\eta}^{(2)} - (\gamma_{\mu_\eta}^{(1)})^{\otimes 2}\right)\right] \leq C \dim(E_\Lambda).
	\end{align}
	Collecting \eqref{est-diff-1}, \eqref{est-diff-2}, and \eqref{est-diff-4}, we obtain
	$$
	\epsilon m \int \langle u, h u\rangle (\mu_0 (u)- \mu(u)) du \leq \Hc_{\cl} (\mu,\mu_0) + C \epsilon^2 \dim(E_\Lambda).
	$$
	There remains to optimize over $\epsilon$, i.e. set
	$$ \epsilon = C_m \sqrt{\frac{\Hc_{\cl} (\mu,\mu_0)}{\dim(E_\Lambda)}}$$
	and recall that, within the notation of this proof $\mu_0 = \rho_{m,0}$ to complete the proof.
	\end{proof}

We complete Proposition~\ref{pro:use rel ent} by a bound on the entropy of the interacting measure relative to the free one:

\begin{lemma}[\textbf{Relative entropy bound}]\label{lem:rel ent}\mbox{}\\
With the notation of Proposition~\ref{pro:dep mass} we have that  
$$ \Hc_{\cl} \left(\rho_{m_2,g},\rho_{m_1,0}\right) \leq C_g \left( |m_1-m_2| \Delta_{m_1} ^{m_2} +1\right),$$
assuming that $\Delta_{m_1} ^{m_2}$ (as defined in~\eqref{eq:dep mass}) is non-negative. 
\end{lemma}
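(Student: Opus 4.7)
My plan is to exploit the Gibbs variational principle (GVP), which characterises both $\rho_{m_2,g}$ and $\rho_{m_1,0}$ as unique minimisers of their respective free-energy functionals over probability measures on $SE_\Lambda$. Writing the Radon--Nikodym derivative
$$\log\frac{d\rho_{m_2,g}}{d\rho_{m_1,0}}(u) = (m_1-m_2)\langle u, h u\rangle + \frac{m_2 g}{2}\langle u^{\otimes 2}, w u^{\otimes 2}\rangle - \log\frac{z_{m_2,g}}{z_{m_1,0}}$$
and integrating against $d\rho_{m_2,g}$ yields an exact algebraic identity for $\Hc_{\cl}(\rho_{m_2,g},\rho_{m_1,0})$ in terms of $-\log(z_{m_2,g}/z_{m_1,0})$ and the two expectations $\int \langle u, hu\rangle\, d\rho_{m_2,g}$, $\int \langle u^{\otimes 2}, wu^{\otimes 2}\rangle\, d\rho_{m_2,g}$.

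Next I would upper-bound the partition-function ratio by testing the variational characterisation of $\rho_{m_2,g}$ against the trial measure $\rho_{m_1,0}$. Using $\log\rho_{m_1,0}(u) = -\log z_{m_1,0} - m_1 \langle u, hu\rangle$ to rewrite the differential entropy of $\rho_{m_1,0}$, a short rearrangement produces
$$-\log\frac{z_{m_2,g}}{z_{m_1,0}} \leq (m_2-m_1)\int\langle u, hu\rangle\, d\rho_{m_1,0} - \frac{m_2 g}{2}\int\langle u^{\otimes 2}, wu^{\otimes 2}\rangle\, d\rho_{m_1,0}.$$
Plugged into the identity above, the kinetic contributions collapse neatly into $(m_2-m_1)\Delta_{m_1}^{m_2} = |m_1-m_2|\Delta_{m_1}^{m_2}$ (using $m_1\leq m_2$ and $\Delta_{m_1}^{m_2}\geq 0$ by assumption), leaving
$$\Hc_{\cl}(\rho_{m_2,g},\rho_{m_1,0}) \leq |m_1-m_2|\Delta_{m_1}^{m_2} + \frac{m_2 g}{2}\int \langle u^{\otimes 2}, wu^{\otimes 2}\rangle\, (d\rho_{m_2,g} - d\rho_{m_1,0}).$$

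The remaining task, which I expect to be the main obstacle, is to show that the interaction integral is uniformly bounded, namely $|\int \langle u^{\otimes 2}, wu^{\otimes 2}\rangle\, d\rho| \leq C$ independently of $\Lambda$ for $\rho \in \{\rho_{m_1,0},\rho_{m_2,g}\}$. A naive pointwise estimate of the type $|\langle u^{\otimes 2}, wu^{\otimes 2}\rangle| \lesssim \langle u, hu\rangle^{2\theta}$ (obtained via H\"older on the unit $L^2$-sphere with $\theta<1/2-1/s$) is not sufficient, since $\int \langle u, hu\rangle\, d\rho$ itself grows polynomially in $\Lambda$. One should instead use the sharper pointwise bound $|\langle u^{\otimes 2}, wu^{\otimes 2}\rangle| \lesssim \|u\|_{\mathcal H^\theta}^4$ (via Young's inequality and 1D Sobolev embedding applied to the decomposition $w = w_+ - w_-$ from Assumption~\ref{asum:int}) and then control $\int \|u\|_{\mathcal H^\theta}^4\, d\rho$ uniformly in $\Lambda$ by transporting to the underlying conditioned Gaussian $\mu_{0,m,\Lambda}$ via Lemma~\ref{lem:meas sphere} and invoking Wick-type moment estimates. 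For the interacting case $\rho = \rho_{m_2,g}$, the exponential integrability of the interaction from Appendix~\ref{sec:sub-inte-meas} is then used to absorb the additional Gibbs weight. The condition $s>6$ in Assumption~\ref{asum:int} is precisely what makes an admissible $\theta$ small enough for the required moment to be uniformly controlled.
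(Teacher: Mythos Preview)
Your argument is correct and essentially identical to the paper's proof: both compute the relative entropy via the explicit Radon--Nikodym derivative, invoke the Gibbs variational principle for $\rho_{m_2,g}$ with trial measure $\rho_{m_1,0}$ to bound the partition-function ratio, and reduce to a uniform-in-$\Lambda$ bound on the interaction expectations $\int \langle u^{\otimes 2}, w\, u^{\otimes 2}\rangle\, d\rho$ for $\rho\in\{\rho_{m_1,0},\rho_{m_2,g}\}$, which the paper dispatches by referring to Appendix~\ref{sec:sub-inte-meas}.

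One inaccuracy worth flagging: your closing remark attributes the restriction $s>6$ to this step, but the interaction-moment bounds here (and in Appendix~\ref{sec:sub-inte-meas}) only require $s>2$; the genuine need for $s>6$ arises later, in the control of particle-number fluctuations via Proposition~\ref{pro:dep mass} in Section~\ref{sec:control fluct}, where the error $\Lambda^{3/(2s)-1/4+\alpha}$ must vanish. Also, the condition on $s$ is part of the definition of $h$ in~\eqref{eq:hamil 1body}, not of Assumption~\ref{asum:int}, which concerns only $w$.
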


\begin{proof}
By definition, 
$$
\begin{aligned} 
	\cH_{\rm cl} \left(\rho_{m_2,g},\rho_{m_1,0}\right) = \int \rho_{m_2,g}(u) \log \left( \rho_{m_2,g}(u)\right) du &- \int \rho_{m_1,0}(u) \log \left( \rho_{m_1,0}(u)\right) du \\
	&+ m_1 \int \left\langle u , h u\right\rangle \left( \rho_{m_2,g}(u) - \rho_{m_1,0}(u)\right) du.
\end{aligned}
$$
But, by the Gibbs variational principle defining $\rho_{m_2,g}$
\begin{align*}
\begin{aligned}
 \int \Big(m_2 \left\langle u , h u\right\rangle &+ \frac{gm_2}{2} \left\langle u ^{\otimes 2}, w u^{\otimes 2}  \right\rangle  \Big) \rho_{m_2,g} (u) du + \int \rho_{m_2,g}(u) \log \left( \rho_{m_2,g}(u)\right) du \\
&\leq \int \left(m_2 \left\langle u , h u\right\rangle + \frac{gm_2}{2} \left\langle u ^{\otimes 2}, w u^{\otimes 2}  \right\rangle  \right) \rho_{m_1,0} (u) du + \int \rho_{m_1,0}(u) \log \left( \rho_{m_1,0}(u)\right) du. 
\end{aligned}
\end{align*}
Hence  
\begin{align*}
\begin{aligned}
 \cH_{\rm cl} \left(\rho_{m_2,g},\rho_{m_1,0}\right) &\leq ( m_1 - m_2)  \int \left\langle u , h u\right\rangle \left( \rho_{m_2,g} (u)- \rho_{m_1,0}(u)\right) du \\
 &\quad + \frac{gm_2}{2} \int \left\langle u ^{\otimes 2}, w u^{\otimes 2}  \right\rangle   \rho_{m_1,0} (u) du - \frac{gm_2}{2} \int \left\langle u ^{\otimes 2}, w u^{\otimes 2}  \right\rangle   \rho_{m_2,g} (u) du.
 \end{aligned}
\end{align*}
By arguments mimicking those involved in the definition of the interacting Gibbs measure (see Appendix~\ref{sec:sub-inte-meas} and references therein) we find that the last two terms are bounded independently of $\Lambda$. The conclusion follows.
\end{proof}

Finally we can give the final step of the 

\begin{proof}[Proof of Proposition~\ref{pro:dep mass}]
Recall that we are free to assume that $\Delta_{m_1} ^{m_2}\geq 0$. Combining Proposition~\ref{pro:use rel ent} and Lemma~\ref{lem:rel ent} leads to 
$$\Delta_{m_1} ^{m_2} \leq C \sqrt{d} \sqrt{ |m_1-m_2| \Delta_{m_1} ^{m_2} +1}$$
hence 
$$ \Delta_{m_1} ^{m_2} \leq C \sqrt{ d|m_1-m_2| \Delta_{m_1} ^{m_2}} +C\sqrt{d}$$
with a generic notation $C$ for constants depending only on $m_1,m_2,g$. Writing this as 
\begin{equation}\label{eq:pouet}
 \sqrt{ \Delta_{m_1} ^{m_2}} \left( \sqrt{ \Delta_{m_1} ^{m_2}} - C \sqrt{ d|m_1-m_2| }\right) \leq C \sqrt{d}
\end{equation}
we see that either  
$$ \Delta_{m_1} ^{m_2}  \leq  2C^2 d |m_1-m_2|$$
or the parenthesis in the left-hand side of~\eqref{eq:pouet} is bounded below by $C \sqrt{ \Delta_{m_1} ^{m_2}}$ and hence 
$$ 
\Delta_{m_1} ^{m_2} \leq C\sqrt{d}.
$$
The proof is complete.
\end{proof}

	\section{From the free canonical state to the fixed mass Gaussian measure}\label{sec:free}
	
	We now consider the limit $N\propto T \to \infty$ of the free canonical Gibbs state, and relate it to the Gaussian measure conditioned on the $L^2$ mass:
	
	\begin{theorem}[\textbf{Limit of the free canonical state}]\label{thm:free case}\mbox{}\\
	Let $N=mT$, 
	$$ H_{N,0} := \sum_{j=1} ^N h_j$$
	acting on $\gh ^{\otimes_\sym N}=: \gh^N$ and the associated canonical Gibbs state
	\begin{align} \label{Gam-c-mT}
	\Gamma^c_{N,T,0} =\frac{1}{Z^c_{N,T,0}} \exp \left(-\frac{1}{T} H_{N,0}\right).
	\end{align}
	Define the associated reduced density matrices 
	$$ 
	(\Gamma^c_{N,T,0})^{(k)} := {N \choose k} \Tr _{k+1 \to N} \left[\Gamma^c_{N,T,0}\right].
	$$
	Let further $\mu_{0,m}$ be the fixed-mass gaussian measure from Definition~\ref{def:gauss meas mass} and Proposition~\ref{theo-fix-mass-meas}.
	
	In the limit $T\to \infty$ with $m>0$ fixed, $\Gamma^c_{mT,T,0} $ converges to $\mu_{0,m}$ in the sense that, for all $k\in \N$, 
	\begin{align} \label{limi-cano}
		k!\frac{(\Gamma^c_{mT,T,0})^{(k)}}{T^k} \xrightarrow[T\to\infty]{} \int |u^{\otimes k} \rangle \langle u^{\otimes k}| d\mu_{0,m}(u)
	\end{align}
	strongly in the trace-class on $\mathfrak {h}^k$. Hence $\mu_{0,m}$ is the de Finetti measure at scale $T^{-1}$ of the sequence $(\Gamma^c_{mT,T,0})_T$ in the sense of~\cite[Definition~4.1, Theorem~4.2]{LewNamRou-15}.
	\end{theorem}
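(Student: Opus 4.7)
The strategy is the three-steps approach announced in the introduction, commuting the limits $T\to\infty$ and $\epsilon\to 0$ on an auxiliary penalized grand-canonical ensemble on Fock space. Concretely, let
$$
\Gamma_{\epsilon,m,T} := \frac{1}{Z_{\epsilon,m,T}}\exp\!\left(-\frac{1}{T}\left(d\Gamma(h) + \frac{1}{\epsilon T}(\mathcal N - mT)^{2}\right)\right),
$$
acting on $\gF(\gh)$, where $d\Gamma(h) = \bigoplus_{N\geq 0}\sum_{j=1}^{N} h_{j}$ and $\mathcal N$ is the number operator. The quadratic penalty is the Fock-space counterpart of the classical weight $\exp(-\epsilon^{-1}(\|u\|^{2}-m)^{2})$ appearing in~\eqref{eq:def mu epsilon}. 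We already know from Proposition~\ref{pro:fixed mass meas} that $\mu_{\epsilon,m}\to\mu_{0,m}$ as $\epsilon\to 0$ at the level of reduced density matrices, so it suffices to link $\Gamma_{\epsilon,m,T}$ to both $\mu_{\epsilon,m}$ and $\Gamma^{c}_{mT,T,0}$.

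First, for \emph{fixed} $\epsilon>0$ and $T\to\infty$, I would apply the general grand-canonical de Finetti machinery of~\cite{LewNamRou-15,LewNamRou-20} to the quadratic two-body perturbation $\epsilon^{-1}T^{-2}(\mathcal N-mT)^{2}$. The interaction potential $w$ is absent here, and the penalty term is quadratic in the number operator, which fits the framework; the free-energy variational principle, combined with coherent states, Berezin--Lieb inequalities, and the quantum de Finetti theorem, will give
$$
\frac{k!}{T^{k}}\,\Gamma_{\epsilon,m,T}^{(k)} \xrightarrow[T\to\infty]{} \int |u^{\otimes k}\rangle\langle u^{\otimes k}|\, d\mu_{\epsilon,m}(u)
$$
strongly in $\gS^{1}(\gh^{k})$, together with the partition function asymptotics. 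The key requirement that differs from~\cite{LewNamRou-15} is \emph{quantitative} control of error terms in $\epsilon$: one needs to track $\epsilon$-dependent constants in the coherent state lower bounds and in the Berezin--Lieb inequality in order to later commute limits.

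Second, for \emph{fixed} $T$ and $\epsilon\to 0$, I would show that $\Gamma_{\epsilon,m,T}$ concentrates on the sector with exactly $N=mT$ particles and converges to $\Gamma^{c}_{mT,T,0}$. Since $\Gamma_{\epsilon,m,T}$ commutes with $\mathcal N$, it decomposes as $\Gamma_{\epsilon,m,T} = \bigoplus_{N\geq 0}p_{\epsilon,N}\,\Gamma^{c}_{N,T,0}$ with weights
$$
p_{\epsilon,N} = \frac{1}{Z_{\epsilon,m,T}}\, Z^{c}_{N,T,0}\exp\!\left(-\frac{1}{\epsilon T^{2}}(N-mT)^{2}\right),
$$
so that the question reduces to showing $p_{\epsilon,N}\to \delta_{N,mT}$ as $\epsilon\to 0$ at fixed $T$. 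This requires regularity in $N$ of the free canonical partition function $Z^{c}_{N,T,0}$, for which I would use the combinatorial identities of~\cite{Cannon-73} together with the $L^{\infty}$ bounds on the densities $f_{\Lambda}$, $g_{\Lambda}$ from Lemma~\ref{lem-f-Lambda}, in the spirit of the computations in Lemma~\ref{lem:meas sphere} relating $\mu_{0,m,\Lambda}$ to the spherical measures $\sigma_{m,\Lambda}$.

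Third, and this is where I expect the main obstacle to lie, one must commute the two limits. The naïve passage $\lim_{\epsilon\to 0}\lim_{T\to\infty}=\lim_{T\to\infty}\lim_{\epsilon\to 0}$ is delicate because the penalty becomes singular as $\epsilon\to 0$ while the free-energy bounds degrade at the same time. Following the scheme outlined in the introduction, I would argue as follows: fix $T$ large, pick $\epsilon=\epsilon(T)\to 0$ sufficiently slowly, and use the quantitative version of Step~1 to keep the error $\Gamma_{\epsilon,m,T}-\int|u^{\otimes k}\rangle\langle u^{\otimes k}|d\mu_{\epsilon,m}$ small, while Step~2 controls $\Gamma_{\epsilon,m,T}-\Gamma^{c}_{mT,T,0}$ and Proposition~\ref{pro:fixed mass meas} controls $\mu_{\epsilon,m}-\mu_{0,m}$. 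The triangle inequality then yields~\eqref{limi-cano}, which by~\cite[Theorem~4.2]{LewNamRou-15} is precisely the statement that $\mu_{0,m}$ is the de Finetti measure at scale $T^{-1}$ of $(\Gamma^{c}_{mT,T,0})_{T}$. The matching of partition functions follows from the same $\Gamma$-convergence bookkeeping. The hard part is tracking the joint quantitative dependence on $\epsilon$ and $T$ in Step~1 and obtaining sufficient regularity of $N\mapsto Z^{c}_{N,T,0}$ in Step~2, which is where the combinatorial input of~\cite{Cannon-73} and the free-ensemble estimates of~\cite{DeuSeiYng-18,DeuSei-19,Suto-04} will be essential.
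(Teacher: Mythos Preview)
Your proposal is essentially the paper's approach: introduce the penalized grand-canonical state $\Gamma_{\epsilon,m,T}$, prove quantitative convergence to $\mu_{\epsilon,m}$ as $T\to\infty$ (Proposition~\ref{pro:class lim relax}), prove convergence to $\Gamma^{c}_{mT,T,0}$ as $\epsilon\to 0$ with controlled $T$-dependence (Proposition~\ref{pro:relax}), and combine along a diagonal $\epsilon=\epsilon(T)$ with Proposition~\ref{pro:fixed mass meas}.

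One point deserves correction. In your Step~2 you write that ``the question reduces to showing $p_{\epsilon,N}\to\delta_{N,mT}$'' and that this ``requires regularity in $N$ of the free canonical partition function $Z^{c}_{N,T,0}$''. This misplaces the difficulty. At fixed $T$ the concentration of the weights $p_{\epsilon,N}$ is trivial; the issue is that along $\epsilon=\epsilon(T)\to 0$ the weights remain spread over a window of width $\sim T\sqrt{\epsilon}$ around $mT$, so one must show that the \emph{reduced density matrices} $(\Gamma^{c}_{N,T,0})^{(k)}$ themselves vary slowly in $N$ at scale $T^{k}$. This is exactly the content of Lemma~\ref{lem-cano-diff}, where the combinatorics of~\cite{Cannon-73} is applied to expectations $\mathrm{Tr}[a^\dagger_{j_1}\cdots a_{j_k}\Gamma^{c}_{N,T,0}]$, not to $Z^{c}_{N,T,0}$. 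The density-function bounds of Lemma~\ref{lem-f-Lambda} and the spherical measures of Lemma~\ref{lem:meas sphere} play no role here; likewise the estimates of~\cite{DeuSeiYng-18,Suto-04} enter only later, in Section~\ref{sec:up}, not in this argument.
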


	The above fixed mass Gaussian measure can be thought of formally as
	$$
	d\mu_{0,m}(u) \propto \exp (-\langle u|hu \rangle) \mathds{1}_{\left\{\int |u|^2=m \right\}} du
	$$
	normalized as a probability. We have seen in Proposition~\ref{pro:fixed mass meas} that $\mu_{0,m}$ is the limit, as $\epsilon \to 0$, of the probability measure 
	$$
	d\mu_{\epsilon,m}(u) = \frac{1}{z_{\epsilon,m}^r} \exp\left(-\frac{1}{\epsilon} \left(\int |u|^2 -m \right)^2 \right) d\mu_0 (u).
	$$
	For fixed $\epsilon >0$, it follows from the main results of~\cite{LewNamRou-20,FroKnoSchSoh-16} that $\mu_{\epsilon,m}$ is a limit, as $T \to \infty$, of the Gibbs state
	\begin{equation}\label{eq:relax Gibbs}
	\Gamma_{\epsilon,m,T} =\frac{1}{Z_{\epsilon,m,T}} \exp \left( -\frac{1}{T} \left(d\Gamma(h) +\frac{T}{\epsilon}\left(\frac{\mathcal{N}}{T}-m \right)^2\right)\right),
	\end{equation}
	where
	$$
	d\Gamma(h) = \bigoplus_{N\geq 0} \sum_{j=1}^N h_{\bx_j}, \quad	\left(\frac{\mathcal{N}}{T}-m \right)^2 = \bigoplus_{N\geq 0} \left(\frac{N}{T}-m\right)^2 \mathds{1}_{\mathfrak h^N}.
	$$
	Here $\mathfrak {h}^N := \mathfrak {h}^{\otimes_s N}$ is the symmetric tensor product of $N$ copies of $\mathfrak {h}$. 
	
	On the other hand, if we take the limit $\epsilon \to 0$ in the first place, the Gibbs state $\Gamma_{\epsilon,m,T}$ clearly converges to the canonical Gibbs state $\Gamma^c_{mT,T}$ we are interested in. Our strategy of proof for Theorem~\ref{thm:free case} is therefore to commute the $T\to \infty$ and $\epsilon \to 0$ limits, which we obtain by investigating the uniformity of the aforementioned convergences, to allow convergence along a sequence $\epsilon (T) \to 0$ when $T \to \infty$ (respectively a sequence $T(\epsilon) \to \infty$ when $\epsilon\to 0$).

	\begin{figure}
		\centering
		\includegraphics[width=0.7\linewidth]{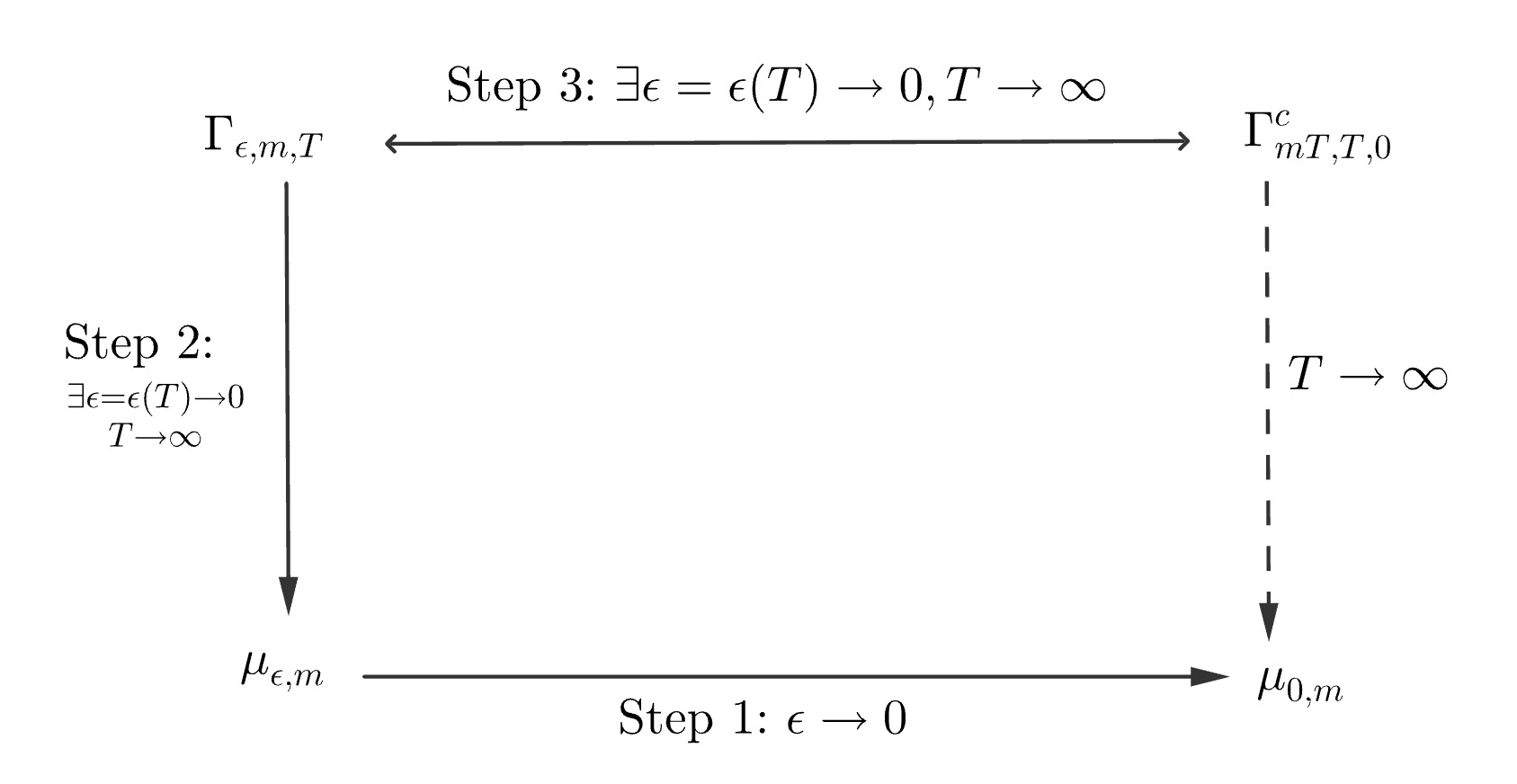}
		\caption{Derivation of the fixed mass Gaussian measure}
		\label{fig:diagram}
	\end{figure}

	We summarize this scheme of proof in Figure~\ref{fig:diagram}. The dashed arrow on the right is the content of Theorem~\ref{thm:free case}, which we obtain by following successively the three solid arrows (Steps 1, 2 and 3). Proposition~\ref{pro:fixed mass meas} provides Step 1 of this scheme (which, again, can be seen as the very definition of $\mu_{0,m}$). To achieve Steps 2 and 3 we shall prove the two Propositions below.

	Step 2 of the proof of Theorem~\ref{thm:free case} follows from (a particular case of) the analysis of grand-canonical states in~\cite{LewNamRou-20}, with some extra care to track the dependence of remainder terms on $\epsilon$:
	
	\begin{proposition}[\textbf{Classical limit of the relaxed free canonical Gibbs state}]\label{pro:class lim relax}\mbox{}\\
	Let $\mu_{\epsilon,m}$ be as in~\eqref{eq:def mu epsilon} and set  
	$$ \epsilon = T^{-a}, \quad 0<a<\frac{s-2}{4s} $$ 
	Then, for all $k\geq 1$
	\begin{align} \label{limi-cano 2}
		k!\frac{\Gamma_{\epsilon,m,T}^{(k)}}{T^k} - \int |u^{\otimes k} \rangle \langle u^{\otimes k}| d\mu_{\epsilon,m}(u) \xrightharpoonup [T\to \infty]{}^\star 0
	\end{align}
	weakly-$\star$ in the trace-class $\gS^1\left(\mathfrak {h}^k\right)$. Moreover
	\begin{equation}\label{eq:class part relax}
	\frac{Z_{\eps,m,T}}{Z_{0,T}} - z_{\eps,m}^r \xrightarrow[T\to\infty]{} 0.
	\end{equation}

	\end{proposition}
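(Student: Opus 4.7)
The plan is to establish both statements by matching upper and lower bounds on the free energy associated with the variational characterization
\begin{equation*}
-\log\left(\frac{Z_{\epsilon,m,T}}{Z_{0,T}}\right) = \inf_{\Gamma}\left\{\cH(\Gamma,\Gamma_{0,T}) + \frac{1}{\epsilon T}\Tr\left[\left(\cN - mT\right)^2\Gamma\right]\right\},
\end{equation*}
with $\Gamma_{0,T}$ the free grand-canonical state on Fock space at temperature $T$, and simultaneously to identify the unique quantum de Finetti measure of the optimal sequence, following the general framework of~\cite{LewNamRou-15,LewNamRou-20}. The classical side has the analogous variational characterization $-\log z_{\epsilon,m}^r = \inf_{\nu}\{\cH_{\cl}(\nu,\mu_0) + \epsilon^{-1}\int(\|u\|^2-m)^2 d\nu(u)\}$ with unique minimizer $\mu_{\epsilon,m}$.

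For the \textbf{upper bound}, I would construct the de Finetti-type trial state
\begin{equation*}
\Gamma_{\mathrm{trial}} = \int |\xi(\sqrt{T} u)\rangle\langle\xi(\sqrt{T} u)|\, d\mu_{\epsilon,m}(u),
\end{equation*}
using bosonic coherent states as in~\eqref{eq:cohe stat}. The Berezin-Lieb inequality (in the form of~\cite[Theorem 7.1]{LewNamRou-15}) then gives $\cH(\Gamma_{\mathrm{trial}},\Gamma_{0,T}) \leq \cH_{\cl}(\mu_{\epsilon,m},\mu_0) + O(\log T / T)$. The penalty term is evaluated directly since $\cN/T$ on a coherent state $\xi(\sqrt T u)$ has expectation $\|u\|^2$ with variance $\|u\|^2/T$, yielding
\begin{equation*}
\tfrac{1}{\epsilon T}\Tr\bigl[(\cN-mT)^2 \Gamma_{\mathrm{trial}}\bigr] = \tfrac{1}{\epsilon}\int(\|u\|^2-m)^2 d\mu_{\epsilon,m}(u) + \tfrac{1}{\epsilon T}\int \|u\|^2 d\mu_{\epsilon,m}(u),
\end{equation*}
and the last term is $O((\epsilon T)^{-1})$. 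For $\epsilon = T^{-a}$ with $a<1$ this is $o(1)$, giving the matching upper bound on the relative free energy.

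For the \textbf{lower bound}, I follow the strategy of~\cite[Section 8]{LewNamRou-20}. From any near-minimizing sequence of states we extract, via the quantum de Finetti theorem, a probability measure $\nu$ on $\cH^\theta$ such that $k!T^{-k}\Gamma^{(k)} \rightharpoonup \int |u^{\otimes k}\rangle\langle u^{\otimes k}|d\nu(u)$. Berezin-Lieb applied to the relative entropy yields $\liminf \cH(\Gamma,\Gamma_{0,T}) \geq \cH_{\cl}(\nu,\mu_0)$. The critical step is passing to the liminf in the singular penalty $\frac{1}{\epsilon T}\Tr[(\cN-mT)^2\Gamma]$, which requires controlling the fluctuations of $\cN/T$ around $\|u\|^2$. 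Writing $(\cN-mT)^2 = \cN(\cN-1) - (2mT-1)\cN + m^2T^2$ relates this penalty to $\Tr[\Gamma^{(2)}]$ and $\Tr[\Gamma^{(1)}]$, so convergence of density matrices combined with uniform-in-$\epsilon$ bounds on $k$-particle moments (obtained by testing the free energy upper bound against the identity) let us pass to the limit $\int(\|u\|^2-m)^2 d\nu(u)$, up to errors of size $\epsilon^{-1}$ times a negative power of $T$. By strict convexity, this forces $\nu = \mu_{\epsilon,m}$.

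The \textbf{main obstacle} is tracking the $\epsilon$-dependence of the error terms in the Berezin-Lieb and de Finetti estimates, which control the rate at which $a$ may approach the critical threshold. The Berezin-Lieb error in~\cite[Theorem 7.1]{LewNamRou-15} scales like $T^{-\alpha(s)}$ with $\alpha(s) = (s-2)/(2s)$ arising from the decay of eigenvalues of $h$, while higher-moment bounds on $\cN$ needed for the penalty contribute extra factors of $\epsilon^{-1}$. Balancing $\epsilon^{-1} T^{-\alpha(s)} \to 0$ with a safety factor of $1/2$ (from a Cauchy-Schwarz step in the quantitative de Finetti estimate) yields exactly the restriction $\epsilon = T^{-a}$ with $a<(s-2)/(4s)$. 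Once \eqref{eq:class part relax} is established, weak-$\star$ convergence of the density matrices in~\eqref{limi-cano 2} follows from uniqueness of the classical minimizer: any weak-$\star$ limit point gives a de Finetti measure which must be $\mu_{\epsilon,m}$, and the uniqueness of this measure forces convergence of the full sequence.
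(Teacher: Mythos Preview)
Your overall variational strategy matches the paper's, and the lower bound sketch (de Finetti plus Berezin--Lieb~\cite[Theorem~7.1]{LewNamRou-15} on the relative entropy) is essentially correct. However, the upper bound argument has a genuine gap.

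First, \cite[Theorem~7.1]{LewNamRou-15} is a \emph{lower} bound on quantum relative entropy in terms of classical relative entropy, not an upper bound, so it cannot be invoked to yield $\cH(\Gamma_{\mathrm{trial}},\Gamma_{0,T}) \leq \cH_{\cl}(\mu_{\epsilon,m},\mu_0) + o(1)$ as you claim. Second, and relatedly, the coherent state integral $\int |\xi(\sqrt{T} u)\rangle\langle\xi(\sqrt{T} u)|\, d\mu_{\epsilon,m}(u)$ over the full infinite-dimensional $\mathfrak{h}$ is not a well-controlled object: the resolution of identity and the associated Berezin--Lieb calculus require a finite-dimensional phase space, and without this the entropy of your trial state cannot be estimated.

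The paper's upper bound instead introduces a kinetic energy cutoff $\Lambda = T^b$, uses the factorized trial state $\Gamma = \mathcal{U}^*(\Gamma_{\epsilon,m,T,P} \otimes \Gamma_{0,T,Q})\mathcal{U}$ on $\mathcal{F}(P_\Lambda\mathfrak{h}) \otimes \mathcal{F}(P_\Lambda^\perp\mathfrak{h})$, and applies the Peierls--Bogoliubov inequality on the finite-dimensional part to bound the projected partition function from below. The truncation errors are of size $\epsilon^{-1}\Tr[P_\Lambda^\perp h^{-1}] \lesssim \epsilon^{-1}\Lambda^{-(1-q)}$ for any $q > \tfrac{1}{2}+\tfrac{1}{s}$, together with $T^{-1}\Lambda^{3/2+1/s}$; similar terms appear in the lower bound after localization. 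Optimizing over $b$ and $q$ subject to $\epsilon = T^{-a}$ is precisely what produces the threshold $a < (s-2)/(4s)$. Your heuristic attributing this to a ``Cauchy--Schwarz factor of $1/2$'' is not what actually drives the constraint. Finally, the uniform-in-$\epsilon$ moment bounds $\Tr[(\cN/T)^k \Gamma_{\epsilon,m,T}] \leq C(k,m)$ that you invoke require a separate argument (Lemma~\ref{lem-NTk-GepsmT-est} in the paper), which is not as immediate as you suggest.
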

	
	As for the third ingredient we show that the Gibbs state with ``relaxed particle number constraint''~\eqref{eq:relax Gibbs} indeed converges to the canonical state $\Gamma^c_{mT,T}$ in the limit $\epsilon \to 0$ with a controlled dependence on $T$ of the limit:

	\begin{proposition}[\textbf{Relaxation of the free canonical Gibbs state}]\label{pro:relax}\mbox{}\\
	Let $a>0$ and set 
	$$ \epsilon = T^{-a}.$$
	Then, for all $k\geq 1$,
	\begin{align} \label{limi-T-eps}
		\frac{\Gamma^{(k)}_{\epsilon,m,T}}{T^k} - \frac{(\Gamma^c_{mT,T,0})^{(k)}}{T^k} \xrightharpoonup[T\to \infty]{}^\star 0 
	\end{align}
	weakly-$\star$ in $\gS ^1 \left(\mathfrak h^k\right)$.
	\end{proposition}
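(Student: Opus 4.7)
The idea is to realize the relaxed state as a convex combination of canonical states indexed by particle number, show that this combination concentrates sharply around $N=mT$ as $\epsilon\to 0$, and then compare the reduced density matrices for nearby values of $N$ by means of the combinatorial identities for the free Bose gas due to Cannon. Since $d\Gamma(h)$ and $\cN$ are jointly diagonal in the particle-number sectors of $\gF(\gh)$, one has the Fock-space factorization
\begin{equation*}
	\Gamma_{\epsilon,m,T} = \sum_{N=0}^{\infty} p_N^{\epsilon,T}\, \Gamma^c_{N,T,0}, \qquad
	p_N^{\epsilon,T} = \frac{Z^c_{N,T,0}\,\exp\!\left(-(N-mT)^2/(\epsilon T)\right)}{Z_{\epsilon,m,T}},
\end{equation*}
so that, testing against a bounded operator $K$ on $\gh^k$,
\begin{equation*}
	\Tr\left[K\left(\frac{\Gamma^{(k)}_{\epsilon,m,T}}{T^k} - \frac{(\Gamma^c_{mT,T,0})^{(k)}}{T^k}\right)\right]
	= \sum_{N\geq k} p_N^{\epsilon,T}\, \Tr\!\left[K \frac{(\Gamma^c_{N,T,0})^{(k)} - (\Gamma^c_{mT,T,0})^{(k)}}{T^k}\right].
\end{equation*}
The uniform trace-class bound $\|(\Gamma^c_{N,T,0})^{(k)}/T^k\|_{\gS^1} = \binom{N}{k}/T^k$, combined with $\mathbb{E}[N^{2k}] \lesssim T^{2k}$ under $p_N^{\epsilon,T}$, gives the uniform $\gS^1$-boundedness needed for weak-$\star$ convergence.

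For $\epsilon = T^{-a}$ the Gaussian penalty has variance $\epsilon T/2 = T^{1-a}/2$, hence width $\sqrt{\epsilon T} = T^{(1-a)/2} = o(T)$. On the other hand, the map $N \mapsto \log Z^c_{N,T,0}$ admits, at leading order in $T$, a smooth concave saddle-point expansion in the density $N/T$, with equilibrium density $m$ selected by a unique chemical potential. A standard saddle-point argument, combined with the Gaussian penalty (which always dominates on the scale $\sqrt{T}$ of natural grand-canonical fluctuations, since $\epsilon \to 0$), yields sub-Gaussian tails
\begin{equation*}
	\sum_{|N-mT|>R\sqrt{\epsilon T}} p_N^{\epsilon,T} \xrightarrow[R\to\infty]{} 0,
\end{equation*}
uniformly in $T$ for each fixed $a>0$.

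The heart of the matter is then the stability of the reduced density matrices for $|N-mT|\leq R\sqrt{\epsilon T}$. For the free Bose gas, Cannon's cycle decomposition~\cite{Cannon-73} gives
\begin{equation*}
	(\Gamma^c_{N,T,0})^{(1)} = \sum_{\ell=1}^{N} \frac{Z^c_{N-\ell,T,0}}{Z^c_{N,T,0}}\, e^{-\ell h/T},
\end{equation*}
and analogous formulas express $(\Gamma^c_{N,T,0})^{(k)}$ as a sum over $k$-cycle configurations with coefficients of the form $Z^c_{N-j,T,0}/Z^c_{N,T,0}$ multiplying tensor products of kernels $e^{-\ell h/T}$. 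Consequently the comparison of $(\Gamma^c_{N,T,0})^{(k)}$ with $(\Gamma^c_{mT,T,0})^{(k)}$ reduces to controlling the smoothness of the ratios of canonical partition functions with respect to $N$. From the saddle-point expansion of the previous step one extracts quantitative bounds of the form
\begin{equation*}
	\left| \frac{Z^c_{N-\ell,T,0}}{Z^c_{N,T,0}} - \frac{Z^c_{mT-\ell,T,0}}{Z^c_{mT,T,0}} \right| \leq C_{\ell} \frac{|N-mT|}{T},
\end{equation*}
valid uniformly for $|N-mT| \ll T$, with $C_\ell$ summable against the spectral weights $e^{-\ell \lambda_j/T}$. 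Substitution and summation over cycle lengths yield a uniform trace-norm bound
\begin{equation*}
	\left\| \frac{(\Gamma^c_{N,T,0})^{(k)} - (\Gamma^c_{mT,T,0})^{(k)}}{T^k} \right\|_{\gS^1(\gh^k)} \xrightarrow[T\to\infty]{} 0
\end{equation*}
uniformly in the range $|N-mT|\leq R\sqrt{\epsilon T}$, for any fixed $R$.

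Splitting the sum at $|N-mT|\lessgtr R\sqrt{\epsilon T}$, the inner part is $o(1)$ by the stability above, and the outer part is $\|K\|\cdot o_R(1)$ by the tail estimate and the trace bounds. Letting $T\to\infty$ and then $R\to\infty$ yields~\eqref{limi-T-eps} for every bounded $K$, hence the announced weak-$\star$ convergence in $\gS^1(\gh^k)$. The main obstacle is the quantitative stability of Step~3: combining Cannon's combinatorial formulas with a $T$-uniform saddle-point analysis of $Z^c_{N,T,0}$, sharp enough to survive the joint limit $T\to\infty$, $\epsilon=T^{-a}\to 0$ required for Theorem~\ref{thm:free case}, is the delicate point that has no direct grand-canonical analogue.
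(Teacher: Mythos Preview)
Your skeleton---convex decomposition over $N$-sectors, near/far split, Cannon's combinatorics---is the same as the paper's, but the implementation diverges in two places, and in both the paper takes a more elementary path that avoids the analytic difficulties you flag.

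\emph{Tail region.} You invoke a saddle-point expansion of $N\mapsto\log Z^c_{N,T,0}$ to get sub-Gaussian concentration. The paper does not do this: it simply bounds the tail mass by $\exp(-\Delta^2/(\epsilon T^2))\cdot Z_{0,T}/Z_{\epsilon,m,T}\cdot\Tr[T^{-k}\Gamma_{0,T}^{(k)}]$ and uses the a priori partition-function bound $Z_{0,T}/Z_{\epsilon,m,T}\leq C(m)/\sqrt\epsilon$ already established in~\eqref{uppe-boun-parti}. With $\Delta=T^{1-a/4}$ this gives $T^{a/2}\exp(-T^{a/2})\to 0$; no saddle point is needed. (Incidentally, your Gaussian penalty should read $\exp(-(N-mT)^2/(\epsilon T^2))$, not $/(\epsilon T)$; the width is $T\sqrt\epsilon=T^{1-a/2}$, not $\sqrt{\epsilon T}$. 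This does not affect the qualitative picture but does shift your numerics.)

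\emph{Near region.} You propose Cannon's cycle-decomposition formula for $(\Gamma^c_{N,T,0})^{(k)}$ and control of the ratios $Z^c_{N-\ell,T,0}/Z^c_{N,T,0}$, again via saddle-point, to obtain trace-norm stability. The paper instead uses a different result of Cannon, the bijection of Lemma~\ref{lem-Cannon-73}, to prove the purely combinatorial weak bound
\[
\left|\langle\phi_k\,|\,(\Gamma^c_{N+1,T,0})^{(k)}-(\Gamma^c_{N,T,0})^{(k)}\,|\,\phi_k\rangle\right|\leq C\,N^{k-1}
\]
for basis vectors $\phi_k=u_{j_1}\otimes_s\cdots\otimes_s u_{j_k}$ (Lemma~\ref{lem-cano-diff}). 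Iterating over $|N-mT|\leq\Delta$ steps and dividing by $T^k$ yields an error $C\Delta/T\to 0$; this is enough for weak-$\star$ convergence and requires no analytic input on $Z^c_{N,T,0}$ whatsoever.

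So the ``delicate point'' you identify---a $T$-uniform saddle-point analysis of the canonical partition function---is precisely what the paper's argument is designed to avoid. Your route is plausible but that step is genuinely not carried out, and without it the proposal remains a plan rather than a proof.
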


\begin{proof}[Proof of Theorem~\ref{thm:free case}, given Propositions~\ref{pro:class lim relax} and~\ref{pro:relax}]
Theorem~\ref{thm:free case} follows from the combination of Proposition~\ref{pro:fixed mass meas}, Proposition~\ref{pro:relax} and Proposition~\ref{pro:class lim relax}. In fact, the combination of \eqref{limi-eps},~\eqref{limi-cano 2} and \eqref{limi-T-eps} gives weak-$\star$ trace-class convergence in~\eqref{limi-cano}, choosing $\epsilon (T)$ appropriately, e.g. $\eps = T^{\frac{2-s}{8s}}$. Convergence in trace-class norm follows as usual from~\cite[Addendum~H]{Simon-79} because both sides of the equation are positive operators, and that the trace (hence the trace-norm) of the left side is easily seen to converge to the trace of the right side.
\end{proof}

The rest of this section focuses on proving Propositions~\ref{pro:class lim relax} and~\ref{pro:relax} (in two separate subsections), i.e. giving the two missing steps outlined in Figure~\ref{fig:diagram}.

\subsection{Classical limit of the relaxed free canonical Gibbs state}\label{sec:lim relax}

We turn to the proof of Proposition~\ref{pro:class lim relax}. We follow the general variational approach of~\cite{LewNamRou-15,LewNamRou-17,LewNamRou-20}, proving upper and lower bounds to the free energy associated to~\eqref{eq:relax Gibbs} in terms of the classical partition function defined in~\eqref{eq:def mu epsilon}. We skip some details since we are mostly concerned with tracking the dependence on $\epsilon$ of various error terms that have been studied extensively in the mentioned references. 

The Gibbs variational principle gives
	$$
	\begin{aligned}
		-\log\left(\frac{Z_{\epsilon,m,T}}{Z_{0,T}}\right) =\inf_{\Gamma \geq 0 \atop {\Tr}_{\mathcal{F}}[\Gamma]=1}\left\{\mathcal{H}(\Gamma,\Gamma_{0,T}) +\frac{1}{\epsilon}{\Tr}\left[ \left(\frac{\mathcal{N}}{T}-m\right)^2\Gamma\right]\right\}
	\end{aligned}
	$$
	where 
	$$
	\mathcal{H}(\Gamma,\Gamma_{0,T}):= {\Tr}[\Gamma(\log \Gamma-\log \Gamma_{0,T})]
	$$
	is the von Neumann quantum relative entropy and $\Gamma_{0,T}$ is the free Gibbs state
	$$
	\Gamma_{0,T} := \frac{1}{Z_{0,T}} \exp\left(-\frac{1}{T} d\Gamma(h)\right).
	$$
	Moreover, the Gibbs state $\Gamma_{\epsilon,m,T}$ is the unique minimizer of the above. 
	
	In a similar manner, we have at the classical level
	$$
	-\log (z^r_{\epsilon,m}) = \inf_{\nu \text{ prob. meas. } \atop \text{ on } \mathfrak{h}} \left\{\mathcal{H}_{\text{cl}}(\nu,\mu_0) +\frac{1}{\epsilon} \int (\langle u,u\rangle -m)^2 d\nu(u) \right\},
	$$
	where 
	$$
	\mathcal{H}_{\text{cl}}(\nu,\mu_0) =\int\frac{d\nu}{d\mu_0} \log \frac{d\nu}{d\mu_0} d\mu_0
	$$
	is the classical relative entropy and the Gibbs measure $\mu_{\epsilon,m}$ is the unique minimizer of the above minimization problem.
	
	We relate the above variational problems in the limit $T\to \infty$ to provide the Proof of Proposition~\ref{pro:class lim relax}. We begin by controling the expected particle number in $\Gamma_{\epsilon,m,T}$, uniformly in $\eps$:

	\begin{lemma}[\textbf{Particle number in the relaxed canonical Gibbs state}]\label{lem-NTk-GepsmT-est}\mbox{}\\
	Let $k\geq 1$ and $m>0$. Then there exists $C(k,m)>0$ such that for all $T\geq 1$ and all $\epsilon>0$ small,
	\begin{align} \label{NTk-GepsmT-est}
	{\Tr}\left[\left(\frac{\mathcal{N}}{T}\right)^k\Gamma_{\epsilon,m,T}\right]\leq C(k,m).
	\end{align}
	In particular, we get
	\begin{align} \label{GepsmT-k}
	\frac{1}{T^k}{\Tr}\left[\Gamma^{(k)}_{\epsilon,m,T}\right]\leq C(k,m).
	\end{align}
	\end{lemma}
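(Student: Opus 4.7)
The starting point is that $\Gamma_{\epsilon,m,T}$ commutes with the number operator $\mathcal{N}$ (both $d\Gamma(h)$ and $(\mathcal{N}/T-m)^2$ are diagonal in the Fock-space grading), so it block-decomposes with scalar particle-number weights
$$p_N := \Tr_{\mathfrak{h}^N}[\Gamma_{\epsilon,m,T}] = \frac{Z^c_{N,T,0}\, e^{-T(N/T-m)^2/\epsilon}}{Z_{\epsilon,m,T}}.$$
Estimate~\eqref{NTk-GepsmT-est} thus reduces to the scalar moment bound $\sum_N (N/T)^k p_N \leq C(k,m)$, and~\eqref{GepsmT-k} follows at once via $\binom{N}{k}\leq N^k/k!$ applied to $\Tr[\Gamma^{(k)}_{\epsilon,m,T}] = \sum_N \binom{N}{k} p_N$.

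The plan is to combine a crude lower bound on the partition function $Z_{\epsilon,m,T}$ with a Gaussian tail estimate. For the former, I would apply the Gibbs variational principle
$$-\log(Z_{\epsilon,m,T}/Z_{0,T}) \leq \mathcal{H}(\Gamma,\Gamma_{0,T}) + \epsilon^{-1}\Tr[(\mathcal{N}/T-m)^2\Gamma]$$
to the trial state $\Gamma^c_{N_0,T,0}$ with $N_0 := \lfloor mT\rfloor$, embedded as a Fock-space state. A direct computation gives $\mathcal{H}(\Gamma^c_{N_0,T,0},\Gamma_{0,T}) = \log(Z_{0,T}/Z^c_{N_0,T,0})$, while the penalty is at most $1/(T^2\epsilon)$. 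Combined with the single-configuration bound $Z^c_{N_0,T,0} \geq e^{-N_0\lambda_1/T} \geq e^{-m\lambda_1}$ (coming from the bosonic vector $u_1^{\otimes N_0}$), this yields $Z_{\epsilon,m,T} \geq e^{-m\lambda_1 - 1/(T^2\epsilon)}$.

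For the moment bound I would split $\sum_N (N/T)^k p_N$ at $N = 2mT$: the low part $N\leq 2mT$ contributes at most $(2m)^k$. For the tail $N > 2mT$, the inequality $(N/T-m)^2 \geq (N/T)^2/4$ together with the bound $Z^c_{N,T,0} \leq Z_{0,T}$ and a standard Gaussian tail estimate produce
$$\sum_{N>2mT}(N/T)^k p_N \leq C(k,m)\, e^{1/(T^2\epsilon)}\, Z_{0,T}\, e^{-cTm^2/\epsilon}$$
for some $c>0$ depending on $k$.

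The decisive step, and the main technical obstacle, is to absorb the large prefactors into the Gaussian suppression. The Cwikel-Lieb-Rozenbljum estimate already used in Proposition~\ref{pro:use rel ent} gives $\log Z_{0,T} \leq CT^{1/2+1/s}\log T = o(T)$ since $s>2$, so the product $Z_{0,T}\,e^{-cTm^2/\epsilon}$ tends to zero uniformly once $\epsilon\leq\epsilon_0$ and $T\geq T_0$. The factor $e^{1/(T^2\epsilon)}$ is bounded in the regime $T^2\epsilon\geq 1$; in the complementary regime $T^2\epsilon<1$ the Gaussian kernel is so sharp that $p_{N_0\pm 1}/p_{N_0}$ is exponentially small, forcing $p_N$ to concentrate at $N=N_0$ and giving the moment bound $\leq (m+1)^k$ directly. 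For $T\leq T_0$ the Fock space truncates effectively and the bound is immediate. The subtlety lies in orchestrating these three regimes into a single uniform estimate; ultimately it rests on the fact that the $\mathcal{N}$-penalty operates on the linear-in-$T$ scale $Tm^2/\epsilon$, which dominates the sub-linear growth $T^{1/2+1/s}\log T$ of $\log Z_{0,T}$ precisely because $s>2$.
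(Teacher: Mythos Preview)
Your formula for the particle-number weights is off by a crucial factor of $T$ in the exponent. From the definition
\[
\Gamma_{\epsilon,m,T} = \frac{1}{Z_{\epsilon,m,T}}\exp\!\left(-\frac{1}{T}\Big(d\Gamma(h)+\frac{T}{\epsilon}\Big(\frac{\mathcal{N}}{T}-m\Big)^2\Big)\right)
\]
the two $T$'s cancel in the penalty, so on the $N$-sector the weight is
\[
p_N = \frac{Z^c_{N,T,0}\, e^{-(N/T-m)^2/\epsilon}}{Z_{\epsilon,m,T}},
\]
\emph{not} $e^{-T(N/T-m)^2/\epsilon}$. This is also consistent with Lemma~\ref{lem-N-sector} and with the Gibbs variational formula used just above the lemma. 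With the correct exponent, your tail bound collapses: for $N\sim 2mT$ the Gaussian factor is only $e^{-m^2/\epsilon}$, a fixed constant once $\epsilon$ is fixed, whereas your crude upper bound $Z^c_{N,T,0}\leq Z_{0,T}$ grows like $\exp(C T^{1/2+1/s})$ as $T\to\infty$. The balance you describe in the last paragraph --- penalty on scale $Tm^2/\epsilon$ versus $\log Z_{0,T}$ on scale $T^{1/2+1/s}$ --- simply does not occur; the penalty lives on the $\epsilon$-independent-of-$T$ scale $m^2/\epsilon$. Consequently your argument gives no uniform bound in the regime where $\epsilon$ is a small but fixed constant and $T\to\infty$, which is exactly the regime the lemma must cover.

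The paper avoids any pointwise control of $Z^c_{N,T,0}$ by a variational interpolation. Writing $\Gamma_M$ for the state with penalty strength $M$ in place of $1/\epsilon$, one compares $\Gamma_M$, $\Gamma_{M(1-c)}$ and $\Gamma_0$ through the free-energy identity $\mathcal{F}_K[\Gamma_M]-\mathcal{F}_K[\Gamma_K]=T\mathcal{H}(\Gamma_M,\Gamma_K)\geq 0$ to obtain
\[
\Tr\!\left[e^{\,cM(\mathcal{N}/T-m)^2}\Gamma_M\right] = \frac{Z_{M(1-c)}}{Z_M} \leq \exp\!\Big(cM\,\Tr\big[(\mathcal{N}/T-m)^2\Gamma_0\big]\Big).
\]
Taking $c=1/M=\epsilon$ and using the known (uniform in $T$) bound on $\Tr[(\mathcal{N}/T-m)^2\Gamma_0]$ gives an exponential moment $\Tr[e^{(\mathcal{N}/T-m)^2}\Gamma_{\epsilon,m,T}]\leq C(m)$, from which all polynomial moments follow. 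The key point is that this route never invokes $Z_{0,T}$ as a multiplicative factor, only the second moment of $\mathcal{N}/T$ under $\Gamma_{0,T}$, which is $O(1)$.
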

	
	\begin{proof}
	It suffices to prove \eqref{NTk-GepsmT-est} since \eqref{GepsmT-k} clearly follows. We denote
	$$
	\Gamma_M = \frac{1}{Z_M} \exp \left(-\frac{1}{T}d\Gamma(h) - M \left( \frac{\mathcal N}{T} -m\right)^2 \right),
	$$
	where $Z_M$ is the partition function so that $\text{Tr}[\Gamma_M]=1$. In particular, $\Gamma_{\epsilon,m,T}=\Gamma_{M=1/\epsilon}$.
	Recall that $\Gamma_M$ is the unique minimizer for the free energy functional
	$$
	\mathcal F_M(\Gamma):= \text{Tr}\left[ \left(d\Gamma(h) + MT\left(\frac{\mathcal N}{T}-m \right)^2\right) \Gamma\right] + T \text{Tr}[\Gamma \log \Gamma]
	$$
	over all states 
	$$
	\Gamma \in \mathcal S(\mathcal F):= \{ 0\leq \Gamma=\Gamma^*, \quad \text{Tr}[\Gamma]=1 \}.
	$$
	Moreover, 
	$$
	\mathcal F_M(\Gamma_M) = -T \log Z_M.
	$$
	Observe that for any $0<c<1$
	\begin{align} \label{NTk-proof-1}
	\frac{Z_{M(1-c)}}{Z_M} = \text{Tr}\left[ \exp \left( cM \left(\frac{\mathcal N}{T}-m \right)^2\right) \Gamma_M\right].
	\end{align}
	We also note that for all $M, K>0$,
	\begin{align} \label{NTk-proof-2}
	\mathcal F_M(\Gamma_M) = \mathcal F_K(\Gamma_K) + T \mathcal H(\Gamma_M, \Gamma_K) + T(M-K) \text{Tr}\left[\left(\frac{\mathcal N}{T}-m \right)^2 \Gamma_M \right],
	\end{align}
	where
	$$
	\mathcal H(\Gamma_M, \Gamma_K) = \text{Tr}[\Gamma_M(\log \Gamma_M -\log \Gamma_K)]
	$$
 	is the quantum relative entropy. 
	
	Let $0<c<1$ be a constant to be chosen later. Applying \eqref{NTk-proof-2} with $M= M(1-c)$ and $K=M$, we have
	$$
	\begin{aligned}
		\mathcal F_{M(1-c)}(\Gamma_{M(1-c)}) &= T \mathcal H(\Gamma_{M(1-c)},\Gamma_M) + \mathcal F_M(\Gamma_M) - cMT \text{Tr}\left[\left( \frac{\mathcal N}{T}-m\right)^2 \Gamma_{M(1-c)}\right]\\
		&\geq \mathcal F_M(\Gamma_M) - cMT \text{Tr}\left[\left( \frac{\mathcal N}{T}-m\right)^2 \Gamma_{M(1-c)}\right]
	\end{aligned}
	$$
	or
	$$
	-T\log Z_{M(1-c)} \geq -T \log Z_M - cMT \text{Tr}\left[\left( \frac{\mathcal N}{T}-m\right)^2 \Gamma_{M(1-c)}\right].
	$$
	Hence
	\begin{align} \label{NTk-proof-3}
	\log \frac{Z_{M(1-c)}}{Z_M} \leq cM\text{Tr}\left[\left( \frac{\mathcal N}{T}-m\right)^2 \Gamma_{M(1-c)}\right].
	\end{align}
	Applying \eqref{NTk-proof-2} with $M=M(1-c)$ and $K=0$, we have
	$$
	\begin{aligned}
		\mathcal F_{M(1-c)}(\Gamma_{M(1-c)}) &= \mathcal F_0(\Gamma_0) + T\mathcal H(\Gamma_{M(1-c)},\Gamma_0) + M(1-c)T \text{Tr}\left[ \left(\frac{\mathcal N}{T}-m\right)^2 \Gamma_{M(1-c)}\right] \\
		&\geq \mathcal F_0(\Gamma_0) + MT(1-c) \text{Tr}\left[ \left(\frac{\mathcal N}{T}-m\right)^2 \Gamma_{M(1-c)}\right].
	\end{aligned}
	$$
	Since $\Gamma_{M(1-c)}$ is the (unique) minimizer of $\mathcal F_{M(1-c)}(\Gamma)$, we have
	$$
	\mathcal F_{M(1-c)}(\Gamma_{M(1-c)}) \leq \mathcal F_{M(1-c)}(\Gamma_0) = \mathcal F_0(\Gamma_0) + TM(1-c) \text{Tr}\left[ \left(\frac{\mathcal N}{T}-m\right)^2 \Gamma_0\right].
	$$
	Thus we get
	\begin{align} \label{NTk-proof-4}
	\text{Tr}\left[ \left(\frac{\mathcal N}{T}-m\right)^2 \Gamma_{M(1-c)}\right] \leq \text{Tr}\left[ \left(\frac{\mathcal N}{T}-m\right)^2 \Gamma_0\right].
	\end{align}
	From \eqref{NTk-proof-1}, \eqref{NTk-proof-3} and \eqref{NTk-proof-4}, we obtain
	$$
	\text{Tr}\left[ \exp \left( cM \left(\frac{\mathcal N}{T}-m \right)^2\right) \Gamma_M\right] \leq e^{\text{Tr}\left[cM \text{Tr}\left[ \left(\frac{\mathcal N}{T}-m\right)^2 \Gamma_0\right] \right]}.
	$$
	Taking $c=1/M = \epsilon$ and using known bounds on $\Gamma_0$ (see e.g.~\cite[Section~3]{LewNamRou-15}), we obtain for all $T\geq 1$,
	$$
	\text{Tr}\left[\exp\left(\left(\frac{\mathcal N}{T}-m\right)^2\right) \Gamma_{\epsilon,m,T}\right] \leq C(m).
	$$
	In particular,
	\begin{align} \label{NTk-proof-5}
	\text{Tr}\left[ \left( \frac{\mathcal N}{T}-m\right)^{2k} \Gamma_{\epsilon,m,T}\right] \leq C(k,m), \quad \forall k \in \mathbb N.
	\end{align}
	It follows that
	$$
	\begin{aligned}
		\text{Tr}\left[\left(\frac{\mathcal N}{T}\right)^k \Gamma_{\epsilon,m,T}\right] &= \text{Tr}\left[\left(\frac{\mathcal N}{T}-m+m\right)^k \Gamma_{\epsilon,m,T}\right]  \\
		&= \text{Tr}\left[ \sum_{l=0}^k \binom{k}{l} m^{k-l} \left(\frac{\mathcal N}{T}-m\right)^l \Gamma_{\epsilon,m,T}\right]
	\end{aligned}
	$$
	which, together with a Cauchy-Schwarz inequality and \eqref{NTk-proof-5}, proves \eqref{NTk-GepsmT-est}.
	\end{proof}
	
	We now proceed with the
	
	\begin{proof}[Proof of Proposition~\ref{pro:class lim relax}]
	 Recall that we set $ \epsilon = T^{-a}$ with $0<a<\frac{s-2}{4s}$.
	
	\medskip
	
	\noindent\textbf{Step 1. Free energy upper bound.} 
	We first prove by a trial state argument that
	\begin{align} \label{uppe-boun-log}
		\lim_{T \to \infty} -\log \left(\frac{Z_{\epsilon,m,T}}{Z_{0,T}} \right) + \log\left(\int_{P\mathfrak{h}} \exp\left(-\frac{1}{\epsilon}\left(\langle u, u\rangle -m\right)^2\right)d\mu_{0,\Lambda}(u)\right) \leq 0,
	\end{align}
	where $P=P_\Lambda = \mathds{1}_{\{h \leq \Lambda\}}$ with 
	$$
	\Lambda = T^b
	$$
	for some $b>0$ to be chosen later (see~\eqref{cond-b-1} below), and $d\mu_{0,\Lambda}(u)$ is the cylindrical projection of $\mu_0$ on $P\mathfrak h$. As per the results in Proposition~\ref{pro:fixed mass meas} this implies that
	\begin{align} \label{uppe-boun-parti}
		\frac{Z_{0,T}}{Z_{\epsilon,m,T}} \leq \frac{C(m)}{\sqrt{\epsilon}}.
	\end{align} 
	 It is known that the Fock space can be factorized as
	$$
	\mathcal{F} \simeq \mathcal{F}(P\mathfrak{h}) \otimes \mathcal{F}(Q\mathfrak{h}) 
	$$
	in the sense that there exists a unitary map (cf Equation~\eqref{eq:unitary map} below)
	$$\mathcal{U}: \mathcal{F}(P\mathfrak{h}\oplus Q\mathfrak{h}) \to \mathcal{F}(P\mathfrak{h}) \otimes \mathcal{F}(Q\mathfrak{h}).$$
	Here $Q= P^\perp_\Lambda = \mathds{1}_{\{h>\Lambda\}}$.
	
	The free Gibbs state $\Gamma_{0,T}$ can be factorized as
	$$
	\Gamma_{0,T} = \mathcal U^* (\Gamma_{0,T,P} \otimes \Gamma_{0,T,Q}) \mathcal U,
	$$
	where
	\begin{align*}
	\Gamma_{0,T,P} &= {\Tr}_{\mathcal{F}(Q\mathfrak{h})}[\mathcal{U}\Gamma_{0,T}\mathcal{U}^*] = \frac{\exp\left(-\frac{1}{T}d\Gamma(Ph)\right)}{{\Tr}_{\mathcal{F}(P\mathfrak{h})}[\exp\left(-\frac{1}{T}d\Gamma(Ph)\right)]}, \\
	\Gamma_{0,T,Q} &= {\Tr}_{\mathcal{F}(P\mathfrak{h})}[\mathcal{U}\Gamma_{0,T}\mathcal{U}^*] = \frac{\exp\left(-\frac{1}{T}d\Gamma(Qh)\right)}{{\Tr}_{\mathcal{F}(Q\mathfrak{h})}[\exp\left(-\frac{1}{T}d\Gamma(Qh)\right)]}.
	\end{align*}
	We also denote $\Gamma_{\epsilon,m,T}^P$ the $P$-localized interacting Gibbs state 
	\begin{align} \label{Gam-epsmT-P}
	\Gamma_{\epsilon,m,T}^P = \frac{\exp\left(-\frac{1}{T}d\Gamma(Ph) -\frac{1}{\epsilon}\left(\frac{d\Gamma(P)}{T}-m\right)^2\right)}{{\Tr}_{\mathcal{F}(P\mathfrak{h})}\left[\exp\left(-\frac{1}{T}d\Gamma(Ph) -\frac{1}{\epsilon}\left(\frac{d\Gamma(P)}{T}-m\right)^2\right)\right]}.
	\end{align}
	We define the following trial state
	\begin{equation}\label{eq:trial state}
	\boxed{\Gamma = \mathcal{U}^*(\Gamma_{\epsilon,m,T}^P\otimes \Gamma_{0,T,Q})\mathcal{U}.}
	\end{equation}
	By the Gibbs variational principle, we have
	\begin{align} \label{uppe-boun-proo-1}
	-\log\left(\frac{Z_{\epsilon,m,T}}{Z_{0,T}}\right)\leq \mathcal{H}(\Gamma,\Gamma_{0,T}) +\frac{1}{\epsilon}{\Tr}\left[\left(\frac{\mathcal{N}}{T}-m\right)^2\Gamma\right].
	\end{align}
	Following step by step the arguments of the proof of~\cite[Lemma~8.3]{LewNamRou-15} to track the dependence on $\epsilon$ of error terms we obtain first
	\begin{align}\label{eq:red fin dim can}
		-\log\left(\frac{Z_{\epsilon,m,T}}{Z_{0,T}}\right)&\leq {\Tr}_{\mathcal{F}(P\mathfrak{h})}\left[\Gamma_{\epsilon,m,T}^P\left(\log \Gamma_{\epsilon,m,T}^P-\log \Gamma_{0,T,P}\right)\right] + \epsilon^{-1} {\Tr}_{\mathcal{F}(P\mathfrak{h})}\left[\left(\frac{d\Gamma(P)}{T}-m\right)^2\Gamma_{\epsilon,m,T}^P\right] \nonumber\\
		&\quad  + ({\Tr}[Qh^{-1}Q])^2 + C(m) {\Tr}[Q h^{-1}Q] + C(m) \epsilon^{-1} {\Tr}[Qh^{-1}Q].
	\end{align}
	The terms on the second line are errors. We focus on the first line and estimate it in terms of the classical variational problem.
	
	By the Gibbs variational principle, the right hand side in the first line is equal to
	$$
	-\log \left(\frac{{\Tr}\left[\exp\left(-\frac{1}{T}d\Gamma(Ph) -\frac{1}{\epsilon}\left(\frac{d\Gamma(P)}{T}-m\right)^2\right)\right]}{{\Tr}[\exp\left(-\frac{1}{T}d\Gamma(Ph)\right)]}\right).
	$$
	We first have, for the denominator,
	\begin{align} \label{Z0T-P}
	{\Tr}\left[\exp\left(-\frac{1}{T} d\Gamma(Ph)\right)\right] =\prod_{\lambda_j \leq \Lambda} \frac{1}{1-e^{-\lambda_j/T}}.
	\end{align} 
	To estimate the numerator, we first use the resolution of identity in terms of coherent states (see \eqref{eq:cohe stat})
	$$
	\left(\frac{T}{\pi}\right)^{{\Tr}[P]} \int_{P\mathfrak{h}} |\xi(u\sqrt{T})\rangle \langle \xi(u\sqrt{T})| du = \mathds{1}_{\mathcal{F}(P\mathfrak{h})}
	$$
	to obtain
	\begin{align*}
		&{\Tr}\Big[\exp\Big(-\frac{1}{T}d\Gamma(Ph) -\frac{1}{\epsilon}\Big(\frac{d\Gamma(P)}{T}-m\Big)^2\Big)\Big] \\
		&\quad = \Big(\frac{T}{\pi}\Big)^{{\Tr}[P]} \int_{P\mathfrak{h}} {\Tr}\Big[ \exp\Big(-\frac{1}{T}d\Gamma(Ph) -\frac{1}{\epsilon}\Big(\frac{d\Gamma(P)}{T}-m\Big)^2\Big) |\xi(u\sqrt{T}\rangle \langle \xi(u\sqrt{T})|\Big]du\\
		&\quad = \Big(\frac{T}{\pi}\Big)^{{\Tr}[P]} \int_{P\mathfrak{h}}\Big\langle \xi(u\sqrt{T}), \exp\Big(-\frac{1}{T}d\Gamma(Ph) -\frac{1}{\epsilon}\Big(\frac{d\Gamma(P)}{T}-m\Big)^2\Big) \xi(u\sqrt{T})\Big\rangle du.
	\end{align*}
	We next use the Peierls-Bogoliubov inequality: for $F: \mathbb{R} \to \mathbb{R}$ a convex function and $A$ is a self-adjoint operator on a Hilbert space $\mathcal{H}$, 
	$$
	\langle x, F(A) x\rangle \geq F(\langle x, Ax\rangle), \quad \forall x \in \mathcal{H}, \|x\|=1
	$$
	with $F(\lambda)=e^\lambda$, $A=-\frac{1}{T}d\Gamma(Ph) -\frac{1}{\epsilon}\left(\frac{d\Gamma(P)}{T}-m\right)^2$, $x=\xi(u\sqrt{T})$, and $\mathcal{H}=\mathcal{F}$ to get
	\begin{align*}
		&{\Tr}\Big[\exp\Big(-\frac{1}{T}d\Gamma(Ph) -\frac{1}{\epsilon}\Big(\frac{d\Gamma(P)}{T}-m\Big)^2\Big)\Big] \\
		&\quad \geq \Big(\frac{T}{\pi}\Big)^{{\Tr}[P]} \int_{P\mathfrak{h}} \exp\Big(-\Big\langle \xi(u\sqrt{T}), \Big(\frac{1}{T}d\Gamma(Ph) +\frac{1}{\epsilon}\Big(\frac{d\Gamma(P)}{T}-m\Big)^2\Big)\xi(u\sqrt{T})\Big\rangle\Big) du.
	\end{align*}
	Using 
	$$
	d\Gamma(Ph) = \sum_{\lambda_j \leq \Lambda} \lambda_j \ada(u_j)a(u_j),
	$$
	with creation/annihilation operators $\ada(u_j),a(u_j)$,  we have for $u \in P\mathfrak{h}$ that
	\begin{align} \label{xi-uT-P}
	\begin{aligned}
		\langle \xi(u\sqrt{T}), d\Gamma(Ph)\xi(u\sqrt{T})\rangle &= \sum_{\lambda_j \leq \Lambda} \lambda_j \langle \xi(u\sqrt{T}), \ada(u_j)a(u_j)\xi(u\sqrt{T})\rangle\\
		&= \sum_{\lambda_j \leq \Lambda} \lambda_j \langle a(u_j)\xi(u\sqrt{T}), a(u_j)\xi(u\sqrt{T})\rangle\\
		&=\sum_{\lambda_j \leq \Lambda} \lambda_j |\langle u_j, u\sqrt{T}\rangle|^2 \|\xi(u\sqrt{T})\|^2_{\mathcal{F}}\\
		&= T\sum_{\lambda_j \leq \Lambda} \lambda_j |\langle u, u_j\rangle|^2\\
		&=T\langle u, hu\rangle,
	\end{aligned}
	\end{align}
	where we used
	$$
	a(f)\xi(u)=\langle f, u\rangle \xi(u), \quad \|\xi(u)\|_{\mathcal{F}}=1.
	$$
	Similarly, we have
	$$
	\left(\frac{d\Gamma(P)}{T}-m\right)^2 = \frac{1}{T^2}(d\Gamma(P))^2 -\frac{2m}{T}d\Gamma(P)+m^2
	$$
	with
	$$
	d\Gamma(P) = \sum_{\lambda_j \leq \Lambda} \ada(u_j)a(u_j).
	$$
	We observe that
	\begin{align*}
		(d\Gamma(P))^2 &= \sum_{\lambda_j, \lambda_l \leq \Lambda} \ada(u_j)a(u_j)\ada(u_l)a(u_l) \\
		&=\sum_{\lambda_j \leq \Lambda} \ada(u_j) a(u_j) \ada(u_j) a(u_j) +\sum_{\lambda_j, \lambda_l \leq \Lambda \atop j \ne l} \ada(u_j)\ada(u_l)a(u_j) a(u_l) \\
		&=\sum_{\lambda_j\leq \Lambda} \ada(u_j) \ada(u_j) a(u_j) a(u_j) + \sum_{\lambda_j \leq \Lambda} \ada(u_j)a(u_j) +\sum_{\lambda_j, \lambda_l \leq \Lambda \atop j\ne l} \ada(u_j)\ada(u_l)a(u_j) a(u_l) \\
	\end{align*}
	where we used the canonical commutation relations
	$$
	a(u_j)\ada(u_l) = \ada(u_l)a(u_j) - [\ada(u_l),a(u_j)]=\ada(u_l)a(u_j)-\delta_{jl}. 
	$$
	Thus we get for $u\in P\mathfrak{h}$,
	\begin{align*}
		\langle \xi(u\sqrt{T}), d\Gamma(P)\xi(u\sqrt{T})\rangle &= \sum_{\lambda_j\leq \Lambda} \langle \xi(u\sqrt{T}), \ada(u_j)a(u_j)\xi(u\sqrt{T})\rangle\\
		&=T \sum_{\lambda_j \leq \Lambda} |\langle u_j, u\rangle|^2\\
		&= T\langle u, u\rangle
	\end{align*}
	and
	\begin{align*}
		\langle \xi(u\sqrt{T}), (d\Gamma(P))^2\xi(u\sqrt{T})\rangle &= \sum_{\lambda_j \leq \Lambda} \langle \xi(u\sqrt{T}), \ada(u_j)\ada(u_j) a(u_j) a(u_j)\xi(u\sqrt{T})\rangle\\
		&\quad - \sum_{\lambda_j \leq \Lambda} \langle \xi(u\sqrt{T}), \ada(u_j) a(u_j) \xi(u\sqrt{T})\rangle\\
		&\quad + \sum_{\lambda_j, \lambda_l \leq \Lambda \atop j \ne l} \langle \xi(u\sqrt{T}), \ada(u_j)\ada(u_l) a(u_j)a(u_l) \xi(u\sqrt{T}) \rangle \\
		&= -T\langle u, u\rangle + T^2\sum_{\lambda_j\leq \Lambda} |\langle u_j, u\rangle|^4 + T^2\sum_{\lambda_j, \lambda_l \leq \Lambda \atop j \ne l} |\langle u_j, u\rangle|^2|\langle u_l, u\rangle|^2 \\
		&= T^2 \Big(\sum_{\lambda_j\leq \Lambda}|\langle u_j, u\rangle|^2\Big)^2 - T\langle u, u\rangle \\
		&= T^2 (\langle u, u\rangle)^2 - T\langle u, u\rangle.
	\end{align*}
	Thus we obtain for $u \in P \mathfrak h$,
	\begin{align*}
		&\Big\langle \xi(u\sqrt{T}),\Big(\frac{d\Gamma(P)}{T}-m\Big)^2\xi(u\sqrt{T})\Big\rangle \\
		&\quad=\frac{1}{T^2}\langle \xi(u\sqrt{T}), (d\Gamma(P))^2 \xi(u\sqrt{T})\rangle -\frac{2m}{T}\langle \xi(u\sqrt{T}), d\Gamma(P)\xi(u\sqrt{T})\rangle +m^2\\
		&\quad =\frac{1}{T^2}\left(T^2(\langle u, u\rangle)^2 -T\langle u,u\rangle\right) -\frac{2m}{T} T\langle u, u\rangle +m^2 \\
		&\quad = \left(\langle u, u\rangle -m\right)^2 - \frac{1}{T}\langle u, u\rangle.
	\end{align*}
	Putting the above identities together, we get
	\begin{align*}
		&{\Tr}\Big[\exp\Big(-\frac{1}{T}d\Gamma(Ph) -\frac{1}{\epsilon}\Big(\frac{d\Gamma(P)}{T}-m\Big)^2\Big)\Big] \\
		&\quad \geq \Big(\frac{T}{\pi}\Big)^{{\Tr}[P]} \int_{P\mathfrak{h}} \exp\Big(-\langle u,hu\rangle -\frac{1}{\epsilon}\left(\langle u, u\rangle -m\right)^2+\frac{1}{T\epsilon} \langle u, u\rangle\Big)du \\
		&\quad \geq \Big(\frac{T}{\pi}\Big)^{{\Tr}[P]} \int_{P\mathfrak{h}} \exp\Big(-\langle u,hu\rangle -\frac{1}{\epsilon}\left(\langle u, u\rangle -m\right)^2\Big)du.
	\end{align*}
	Observe also that for $u \in P\mathfrak{h}$, 
	$$
	e^{-\langle u, hu\rangle} du = \prod_{\lambda_j\leq \Lambda} e^{-\lambda_j|\alpha_j|^2} d\alpha_j= \Big(\prod_{\lambda_j \leq \Lambda} \frac{\pi}{\lambda_j}\Big) d\mu_{0,\Lambda}(u).
	$$
	Thus,
	\begin{align*}
		&{\Tr}\Big[\exp\Big(-\frac{1}{T}d\Gamma(Ph) -\frac{1}{\epsilon}\Big(\frac{d\Gamma(P)}{T}-m\Big)^2\Big)\Big] \\
		&\quad \geq \Big(\prod_{\lambda_j\leq \Lambda} \frac{T}{\lambda_j}\Big) \int_{P\mathfrak{h}} \exp\Big(-\frac{1}{\epsilon}(\langle u, u\rangle -m)^2\Big)d\mu_{0,\Lambda}(u),
	\end{align*}
	hence
	\begin{align*}
		&-\log \Bigg(\frac{{\Tr}\Big[\exp\left(-\frac{1}{T}d\Gamma(Ph) -\frac{1}{\epsilon}\Big(\frac{d\Gamma(P)}{T}-m\right)^2\Big)\Big]}{{\Tr}\Big[\exp\Big(-\frac{1}{T}d\Gamma(Ph)\Big)\Big]}\Bigg)\\
		&\quad \leq -\log \Bigg(\Big(\prod_{\lambda_j\leq \Lambda} \frac{T(1-e^{-\lambda_j/T})}{\lambda_j}\Big) \int_{P\mathfrak{h}} \exp\Big(-\frac{1}{\epsilon}\Big(\langle u, u\rangle -m\Big)^2\Big)d\mu_{0,\Lambda}(u)\Bigg).
	\end{align*}
	Inserting the above in~\eqref{eq:red fin dim can} we obtain
	\begin{align*}
		-\log\left(\frac{Z_{\epsilon,m,T}}{Z_{0,T}}\right)&\leq -\log \Bigg(\Big(\prod_{\lambda_j\leq \Lambda} \frac{T(1-e^{-\lambda_j/T})}{\lambda_j}\Big) \int_{P\mathfrak{h}} \exp\Big(-\frac{1}{\epsilon}\Big(\langle u, u\rangle -m\Big)^2\Big)d\mu_{0,\Lambda}(u)\Bigg) \\
		&\quad + ({\Tr}[Qh^{-1}Q])^2 + C(m) {\Tr}[Q h^{-1}Q] + C(m) \epsilon^{-1} {\Tr}[Qh^{-1}Q].
	\end{align*}
	We estimate for any $\frac{1}{2}+\frac{1}{s}<q <1$,
	$$
	{\Tr}[Qh^{-1}Q] = \sum_{\lambda_j>\Lambda} \lambda_j^{-1} \leq \Lambda^{-(1-q)} \sum_{\lambda_j >\Lambda} \lambda_j^{-q} \leq \Lambda^{-(1-q)} {\Tr}[h^{-q}],
	$$
	where we recall (see \cite{LewNamRou-17}) that ${\Tr}[h^{-q}]<\infty$ for all $q>1/2+1/s$.	We deduce
	$$
	\begin{aligned}
		-\log \left(\frac{Z_{\epsilon,m,T}}{Z_{0,T}} \right) \leq &- \log\Bigg(\Big(\prod_{\lambda_j\leq \Lambda} \frac{T(1-e^{-\lambda_j/T})}{\lambda_j}\Big) \int_{P\mathfrak{h}} \exp\Big(-\frac{1}{\epsilon}\Big(\langle u, u\rangle -m\Big)^2\Big)d\mu_{0,\Lambda}(u)\Bigg) \\
		&+ C(m,q) \epsilon^{-1} \Lambda^{-(1-q)}
	\end{aligned}
	$$
	for all $\Lambda >0$ sufficiently large and all $\epsilon>0$ sufficiently small. In particular, we have
	$$
	\begin{aligned}
		-\log \left(\frac{Z_{\epsilon,m,T}}{Z_{0,T}} \right) &+ \log\left(\int_{P\mathfrak{h}} \exp\left(-\frac{1}{\epsilon}\left(\langle u, u\rangle -m\right)^2\right)d\mu_{0,\Lambda}(u)\right) \\
		&\leq - \log\Big(\prod_{\lambda_j\leq \Lambda} \frac{T(1-e^{-\lambda_j/T})}{\lambda_j}\Big)+ C(m,q) \epsilon^{-1} \Lambda^{-(1-q)}.
	\end{aligned}
	$$
	To estimate the log term, we write
	$$
	\begin{aligned}
		- \log\Big(\prod_{\lambda_j\leq \Lambda} \frac{T(1-e^{-\lambda_j/T})}{\lambda_j}\Big) &= -\sum_{\lambda_j \leq \Lambda} \log \Big(\frac{T(1-e^{-\lambda_j/T})}{\lambda_j}\Big) \\
		&= \sum_{\lambda_j \leq \Lambda} \log \Big(\frac{\lambda_j/T}{1-e^{-\lambda_j/T}}\Big).
	\end{aligned}
	$$
	Since
	$$
	e^{-\frac{\lambda_j}{T}} = 1 - \frac{\lambda_j}{T} + O\Big(\Big(\frac{\lambda_j}{T}\Big)^2\Big),
	$$
	we have
	$$
	\frac{\lambda_j/T}{1-e^{-\lambda_j/T}} = 1 + O(\lambda_j/T).
	$$
	Since
	$$
	\log(1+x) \leq C x, \quad \forall x\geq 0
	$$
	we get
	$$
	\sum_{\lambda_j \leq \Lambda} \log \left(\frac{\lambda_j/T}{1-e^{-\lambda_j/T}}\right) \leq C \sum_{\lambda_j \leq \Lambda} (\lambda_j/T) \leq C T^{-1} \Lambda^{3/2+1/s},
	$$
	where we have used the Cwikel-Lieb-Rozenbljum law (see~\cite[Lemma D1]{DinRou-23})
	\begin{align} \label{K-boun}
		\#\{\lambda_j : \lambda_j \leq \Lambda\} \sim \Lambda^{1/2+1/s}.
	\end{align}
	Altogether we have proved that
	$$
	\begin{aligned}
	-\log \Big(\frac{Z_{\epsilon,m,T}}{Z_{0,T}} \Big) &+ \log\Big(\int_{P\mathfrak{h}} \exp\Big(-\frac{1}{\epsilon}\left(\langle u, u\rangle -m\right)^2\big)d\mu_{0,\Lambda}(u)\Big) \\
	&\leq C T^{-1} \Lambda^{3/2+1/s} + C(m,q)\epsilon^{-1} \Lambda^{-(1-q)}
	\end{aligned}
	$$
	for any $1/2+1/2<q<1$. With $\epsilon =T^{-a}$ and $\Lambda=T^b$ this reads
	$$
	\begin{aligned}
		-\log \Big(\frac{Z_{\epsilon,m,T}}{Z_{0,T}} \Big) &+ \log\Big(\int_{P\mathfrak{h}} \exp\Big(-\frac{1}{\epsilon}\Big(\langle u, u\rangle -m\Big)^2\Big)d\mu_{0,\Lambda}(u)\Big) \\
		&\leq C T^{b(3/2+1/s) - 1}+ C(m,q) T^{a-b(1-q)}.
	\end{aligned}
	$$
	To make the error small, we need
	\begin{align} \label{cond-b-1}
	\frac{a}{1-q} < b < \frac{2s}{3s+2}.
	\end{align}
	For  $0<a<\frac{s-2}{4s}$ we can always find $q \in (1/2+1/s,1)$ and $b$ so that \eqref{cond-b-1} is satisfied. We pick one and finally obtain the desired upper bound \eqref{uppe-boun-log}.

	\medskip
	
	\noindent\textbf{Step 2. Free energy lower bound.} Let again $P = P_\Lambda$ for $\Lambda = T^b$, $0<b<1$ to be optimized over later.
	
	Let $\Gamma_{\eps,m,T,P}$ be the $P$-localization of $\Gamma_{\epsilon,m,T}$ defined by 
	$$ \Gamma_{\eps,m,T,P} = {\Tr}_{\mathcal{F}(Q\mathfrak{h})}[\mathcal{U}\Gamma_{\eps,m,T}\mathcal{U}^*]$$
	It satisfies (cf~\cite{Lewin-11} or~\cite{Rougerie-LMU})
	$$
	P^{\otimes k} \Gamma^{(k)}_{\epsilon,m,T} P^{\otimes k} = \Gamma^{(k)}_{\epsilon,m,T,P}
	$$
	for all $k\geq 1$. Let 
	$$
	d\mu_{\epsilon,m,T,P}(u) = \Big( \frac{T}{\pi}\Big)^{{\Tr}[P]} \left\langle \xi(u\sqrt{T}), \Gamma_{\epsilon,m,T,P} \xi(u\sqrt{T})\right\rangle_{\mathcal F(P\mathfrak h)} du
	$$
	be the lower symbol of $\Gamma_{\epsilon,m,T}$ on $P\mathfrak h$ at scale $\frac{1}{T}$ and 
	$$
	d\mu_{0,T,P}(u) = \Big( \frac{T}{\pi}\Big)^{{\Tr}[P]} \left\langle \xi(u\sqrt{T}), \Gamma_{0,T,P} \xi(u\sqrt{T})\right\rangle_{\mathcal F(P\mathfrak h)} du
	$$
	be that of the $P$-localization of $\Gamma_{0,T}$. Define further the probability measure
	$$
	d\tilde{\mu}_{\epsilon,m,T,P}(u) := \frac{1}{\tilde{z}_{\epsilon,m,T,P}} \exp \left(-\frac{1}{\epsilon} (\langle u, u\rangle-m)^2\right) d\mu_{0,T,P}(u)
	$$
	with 
	$$
	\tilde{z}_{\epsilon,m,T,P} =\int_{P\mathfrak h} \exp\left(-\frac{1}{\epsilon} (\langle u, u\rangle -m)^2\right) d\mu_{0,T,P}(u).
	$$
	We claim that 
	\begin{align} \label{L1-conv-proof-1}
	\begin{aligned}
	-\log \Big(\frac{Z_{\epsilon,m,T}}{Z_{0,T}}\Big)&\geq 
	 \mathcal H_{\text{cl}}(\mu_{\epsilon,m,T,P}, \tilde{\mu}_{\epsilon,m,T,P}) - \log (\tilde{z}_{\epsilon,m,T,P}) \\
	&\quad - C(m) T^{-(1-a-b(1/2+1/s))} -C(m) T^{-(b(1-p)-a)} \log\left(T^a\right),
	\end{aligned}
	\end{align}
	The second term of the right-hand side will be shown to match the main term of the upper bound from Step 1, hence proving~\eqref{eq:class part relax}. The first term will provide the desired control leading to the convergence of density matrices.  
	
	By (elements of the proof of) the Berezin-Lieb type inequality from~\cite[Theorem 7.1]{LewNamRou-15} or~\cite[Theorem 5.9]{LewNamRou-20}), the entropy term is bounded from below by
	\begin{align} \label{entro-term}
	\mathcal H(\Gamma_{\epsilon,m,T},\Gamma_{0,T}) \geq \mathcal H(\Gamma_{\epsilon,m,T,P}, \Gamma_{0,T,P}) \geq \mathcal H_{\text{cl}} (\mu_{\epsilon,m,T,P}, \mu_{0,T,P}).
	\end{align}
	For the interaction term, we write
	\begin{align*}
	{\Tr}_{\mathcal F(\mathfrak h)}&\Big[\Big(\frac{\mathcal N}{T}-m\Big)^2\Gamma_{\epsilon,m,T}\Big] \\
	&= {\Tr}_{\mathcal F(P\mathfrak h) \otimes \mathcal F(Q\mathfrak h)} \Big[\mathcal U^*\Big(\frac{\mathcal N}{T}-m\Big)^2\Gamma_{\epsilon,m,T} \mathcal U\Big]\\
	&= {\Tr}_{\mathcal F(P\mathfrak h) \otimes \mathcal F(Q\mathfrak h)} \Big[\mathcal U^*\Big(\frac{\mathcal N}{T}-m\Big)^2 \mathcal U \mathcal U^*\Gamma_{\epsilon,m,T} \mathcal U\Big]\\
	&= {\Tr}_{\mathcal F(P\mathfrak h) \otimes \mathcal F(Q\mathfrak h)} \Big[\Big(\frac{d\Gamma(P) + d\Gamma(Q)}{T}-m\Big)^2 \mathcal U^*\Gamma_{\epsilon,m,T} \mathcal U\Big]\\
	&\geq{\Tr}_{\mathcal F(P\mathfrak h) \otimes \mathcal F(Q\mathfrak h)} \Big[\Big(\Big(\frac{d\Gamma(P)}{T}\Big)^2 -2m\frac{d\Gamma(P)}{T} + m^2 +\Big(\frac{d\Gamma(Q)}{T}\Big)^2-2m\frac{d\Gamma(Q)}{T}\Big) \mathcal U^*\Gamma_{\epsilon,m,T} \mathcal U\Big]\\
	&={\Tr}_{\mathcal F(P\mathfrak h)} \Big[\Big(\Big(\frac{d\Gamma(P)}{T}\Big)^2 -2m\frac{d\Gamma(P)}{T} + m^2 \Big) {\Tr}_{\mathcal F(Q\mathfrak h)}[ \mathcal U^*\Gamma_{\epsilon,m,T} \mathcal U]\Big]\\
	&\quad + {\Tr}_{\mathcal F(Q\mathfrak h)} \Big[\Big(\Big(\frac{d\Gamma(Q)}{T}\Big)^2 -2m\frac{d\Gamma(Q)}{T}\Big) {\Tr}_{\mathcal F(P\mathfrak h)}[ \mathcal U^*\Gamma_{\epsilon,m,T} \mathcal U]\Big] \\
	&= {\Tr}_{\mathcal F(P\mathfrak h)} \Big[\Big(\Big(\frac{d\Gamma(P)}{T}\Big)^2 -2m\frac{d\Gamma(P)}{T} + m^2 \Big) \Gamma_{\epsilon,m,T,P}\Big]\\
	&\quad + {\Tr}_{\mathcal F(Q\mathfrak h)} \Big[\Big(\Big(\frac{d\Gamma(Q)}{T}\Big)^2 -2m\frac{d\Gamma(Q)}{T}\Big) \Gamma_{\epsilon,m,T,Q}\Big]
	\end{align*}
	where the inequality uses the fact that $d\Gamma (P) d\Gamma (Q) \geq 0$ as an operator because $P,Q\geq 0$ and $PQ=0$. Observe next that
	$$
	\begin{aligned}
		{\Tr}_{\mathcal F(P\mathfrak h)} &\Big[\Big(\Big(\frac{d\Gamma(P)}{T}\Big)^2 -2m\frac{d\Gamma(P)}{T} + m^2 \Big) \Gamma_{\epsilon,m,T,P}\Big] \\
		&= \frac{2}{T^2} {\Tr}_{\mathcal F(P\mathfrak h)} \Big[ \binom{d\Gamma(P)}{2} \Gamma_{\epsilon,m,T,P}\Big] + \frac{1}{T}\Big(\frac{1}{T}-2m\Big) {\Tr}_{\mathcal F(P\mathfrak h)} \Big[ \binom{d\Gamma(P)}{1} \Gamma_{\epsilon,m,T,P}\Big] + m^2 \\
		&=2 {\Tr}_{\mathcal F(P\mathfrak h)} \Big[\frac{1}{T^2} \Gamma^{(2)}_{\epsilon,m,T,P} \Big] + \Big(\frac{1}{T}-2m\Big) {\Tr}_{\mathcal F(P\mathfrak h)} \Big[\frac{1}{T} \Gamma^{(1)}_{\epsilon,m,T,P} \Big] + m^2.
	\end{aligned}
	$$
	By omitting a non-negative term and noting that
	$$
	{\Tr}_{\mathcal F(Q\mathfrak h)} [d\Gamma(Q) \Gamma_{\epsilon,m,T,Q}] = {\Tr}[Q\Gamma^{(1)}_{\epsilon,m,T}],
	$$
	we obtain
	$$
	\begin{aligned}
		\frac{1}{\epsilon} {\Tr}&\Big[\Big(\frac{\mathcal N}{T}-m \Big)^2 \Gamma_{\epsilon,m,T} \Big] \\
		&\geq \frac{1}{\epsilon} \Big(2 {\Tr}_{\mathcal F(P\mathfrak h)} \Big[\frac{1}{T^2} \Gamma^{(2)}_{\epsilon,m,T,P} \Big] + \Big(\frac{1}{T}-2m\Big) {\Tr}_{\mathcal F(P\mathfrak h)} \Big[\frac{1}{T} \Gamma^{(1)}_{\epsilon,m,T,P} \Big] + m^2 \Big) -\frac{2m}{\epsilon T} {\Tr}[Q\Gamma^{(1)}_{\epsilon,m,T}].
	\end{aligned}
	$$
	By the de Finetti theorem (see \cite[Lemma 6.2 and Remark 6.4]{LewNamRou-15} or \cite[Theorem 5.8]{LewNamRou-20}), we have for all $k\geq 1$,
	\begin{align} \label{eq:deF}
	\int_{P \mathfrak h} |u^{\otimes k} \rangle \langle u^{\otimes k}| d\mu_{\epsilon,m,T,P}(u) =  \frac{k!}{T^k} \Gamma^{(k)}_{\epsilon,m,T,P} + \frac{k!}{T^k} \sum_{l=0}^{k-1} \binom{k}{l} \Gamma^{(l)}_{\epsilon,m,T,P} \otimes_s \mathds{1}_{(P\mathfrak h)^{k-l}}
	\end{align}
	and
	$$
	\begin{aligned}
	\Big\|\frac{k!}{T^k} \Gamma^{(k)}_{\epsilon,m,T,P} &- \int_{P \mathfrak h} |u^{\otimes k} \rangle \langle u^{\otimes k}| d\mu_{\epsilon,m,T,P}(u) \Big\|_{\mathfrak S^1((P\mathfrak h)^k)} \\
	& \leq \frac{1}{T^{k-l}} \sum_{l=0}^{k-1} \binom{k}{l}^2 \frac{(k-l+{\Tr}[P] -1)!}{({\Tr}[P]-1)!} {\Tr}_{\mathcal F (P \mathfrak h)} \Big[\Big(\frac{d\Gamma(P)}{T} \Big)^l \Gamma_{\epsilon,m,T,P} \Big].
	\end{aligned}
	$$
	In particular
	$$
	\frac{1}{T} \Gamma^{(1)}_{\epsilon,m,T,P} = \int_{P\mathfrak h} |u\rangle \langle u| d\mu_{\epsilon,m,T,P}(u) - \frac{1}{T} P
	$$
	and
	$$
	\frac{1}{T^2} \Gamma^{(2)}_{\epsilon,m,T,P} = \frac{1}{2} \int_{P\mathfrak h} |u^{\otimes 2} \rangle \langle u^{\otimes 2}| d\mu_{\epsilon,m,T,P}(u) - \frac{2}{T^2} \Gamma^{(1)}_{\epsilon,m,T,P} \otimes_s P -\frac{2}{T^2} P \otimes_s P.
	$$
	Thus
	$$
	\begin{aligned}
		2 {\Tr}_{\mathcal F(P\mathfrak h)} &\Big[\frac{1}{T^2} \Gamma^{(2)}_{\epsilon,m,T,P} \Big] + \Big(\frac{1}{T}-2m\Big) {\Tr}_{\mathcal F(P\mathfrak h)} \Big[\frac{1}{T} \Gamma^{(1)}_{\epsilon,m,T,P} \Big] + m^2 \\
		&= \int_{P\mathfrak h} \langle u, u \rangle^2 d\mu_{\epsilon,m,T,P}(u) -\frac{4}{T^2} {\Tr}[P] {\Tr}[\Gamma^{(1)}_{\epsilon,m,T,P}] -\frac{4}{T^2} ({\Tr}[P])^2 \\
		&\quad + \Big(\frac{1}{T}-2m\Big) \int_{P\mathfrak h} \langle u, u \rangle d\mu_{\epsilon,m,T,P}(u) +\Big(\frac{1}{T}-2m\Big) \frac{1}{T}{\Tr}[P] +m^2 \\
		&\geq \int_{P\mathfrak h} (\langle u, u \rangle-m)^2 d\mu_{\epsilon,m,T,P}(u) -\frac{4}{T^2} {\Tr}[P] {\Tr}[\Gamma^{(1)}_{\epsilon,m,T,P}] -\frac{4}{T^2} ({\Tr}[P])^2 -2 m \frac{1}{T}{\Tr}[P].
	\end{aligned}
	$$
	Since (by \eqref{GepsmT-k})
	$$
	\frac{1}{T}{\Tr}[\Gamma^{(1)}_{\epsilon,m,T,P}] = \frac{1}{T} {\Tr}[\Gamma_{\epsilon,m,T}] \leq C(m)
	$$
	and ${\Tr}[P]=\Lambda^{1/2+1/s} \ll T$, we have
	$$
	\begin{aligned}
	2 {\Tr}_{\mathcal F(P\mathfrak h)} \Big[\frac{1}{T^2} \Gamma^{(2)}_{\epsilon,m,T,P} \Big] &+ \Big(\frac{1}{T}-2m\Big) {\Tr}_{\mathcal F(P\mathfrak h)} \left[\frac{1}{T} \Gamma^{(1)}_{\epsilon,m,T,P} \right] + m^2 \\
	&\geq \int_{P\mathfrak h} (\langle u, u \rangle-m)^2 d\mu_{\epsilon,m,T,P}(u) - C(m) \frac{{\Tr}[P]}{T}.
	\end{aligned}
	$$
	In particular, we have
	$$
	\frac{1}{\epsilon} {\Tr}\Big[\Big( \frac{\mathcal N}{T}-m\Big)^2 \Gamma_{\epsilon,m,T} \Big] \geq \frac{1}{\epsilon} \int_{P\mathfrak h} (\langle u, u\rangle-m)^2 d\mu_{\epsilon,m,T,P}(u) - C(m) \frac{{\Tr}[P]}{\epsilon T} -\frac{2m}{\epsilon T} {\Tr}[Q\Gamma^{(1)}_{\epsilon,m,T}].
	$$
	Thanks to \eqref{uppe-boun-parti}, the same argument as in the proof of~\cite[Lemma~8.2]{LewNamRou-15} yields
	$$
	\frac{1}{T} \Gamma^{(1)}_{\epsilon,m,T} \leq C(m) \log \Big(\frac{1}{\epsilon}\Big) h^{-1},
	$$
	hence
	$$
	\frac{1}{T} {\Tr}[Q \Gamma^{(1)}_{\epsilon,m,T}] \leq C(m) \log \Big(\frac{1}{\epsilon} \Big) {\Tr}[Qh^{-1}] \leq C(m,q) \log \Big(\frac{1}{\epsilon} \Big) \Lambda^{-(1-q)}
	$$
	for any $q \in (1/2+1/s,1)$. It follows that
	$$
	\begin{aligned}
	\frac{1}{\epsilon} {\Tr}\Big[\Big( \frac{\mathcal N}{T}-m\Big)^2 \Gamma_{\epsilon,m,T}\Big] &\geq \frac{1}{\epsilon} \int_{P\mathfrak h} (\langle u, u\rangle-m)^2 d\mu_{\epsilon,m,T,P}(u) \\
	&\quad - C(m) \epsilon^{-1} T^{-1} \Lambda^{1/2+1/s} -C(m,q) \epsilon^{-1} \log\Big(\frac{1}{\epsilon}\Big) \Lambda^{-(1-q)}.
	\end{aligned}
	$$
	Since $\epsilon =T^{-a}$ and $\Lambda =T^b$, we have
	\begin{align} \label{intera-term}
	\begin{aligned}
	\frac{1}{\epsilon} {\Tr}\Big[\Big( \frac{\mathcal N}{T}-m\Big)^2 \Gamma_{\epsilon,m,T}\Big] &\geq \frac{1}{\epsilon} \int_{P\mathfrak h} (\langle u, u\rangle-m)^2 d\mu_{\epsilon,m,T,P}(u) \\
	&\quad - C(m) T^{-(1-a-b(1/2+1/s))} -C(m,q) T^{-(b(1-q)-a)} \log(T^a).
	\end{aligned}
	\end{align}
	To make the error small, we need
	$$
	a+b(1/2+1/s) <1, \quad a <b(1-q).
	$$
	These conditions are fulfilled if
	\begin{align} \label{cond-b-2}
	\frac{a}{1-q}<b<\frac{1-a}{1/2+1/s}.
	\end{align}
	Again, for $0<a<\frac{s-2}{4s}$, we can always find $q\in (1/2+1/s,1)$ and $b$ so that \eqref{cond-b-2} is satisfied. Collecting \eqref{entro-term} and \eqref{intera-term} gives~\eqref{L1-conv-proof-1}.

	\medskip
	
	\noindent\textbf{Step 3. Comparison of classical measures.} The lower bound~\eqref{L1-conv-proof-1} involves objects that are similar to the classical measures and partition functions of Section~\ref{sec:def cond meas}, but not exactly identical. We bridge this gap by proving that, with $\epsilon= T^{-a}$ and $\Lambda = T^b$, assuming
		\begin{align} \label{cond-b-3}
		0<b<\frac{1-a}{3/2+1/s},
		\end{align}
		then we have
		\begin{align} \label{L1-conv-proof-2}
		\|\tilde{\mu}_{\epsilon,m,T,P} - \mu_{\epsilon,m}\|_{L^1(P\mathfrak h)} \xrightarrow[T\to \infty]{} 0.
		\end{align}  
	and 
	\begin{equation}\label{eq:part tilde}
			\Big|\frac{1}{\tilde{z}_{\epsilon,m,T,P}} - \frac{1}{z_{\epsilon,m,P}}\Big| \leq C(m) \frac{\Lambda^{3/2+1/s}}{T \epsilon}.
	\end{equation}
	with $$
		z_{\epsilon,m,P}:=  \int_{P\mathfrak h} \exp \left( -\frac{1}{\epsilon} (\langle u, u \rangle -m)^2\right) d\mu_{0,\Lambda}(u).
		$$
		We argue as in \cite[Lemma 9.3]{LewNamRou-20}. First we have
		$$
		\begin{aligned}
			|\tilde{z}_{\epsilon,m,T,P}-z_{\epsilon,m,P}|& =\Big| \int_{P\mathfrak h} \exp \left( -\frac{1}{\epsilon} (\langle u, u \rangle -m)^2\right) (d\mu_{0,T,P}(u)- d\mu_{0,\Lambda}(u)) \Big| \\
			&\leq \|\mu_{0,T,P}-\mu_{0,\Lambda}\|_{L^1(P\mathfrak h)}.
		\end{aligned}
		$$
		Recall that, with $K = \mathrm{rank} (P)$
		$$
		d\mu_{0,\Lambda}(u) = \prod_{j=1}^K \frac{\lambda_j}{\pi} \exp(-\lambda_j|\alpha_j|^2) d\alpha_j, \quad u = \sum_{j=1}^K \alpha_j u_j
		$$
		and
		$$
		\begin{aligned}
			d\mu_{0,T,P}(u) &= \left(\frac{T}{\pi}\right)^K \left\langle \xi(u\sqrt{T}), \Gamma_{0,T,P} \xi(u\sqrt{T})\right\rangle_{\mathcal F(P\mathfrak h)} du \\
			&= \Big(\frac{T}{\pi}\Big)^K \frac{1}{{\Tr}\Big[\exp \Big(-\frac{1}{T} d\Gamma(Ph) \Big) \Big]}\Big\langle \xi(u\sqrt{T}), \exp\Big(-\frac{1}{T} d\Gamma(Ph) \Big) \xi(u\sqrt{T})\Big\rangle_{\mathcal F(P\mathfrak h)} du.
		\end{aligned}
		$$
		By the Peierls-Bogoliubov inequality, we have (see \eqref{xi-uT-P}) for $u \in P\mathfrak h$,
		$$
		\begin{aligned}
		\Big\langle \xi(u\sqrt{T}), \exp\Big(-\frac{1}{T} d\Gamma(Ph) \Big) \xi(u\sqrt{T})\Big\rangle &\geq \exp \Big(-\langle \xi(u\sqrt{T}), \frac{1}{T}d\Gamma(Ph) \xi(u\sqrt{T}) \rangle \Big) \\
		& =\exp \Big(-\sum_{j=1}^K \lambda_j |\alpha_j|^2\Big).
		\end{aligned}
		$$
		By \eqref{Z0T-P}, we get
		$$
		d\mu_{0,T,P}(u) \geq \Big(\prod_{j=1}^K \frac{T}{\lambda_j} (1-e^{-\lambda_j/T})\Big) d\mu_{0,\Lambda}(u).
		$$
		Using 
		$$
		\frac{1-e^{-x}}{x} \geq 1-\frac{x}{2}, \quad \forall x > 0
		$$
		and
		$$
		\prod_{j=1}^K (1-a_j) \geq 1- \sum_{j=1}^K a_j, \quad \forall 0<a_j<1,
		$$
		we see that
		$$
		\begin{aligned}
			\prod_{j=1}^K \frac{T}{\lambda_j}(1-e^{-\lambda_j/T}) &\geq \prod_{j=1}^K \Big(1-\frac{\lambda_j}{2T}\Big) \\
			&\geq 1- \frac{1}{2T}\sum_{j=1}^K\lambda_j \\
			&\geq 1- \frac{1}{2T} \Lambda \cdot \#\{\lambda_j : \lambda_j \leq \Lambda\} \\
			&\geq 1- \frac{C}{T} \Lambda^{\frac{3}{2}+\frac{1}{s}}
		\end{aligned}
		$$
		hence
		$$
		d\mu_{0,T,P}(u) \geq \Big(1- \frac{C}{T} \Lambda^{3/2+1/s}\Big) d\mu_{0,\Lambda}(u).
		$$
		This shows that
		$$
		(d\mu_{0,T,P}(u)-d\mu_{0,\Lambda}(u))_- \leq \frac{C}{T} \Lambda^{3/2+1/s} d\mu_{0,\Lambda}(u),
		$$
		where $f_-=\max(-f,0)$ is the negative part. Integrating over $u \in P\mathfrak h$, we get
		$$
		\int_{P\mathfrak h} (d\mu_{0,T,P}(u)-d\mu_{0,\Lambda}(u))_-  \leq  \frac{C}T \Lambda^{3/2+1/s}.
		$$
		Since $\mu_{0,T,P}$ and $\mu_{0,\Lambda}$ are probability measures and $f=f_+-f_-$, we have
		$$
		0= \int_{P\mathfrak h} d\mu_{0,T,P}(u)- d\mu_{0,\Lambda}(u) = \int_{P\mathfrak h} (d\mu_{0,T,P}(u)-d\mu_{0,\Lambda}(u))_+ - \int_{P\mathfrak h} (d\mu_{0,T,P}(u)-d\mu_{0,\Lambda}(u))_-.
		$$
		Because $|f|=f_+ + f_-$, we obtain
		$$
		\int_{P\mathfrak h} |d\mu_{0,T,P}(u)-d\mu_{0,\Lambda}(u)| \leq 2 \int_{P\mathfrak h} (d\mu_{0,T,P}(u)-d\mu_{0,\Lambda}(u))_- \leq \frac{C}T \Lambda^{3/2+1/s}.
		$$
		This shows that
		$$
		|\tilde{z}_{\epsilon,m,T,P}-z_{\epsilon,m,P}| \leq \frac{C}{T} \Lambda^{3/2+1/s}.
		$$
		Since $z_{\epsilon,m,P} \geq C(m) \sqrt{\epsilon}$ (cf Section~\ref{sec:def cond meas}), we infer that
		$$
		\begin{aligned}
			\tilde{z}_{\epsilon,m,T,P} &\geq z_{\epsilon,m,P} -\frac{C}{T}\Lambda^{3/2+1/s} \\
			&\geq C(m)\sqrt{\epsilon} 
		\end{aligned}
		$$
		because of $\Lambda^{3/2+1/s} \ll T\sqrt{\epsilon}$ due to \eqref{cond-b-3}. In particular, we have
		$$
		\begin{aligned}
			\Big|\frac{1}{\tilde{z}_{\epsilon,m,T,P}} - \frac{1}{z_{\epsilon,m,P}}\Big| &= \frac{|\tilde{z}_{\epsilon,m,T,P}-z_{\epsilon,m,P}|}{\tilde{z}_{\epsilon,m,T,P} z_{\epsilon,m,P}} \\
			&\leq C(m) \frac{\Lambda^{3/2+1/s}}{T \epsilon}.
		\end{aligned}
		$$
		We now estimate
		$$
		\begin{aligned}
			\|\tilde{\mu}_{\epsilon,m,T,P}-\mu_{\epsilon,m,P}\|_{L^1(P\mathfrak h)} &\leq \Big|\frac{1}{\tilde{z}_{\epsilon,m,T,P}}-\frac{1}{z_{\epsilon,m,P}}\Big| \int_{P\mathfrak h} \exp \Big(-\frac{1}{\epsilon}(\langle u, u\rangle -m)^2\Big) d\mu_{0,T,P}(u) \\
			&\quad + \int_{P\mathfrak h} \frac{1}{z_{\epsilon,m,P}} \exp\Big(-\frac{1}{\epsilon}(\langle u,u\rangle-m)^2\Big) |d\mu_{0,T,P}(u)-d\mu_{0,\Lambda}(u)| \\
			&\leq \Big|\frac{1}{\tilde{z}_{\epsilon,m,T,P}}-\frac{1}{z_{\epsilon,m,P}}\Big| + \frac{1}{z_{\epsilon,m,P}} \|\mu_{0,T,P}-\mu_{0,\Lambda}\|_{L^1(P\mathfrak h)} \\
			&\leq C(m) \frac{\Lambda^{3/2+1/s}}{T \epsilon} + C(m)\frac{\Lambda^{3/2+1/s}}{T \sqrt{\epsilon}} \\
			&\leq C(m) \frac{\Lambda^{3/2+1/s}}{T \epsilon} \\
			&= C(m) T^{-(1-a - b(3/2+1/s))} \\
			& \xrightarrow[T\to \infty]{} 0
		\end{aligned}
		$$
		thanks to \eqref{cond-b-3}.

		\medskip
		
		\noindent\textbf{Step 4. Convergence of lower symbols and density matrices.} Combining~\eqref{uppe-boun-log} with~\eqref{L1-conv-proof-1} we conclude that 
		$$
	\lim_{T\to \infty} \mathcal H_{\text{cl}}(\mu_{\epsilon,m,T,P}, \tilde{\mu}_{\epsilon,m,T,P}) =0
	$$
	provided that the conditions \eqref{cond-b-1}, \eqref{cond-b-2} and \eqref{cond-b-3} are satisfied simultaniously. These conditions require
	$$
	\frac{a}{1-q} <b<\frac{1-a}{3/2+1/s}
	$$
	for some $q \in (1/2+1/s,1)$. This is possible if we impose
	$$
	0<a<\frac{1-q}{5/2+1/s-q}.
	$$
	Since $q$ can take any value in $(1/2+1/s,1)$, we can match this condition if
	$$
	0<a<\frac{1}{2}\left(\frac{1}{2}-\frac{1}{s}\right).
	$$
	Using Pinsker's inequality (see \cite{CarLie-14})
	$$
	\mathcal H_{\text{cl}}(\mu,\nu) \geq \frac{1}{2} (\|\mu-\nu\|_{L^1})^2,
	$$
	we get
	\begin{align} \label{L1-conv-proof-3}
		\|\mu_{\epsilon,m,T,P}-\tilde{\mu}_{\epsilon,m,T,P}\|_{L^1(P\mathfrak h)} \xrightarrow[T\to \infty]{} 0.
	\end{align}
	Combining with \eqref{L1-conv-proof-2} we deduce
        \begin{align} \label{L1-conv-proof-4}
		\|\mu_{\epsilon,m,T,P}-\mu_{\epsilon,m}\|_{L^1(P\mathfrak h)} \xrightarrow[T\to \infty]{} 0.
	\end{align}
Let now $K$ be a bounded operator on $(P\mathfrak h)^k$. Let $\delta>0$ and set $f(u) = \langle u^{\otimes k}, K u^{\otimes k}\rangle$. For $A>0$ to be chosen later, we estimate
	$$
	\begin{aligned}
		\Big|\int_{P\mathfrak h} f(u) (d\mu_{\epsilon,m,T,P}(u)-d\mu_{\epsilon,m}(u)) \Big| &\leq \Big|\int_{u \in P\mathfrak h, \|u\|\leq A} f(u) (d\mu_{\epsilon,m,T,P}(u)-d\mu_{\epsilon,m}(u)) \Big| \\ 
		&\quad +\Big|\int_{u \in P\mathfrak h,\|u\|>A} f(u) d\mu_{\epsilon,m,T,P}(u)\Big| + \Big|\int_{u \in P\mathfrak h, \|u\|>A} f(u) d\mu_{\epsilon,m}(u)\Big|.
	\end{aligned}
	$$
	Since $|f(u)| \leq \|K\|_{\infty} \|u\|^{2k}$, we have
	$$
	\begin{aligned}
		\Big|\int_{P\mathfrak h} f(u) (d\mu_{\epsilon,m,T,P}(u)-d\mu_{\epsilon,m}(u)) \Big| &\leq \|K\|_\infty A^{2k} \|\mu_{\epsilon,m,T,P} -\mu_{\epsilon,m}\|_{L^1(P\mathfrak h)} \\
		&\quad + A^{-2k} \left( \int_{P\mathfrak h} \|u\|^{4k} d\mu_{\epsilon,m,T,P}(u) + \int \|u\|^{4k} d\mu_{\epsilon,m}(u)\right).
	\end{aligned}
	$$
	By the de Finetti theorem~\eqref{eq:deF} and \eqref{NTk-GepsmT-est}, we have
	$$
	\int_{P\mathfrak h} \|u\|^{4k} d\mu_{\epsilon,m,T,P}(u) \leq C(k,m)
	$$
	uniformly in $\epsilon>0$. On the other hand, from Proposition~\ref{pro:class lim relax} we have
	$$
	\int \|u\|^{4k} d\mu_{\epsilon,m}(u) \leq C(k,m)
	$$
	uniformly in $\epsilon>0$ small. 
	
	By choosing $A=A(\delta)>0$ large, we have
	$$
	A^{-2k} \left( \int_{P\mathfrak h} \|u\|^{4k} d\mu_{\epsilon,m,T,P}(u) + \int \|u\|^{4k} d\mu_{\epsilon,m}(u)\right) <\delta/2.
	$$
	With $A$ chosen above, we can use \eqref{L1-conv-proof-4} to make
	$$
	\|K\|_\infty A^{2k} \|\mu_{\epsilon,m,T,P} -\mu_{\epsilon,m}\|_{L^1(P\mathfrak h)} <\delta/2.
	$$
	This shows that, for any bounded operator $K$
	$$
	\Big|\int_{P\mathfrak h} \langle u^{\otimes k}, K u^{\otimes k} \rangle d\mu_{\epsilon,m,T,P}(u) - \int_{P\mathfrak h} \langle u^{\otimes k}, K u^{\otimes k} \rangle d\mu_{\epsilon,m}(u)\Big| \xrightarrow{T\to \infty } 0.
	$$
	Combining with the de Finetti theorem~\eqref{eq:deF} and \eqref{NTk-GepsmT-est} we deduce that 
	$$
	\Big\|\frac{k!}{T^k} P^{\otimes k} \Gamma^{(k)}_{\epsilon,m,T} P^{\otimes k} - \int |(Pu)^{\otimes k} \rangle \langle (Pu)^{\otimes k}| d\mu_{\epsilon,m}(u) \Big\|_{\mathfrak S^1((P\mathfrak h)^k)} \to 0.
	$$
	It follow that for any fixed $\phi_k\in \gh^k$
	$$\frac{k!}{T^k} \left\langle\phi_k, \Gamma^{(k)}_{\epsilon,m,T} \phi_k\right\rangle - \int \left|\left\langle\phi_k, u^{\otimes k} \right\rangle\right|^2 d\mu_{\epsilon,m}(u) \to 0$$
	and~\eqref{limi-cano 2} follows from combining with uniform bounds in trace-class norm that we have already used. 
	\end{proof}
	
\subsection{Convergence of the relaxed Gibbs state to the canonical Gibbs state}\label{sec:relax}
	
	We now study the limit, when $\epsilon \to 0$, of the relaxed free canonical Gibbs state $\Gamma_{\epsilon,m,T}$, and prove Proposition~\ref{pro:relax}. 
	
	We have the following immediate relations:
	
	\begin{lemma}[\textbf{Relaxed Gibbs state and canonical Gibbs state}] \label{lem-N-sector}\mbox{}\\
		We have
		$$
		\Gamma_{\epsilon,m,T} = \bigoplus_{N\geq 0} (\Gamma_{\epsilon,m,T})_N = \bigoplus_{N\geq 0} \Gamma^c_{N,T,0} a^\eps_N
		$$
		with
		$$
		\Gamma^c_{N,T,0} =\frac{1}{Z^c_{N,T,0}} \exp\left(-\frac{1}{T} H_{N,0}\right)
		$$
		and
		$$
		a^\eps_N=\frac{1}{Z_{\epsilon,m,T}}\exp\left( -\frac{1}{T} \left(F^c_0(N) +\frac{T}{\epsilon}\left(\frac{N}{T}-m \right)^2 \right)\right),
		$$
		where
		$$
		Z^c_{N,T,0} = {\Tr}\left[\exp\left( -\frac{1}{T} H_{N,0}\right)\right], \quad F^c_0(N) = -T\log (Z^c_{N,T,0}).
		$$
		In particular,
		\begin{align} \label{Z-eps-m-T}
		Z_{\epsilon,m,T} = \sum_{N\geq 0} \exp\left( -\frac{1}{T} \left(F^c_0(N) +\frac{T}{\epsilon}\left(\frac{N}{T}-m \right)^2 \right)\right).
		\end{align}
	\end{lemma}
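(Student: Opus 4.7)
The statement is essentially a bookkeeping exercise exploiting that both operators inside the exponential defining $\Gamma_{\eps,m,T}$ are diagonal in the particle-number grading of the Fock space $\gF(\gh)$. I would proceed as follows.

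First, recall that $d\Gamma(h) = \bigoplus_{N\geq 0} H_{N,0}$ (with $H_{N,0} = \sum_{j=1}^N h_{\bx_j}$ acting on $\gh^N$) and $\cN = \bigoplus_{N\geq 0} N\, \Id_{\gh^N}$. In particular, both $d\Gamma(h)$ and $(\cN/T - m)^2$ are block-diagonal with respect to the Fock-space decomposition, and they commute. Hence the exponential in the definition~\eqref{eq:relax Gibbs} of $\Gamma_{\eps,m,T}$ can itself be written as a direct sum over $N$:
\[
\exp\!\left(-\frac{1}{T}d\Gamma(h) - \frac{1}{\eps}\Big(\tfrac{\cN}{T}-m\Big)^2\right) = \bigoplus_{N\geq 0} \exp\!\left(-\frac{1}{T}H_{N,0}\right) \exp\!\left(-\frac{1}{\eps}\Big(\tfrac{N}{T}-m\Big)^2\right).
\]
The second factor on each sector is a scalar, so it can be pulled out. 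Dividing by $Z_{\eps,m,T}$ and using the definitions $\Gamma^c_{N,T,0} = Z^c_{N,T,0}{}^{-1}\exp(-H_{N,0}/T)$ and $F^c_0(N) = -T\log Z^c_{N,T,0}$, this gives exactly the claimed decomposition $(\Gamma_{\eps,m,T})_N = \Gamma^c_{N,T,0}\, a^\eps_N$.

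Second, for the identity~\eqref{Z-eps-m-T}, I would take the trace of both sides of the decomposition. Since $\Tr_{\gh^N}[\Gamma^c_{N,T,0}]=1$ for every $N$, the normalization condition $\Tr[\Gamma_{\eps,m,T}]=1$ reads $\sum_{N\geq 0} a^\eps_N = 1$, which upon multiplication by $Z_{\eps,m,T}$ yields precisely~\eqref{Z-eps-m-T}. Alternatively, and perhaps more transparently, one can compute $Z_{\eps,m,T} = \Tr[\exp(-(d\Gamma(h) + (T/\eps)(\cN/T-m)^2)/T)]$ directly by decomposing the trace over $\gF(\gh)$ as a sum of partial traces over each $\gh^N$ and using that $(\cN/T-m)^2$ acts as the scalar $(N/T-m)^2$ on the $N$-th sector.

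There is no real obstacle here: the only thing to check carefully is that one may freely factor the exponential across commuting block-diagonal operators, and that the scalar factor $\exp(-(N/T-m)^2/\eps)$ can be pulled out of the partial trace on each sector. Both are elementary. The lemma's usefulness is conceptual rather than technical: it will allow the later analysis (in the proof of Proposition~\ref{pro:relax}) to recast $\Gamma_{\eps,m,T}$ as a convex combination of the canonical states $\Gamma^c_{N,T,0}$ weighted by a sharply peaked discrete distribution $(a^\eps_N)_N$ concentrating on $N = mT$ as $\eps\to 0$.
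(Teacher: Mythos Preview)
Your proposal is correct and is precisely the argument the paper has in mind: the lemma is introduced as giving ``immediate relations'' and no proof is supplied, because the block-diagonal structure of $d\Gamma(h)$ and $(\cN/T-m)^2$ in the particle-number grading makes the decomposition a direct computation, exactly as you wrote it out.
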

	
	We will need to control the dependence on the particle number of the free canonical Gibbs state:
	
	\begin{lemma}[\textbf{Dependence of the canonical ensemble on the particle number}]\label{lem-cano-diff}\mbox{}\\
	For each $\phi_k \in \mathfrak h^k$, there exists $C>0$ such that 
	$$\left|\left\langle \phi_k | (\Gamma^c_{N,T,0})^{(k)} - (\Gamma^c_{N+1,T,0})^{(k)}| \phi_k\right\rangle\right| \leq C N^{k-1}.$$
	In particular, for $M>N$,
	$$\left|\left\langle \phi_k | (\Gamma^c_{N,T,0})^{(k)} - (\Gamma^c_{M,T,0})^{(k)}| \phi_k\right\rangle\right| \leq C N^{k-1} (M-N).$$
	\end{lemma}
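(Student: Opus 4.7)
My plan is as follows. The second (telescoped) bound follows from the first by summation: $\sum_{N'=N}^{M-1} C(N')^{k-1}\lesssim (M-N)\max(N,M-1)^{k-1}$, and in the intended regime $M-N\lesssim N$ we have $\max(N,M-1)^{k-1}\lesssim N^{k-1}$, giving the announced $C N^{k-1}(M-N)$. I thus concentrate on the single-step estimate for $M=N+1$.

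\textbf{Step 1 — second-quantized reduction.} Expressing $k!\,\langle\phi_k|(\Gamma^c_{N,T,0})^{(k)}|\phi_k\rangle = \Tr_{\gh^N}[\ada(\phi_k)\, a(\phi_k)\,\Gamma^c_{N,T,0}]$ and expanding $\phi_k = \sum_{\vec i} c_{\vec i}\, u_{i_1}\otimes\cdots\otimes u_{i_k}$ in the eigenbasis of $h$, the fact that $\Gamma^c_{N,T,0}$ commutes with each occupation operator $\hat{n}_i$ implies that only index pairs $(\vec i,\vec j)$ sharing the same multiset contribute, and the bilinear form reduces to a weighted sum of expectations $\langle \hat{n}_{i_1}^{\alpha_1}\cdots \hat{n}_{i_r}^{\alpha_r}\rangle_{\Gamma^c_{N,T,0}}$ with $\sum_\ell\alpha_\ell = k$. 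It then suffices to control the $N\to N+1$ variation of such expectations by $O(N^{k-1})$ and to sum against the $|c_{\vec i}|^2$, which produces $\|\phi_k\|^2$ at the end.

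\textbf{Step 2 — generating functions and Cannon's recursion.} Setting $x_j = e^{-\lambda_j/T}$, the generating function $\sum_N z^N Z^c_{N,T,0} = \prod_j (1-zx_j)^{-1}$ gives, for distinct indices $i_1,\ldots,i_r$, the identity
\begin{equation*}
\langle \hat{n}_{i_1}\cdots \hat{n}_{i_r}\rangle_N = \sum_{\substack{n_1,\ldots,n_r\geq 1 \\ |\vec n|\leq N}} x_{i_1}^{n_1}\cdots x_{i_r}^{n_r}\,\frac{Z^c_{N-|\vec n|,T,0}}{Z^c_{N,T,0}},
\end{equation*}
with analogous formulas for the higher powers $\hat{n}_i^{\alpha}$. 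The analysis thereby reduces to the $M$-variation of the ratios $R_M := Z^c_{M,T,0}/Z^c_{M+1,T,0}$. The Cannon recursion $N Z^c_{N,T,0} = \sum_{n\geq 1} p_n\,Z^c_{N-n,T,0}$ with $p_n = \Tr[e^{-nh/T}]$ furnishes the needed uniform control on $R_M$ and on its discrete derivative $R_{M+1}-R_M$. Writing $Z^c_{N-\ell,T,0}/Z^c_{N,T,0} = R_{N-\ell}R_{N-\ell+1}\cdots R_{N-1}$ as a telescoping product, the difference $\frac{Z^c_{N+1-\ell,T,0}}{Z^c_{N+1,T,0}} - \frac{Z^c_{N-\ell,T,0}}{Z^c_{N,T,0}}$ decomposes into $\ell$ single-increment pieces each controlled by the Cannon estimate. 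Integrating against $\prod_j x_{i_j}^{n_j}$ and summing against $|c_{\vec i}|^2$ yields the target $O(N^{k-1})$ bound, one factor of $N$ below the trivial trace bound $\binom{N}{k}$ on $(\Gamma^c_{N,T,0})^{(k)}$.

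\textbf{Main obstacle.} The delicate point is obtaining the uniform-in-$M$ bound on $R_{M+1}-R_M$ in Step 2 that underpins precisely the $N^{k-1}$ scaling (rather than a worse $N^{k-1+\epsilon}$). This is exactly where the combinatorial considerations from Cannon's 1973 paper are essential, playing the role of the Wick-type factorizations that control the analogous grand-canonical computation but that are unavailable in the canonical ensemble, as emphasized already in the strategy section of the introduction.
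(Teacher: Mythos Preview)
Your Step~1 reduction is sound and matches the paper's. Your Step~2, however, takes a genuinely different route and leaves the decisive estimate unproven.

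The paper does \emph{not} analyze the smoothness of the partition-function ratios $R_M=Z^c_{M,T,0}/Z^c_{M+1,T,0}$, nor does it use the recursion $N Z^c_{N,T,0}=\sum_{n\ge1}p_n Z^c_{N-n,T,0}$. Instead it clears denominators by multiplying the difference of expectations by $Z^c_{N,T,0}Z^c_{N+1,T,0}$ and rewrites the resulting double sum as
\[
\sum_{|g|=2N+1} G(g)\Big(\sum_{\substack{n\le g\\|n|=N}} f(n)-\sum_{\substack{m\le g\\|m|=N+1}} f(m)\Big),\qquad f(n)=n_{j_1}\cdots n_{j_k}.
\]
The input from Cannon is a purely combinatorial bijection $\Phi:S_N\to S_{N+1}$ (with $S_N=\{n\le g:|n|=N\}$) that alters exactly one coordinate by $+1$; hence $|f(n)-f(\Phi(n))|\le N^{k-1}$ pointwise. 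Summing back recovers $Z^c_{N,T,0}Z^c_{N+1,T,0}$, and division yields the bound in one stroke. No information on $R_M$ or its increments is needed.

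The ``main obstacle'' you identify is therefore a real gap in your argument, not just a bookkeeping detail. Extracting a uniform $O(1/M)$-type control on $R_{M+1}-R_M$ from the recursion is not straightforward (it implicitly encodes log-concavity plus quantitative convexity of $M\mapsto\log Z^c_{M,T,0}$, uniformly along $T\to\infty$), and even granting it, your telescoped product of $\ell$ factors $R_\bullet$ has size you have not bounded, so pushing the estimate through the $x_i^n$-weighted sums to land precisely on $N^{k-1}$ needs further work. The bijection argument bypasses all of this.
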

	
	We need the following combinatorial lemma from~\cite[Combinatorial Proposition on Page 102]{Cannon-73}:
	
	\begin{lemma}[\textbf{Combinatorics}] \label{lem-Cannon-73}\mbox{}\\
	Given a non-negative integer $N$ and a sequence of non-negative integers $(g_j) \in \mathbb N^{\mathbb N}$ with $|g|=2N+1$ we let 
	$$S_N = \left\{(n_j)\in \mathbb N^{\mathbb N} : |n|= N, n_j \leq g_j, \forall j \right\}.$$
	Then there exists a bijection $\Phi: S_N \to S_{N+1}$ such that for all $n \in S_N$, there exists $j(n)$ so that
	$$(\Phi (n))_j =  \left\{ \begin{array}{ll}  n_j &\text{if } j \ne j(n), \\ n_{j(n)}+1 &\text{if } j= j(n).\end{array} \right. $$
	\end{lemma}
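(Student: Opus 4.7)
My plan is to construct $\Phi$ via a \emph{symmetric chain decomposition} (SCD) of the finite product poset $P := \prod_j [0, g_j]$, ordered componentwise. Since $|g|=2N+1$ forces only finitely many $g_j$ to be nonzero, $P$ is a finite graded poset of rank $R=|g|=2N+1$, and $S_N$ (resp.\ $S_{N+1}$) is exactly its rank-$N$ (resp.\ rank-$(N+1)$) slice. Recall that an SCD of a graded poset of rank $R$ is a partition of its elements into saturated chains $c_0 \prec c_1 \prec \cdots \prec c_\ell$ (consecutive elements related by a cover relation), each chain being symmetric about the middle rank in the sense that $\mathrm{rk}(c_0)+\mathrm{rk}(c_\ell)=R$.

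The first step is to invoke the classical theorem of de Bruijn-Tengbergen-Kruyswijk, which guarantees the existence of an SCD for any finite product of chains. I would either cite this result or reconstruct it by induction on the number of nonzero entries of $g$: given an SCD for the product indexed by $j<J$, one extends each chain $C$ of the smaller SCD by ``threading'' the factor $[0,g_J]$ through the grid $C \times [0,g_J]$, splitting it into symmetric chains for the enlarged poset. This is the combinatorial heart of the argument and the only real obstacle; a transparent explicit construction is the standard bracket-matching encoding, in which each element of $P$ is represented as a $0$-$1$ word of length $|g|$ obtained by concatenating blocks $0^{g_j-n_j}1^{n_j}$, and SCD classes correspond to the unmatched pattern remaining after a greedy parenthesis matching of the $1$'s and $0$'s.

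Once the SCD is in hand, the conclusion is immediate: because $|g|=2N+1$ is odd, every symmetric chain has minimum rank $N-k$ and maximum rank $N+1+k$ for some integer $k\geq 0$, hence contains exactly one element at rank $N$ and exactly one at rank $N+1$, and these two elements are consecutive in the chain, so they differ by exactly one unit in exactly one coordinate. Defining $\Phi(n)$ to be the rank-$(N+1)$ element of the unique SCD chain through $n$, and letting $j(n)$ be the coordinate at which $n$ and $\Phi(n)$ differ, yields the required bijection $\Phi: S_N \to S_{N+1}$ with the stated property.
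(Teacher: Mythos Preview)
Your proposal is correct. The paper itself does not prove this lemma: it simply cites it as the ``Combinatorial Proposition on Page 102'' of Cannon~\cite{Cannon-73}, so there is no in-paper proof to compare against at the level of argument.

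Your route via the de Bruijn--Tengbergen--Kruyswijk symmetric chain decomposition of $\prod_j [0,g_j]$ is a clean and standard way to obtain exactly this conclusion. The key observation you make --- that when the total rank $R=|g|=2N+1$ is odd, every symmetric chain runs from some rank $r\le N$ to rank $2N+1-r\ge N+1$ and hence crosses both middle ranks at consecutive saturated steps --- is precisely what produces the bijection with the single-coordinate-increment property. The only caveat is that the bracket-matching description you sketch is usually stated with the block order $1^{n_j}0^{g_j-n_j}$ rather than $0^{g_j-n_j}1^{n_j}$; either convention works after a global symmetry, but if you write out details you should fix one and check that the matching rule (e.g.\ each $0$ paired with the nearest unmatched $1$ to its left) indeed yields saturated symmetric chains. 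Alternatively, the inductive ``threading'' construction you mention is entirely self-contained and avoids this bookkeeping.
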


	\begin{proof}[Proof of Lemma~\ref{lem-cano-diff}]
	Since the eigenfunctions $(u_j)_{j\geq 1}$ form an orthonormal basis of $\mathfrak h$, $(u_{j_1} \otimes_s ...\otimes_s u_{j_k})_{j_1\leq...\leq j_k}$ is an orthogonal basis of $\mathfrak h^k$. It suffices to consider $\phi_k = u_{j_1} \otimes_s ...\otimes_s u_{j_k}$. We have
	$$
	\left\langle \phi_k | (\Gamma^c_{N,T,0})^{(k)}|\phi_k\right\rangle = \text{Tr}\left[\ada_{j_1}...\ada_{j_k} a_{j_1}...a_{j_k} \Gamma^c_{N,T,0}\right],
	$$
	where $a_{j_i}=a(u_{j_i})$ and $\ada_{j_i} = \ada(u_{j_i})$ are the annihilation and creation operators. Using the canonical commutation relation
	$$
	[a(f),a(g)]=[\ada(f), \ada(g)] =0, \quad [a(f), \ada(g)]=\langle f,g\rangle,
	$$
	we have
	\begin{align}\label{eq:normal order}
		\text{Tr}\left[\ada_{j_1}...\ada_{j_k} a_{j_1}...a_{j_k} \Gamma^c_{N,T,0}\right] &= \text{Tr}\left[\ada_{j_1} a_{j_1} \ada_{j_2} a_{j_2}...\ada_{j_k} a_{j_k} \Gamma^c_{N,T,0}\right] \nonumber \\
		&\quad + \text{ lower order terms in } k
	\end{align}
	where the lower order terms come from normal ordering and thus consist of polynomials of degree at most $k-1$, so that 
	\begin{equation}\label{eq:normal order 2}
	 \left| \text{ lower order terms in } k\right| \leq C N^{k-1}.
	\end{equation}
    We thus focus on the first term in~\eqref{eq:normal order}.
	
	One can represent all the possible configurations of occupation of the energy levels by the sequences $n=(n_j) \in \mathbb N^{\mathbb N}$ such that $|n|:=\sum_j n_j = N$. Since the eigenvalues of $H_{N,0}$ are $\sum_{j} n_j \lambda_j$ with $|n| = N$, the partition function is given by
	$$
	Z^c_{N,T,0} = \sum_{|n| =N} \exp \left(-\frac{1}{T} 
	\sum_j \lambda_j n_j\right).
	$$
	We also have
	$$
	\text{Tr}\left[\ada_{j_1} a_{j_1} \ada_{j_2} a_{j_2}...\ada_{j_k} a_{j_k} \Gamma^c_{N,T,0}\right] = \frac{1}{Z^c_{N,T,0}} \sum_{|n| = N} \exp \left(-\frac{1}{T} 
	\sum_j \lambda_j n_j\right) n_{j_1}...n_{j_k}.
	$$
	For a sequence $n=(n_j) \in \mathbb N^{\mathbb N}$, we denote
	$$
	f(n) = n_{j_1}...n_{j_k}, \quad G(n) = \exp \left(-\frac{1}{T} \sum_j \lambda_j n_j\right).
	$$
	In particular,
	$$
	Z^c_{N,T,0} = \sum_{|n|=N} G(n)
	$$
	and
	$$
	\text{Tr}\left[\ada_{j_1} a_{j_1} \ada_{j_2} a_{j_2}...\ada_{j_k} a_{j_k} \Gamma^c_{N,T,0}\right] = \frac{1}{Z^c_{N,T,0}} \sum_{|n| = N} G(n) f(n).
	$$
	It follows that
	$$
	\begin{aligned}
		\text{I}:&=\left| \text{Tr}\left[\ada_{j_1} a_{j_1} \ada_{j_2} a_{j_2}...\ada_{j_k} a_{j_k} (\Gamma^c_{N,T,0}-\Gamma^c_{N+1,T,0})\right] Z^c_{N,T,0} Z^c_{N+1,T,0} \right| \\
		&=\left|\sum_{|m|=N+1} G(m) \sum_{|n|=N} G(n)f(n) - \sum_{|n|=N} G(n) \sum_{|m|=N+1} G(m)f(m) \right| \\
		&= \left| \sum_{|n|=N} \sum_{|m|=N+1} G(n+m) (f(n)-f(m))\right| \\
		&= \left| \sum_{|g|=2N+1} G(g) \Big( \sum_{n\leq g \atop |n|=N} f(n) - \sum_{m\leq g \atop |m|=N+1} f(m)\Big)\right|,
	\end{aligned}
	$$
	where we set $g=n+m$ and $n\leq g$ means $n_j\leq g_j$ for all $j$. 
	
	Using Lemma~\ref{lem-Cannon-73}, we get
	$$
	\begin{aligned}
		\text{I}=\sum_{|g|=2N+1} G(g) \sum_{n\leq g \atop |n|=N} \left(f(n) -f(\Phi (n))\right).
	\end{aligned}
	$$
	Since $\Phi(n)_j$ is $n_j$ for all $j$ except a single one where $\Phi(n)_j=n_j+1$ and $n_j \leq N$ for all $j$, we have
	$$
	|f(n)-f(\Phi(n))| \leq N^{k-1}.
	$$
	We obtain
	$$
	\begin{aligned}
		\text{I} &\leq N^{k-1} \sum_{|g|=2N+1} G(g) \sum_{n\leq g \atop |n|=N} 1 \\
		&= N^{k-1} \sum_{|n|=N} G(n)\sum_{|m|=N+1} G(m) \\
		&= N^{k-1} Z^c_{N,T,0} Z^c_{N+1,T,0}.
	\end{aligned}
	$$
	Dividing by $Z^c_{N,T,0} Z^c_{N+1,T,0}$ this shows that
	$$
	\left| \text{Tr}\left[\ada_{j_1} a_{j_1} \ada_{j_2} a_{j_2}...\ada_{j_k} a_{j_k} (\Gamma^c_{N,T,0}-\Gamma^c_{N+1,T,0})\right]\right| \leq N^{k-1}.
	$$
	Returning to~\eqref{eq:normal order}-\eqref{eq:normal order 2} completes the proof.
	\end{proof}
	
	We may now complete the
	
	\begin{proof}[Proof of Proposition~\ref{pro:relax}]
	From Lemma~\ref{lem-NTk-GepsmT-est} we deduce that the trace of $T^{-k} \Gamma_{\epsilon,m,T}^{(k)}$ is uniformly bounded. We can thus reduce the convergence~\eqref{limi-T-eps} to the proof of 
		\begin{align} \label{weak-conv-phi}
		\frac{1}{T^k} \left\langle \phi_k \Big|\Gamma_{\epsilon,m,T}^{(k)}-(\Gamma^c_{mT,T,0})^{(k)} \Big| \phi_k \right\rangle \xrightarrow[T\to \infty]{} 0
		\end{align}
		for all $\phi_k \in \mathfrak h^k$.	Since $(u_{j(1)} \otimes_s ... \otimes_s u_{j(k)})_{j(1)\leq ...\leq j(k)}$ forms an orthogonal basis of $\mathfrak h^k$, it suffices to show the above convergence with $\phi_k = u_{j(1)} \otimes_s ... \otimes_s u_{j(k)}$. By the definition, we have
		$$
		\Gamma^{(k)}_{\epsilon,m,T} = \sum_{N\geq k} a_N^\eps (\Gamma^c_{N,T,0})^{(k)}.
		$$
		For $\Delta=\Delta(T) \ll T$ a large parameter to be chosen later, we write
		$$
		\Gamma^{(k)}_{\epsilon,m,T} - (\Gamma^c_{mT,T,0})^{(k)} = \sum_{N\geq k \atop |N-mT|\leq \Delta} a_N^\eps (\Gamma^c_{N,T,0})^{(k)} - (\Gamma^c_{mT,T,0})^{(k)} + \sum_{N\geq k \atop |N-mT| >\Delta} a_N^\eps (\Gamma^c_{N,T,0})^{(k)}.
		$$
		Thus
		$$
		\begin{aligned}
			\frac{1}{T^k} \Big\langle \phi_k \Big|\Gamma^{(k)}_{\epsilon,m,T}-(\Gamma^c_{mT,T,0})^{(k)} \Big| \phi_k \Big\rangle &= \frac{1}{T^k} \Big\langle \phi_k \Big| \sum_{N\geq k \atop |N-mT|\leq \Delta} a_N^\eps (\Gamma^c_{N,T,0})^{(k)} - (\Gamma^c_{mT,T,0})^{(k)} \Big| \phi_k \Big\rangle \\
			&\quad + \frac{1}{T^k} \Big\langle \phi_k \Big| \sum_{N\geq k \atop |N-mT|>\Delta} a_N^\eps (\Gamma^c_{N,T,0})^{(k)} \Big|\phi_k \Big\rangle \\
			&=: (\text{I}) + (\text{II}).
		\end{aligned}
		$$
		For (II), we have
		$$
		\begin{aligned}
			|(\text{II})| &\leq \frac{1}{T^k} \|\phi_k\|^2 \Big\| \sum_{N\geq k \atop |N-mT|>\Delta} a_N^\eps (\Gamma^c_{N,T,0})^{(k)} \Big\|_{\mathfrak S^1(\mathfrak h^k)} \\
			&\leq \frac{1}{T^k} \|\phi_k\|^2 \sum_{N\geq k \atop |N-mT|>\Delta} a_N^\eps \|(\Gamma^c_{N,T,0})^{(k)} \|_{\mathfrak S^1(\mathfrak h^k)}.
		\end{aligned}
		$$
		Recall that
		$$
		(\Gamma^c_{N,T,0})^{(k)} = \binom{N}{k} {\Tr}_{k+1\to N} [\Gamma^c_{N,T,0}].
		$$
		Thus
		$$
		\begin{aligned}
			|(\text{II})| &\leq \frac{1}{T^k} \|\phi_k\|^2 \sum_{N\geq k \atop |N-mT|>\Delta} a_N^\eps \binom{N}{k}.
		\end{aligned}
		$$
		We have
		$$
		\begin{aligned}
			\frac{1}{T^k} \sum_{N\geq k \atop |N-mT|>\Delta} a_N^\eps \binom{N}{k} &= \frac{1}{T^k} \sum_{N\geq k \atop |N-mT|>\Delta} \binom{N}{k} \frac{1}{Z_{\epsilon,m,T}} \exp \left(-\frac{1}{T} F^c_0(N) -\frac{1}{\epsilon} \left(\frac{N}{T}-m\right)^2\right) \\
			&\leq \frac{1}{T^k} \exp\left(-\frac{\Delta^2}{\epsilon T^2}\right) \frac{1}{Z_{\epsilon,m,T}} \sum_{N\geq k} \binom{N}{k} \exp\left(-\frac{1}{T}F^c_0(N)\right) \\ 
			&= \frac{1}{T^k} \exp\left(-\frac{\Delta^2}{\epsilon T^2}\right) \frac{1}{Z_{\epsilon,m,T}} \sum_{N\geq k} \binom{N}{k} \text{Tr}\left[\exp\left(-\frac{1}{T} H_{N,0}\right) \right]\\ 
		\end{aligned}
		$$
		Recall that
		$$
		\Gamma_{0,T} = \frac{1}{Z_{0,T}} \exp \left( -\frac{1}{T} d\Gamma(h)\right) = \frac{1}{Z_{0,T}} \bigoplus_{N\geq 0} \exp \left(-\frac{1}{T} H_{N,0}\right)
		$$
		and
		$$
		\text{Tr}[\Gamma_{0,T}^{(k)}] = \text{Tr}\left[ \binom{\mathcal N}{k} \Gamma_{0,T}\right] = \frac{1}{Z_{0,T}}\sum_{N\geq k} \binom{N}{k} \text{Tr}\left[\exp \left(-\frac{1}{T} H_{N,0}\right) \right].
		$$
		It follows that
		$$
		\sum_{N\geq k} \binom{N}{k} \text{Tr}\left[\exp\left( -\frac{1}{T} H_{N,0}\right) \right] = Z_{0,T} \text{Tr}[\Gamma^{(k)}_{0,T}]
		$$
		hence
		$$
		\frac{1}{T^k} \sum_{N\geq k \atop |N-mT|>\Delta} a_N^\eps \binom{N}{k} \leq \exp \left(-\frac{\Delta^2}{\epsilon T^2}\right) \frac{Z_{0,T}}{Z_{\epsilon,m,T}} \text{Tr}\left[ \frac{1}{T^k} \Gamma^{(k)}_{0,T}\right].
		$$
		From~\eqref{eq:class part relax} and~\eqref{limi-eps-parti} we deduce
		$$
		\frac{Z_{0,T}}{Z_{\epsilon,m,T}} \leq \frac{C(m)}{\sqrt{\epsilon}}
		$$
		and since we have (see e.g.~\cite[Section~2]{LewNamRou-15})
		$$
		\text{Tr}\left[ \frac{1}{T^k} \Gamma^{(k)}_{0,T}\right] \leq C(k).
		$$
		Thus we conclude that
		\begin{align} \label{term-II}
		|(\text{II})| \leq C(m,k) \|\phi_k\|^2  \exp \left(-\frac{\Delta^2}{\epsilon T^2}\right) \frac{1}{\sqrt{\epsilon}}.
		\end{align}
		
		For (I), we write
		$$
		\sum_{N\geq k \atop |N-mT|\leq \Delta} a_N^\eps (\Gamma^c_{N,T,0})^{(k)} = \sum_{N\geq k \atop |N-mT|\leq \Delta} a_N^\eps \left( (\Gamma^c_{N,T,0})^{(k)} - (\Gamma^c_{mT,T,0})^{(k)}\right) + \Big(\sum_{N\geq k \atop |N-mT| \leq \Delta} a_N^\eps \Big) (\Gamma^c_{mT,T,0})^{(k)}.
		$$
		Thus
		$$
		\begin{aligned}
			|(\text{I})| &\leq \frac{1}{T^k} \Big| \Big\langle \phi_k \Big| \sum_{N\geq k \atop |N-mT|\leq \Delta} a_N^\eps \left( (\Gamma^c_{N,T,0})^{(k)} - (\Gamma^c_{mT,T,0})^{(k)}\right) \Big| \phi_k \Big\rangle\Big|\\
			&\quad + \frac{1}{T^k} \Big| \Big\langle \phi_k \Big| \Big(\sum_{N\geq k \atop |N-mT| \leq \Delta} a_N^\eps -1\Big) (\Gamma^c_{mT,T,0})^{(k)} \Big| \phi_k \Big\rangle\Big| \\
			&=: (\text{I}_1) + (\text{I}_2).
		\end{aligned}
		$$
		We have
		$$
		|(\text{I}_2)| \leq \|\phi_k\|^2  \text{Tr}\Big[\frac{1}{T^k} (\Gamma^c_{mT,T,0})^{(k)}\Big] \Big|\sum_{N\geq k \atop |N-mT|\leq \Delta} a_N^\eps -1 \Big|.
		$$
		Since $\text{Tr}[\Gamma_{\epsilon, m,T}] =1$, we have that $\sum_{N} a_N^\eps =1$ and thus
		$$
		\sum_{N\geq k \atop |N-mT|\leq \Delta} a_N^\eps -1 = \sum_{N <k} a_N^\eps + \sum_{N\geq k \atop |N-mT| >\Delta} a_N^\eps \leq \sum_{N\geq 0 \atop |N-mT|>\Delta} a_N^\eps
		$$
		as $|N-mT| \geq mT - N \gg \Delta$ for $N <k$ and $T$ large. It follows that
		$$
		\begin{aligned}
			\Big| \sum_{N\geq k \atop |N-mT| \leq \Delta} a_N^\eps -1 \Big| &\leq \sum_{|N-mT| >\Delta} a_N^\eps \\
			&= \sum_{|N-mT|>\Delta} \frac{1}{Z_{\epsilon,m,T}} \exp \left(-\frac{1}{T} F^c_0(N) -\frac{1}{\epsilon} \left( \frac{N}{T}-m\right)^2\right) \\
			&\leq \frac{1}{Z_{\epsilon,m,T}} \exp \left(-\frac{\Delta^2}{\epsilon T^2}\right) \sum_{|N-mT| >\Delta} \exp\left(-\frac{1}{T} F^c_0(N)\right) \\
			&\leq \frac{1}{Z_{\epsilon,m,T}} \exp \left(-\frac{\Delta^2}{\epsilon T^2}\right) \sum_{N\geq 0} \text{Tr} \left[\exp \left(-\frac{1}{T} H_{N,0}\right) \right] \\
			&\leq \frac{Z_{0,T}}{Z_{\epsilon,m,T}} \exp \left(-\frac{\Delta^2}{\epsilon T^2}\right) \\
			&\leq C(m) \exp \left(-\frac{\Delta^2}{\epsilon T^2}\right) \frac{1}{\sqrt{\epsilon}}.
		\end{aligned}
		$$
		In addition, we have
		$$
		\text{Tr}\left[\frac{1}{T^k} (\Gamma^c_{mT,T,0})^{(k)} \right] = \frac{1}{T^k} \binom{mT}{k} \leq C(m,k).
		$$
		This shows that
		\begin{align}\label{term-I-2}
		|(\text{I}_2)| \leq C(m,k) \exp \left(-\frac{\Delta^2}{\epsilon T^2}\right)  \frac{1}{\sqrt{\epsilon}}.
		\end{align}
		For $(\text{I}_1)$, we use the weak convergence of $k$-particle density matrice of $\Gamma^c_{N,T,0}$ given in Lemma \ref{lem-cano-diff} to get
		\begin{align} \label{term-I-1}
		\begin{aligned}
			|(\text{I}_1)| &\leq \frac{1}{T^k} \sum_{N\geq k  \atop |N-mT| \leq \Delta} a_N^\eps | \langle \phi_k |(\Gamma^c_{N,T,0})^{(k)}- (\Gamma^c_{mT,T,0})^{(k)}| \phi_k \rangle| \\
			&\leq \frac{(mT)^{k-1}}{T^k} \sum_{N\geq k \atop |N-mT| \leq \Delta}  |N-mT| a_N^\eps \\
			&\leq m^{k-1}\frac{\Delta}{T}
		\end{aligned}
		\end{align}
		because $a_N^\eps$ are non-negative and $\sum_N a_N^\eps =1$. Collecting \eqref{term-II}, \eqref{term-I-2} and \eqref{term-I-1}, we obtain
		$$
		\Big| \frac{1}{T^k} \langle \phi_k |(\Gamma^{(k)}_{\epsilon,m,T}-(\Gamma^c_{mT,T,0})^{(k)})| \phi_k \rangle\Big| \leq C(m,k) \left(\exp \left(-\frac{\Delta^2}{\epsilon T^2}\right)  \frac{1}{\sqrt{\epsilon}} + \frac{\Delta}{T} \right).
		$$		
		Since $\epsilon=T^{-a}$ with $a>0$, we can take
		$$
		\Delta = T^{1-a/4} \ll T
		$$
		and obtain
		$$
		\Big| \frac{1}{T^k} \langle \phi_k |(\Gamma^{(k)}_{\epsilon,m,T}-(\Gamma^c_{mT,T,0})^{(k)})| \phi_k \rangle\Big| \leq C(m,k) \left(\exp(-T^{a/2}) T^{a/2} + T^{-a/4}\right) \to 0
		$$
		as $T\to \infty$. This proves \eqref{weak-conv-phi} and thus completes the proof of \eqref{limi-T-eps}.
	\end{proof}
		
	\section{Free energy upper bound}\label{sec:up}
	
	Here we prove the upper bound corresponding to~\eqref{eq:main ener}.
	
	\begin{proposition}[\textbf{Free-energy upper bound}]\label{pro:up bound}\mbox{}\\
	 Under the assumptions of Theorem~\ref{thm:main} we have that 
	\begin{equation}\label{uppe-boun-cano}
		\limsup_{T\to \infty} -\log \left(\frac{Z^c_{mT,T,g}}{Z^c_{mT,T,0}}\right) \leq -\log (z^r_m)
	\end{equation}
	with $z^r_m$ as in~\eqref{eq:class part}.
	\end{proposition}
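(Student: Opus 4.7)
The plan is to apply the Gibbs variational principle~\eqref{eq:free ener rel} with a carefully crafted $N$-particle trial state that disentangles the low- and high-energy modes while respecting the total particle number constraint $N = mT$. Fix a kinetic energy cut-off $\Lambda = \Lambda(T)\to\infty$ (ultimately $\Lambda = T^b$ for a suitable small $b>0$), set $P = P_\Lambda$, $Q = P_\Lambda^\perp$, and use the orthogonal decomposition
\begin{equation*}
\gh^{\otimes_\sym N} \simeq \bigoplus_{M=0}^N E_\Lambda^{\otimes_\sym (N-M)} \otimes_\sym (E_\Lambda^\perp)^{\otimes_\sym M}.
\end{equation*}
Since $h$ commutes with $P$, this decomposition is preserved by $H_{N,0}$, so
\begin{equation*}
\Gamma^c_{N,T,0} = \bigoplus_{M=0}^N q_M\, \Gamma^c_{N-M,T,0,P} \otimes \Gamma^c_{M,T,0,Q}, \qquad q_M = \frac{Z^c_{N-M,T,0,P}\, Z^c_{M,T,0,Q}}{Z^c_{N,T,0}},
\end{equation*}
where $\Gamma^c_{K,T,0,P}$ (resp. $\Gamma^c_{K,T,0,Q}$) denotes the $K$-particle free canonical state on $E_\Lambda$ (resp.\ $E_\Lambda^\perp$).

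I propose the trial state
\begin{equation*}
\Gamma = \bigoplus_{M=0}^N q_M\, \Gamma^{\mathrm{cl}}_{N-M,P} \otimes \Gamma^c_{M,T,0,Q}, \qquad \Gamma^{\mathrm{cl}}_{N-M,P} := \int_{E_\Lambda} |u^{\otimes (N-M)}\rangle\langle u^{\otimes (N-M)}|\, d\mu_{g,\Lambda,m-M/T}(u),
\end{equation*}
where $\mu_{g,\Lambda,m'}$ is the finite-dimensional analogue of $\mu_{g,m}$ (Definition~\ref{def:int measure}) on $E_\Lambda$ conditioned to mass $m'$, the shift $m - M/T$ reflecting that only $N-M$ particles occupy the low-energy subspace. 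Because the weights $q_M$ and the high-energy factors $\Gamma^c_{M,T,0,Q}$ are shared with $\Gamma^c_{mT,T,0}$, the von Neumann relative entropy collapses to
\begin{equation*}
\mathcal{H}(\Gamma,\Gamma^c_{mT,T,0}) = \sum_{M} q_M\, \mathcal{H}\bigl(\Gamma^{\mathrm{cl}}_{N-M,P},\, \Gamma^c_{N-M,T,0,P}\bigr).
\end{equation*}

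For each $M$, the summand is controlled inside the finite-dimensional space $E_\Lambda$: the finite-dimensional version of Theorem~\ref{thm:free case} identifies $\Gamma^c_{N-M,T,0,P}$ in the semiclassical sense with the Gaussian measure $\mu_{0,\Lambda,m-M/T}$, so a Berezin--Lieb upper-symbol inequality of the type~\cite[Theorem~7.1]{LewNamRou-15} bounds the quantum entropy by the classical $\mathcal{H}_{\cl}(\mu_{g,\Lambda,m-M/T},\mu_{0,\Lambda,m-M/T})$, up to errors governed by $\Tr[Qh^{-1}Q]\lesssim \Lambda^{-(1-q)}$ for any $q > \tfrac{1}{2}+\tfrac{1}{s}$. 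The interaction term $\frac{g}{TN}\Tr[w(\bx-\by)\Gamma^{(2)}]$ is handled analogously: cross terms between $E_\Lambda$ and $E_\Lambda^\perp$ are small by the same one-particle density bounds and Assumption~\ref{asum:int}, while the coherent-state form of $\Gamma^{\mathrm{cl}}_{N-M,P}$ reproduces directly the classical interaction $\frac{g}{2(m-M/T)}\int\langle u^{\otimes 2}, w\, u^{\otimes 2}\rangle d\mu_{g,\Lambda,m-M/T}$ within the low-energy sector. By the classical Gibbs variational principle (see~\eqref{eq:free ener class}), each $M$-summand thus contributes $-\log z^r_{m-M/T,\Lambda}$ up to vanishing errors.

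The proof is completed by averaging against $q_M$ and letting $T,\Lambda\to\infty$. The typical value of $M$ under the free canonical state is of order $\Tr[Q(e^{h/T}-1)^{-1}]$, which is $o(T)$ for $\Lambda = T^b$ with $b$ small, thanks to $s > 6$ and the Cwikel--Lieb--Rozenbljum bound $\dim E_\Lambda \lesssim \Lambda^{1/2+1/s}$. The principal obstacle, and the very reason for requiring $s > 6$, is the control of the $M$-dependence of $z^r_{m-M/T,\Lambda}$: this partition function is acutely sensitive to variations of the mass constraint as $\Lambda\to\infty$, and Proposition~\ref{pro:dep mass} is the quantitative tool designed to bound this, yielding
\begin{equation*}
\sum_M q_M\, \log z^r_{m-M/T,\Lambda} = \log z^r_{m,\Lambda} + o(1).
\end{equation*}
Definition~\ref{def:int measure} and the tightness of $\mu_{g,\Lambda,m}\to \mu_{g,m}$ then give $\log z^r_{m,\Lambda}\to \log z^r_m$ as $\Lambda\to\infty$, establishing~\eqref{uppe-boun-cano}.
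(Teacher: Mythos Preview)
Your overall architecture---split the canonical state along $E_\Lambda \oplus E_\Lambda^\perp$, keep the free high-energy part, insert a semiclassical trial state in the low-energy sector, and control the mass fluctuations via Proposition~\ref{pro:dep mass}---is exactly the paper's strategy. But several implementation details are off, and one of them matters.

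\textbf{Wrong direction for the entropy inequality.} You write that ``a Berezin--Lieb upper-symbol inequality of the type \cite[Theorem~7.1]{LewNamRou-15} bounds the quantum entropy by the classical $\mathcal{H}_{\cl}$''. Theorem~7.1 there is a \emph{lower} bound on the quantum relative entropy in terms of the classical one; it goes the wrong way for a free-energy \emph{upper} bound. What is needed is the \emph{second} Berezin--Lieb inequality (upper symbols, convex functions; see \cite[Lemma~B.4]{Rougerie-LMU}) to bound $\Tr[\Gamma_n\log\Gamma_n]$ from above, together with the \emph{first} Berezin--Lieb inequality / de Finetti theorem to bound the free canonical free energy $\mathcal{F}^{c,-}_{n,0}[\Gamma^{c,-}_{n,T,0}]$ from below. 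The paper does exactly this in Lemma~\ref{lem:semiclassics}; the semiclassical error is $C\Lambda\dim(\gh^-)/T$, not $\Tr[Qh^{-1}Q]$.

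\textbf{Fixed versus varying trial measure.} You take the low-energy trial measure to be $\mu_{g,\Lambda,m-M/T}$, varying with $M$, and then aim to show $\sum_M q_M\log z^r_{m-M/T,\Lambda}=\log z^r_{m,\Lambda}+o(1)$. The paper instead inserts the \emph{same} trial measure $\mu_{mT,T,g}$ (mass $m$, see~\eqref{eq:class meas scale}) in every sector. This is the crucial maneuver: the mismatch between the fixed trial measure at mass $m$ and the sector's natural free minimizer $\mu_{n,T,0}$ at mass $n/T$ produces precisely the kinetic-energy difference
\[
\frac{1}{T}\sum_n d_n\,(mT-n)\int_{S\gh^-}\langle u,h^-u\rangle\bigl(\mu_{n,T,0}(u)-\mu_{mT,T,g}(u)\bigr)\,du,
\]
which is \emph{literally} the quantity Proposition~\ref{pro:dep mass} is designed to bound (with $m_1=n/T$, $m_2=m$). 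Your route via partition-function differences is not what Proposition~\ref{pro:dep mass} states, and extracting the required bound would need an extra integration-in-mass argument you have not supplied.

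\textbf{Truncation of the sum.} The paper does \emph{not} keep all weights $q_M$: it replaces $c_n$ by $d_n=c_n D_M^{-1}\mathds{1}_{\{n\geq M\}}$ with $M=mT-T\delta$, $\delta=\Lambda^{1/s-1/2+\alpha}$, and shows $D_M\to 1$ via~\eqref{choi-D}. This forces $|m-n/T|\leq\delta$, which is what makes the $d|m_1-m_2|$ branch of Proposition~\ref{pro:dep mass} small enough (and is where $s>6$ enters, via~\eqref{est-II}). Your ``typical value of $M$'' remark addresses only the mean; without truncation you would need a genuine tail bound on $q_M$ that you have not provided.
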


	We obtain the above by inserting an appropriate trial state in the variational principle defining $-\log Z^c_{mT,T,g}$. 
	
	\subsection{Trial state and reduction to a finite dimensional estimate}\label{sec:trial state}

	Let $\Lambda>0$ be an energy cutoff. We define the projections
	$$
	P^-:= P_\Lambda, \quad P^+ := P_\Lambda^\perp, 
	$$
	where $P_\Lambda$ and $P_\Lambda^\perp$ are as in~\eqref{P-project}. We denote
	$$
	P^{\#} := \left\{
	\begin{array}{ll}
		\mathds{1} &\text{if } \# = \varnothing \\
		P^+ &\text{if } \# = + \\
		P^- &\text{if } \# = -
	\end{array}
	\right\}
	$$
	and
	$$
	h^{\#}= P^{\#} h P^{\#},\quad \mathfrak h^{\#} = P^{\#} \mathfrak h . 
	$$
	Let $n \in \mathbb N$. We denote the free canonical Gibbs state by
	$$
	\Gamma^{c,\#}_{n,T,0} = \frac{1}{Z^{c,\#}_{n,T,0}} \exp \left(-\frac{1}{T} H^{\#}_{n,0}\right)
	$$
	where
	$$
	H^{\#}_{n,0} = \sum_{j=1}^n h^{\#}_j.
	$$
	As usual $\Gamma^{c,\#}_{n,T,0}$ is the unique minimizer of the free (canonical) energy
	$$
	F^{c,\#}_0(n) = \inf \left\{\mathcal F^{c,\#}_{n,0}[\Gamma] := {\Tr}[H^{\#}_{n,0} \Gamma] + T {\Tr}[\Gamma \log \Gamma] : \Gamma \in \mathcal S((\mathfrak h^{\#})^{\otimes_s n})\right\},
	$$
	where $\mathcal S((\mathfrak h^{\#})^{\otimes_s n})$ is the set of states (non-negative self-adjoint operators with unit trace) on $(\mathfrak h^{\#})^{\otimes_s n}$. Moreover, we have
	$$
	F^{c,\#}_0(n) = \mathcal F^{c,\#}_{n,0}[\Gamma^{c,\#}_{n,T,0}] = -T \log (Z^{c,\#}_{n,T,0}).
	$$
	We will need a simple observation on the relationship between the full free Gibbs state~\eqref{eq:C ens} and the corresponding state on the projected spaces $\Gamma^{c,\#}_{n,T,0}$. It will replace the factorization property~\eqref{eq:trial state} of free grand-canonical Gibbs states used at length in~\cite{LewNamRou-15,LewNamRou-17,LewNamRou-20}. This uses the unitary map $\mathcal U$ from $\mathcal F (\mathfrak h^- \oplus \mathfrak h^+)$ to  $\mathcal F(\mathfrak h^-) \otimes \mathcal F(\mathfrak h^+)$ defined by its action
	\begin{equation}\label{eq:unitary map}
\cU a^\dagger (f) \cU ^* = a^\dagger (P^-f) \otimes \Id + \Id \otimes a^\dagger (P^+f)	 
	\end{equation}
	on creation/annihilation operators, see e.g.~\cite[Appendix~A]{HaiLewSol-09b}

	\begin{lemma}[\textbf{Factorization of free canonical Gibbs states}]\label{lem:fac Gibbs}\mbox{}\\
		For $N \in \mathbb N$, we have
		$$\mathcal U \Gamma^c_{N,T,0} \mathcal U^* = \bigoplus_{n=0}^N c_n \Gamma^{c,-}_{n,T,0} \otimes \Gamma^{c,+}_{N-n,T,0}$$
		where 
		\begin{align} \label{cn}
			c_n = \frac{1}{Z^c_{N,T,0}} Z^{c,-}_{n,T,0} Z^{c,+}_{N-n,T,0}.
		\end{align}
		Moreover, define for $M\leq N$
        \begin{equation}\label{eq:coeffs}
        D_M := \sum_{n= M} ^N c_n
        \end{equation}
        and set
		\begin{equation}\label{eq:choice M}
		M= N-T\delta
		\end{equation}
		for some $\delta>0$ small. Taking $N=mT$ for a fixed $m>0$ we have for $\Lambda>0$ large,
		\begin{align} \label{choi-D}
		D_M= 1+ O\left(\delta^{-1} \Lambda^{\frac{1}{s}-\frac{1}{2}}\right).
		\end{align}
	\end{lemma}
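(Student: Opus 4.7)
The first statement is a routine consequence of the tensor-product structure of bosonic Fock space combined with the fact that $h$ commutes with $P^\pm$. Under the unitary $\mathcal U$ defined in~\eqref{eq:unitary map}, the $N$-particle sector of $\mathcal F(\mathfrak h)$ gets mapped to $\bigoplus_{n=0}^N (\mathfrak h^-)^{\otimes_s n}\otimes (\mathfrak h^+)^{\otimes_s (N-n)}$. Because $[h,P^\pm]=0$, the free Hamiltonian $H_{N,0}=\sum_j h_j$ preserves this block decomposition and acts as $H^-_{n,0}\otimes \mathds{1}+\mathds{1}\otimes H^+_{N-n,0}$ on the $n$-th block. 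Exponentiation then factorizes block-by-block as $\exp(-H^-_{n,0}/T)\otimes \exp(-H^+_{N-n,0}/T)$, and normalizing by $Z^c_{N,T,0}$ yields the stated decomposition, with the coefficients $c_n$ in~\eqref{cn} arising directly from the ratio of partition functions.

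For the second statement, I would use a Markov-type argument. Since $\sum_{n=0}^N c_n=1$ (the right-hand side has trace one on each block, and the $c_n$ weight the blocks of a state of total trace one), we have
$$
1-D_M=\sum_{n<M}c_n=\sum_{n:\,N-n>T\delta}c_n\leq \frac{1}{T\delta}\sum_{n=0}^N(N-n)c_n.
$$
The key observation is that $\sum_n(N-n)c_n$ is exactly the expected number of particles in the high-energy subspace $\mathfrak h^+=E_\Lambda^\perp$ under the free canonical Gibbs state, namely $\Tr[P^+(\Gamma^c_{N,T,0})^{(1)}]$. To close the bound it thus suffices to show
\begin{equation}\label{eq:key bound proposal}
\Tr[P^+(\Gamma^c_{N,T,0})^{(1)}]\leq CT\,\Lambda^{\frac{1}{s}-\frac{1}{2}}.
\end{equation}

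My plan for \eqref{eq:key bound proposal} is to establish the operator inequality $(\Gamma^c_{N,T,0})^{(1)}\leq CTh^{-1}$, which for the grand-canonical Gibbs state is immediate (one gets $(\Gamma_{0,T})^{(1)}=(e^{h/T}-1)^{-1}\leq Th^{-1}$) but needs care in the canonical setting. Given this bound, the conclusion follows from
$$
\Tr[P^+h^{-1}]\leq \Lambda^{-(1-q)}\Tr[h^{-q}]\leq C\,\Lambda^{-(\frac{1}{2}-\frac{1}{s})}
$$
valid for any $q\in(\tfrac{1}{2}+\tfrac{1}{s},1)$, using the Cwikel-Lieb-Rozenbljum estimate~\eqref{K-boun} (and letting $q\downarrow \tfrac{1}{2}+\tfrac{1}{s}$). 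Combining with the Markov bound above gives $1-D_M\leq C\delta^{-1}\Lambda^{\frac{1}{s}-\frac{1}{2}}$, which is~\eqref{choi-D}.

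The main obstacle is proving $(\Gamma^c_{N,T,0})^{(1)}\leq CTh^{-1}$ in the canonical setting, where Wick's theorem is unavailable. A natural approach is a Peierls-Bogoliubov argument: for $\eps>0$ small, perturb $h$ by $\eps|u_j\rangle\langle u_j|$ and compare the free energies $F^c_0(N;h+\eps|u_j\rangle\langle u_j|)$ and $F^c_0(N;h)$. Convexity of the free energy in the one-body operator, together with explicit bounds on the derivative via the Hellmann-Feynman formula, should control $\langle u_j,(\Gamma^c_{N,T,0})^{(1)}u_j\rangle$ in terms of $T/\lambda_j$. Alternatively, one can use monotonicity of the canonical ensemble under increasing $h$ to compare directly with the grand-canonical ensemble at a tuned chemical potential, where the bound is automatic. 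Either route should yield the required estimate, completing the proof.
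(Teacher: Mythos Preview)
Your argument is essentially the paper's. The Markov bound and the identification $\sum_n(N-n)c_n=\Tr[P^+(\Gamma^c_{N,T,0})^{(1)}]$ match the paper exactly, and your second alternative---comparison with the grand-canonical ensemble at a tuned chemical potential---is precisely what the paper invokes, via the operator bound~\eqref{DM-1-ope} in Appendix~\ref{sec:DMs} (drawn from~\cite{DeuSeiYng-18,Suto-04b}), giving $(\Gamma^c_{N,T,0})^{(1)}\leq C(\Gamma^\nu_T)^{(1)}\leq CT(h+\nu)^{-1}$ directly.

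One small correction: your tail bound $\Tr[P^+h^{-1}]\leq\Lambda^{-(1-q)}\Tr[h^{-q}]$ followed by ``$q\downarrow\tfrac{1}{2}+\tfrac{1}{s}$'' does not deliver the stated exponent, because $\Tr[h^{-q}]$ diverges at that endpoint. The paper instead appeals to the sharp tail estimate $\sum_{\lambda_j>\Lambda}\lambda_j^{-1}\leq C\Lambda^{\frac{1}{s}-\frac{1}{2}}$ (see~\cite[Corollary~2.7]{DinRouTolWan-23}), which closes~\eqref{choi-D} with the exact power.
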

	
	\begin{proof}
	We write
	$$
	\Gamma^c_{N,T,0} = \frac{1}{Z^c_{N,T,0}} \exp \left(-\frac{1}{T} d\Gamma(h)\right) \Id_{\{\mathcal N= N\}},
	$$
	where 
	$$
	d\Gamma(h) = \bigoplus_{n\geq 1} H_{n,0}, \quad \mathcal N = d\Gamma(\Id)=\bigoplus_{n\geq 0} n \Id_{\mathfrak h^{\otimes_s n}}.
	$$
	Denote
	$$
	\mathcal N^{\#} = d\Gamma(P^{\#}).
	$$
	Since $\cU$ acts on creation/annihilation operators as in~\eqref{eq:unitary map}
	and $\cU \cU^* = \Id$, writing the $N$-particles space as the span of vectors $a^\dagger (f_1) \ldots a^\dagger (f_N), f_1,\ldots, f_N \in \gh$ we find that 
	$$ \mathcal U \Id_{\{\mathcal N=N\}} \mathcal U^* = \bigoplus_{n=0}^N \Id_{\{\mathcal N^- = n\}} \otimes \Id_{\{\mathcal N^+ = N-n\}}.$$
	Hence
	$$
	\begin{aligned}
		\mathcal U \Gamma^c_{N,T,0} \mathcal U^* &= \frac{1}{Z^c_{N,T,0}} \left( \mathcal U \exp\left(-\frac{1}{T} d\Gamma(h)\right) \mathcal U^*\right) \left( \mathcal U \Id_{\{\mathcal N=N\}} \mathcal U^*\right) \\
		&= \frac{1}{Z^c_{N,T,0}} \left(\exp\left(-\frac{1}{T} d\Gamma(h^-)\right) \otimes \exp\left(-\frac{1}{T} d\Gamma(h^+)\right)\right) \left(\bigoplus_{n=0}^N \Id_{\{\mathcal N^- = n\}} \otimes \Id_{\{\mathcal N^+ = N-n\}} \right).
	\end{aligned}
	$$
	Since these operators commute, we get
	$$
	\begin{aligned}
		\mathcal U \Gamma^c_{N,T,0} \mathcal U^* &= \frac{1}{Z^c_{N,T,0}} \bigoplus_{n=0}^N \left(\exp\left(-\frac{1}{T} d\Gamma(h^-)\right) \Id_{\{\mathcal N^-=n\}} \right) \otimes \left(\exp\left(-\frac{1}{T} d\Gamma(h^+)\right) \Id_{\{\mathcal N^+=N-n\}}\right) \\
		&= \frac{1}{Z^c_{N,T,0}} \bigoplus_{n=0}^N (Z^{c,-}_{n,T,0}\Gamma^{c,-}_{n,T,0}) \otimes (Z^{c,+}_{N-n,T,0} \Gamma^{c,+}_{N-n,T,0}) \\
		&= \bigoplus_{n=0}^N c_n \Gamma^{c,-}_{n,T,0} \otimes \Gamma^{c,+}_{N-n,T,0}.
	\end{aligned}
	$$
	We now turn to the proof of~\eqref{choi-D}. We have
		$$
		\begin{aligned}
		mT = {\Tr}\left[(\Gamma^c_{mT,T,0})^{(1)}\right] = {\Tr}\left[P^- (\Gamma^c_{mT,T,0})^{(1)}\right] + {\Tr}\left[P^+ (\Gamma^c_{mT,T,0})^{(1)}\right].
		\end{aligned}
		$$
		Since
		$$
		(\Gamma^c_{mT,T,0})^{(1)} = \sum_{n=0}^{mT} c_n \left((\Gamma^{c,-}_{n,T,0})^{(1)} + (\Gamma^{c,+}_{mT-n,T,0})^{(1)}\right)
		$$
		and by definition
		$$
		(\Gamma^{c,-}_{n,T,0})^{(1)} = P^- (\Gamma^c_{n,T,0})^{(1)}P^-, \quad (P^-)^2 = P^-, \quad P^- P^+ = 0,
		$$
		we have
		$$
		P^-(\Gamma^c_{mT,T,0})^{(1)}= \sum_{n=0}^{mT} c_n (\Gamma^{c,-}_{n,T,0})^{(1)}
		$$
		hence
		$$
		{\Tr}\left[P^-(\Gamma^c_{n,T,0})^{(1)}\right] = \sum_{n=0}^{mT}c_n {\Tr}\left[(\Gamma^{c,-}_{n,T,0})^{(1)}\right] = \sum_{n=0}^{mT} n c_n.
		$$
		In particular, we get
		\begin{align} \label{choi-D-proof-1}
		mT-\sum_{n=0}^{mT} n c_n = {\Tr}\left[P^+(\Gamma^c_{mT,T,0})^{(1)}\right].
		\end{align}
		Using~\eqref{DM-1-ope}, we have
		$$
		\begin{aligned}
		mT-\sum_{n=0}^{mT} nc_n &\leq C {\Tr}\left[P^+ (\Gamma^\nu_T)^{(1)}\right] \\
		& = \sum_{\lambda_j>\Lambda} \frac{1}{e^{(\lambda_j+\nu)/T}-1} \\
		& = T \sum_{\lambda_j >\Lambda} \frac{1}{T(e^{(\lambda_j+\nu)/T}-1)} \\
		&\leq T \sum_{\lambda_j>\Lambda} \frac{1}{\lambda_j+\nu}.
		\end{aligned}
		$$
		Here $\nu >-\lambda_1$ is such that
		$$
		mT = \sum_{j\geq 1} \frac{1}{e^{(\lambda_j+\nu)/T-1}}.
		$$
		If $\nu\geq 0$, then we simply bound 
		$$
		\frac{1}{\lambda_j+\nu} \leq \lambda_j^{-1}.
		$$
		If $\nu \in (-\lambda_1,0)$, then for $\Lambda>0$ large
		$$
		\frac{1}{\lambda_j+\nu} \leq 2\lambda_j^{-1},\quad \forall \lambda_j>\Lambda.
		$$
		In particular, we have
		$$
		mT-\sum_{n=0}^{mT} nc_n \leq 2T \sum_{\lambda_j>\Lambda} \lambda_j^{-1}.
		$$
		By the tail estimate (see \cite[Corollary 2.7]{DinRouTolWan-23}), we know
		$$
		\sum_{\lambda_j>\Lambda} \lambda_j^{-1} \sim \Lambda^{\frac{1}{s}-\frac{1}{2}}
		$$
		hence
		$$
		mT-\sum_{n=0}^{mT} n c_n \leq C T \Lambda^{\frac{1}{s}-\frac{1}{2}}.
		$$
		It follows that
		$$
		\begin{aligned}
		\sum_{n=0}^{mT} n c_n & = \sum_{n=M}^{mT} nc_n + \sum_{n=0}^M nc_n \\
		&\leq mT \sum_{n=M}^{mT} c_n + M \sum_{n=0}^M c_n \\
		&= mT \sum_{n=M}^{mT} c_n + (mT-T\delta) \sum_{n=0}^{M} c_n \\
		&= mT \sum_{n=0}^{mT} c_n - T\delta \sum_{n=0}^M c_n.
		\end{aligned}
		$$
		This implies
		$$
		T\delta \sum_{n=0}^M c_n = mT- \sum_{n=0}^{mT} nc_n \leq C T\Lambda^{\frac{1}{s}-\frac{1}{2}}
		$$
		hence the desired
		$$
		\sum_{n=0}^M c_n \leq C \delta^{-1} \Lambda^{\frac{1}{s}-\frac{1}{2}}.
		$$
	\end{proof}

	For $g>0$ we denote (again with $\#$ standing for $+,-$ or $\varnothing$)
	$$
	H^{\#}_{n,g} = \sum_{j=1}^n h^{\#}_j + \frac{g}{N} \sum_{1\leq j<k \leq n} w^{\#}_{ij},
	$$
	where
	$$
	w^{\#}_{jk} = (P^{\#}_j \otimes P^{\#}_k) w_{jk} (P^{\#}_j \otimes P^{\#}_k)
	$$
	with 
	$$
	w_{jk} = w(\bx_j-\bx_k).
	$$
	The interacting canonical state is defined as
	$$
	\Gamma^{c,\#}_{n,T,g} = \frac{1}{Z^{c,\#}_{n,T,g}} \exp \left(-\frac{1}{T} H^{\#}_{n,g}\right).
	$$
	It is the unique minimizer of the interacting (canonical) energy
	$$
	F^{c,\#}_g (n) = \inf \left\{\mathcal F^{c,\#}_{n,g}[\Gamma] := {\Tr}[H^{\#}_{n,g} \Gamma] + T {\Tr}[\Gamma \log \Gamma] : \Gamma \in \mathcal S((\mathfrak h^{\#})^{\otimes_s n}) \right\}.
	$$
	Moreover, we have
	$$
	F^{c,\#}_{g}(n) = \mathcal F^{c,\#}_{n,g}[\Gamma^{c,\#}_{n,T,g}] = - T \log(Z^{c,\#}_{n,T,g}).
	$$
	In this notation, the targeted quantity in Proposition~\ref{pro:up bound} reads as
	$$
	-\log \left(\frac{Z^c_{mT,T,g}}{Z^c_{mT,T,0}} \right) = \frac{1}{T} \left(F^c_g(mT) - F^c_0(mT) \right).
	$$
	To obtain an upper bound of the right hand side, we pick a large cut-off $\Lambda \geq \lambda_1$ and define a trial state using the unitary map (cf~\eqref{eq:unitary map})
	$$
	\cU : \gF \left(\gh ^- \oplus \gh^+\right) \mapsto \gF \left(\gh ^- \right) \otimes \left(\gh^+\right)
	$$
	in the manner
	\begin{align} \label{Xi-mT}
	\boxed{\Xi_{mT} = \mathcal U^* \left(\bigoplus_{n=0}^{mT} d_n \Gamma^{c,-}_{n,T,g} \otimes \Gamma^{c,+}_{mT-n,T,0} \right) \mathcal U,}
	\end{align}
	where 
	\begin{equation} \label{dn}
	d_n=\begin{cases} c_n D_M^{-1} \mbox{ if } n \geq M \\
	0 \mbox{ if } n < M	     
	    \end{cases}
	\end{equation}
	with $M>0$ to be chosen later and $c_n$  as in \eqref{cn} with $N=mT$. Note that the definition~\eqref{eq:coeffs} of $D_M$ ensures 
	\begin{align} \label{sum-dn}
	\sum_{n=0}^{mT} d_n=1
	\end{align}
	so that $\Xi_{mT}$ is indeed a state on $\mathfrak h^{mT}$.
	
	\begin{lemma}[\textbf{Reduction to a finite dimensional upper bound}]\label{lem:fin dim}\mbox{}\\
	Fixing $m,g >0$ and setting $N= mT$, choosing $\Lambda$ such that $\Lambda \xrightarrow[T\to \infty]{} \infty$ and $M$ as in~\eqref{eq:choice M} with $\delta\gg \Lambda^{1/s-1/2}$, we have (using the above notation)
	\begin{align}\label{eq:up bound 1}
	\limsup_{T\to \infty} -\log \left(\frac{Z^c_{mT,T,g}}{Z^c_{mT,T,0}}\right) &\leq \limsup_{T\to \infty} \frac{1}{T} \left( \mathcal F^{c}_{n,g}\left[\Xi_{mT}\right]- F^c_0(mT) \right) \nonumber \\
	&\leq \limsup_{T\to \infty} \frac{1}{T}\sum_{n=0}^{mT} d_n \left(\mathcal F^{c,-}_{n,g}[\Gamma^{c,-}_{n,T,g}] - \mathcal F^{c,-}_{n,0} [\Gamma^{c,-}_{n,T,0}]\right).
	\end{align}
	
	\end{lemma}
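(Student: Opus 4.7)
The first inequality is just the Gibbs variational principle applied to the trial state $\Xi_{mT}$: by~\eqref{sum-dn} this is a genuine element of $\mathcal S(\mathfrak h^{mT})$, so $F^c_g(mT) \leq \mathcal F^c_{mT,g}[\Xi_{mT}]$, and dividing by $T$ and subtracting $-\log Z^c_{mT,T,0}$ yields the claim.

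For the second inequality the plan is to compare term by term the decompositions of $\mathcal F^c_{mT,g}[\Xi_{mT}]$ and $F^c_0(mT)=\mathcal F^c_{mT,0}[\Gamma^c_{mT,T,0}]$ along the factorization provided by Lemma~\ref{lem:fac Gibbs}. Both $\mathcal U \Xi_{mT}\mathcal U^*$ (by definition) and $\mathcal U \Gamma^c_{mT,T,0}\mathcal U^*$ (by Lemma~\ref{lem:fac Gibbs}) are direct sums of tensor products on sectors with $(n^-, n^+)=(n,mT-n)$ particles in the $\pm$ subspaces. Consequently the entropy $T\Tr[\,\cdot\log\cdot\,]$ and the kinetic energy $\Tr[d\Gamma(h)\,\cdot\,]$ split exactly as sums over $n$ of $-$ and $+$ contributions weighted by $d_n$ (resp. $c_n$), with block entropies $\log d_n$ (resp. $\log c_n$) appearing on the two sides. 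Using $c_n = Z^{c,-}_{n,T,0} Z^{c,+}_{mT-n,T,0}/Z^c_{mT,T,0}$ and the identity $\sum_n (d_n-c_n)=0$, the contributions involving the $+$ sector free energies cancel between the two terms, while the entropic mismatch between $d_n$ and $c_n$ collapses to the single relative-entropy correction $-T\log D_M$.

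The interaction $W:=(g/N)\sum_{j<k} w_{jk}$ does not split neatly across $\pm$. Expanding $a^\dagger(f) = a^\dagger(P^-f)\otimes \Id + \Id \otimes a^\dagger(P^+ f)$ (cf.~\eqref{eq:unitary map}) and exploiting the block-diagonality of $\mathcal U \Xi_{mT}\mathcal U^*$, only the block-preserving monomials of $\mathcal U W \mathcal U^*$ contribute to $\Tr[W\Xi_{mT}]$: these fall into a pure $-$ self-interaction (which merges with the $-$ kinetic and entropy pieces to reconstitute $\mathcal F^{c,-}_{n,g}[\Gamma^{c,-}_{n,T,g}]$), a pure $+$ self-interaction $E_+:=\sum_n d_n \Tr[W^{++}\Gamma^{c,+}_{mT-n,T,0}]$, and mixed quadratic terms $E_\times$ factoring through the one-body density matrices of the two pieces. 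Putting everything together one expects
\[
\mathcal F^c_{mT,g}[\Xi_{mT}] - F^c_0(mT) = \sum_{n=M}^{mT} d_n \bigl(\mathcal F^{c,-}_{n,g}[\Gamma^{c,-}_{n,T,g}] - \mathcal F^{c,-}_{n,0}[\Gamma^{c,-}_{n,T,0}]\bigr) - T\log D_M + E_+ + E_\times.
\]

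The main obstacle is to show that the combined error $-T\log D_M + E_+ + E_\times$ is $o(T)$ under the stated choice of parameters. The first piece is immediate from~\eqref{choi-D}: the assumption $\delta \gg \Lambda^{1/s-1/2}$ forces $|\log D_M| = O(\delta^{-1}\Lambda^{1/s-1/2}) \to 0$, and since $s>2$ yields $\Lambda^{1/s-1/2}\to 0$, this is compatible with $\delta\to 0$ (one may take $\delta = \Lambda^{(1/s-1/2)/2}$). For $E_+$ and $E_\times$ the crucial observation is that $d_n=0$ for $n<M$, so that in the $+$ sector the free state $\Gamma^{c,+}_{mT-n,T,0}$ has expected particle number at most $mT-n \leq T\delta$. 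Combining a priori bounds on its one- and two-body reduced density matrices (the canonical analogue of Wick's theorem, along the lines of Sütő's inequality already used in the proof of Lemma~\ref{lem-Suto-04b-clas}, bounding two-body expectations in terms of products of one-body expectations up to exchange) with the $L^p+\mathcal M$ control on $w$ from Assumption~\ref{asum:int}, one expects $E_+ = O(T\delta^2)$ and $E_\times = O(T\delta)$, both of which are $o(T)$ as $\delta \to 0$. Taking $T\to\infty$ then yields~\eqref{eq:up bound 1}.
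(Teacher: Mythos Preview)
Your identity
\[
\mathcal F^c_{mT,g}[\Xi_{mT}] - F^c_0(mT) = \sum_{n=M}^{mT} d_n \bigl(\mathcal F^{c,-}_{n,g}[\Gamma^{c,-}_{n,T,g}] - \mathcal F^{c,-}_{n,0}[\Gamma^{c,-}_{n,T,0}]\bigr) - T\log D_M + E_+ + E_\times
\]
is correct (the $+$-sector free energies do not cancel by $\sum_n(d_n-c_n)=0$ alone, but combining $T\log c_n$ with the explicit formula~\eqref{cn} does the job, as you indicate). The handling of $-T\log D_M$ is also fine.

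The gap is in your control of $E_+$ and $E_\times$. Your mechanism --- ``few particles in the $+$ sector, $mT-n\le T\delta$'' --- does not yield $E_+=O(T\delta^2)$ or $E_\times=O(T\delta)$ under Assumption~\ref{asum:int}. For bounded $w$ the trace bound $\Tr[(\Gamma^{c,+}_{k,T,0})^{(2)}]=\binom{k}{2}$ would suffice, but for $w\in L^p+\mathcal M$ you need pointwise/$L^q$ control on the two-body density. S\"ut\H{o}'s inequality only gives $\langle\mathcal N_i\mathcal N_j\rangle_c\le\langle\mathcal N_i\rangle_c\langle\mathcal N_j\rangle_c$ for $i\ne j$; it does \emph{not} imply a kernel bound $\rho^{(2)}_k(x,y)\le C\rho^+_k(x)\rho^+_k(y)$ in terms of the \emph{canonical} one-body density (the diagonal $i=j$ contribution and the exchange term obstruct this). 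The only available kernel bound is the one from Appendix~\ref{sec:DMs}, namely~\eqref{DM-2}, which compares to the \emph{grand-canonical} density $P^+(\Gamma^\nu_T)^{(1)}P^+$ and is oblivious to $k$.

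The paper therefore uses a different mechanism: smallness comes entirely from the high-frequency projection, via $\|P^+ h^{-1}P^+(\cdot,\cdot)\|_{L^q(X)}\to 0$ as $\Lambda\to\infty$ (dominated convergence, since $h^{-1}(\cdot,\cdot)\in L^q$). Plugging~\eqref{DM-1-ope}--\eqref{DM-2} and $(\Gamma^\nu_T)^{(1)}\le CT/(h+\nu)$ into Young's inequality gives $E_+=o_\Lambda(1)\,T$ and $E_\times=o_\Lambda(1)\,T$ uniformly in $n$, with $\sum_n d_n=1$ and $\sum_n nd_n\le mT$ the only facts about $d_n$ used. The constraint $mT-n\le T\delta$ plays no role in these estimates; it is needed only later, in Lemma~\ref{lem:semiclassics} and Section~\ref{sec:control fluct}, to control the mass mismatch in the finite-dimensional problem.
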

	
	This lemma effectively dispatches the contribution of high one-body energy modes and allows us to focus on estimates in the finite dimensional space $\gh^-$. 
	
	\begin{proof}
	 Using the trial state~\eqref{Xi-mT} we have
	$$
	\begin{aligned}
		F^c_g (mT) - F^c_0(mT) &\leq \mathcal F_{mT,g}^c[\Xi_{mT}] - \mathcal F_{mT,0}^c[\Gamma^c_{mT,T,0}] \\
		&= T \mathcal H(\Xi_{mT}, \Gamma^c_{mT,T,0}) + \frac{g}{mT} {\Tr}[w \, \Xi_{mT}^{(2)}].
	\end{aligned}
	$$
	Since the relative entropy is unaffected by the unitary map $\cU$ and the summands live on orthogonal subspaces, we have
	$$
	\begin{aligned}
		\mathcal H(\Xi_{mT},\Gamma^c_{mT,T,0}) &= \mathcal H \left( \bigoplus_{n=0}^{mT} d_n \Gamma^{c,-}_{n,T, g} \otimes \Gamma^{c,+}_{mT-n,T,0}\, ,\, \bigoplus_{n=0}^{mT} c_n \Gamma^{c,-}_{n,T,0} \otimes \Gamma^{c,+}_{mT-n,T,0}\right) \\
		&= \sum_{n=0}^{mT} \mathcal H \left(d_n \Gamma^{c,-}_{n,T,g} \otimes \Gamma^{c,+}_{mT-n,T,0}, c_n \Gamma^{c,-}_{n,T,0} \otimes \Gamma^{c,+}_{mT-n,T,0}\right) \\
		&= \sum_{n=0}^{mT} \text{Tr}\left[d_n \Gamma^{c,-}_{n,T,g} \otimes \Gamma^{c,+}_{mT-n,T,0} \left(\log(d_n \Gamma^{c,-}_{n,T,g} \otimes \Gamma^{c,+}_{mT-n,T,0})-\log(c_n \Gamma^{c,-}_{n,T,0} \otimes \Gamma^{c,+}_{mT-n,T,0})\right)\right].
	\end{aligned}
	$$
	Using
	$$
	\log(A\otimes B) = (\log A) \otimes \Id + \Id \otimes(\log B),
	$$
	we have
	$$
	\log(d_n \Gamma^{c,-}_{n,T,g} \otimes \Gamma^{c,+}_{mT-n,T,0})-\log(c_n \Gamma^{c,-}_{n,T,0} \otimes \Gamma^{c,+}_{mT-n,T,0})) = \log\left(\frac{d_n}{c_n}\right) +  \left(\log(\Gamma^{c,-}_{n,T,g}) -\log(\Gamma^{c,-}_{n,T,0})\right) \otimes \Id.
	$$
	Thus
	\begin{align*}
		\begin{aligned}
		\text{Tr}\Big[d_n \Gamma^{c,-}_{n,T,g} &\otimes \Gamma^{c,+}_{mT-n,T,0} \left(\log(d_n \Gamma^{c,-}_{n,T,g} \otimes \Gamma^{c,+}_{mT-n,T,0}) -\log(c_n \Gamma^{c,-}_{n,T,0} \otimes \Gamma^{c,+}_{mT-n,T,0})\right)\Big] \\
		&= d_n \log \left(\frac{d_n}{c_n}\right) + d_n \text{Tr}\left[ \Gamma^{c,-}_{n,T,g}\left(\log(\Gamma^{c,-}_{n,T,g}) -\log(\Gamma^{c,-}_{n,T,0})\right) \otimes \Gamma^{c,+}_{mT-n,T,0}\right] \\
		&=  d_n \log \left(\frac{d_n}{c_n}\right) +  d_n \mathcal H\left(\Gamma^{c,-}_{n,T,g},\Gamma^{c,-}_{n,T,0}\right).
		\end{aligned}
	\end{align*}
	By the choice of $d_n$ (see \eqref{dn} and \eqref{sum-dn}), we have
	$$
	\sum_{n=0}^{mT} d_n \log \left(\frac{d_n}{c_n}\right) = \log \left(\frac{1}{D_M}\right)
	$$
	which tends to zero as long as (see \eqref{choi-D})
	\begin{align} \label{cond-del-Lam}
		\delta^{-1} \Lambda^{\frac1s-\frac12} \to 0 \text{ as } \Lambda \to \infty.
	\end{align}
	For the interaction term, we use
	$$
	\Xi_{mT}^{(2)} = \sum_{n=0}^{mT} d_n \left( (\Gamma^{c,-}_{n,T,g})^{(2)} + (\Gamma^{c,+}_{mT-n,T,0})^{(2)} + (\Gamma^{c,-}_{n,T,g})^{(1)} \otimes (\Gamma^{c,+}_{mT-n,T,0})^{(1)} + (\Gamma^{c,+}_{mT-n,T,0})^{(1)} \otimes (\Gamma^{c,-}_{n,T,g})^{(1)} \right)
	$$
	to obtain
	$$
	\begin{aligned}
	{\Tr}[w \, \Xi^{(2)}_{mT}] &= \sum_{n=0}^{mT} d_n \left({\Tr}\left[w (\Gamma^{c,-}_{n,T,g})^{(2)}\right] + {\Tr}\left[w (\Gamma^{c,+}_{mT-n,T,0})^{(2)}\right] \right)  \\
	&\quad + \sum_{n=0}^{mT} d_n \left( {\Tr}\left[w (\Gamma^{c,-}_{n,T,g})^{(1)} \otimes (\Gamma^{c,+}_{mT-n,T,0})^{(1)}\right] + {\Tr}\left[w (\Gamma^{c,+}_{mT-n,T,0})^{(1)} \otimes (\Gamma^{c,-}_{n,T,g})^{(1)}\right]  \right).
	\end{aligned}
	$$
	We now choose $\nu \in \R$ such that~\eqref{eq:GC comp} holds. Using~\eqref{DM-2},~\eqref{sum-dn} and\footnote{We allow for the possibility that $w$ has a measure part in the $p=1$ case, with a slight abuse of notation.} $w\in L^p(\R)$, we have
	$$
	\begin{aligned}
		\sum_{n=0}^{mT} d_n {\Tr}\left[w (\Gamma^{c,+}_{mT-n,T,0})^{(2)}\right] &= \sum_{n=0}^{mT} d_n \iint w(\bx-\by) \left(\Gamma^{c,+}_{mT-n,T,0}\right)^{(2)}(\bx,\by;\bx,\by) d\bx d\by \\
		&\leq  \sum_{n=0}^{mT} d_n \iint |w(\bx-\by)| \left(\Gamma^{c,+}_{mT-n,T,0}\right)^{(2)}(\bx,\by;\bx,\by) d\bx d\by \\
		&\leq C \iint |w(\bx-\by)| \left(P^+ (\Gamma^\nu_T)^{(1)} P^+\right)(\bx,\bx) \left(P^+ (\Gamma^\nu_T)^{(1)}P^+\right)(\by,\by) d\bx d\by \\
		&\leq C \norm{\left(P^+ (\Gamma^\nu_T)^{(1)} P^+\right) (.,.)}_{L^q(\R)} \norm{|w|\ast \left(P^+ (\Gamma^\nu_T)^{(1)}P^+\right)}_{L^p (\R)} \\
		&\leq C \|w\|_{L^p (\R)} \norm{P^+(\Gamma^\nu_T)^{(1)} P^+ (\cdot,\cdot)}_{L^q(\R)} \norm{P^+(\Gamma^\nu_T)^{(1)} P^+ (\cdot,\cdot)}_{L^1(\R)},
	\end{aligned}
	$$
	where $1= \frac{1}{p} + \frac{1}{q}$. Since
	$$
	\begin{aligned}
	(\Gamma^{\nu}_T)^{(1)}(x,x) &= \sum_{j\geq 1} \frac{1}{e^{(\lambda_j+\nu)/T}-1} |u_j(x)|^2 \\
	&=T \sum_{j\geq 1} \frac{1}{T(e^{(\lambda_j+\nu)/T}-1)} |u_j(x)|^2 \\
	&\leq T \sum_{j\geq 1} \frac{1}{\lambda_j+\nu} |u_j(x)|^2,
	\end{aligned}
	$$
	we have for $\Lambda>0$ large,
	$$
	\begin{aligned}
	P^+ (\Gamma^{\nu}_T)^{(1)}P^+ \, (x,x) &\leq T \sum_{\lambda_j>\Lambda} \frac{1}{\lambda_j+\nu} |u_j(x)|^2 \\
	&\leq 2T \sum_{\lambda_j>\Lambda} \lambda_j^{-1}|u_j(x)|^2 \\
	&\leq 2T P^+ h^{-1} P^+ \, (x,x)
	\end{aligned}
	$$
	Since $h^{-1}(.,.) \in L^q(\R)$ for all $1\leq q \leq \infty$ (see \cite[Lemma 3.2]{LewNamRou-17}), the dominated convergence theorem implies
	$$
	\norm{P^+ h^{-1} P^+(.\,,.) }_{L^q(\R)} \to 0 \quad \text{as } \Lambda \to \infty.
	$$
	In particular, we have
	$$
	\sum_{n=0}^{mT} d_n {\Tr}\left[w (\Gamma^{c,+}_{mT-n,T,0})^{(2)}\right] =o_\Lambda(1) T^2.
	$$
	Similarly
	$$
	\begin{aligned}
	\sum_{n=0}^{mT}d_n {\Tr}\Big[w (\Gamma^{c,-}_{n,T,g})^{(1)} &\otimes (\Gamma^{c,+}_{mT-n,T,0})^{(1)}\Big] \\
	&= \sum_{n=0}^{mT} d_n \iint w(\bx -\by) (\Gamma^{c,-}_{n,T,g})^{(1)}(\bx, \bx) (\Gamma^{c,+}_{n,T,0})^{(1)}(\by, \by) d\bx d\by\\
	&\leq \sum_{n=0}^{mT} d_n \iint |w(\bx-\by)| (\Gamma^{c,-}_{n,T,g})^{(1)}(\bx, \bx) (\Gamma^{c,+}_{n,T,0})^{(1)}(\by, \by) d\bx d\by\\
	&\leq C \sum_{n=0}^{mT} d_n \iint |w(\bx -\by)| (\Gamma^{c,-}_{n,T,g})^{(1)}(\bx, \bx) \left(P^+ (\Gamma^\nu_T)^{(1)} P^+\right)(\by,\by) d\bx d\by \\
	&\leq C \left\| \left(P^+ (\Gamma^\nu_T)^{(1)} P^+\right)(\cdot,\cdot)\right\|_{L^q(\R)} \sum_{n=0}^{mT} \Big\||w|\ast (\Gamma^{c,-}_{n,T,g})^{(1)}\Big\|_{L^p(\R)} \\
	&\leq C \|w\|_{L^p(\R)}  \left\| \left(P^+ (\Gamma^\nu_T)^{(1)} P^+\right)(\cdot,\cdot)\right\|_{L^q(\R)} \sum_{n=0}^{mT} d_n \|(\Gamma^{c,-}_{n,T,g})^{(1)}\|_{L^1(\R)} \\
	&\leq C \|w\|_{L^p(\R)}  \left\| \left(P^+ (\Gamma^\nu_T)^{(1)} P^+\right)(\cdot,\cdot)\right\|_{L^q(\R)} \sum_{n=0}^{mT} d_n {\Tr}\left[(\Gamma^{c,-}_{n,T,g})^{(1)}\right] \\
	&\leq C \|w\|_{L^p(\R)}  \left\| \left(P^+ (\Gamma^\nu_T)^{(1)} P^+\right)(\cdot,\cdot)\right\|_{L^q(\R)} \sum_{n=0}^{mT} n d_n.
	\end{aligned}
	$$
%
We may also bound 
$$
	\sum_{n=0}^{mT} n d_n = \sum_{n=M}^{mT} n c_n \leq \sum_{n=0}^{mT} n c_n \leq  mT,
	$$
because 
	\begin{align} \label{mT-expan}
	\begin{aligned}
		mT &={\Tr}\left[(\Gamma^c_{mT,T,0})^{(1)}\right] \\
		&= {\Tr}\left[(\Gamma^{c,-}_{mT,T,0})^{(1)}\right] + {\Tr}\left[P^+  (\Gamma^c_{mT,T,0})^{(1)}\right] \\
		&= \sum_{n=0}^{mT} n c_n + {\Tr}\left[P^+  (\Gamma^c_{mT,T,0})^{(1)}\right]. 
	\end{aligned}
	\end{align}
	This yields
	$$
	\sum_{n=0}^{mT}d_n {\Tr}\Big[w (\Gamma^{c,-}_{n,T,g})^{(1)} \otimes (\Gamma^{c,+}_{mT-n,T,0})^{(1)}\Big] = o_\Lambda(1) T^2.
	$$
	
	Collecting the previous bounds we deduce 
	$$
	\begin{aligned}
		-\log \left(\frac{Z^c_{mT,T,g}}{Z^c_{mT,T,0}}\right) &\leq \frac{1}{T} \sum_{n=0}^{mT} d_n \left( T \mathcal H(\Gamma^{c,-}_{n,T,g}, \Gamma^{c,-}_{n,T,0}) + \frac{g}{m} {\Tr}[w (\Gamma^{c,-}_{n,T,g})^{(2)}]\right) \\
		&\quad + o_\Lambda(1) + \log \left(\frac{1}{D_M}\right).
	\end{aligned}
	$$
	
%
	The main term is rewritten as
	$$
	\frac{1}{T}\sum_{n=0}^{mT} d_n \left(T \mathcal H(\Gamma^{c,-}_{n,T,g}, \Gamma^{c,-}_{n,T,0}) + \frac{g}{m}{\Tr}\left[w (\Gamma^{c,-}_{n,T,g})^{(2)}\right] \right) = \frac{1}{T}\sum_{n=0}^{mT} d_n (\mathcal F^{c,-}_{n,g}[\Gamma^{c,-}_{n,T,g}] - \mathcal F^{c,-}_{n,0} [\Gamma^{c,-}_{n,T,0}]),
	$$
	which concludes the proof. 
	\end{proof}

	\subsection{Semiclassics for the finite dimensional canonical ensemble.}
	
	The problem is now reduced to the finite dimensional setting. We are thus in a position to insert estimates from~\cite[Appendix~B]{Rougerie-LMU} and references therein to obtain 
	
	\begin{lemma}[\textbf{Finite-dimensional semi-classics}]\label{lem:semiclassics}\mbox{}\\
	Let $\mu_{mT,T,g}$ be the probability measure over the unique sphere $S\gh^-$
	be defined by
	\begin{equation}\label{eq:class meas scale}
	d\mu_{mT,T,g}(u) = \frac{1}{z_{mT,T,g}} \exp \left(-\frac{1}{T} \left(mT\langle u, h^- u\rangle + \frac{g(mT-1)}{2}\left\langle u^{\otimes 2}, w^- u^{\otimes 2} \right\rangle \right) \right) du
	\end{equation}
	with $du$ the Lebesgue measure on $S\gh^{-}$.
	
	We have
	\begin{align}\label{eq:semiclassics}
	\begin{aligned}
	\frac{1}{T}\sum_{n=0}^{mT} d_n &\left(\mathcal F^{c,-}_{n,g}[\Gamma^{c,-}_{n,T,g}] - \mathcal F^{c,-}_{n,0} [\Gamma^{c,-}_{n,T,0}]\right) \leq - \log z_m^r + o_{T} (1) + o_{\Lambda} (1) + C\frac{\Lambda}{T} \mathrm{dim} (\gh^-)\\
	&+ \frac{1}{T   }\sum_{n=0} ^{mT} d_n (mT-n) \int_{S\mathfrak h^-} \langle u, h^- u\rangle \left( \mu_{n,T,0}(u) - \mu_{mT,T,g}(u)\right) du, \\
	&+ \frac{1}{T}\sum_{n=0} ^{mT} d_n  \Big(\frac{n(n-1)}{2}-\frac{mT(mT-1)}{2}\Big) \frac{g}{mT} \int_{S\mathfrak h^-} \left\langle u^{\otimes 2}, w^- u^{\otimes 2}\right\rangle \mu_{mT,T,g}(u) du,
	\end{aligned}
	\end{align}
	where $o_T(1),o_\Lambda(1) \to 0$ respectively when $T\to \infty$ and $\Lambda \to \infty.$
	\end{lemma}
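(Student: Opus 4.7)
The plan is to combine finite-dimensional semi-classical Berezin-Lieb-type bounds, valid for each fixed $n$, with the classical Gibbs variational principle applied to well-chosen trial measures. This converts the $n$-dependence into the claimed error terms and identifies the main contribution with $-\log z^r_m$.

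For each $n \leq mT$, I would apply the semi-classics of the bosonic canonical ensemble on $(\mathfrak h^-)^{\otimes_s n}$ in the spirit of~\cite[Appendix~B]{Rougerie-LMU}, namely a coherent-state resolution of identity proportional to $\int |u^{\otimes n}\rangle\langle u^{\otimes n}|\,du$ on the sphere of $\mathfrak h^-$, to obtain
\begin{equation}\label{eq:plan-sc}
\mathcal F^{c,-}_{n,g}[\Gamma^{c,-}_{n,T,g}] - \mathcal F^{c,-}_{n,0}[\Gamma^{c,-}_{n,T,0}] \leq -T \log \frac{z_{n,T,g}}{z_{n,T,0}} + C\Lambda \dim(\mathfrak h^-),
\end{equation}
where $z_{n,T,g} := \int_{S\mathfrak h^-} \exp\bigl(-\tfrac{1}{T}\mathcal E_{n,g}[u]\bigr)\,du$ with classical energy $\mathcal E_{n,g}[u] = n\langle u, h^- u\rangle + \frac{gn(n-1)}{2mT}\langle u^{\otimes 2}, w^- u^{\otimes 2}\rangle$, coinciding at $n=mT$ with the exponent in~\eqref{eq:class meas scale}. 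The error arises from the $O(\Lambda/T)$ gap between upper and lower symbols on each of the $\dim(\mathfrak h^-)$ low-energy modes and, divided by $T$, yields the $C\Lambda \dim(\mathfrak h^-)/T$ term of the statement.

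Next I would extract the $n$-dependence of $-T\log(z_{n,T,g}/z_{n,T,0})$ via two applications of the classical Gibbs variational principle. Using $\mu_{mT,T,g}$ as trial measure for the $(n,g)$-problem gives
\begin{equation*}
-\log z_{n,T,g} - (-\log z_{mT,T,g}) \leq \frac{n-mT}{T} \int \langle u, h^- u\rangle\, d\mu_{mT,T,g} + \frac{g}{mT^2}\Big[\tfrac{n(n-1)}{2} - \tfrac{mT(mT-1)}{2}\Big]\int \langle u^{\otimes 2}, w^- u^{\otimes 2}\rangle\, d\mu_{mT,T,g},
\end{equation*}
while evaluating the $(mT,0)$ free energy at $\mu_{n,T,0}$ (and absorbing the non-negative relative entropy $\mathcal H_{\rm cl}(\mu_{n,T,0},\mu_{mT,T,0})$) yields
\begin{equation*}
-\log z_{n,T,0} - (-\log z_{mT,T,0}) \geq \frac{n-mT}{T}\int \langle u, h^- u\rangle\, d\mu_{n,T,0}.
\end{equation*}
Subtracting these, multiplying by $T$, summing over $n$ against the weights $d_n$ (with $\sum_n d_n = 1$, so that the $n$-independent main term $-\log(z_{mT,T,g}/z_{mT,T,0})$ factors out), and dividing by $T$, produces precisely the two integrated correction terms of the statement with main contribution $-\log(z_{mT,T,g}/z_{mT,T,0})$.

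Finally I would identify this main term as $-\log z^r_m + o_T(1) + o_\Lambda(1)$. The ratio rewrites as $\int \exp\bigl(-\tfrac{g(mT-1)}{2T}\langle u^{\otimes 2}, w^- u^{\otimes 2}\rangle\bigr)\, d\mu_{mT,T,0}(u)$, and the rescaling $v = \sqrt m\, u$ from the unit to the mass-$m$ sphere of $\mathfrak h^-$ turns $\mu_{mT,T,0}$ into the measure $\sigma_{m,\Lambda}$ of Lemma~\ref{lem:meas sphere}, while replacing the weight by $\exp\bigl(-\tfrac{g(m-1/T)}{2m}\langle v^{\otimes 2}, w^- v^{\otimes 2}\rangle\bigr)$; convergence to $z^r_m$ then follows from Lemma~\ref{lem:meas sphere}, Definition~\ref{def:int measure}, and the integrability recap of Appendix~\ref{sec:sub-inte-meas}. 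The main obstacle is making~\eqref{eq:plan-sc} quantitative with the correct error: unlike in the grand-canonical setting where Lemma~\ref{lem:fac Gibbs} gives clean Fock-space factorization, the fixed-$n$ bosonic sector requires coherent states localized on the $n$-particle sphere, and the error $O(\Lambda\dim(\mathfrak h^-))$ must be tracked uniformly in $n$ so as to remain sub-leading under $\dim(\mathfrak h^-) \leq C\Lambda^{1/2+1/s}$ and $\Lambda \ll T$.
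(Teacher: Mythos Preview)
Your proposal is correct and follows essentially the same approach as the paper's proof: second Berezin--Lieb for the upper bound on $\mathcal F^{c,-}_{n,g}$, first Berezin--Lieb plus the quantitative de Finetti theorem for the lower bound on $\mathcal F^{c,-}_{n,0}$ (producing the $C\Lambda\dim(\mathfrak h^-)$ error), then the classical Gibbs variational principle and the rescaling via Lemma~\ref{lem:meas sphere} to identify $-\log z^r_m$. The only organizational difference is that the paper inserts $\mu_{mT,T,g}$ directly as the upper symbol in the quantum trial state rather than first passing through $z_{n,T,g}$; your two classical variational inequalities then reproduce exactly the paper's decomposition into terms (I), (II), (III), since your inequality $-\log z_{n,T,0}+\log z_{mT,T,0}\ge \tfrac{n-mT}{T}\int\langle u,h^-u\rangle\,d\mu_{n,T,0}$ is precisely the paper's step of replacing $\mu_{n,T,0}$ by the minimizer $\rho_m=\mu_{mT,T,0}$ in the $(mT,0)$-functional. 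One small correction: the $C\Lambda\dim(\mathfrak h^-)$ error does not come from an ``$O(\Lambda/T)$ symbol gap per mode'' but from the de Finetti remainder $\|(\Gamma^{c,-}_{n,T,0})^{(1)}-\int|u\rangle\langle u|\,\tilde\mu_n\|_{\mathfrak S^1}\le C\dim(\mathfrak h^-)/n$ multiplied by $n\|h^-\|_{\rm Op}\le n\Lambda$, which is indeed $n$-independent as needed.
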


	The main term we are after now appears explicitly on the first line of~\eqref{eq:semiclassics}. Note however that we still need to control the contribution from the second and third lines, which involve the measures~\eqref{eq:class meas scale}, i.e. projected and rescaled versions of Definition~\ref{def:int measure}.
	
	\begin{proof}
	We want to bound $\mathcal F^{c,-}_{n,g}[\Gamma^{c,-}_{n,T,g}]$ from above and $\mathcal F^{c,-}_{n,0} [\Gamma^{c,-}_{n,T,0}]$ from below. 
	
	\noindent \textbf{Upper bound on $\mathcal F^{c,-}_{n,g}[\Gamma^{c,-}_{n,T,g}]$.} We are free to insert a convenient trial state in place of the true minimizer $\mathcal F^{c,-}_{n,g}[\Gamma^{c,-}_{n,T,g}]$. We take
	$$
	\Gamma_n =\int_{S \mathfrak h^-} |u^{\otimes n}\rangle \langle u^{\otimes n}| \mu (u)du, \quad \mu = \mu_{mT,T,g}.
	$$
	We have
	$$
	\mathcal F_{n,g}^{c,-}[\Gamma_n] = {\Tr}[H^-_{n,g} \Gamma_n] + T {\Tr}[\Gamma_n \log \Gamma_n].
	$$
	A direct computation gives
	$$
	\begin{aligned}
		{\Tr}[H^-_{n,g} \Gamma_n] &={\Tr}\left[ \Big( \sum_{j=1}^n h^-_j + \frac{g}{mT} \sum_{1\leq j<k\leq n} w^-_{jk}\Big) \int_{S\mathfrak h^-} |u^{\otimes n}\rangle \langle u^{\otimes n}| \mu(u) du\right] \\
		&=n \int_{S\mathfrak h^-} \langle u, h^- u\rangle \mu(u) du + \frac{gn(n-1)}{2mT} \int_{S\mathfrak h^-} \left\langle u^{\otimes 2}, w^- u^{\otimes 2}\right\rangle \mu(u) du.
	\end{aligned}
	$$
	We apply the second Berezin-Lieb inequality (see \cite[Lemma B.4]{Rougerie-LMU}) with $f(x)=x \log x$ to get
	$$
	\begin{aligned}
		{\Tr}[\Gamma_n \log \Gamma_n] &\leq \dim\left(\left(\mathfrak h^-\right)^{\otimes_s n}\right) \int_{S\mathfrak h^-} \frac{\mu(u)}{\dim\left(\left(\mathfrak h^-\right)^{\otimes_s n}\right)} \log \left(\frac{\mu(u)}{\dim\left(\left(\mathfrak h^-\right)^{\otimes_s n}\right)} \right) du \\
		&=-\log \left(\dim\left(\left(\mathfrak h^-\right)^{\otimes_s n}\right)\right) + \int_{S\mathfrak h^-} \mu(u) \log\left(\mu(u)\right) du.
	\end{aligned}
	$$
	It follows that
	$$
	\begin{aligned}
		\mathcal F^{c,-}_{n,g}[\Gamma^{c,-}_{n,T,g}] &= F^{c,-}_g(n) \\
		&\leq \mathcal F^{c,-}_{n,g}[\Gamma_n] \\
		&\leq n \int_{S\mathfrak h^-} \langle u, h^- u\rangle \mu(u) du + \frac{n(n-1)}{2N} g \int_{S\mathfrak h^-} \left\langle u^{\otimes 2}, w^- u^{\otimes 2}\right\rangle \mu (u) du \\
		&\quad + T \int_{S\mathfrak h^-} \mu (u) \log\left(\mu (u)\right) du - T \log \left(\dim\left(\left(\mathfrak h^-\right)^{\otimes_s n}\right)\right).
	\end{aligned}
	$$

	\noindent\textbf{Lower bound on $\mathcal F^{c,-}_{n,0} [\Gamma^{c,-}_{n,T,0}]$.} Let us  denote
	$$
	\tilde{\mu}_n(u)= \dim\left(\left(\mathfrak h^-\right)^{\otimes_s n}\right) \left\langle u^{\otimes n}, \Gamma^{c,-}_{n,T,0} u^{\otimes n}\right\rangle
	$$
	the lower symbol of $\Gamma^{c,-}_{n,T,0}$. 
	
	Using the quantitative quantum de Finetti theorem (see \cite[Theorem 4.1]{Rougerie-LMU}), we have
	 \begin{equation}\label{eq:deF1}
	  {\Tr}\Big| (\Gamma^{c,-}_{n,T,0})^{(1)} - \int_{S\mathfrak h^-} |u\rangle \langle u | \tilde{\mu}_n(u) du \Big| \leq C\frac{\dim(\mathfrak h^-)}{n}
	\end{equation}

	which implies
	\begin{equation}\label{eq:deF2}
	\begin{aligned}
		{\Tr}[H^-_{n,0} \Gamma^{c,-}_{n,T,0}] &= n {\Tr}[h^- (\Gamma^{c,-}_{n,T,0})^{(1)}] \\
		&= n {\Tr}\Big[h^- \int_{S\mathfrak h^-} |u\rangle \langle u| \tilde{\mu}_n(u) du\Big] + n {\Tr}\Big[h^- \Big((\Gamma^{c,-}_{n,T,0})^{(1)} - \int_{S\mathfrak h^-} |u\rangle \langle u| \tilde{\mu}_n(u) du\Big)\Big] \\
		&\geq n\int_{S\mathfrak h^-} \langle u, h^- u\rangle \tilde{\mu}_n(u) du - n \|h^-\|_{\text{Op}} \Big\|(\Gamma^{c,-}_{n,T,0})^{(1)} - \int_{S\mathfrak h^-} |u\rangle \langle u| \tilde{\mu}_n(u) du\Big\|_{\mathfrak S^1} \\
		&\geq n\int_{S\mathfrak h^-} \langle u, h^- u\rangle \tilde{\mu}_n(u) du - C \Lambda \dim(\mathfrak h^-),
	\end{aligned}
	\end{equation}
	where $\|h^-\|_{\text{Op}} \leq \Lambda$.
	
	For the entropy term, we use the first Berezin-Lieb inequality (see \cite[Lemma B.3]{Rougerie-LMU}) to get
	$$
	\begin{aligned}
	{\Tr}[\Gamma^{c,-}_{n,T,0} \log \Gamma^{c,-}_{n,T,0}] &\geq \dim\left(\left(\mathfrak h^-\right)^{\otimes_s n}\right) \int_{S\mathfrak h^-} \frac{\tilde{\mu}_n(u)}{\dim\left(\left(\mathfrak h^-\right)^{\otimes_s n}\right)} \log \Big(\frac{\tilde{\mu}_n(u)}{\dim\left(\left(\mathfrak h^-\right)^{\otimes_s n}\right)} \Big)du \\
	&= - \log\left(\dim\left(\left(\mathfrak h^-\right)^{\otimes_s n}\right)\right)+ \int_{S\mathfrak h^-} \tilde{\mu}_n(u) \log (\tilde{\mu}_n(u)) du.
	\end{aligned}
	$$
	In particular, we have
	$$
	\begin{aligned}
	\mathcal F^{c,-}_{n,0}&[\Gamma^{c,-}_{n,T,0}] \\
	&\geq n \int_{S\mathfrak h^-} \langle u, h^- u\rangle \tilde{\mu}_n(u) du + T \int_{S\mathfrak h^-} \tilde{\mu}_n(u) \log (\tilde{\mu}_n(u)) du - T \log\left(\dim\left(\left(\mathfrak h^-\right)^{\otimes_s n}\right)\right)- C \Lambda \dim(\mathfrak h^-) \\
	& \geq n \int_{S\mathfrak h^-} \langle u, h^- u \rangle \mu_{n,T,0}(u) du + T \int_{S\mathfrak h^-} \mu_{n,T,0}(u) \log(\mu_{n,T,0}(u)) du \\
	&\quad -  T \log\left(\dim\left(\left(\mathfrak h^-\right)^{\otimes_s n}\right)\right)- C \Lambda \dim(\mathfrak h^-),
	\end{aligned}
	$$
	where 
	\begin{equation}\label{eq:mu nT}
	d\mu_{n,T,0}(u) = \frac{1}{z_{n,T,0}} \exp \left(-\frac{n}{T} \langle u, h^- u\rangle \right) du
	\end{equation}
	is the unique minimizer of 
	$$
	\mathcal F_{n,T,0}(\mu) = n \int_{S\mathfrak h^-} \langle u, h^- u\rangle \mu(u) du + T \int_{S\mathfrak h^-} \mu(u) \log(\mu(u)) du
	$$
	over all $\mu \in \mathcal P(S\gh^-)$, where $\mathcal P(S\gh^-)$ is the set of all probability measures on $S\gh^-$. It follows that
	$$
	\begin{aligned}
		\frac{1}{T}\sum_{n=0}^{mT} &d_n \left(\mathcal F^{c,-}_{n,g}[\Gamma^{c,-}_{n,T,g}] - \mathcal F^{c,-}_{n,0} [\Gamma^{c,-}_{n,T,0}] \right) \\
		&\leq \frac{1}{T}\sum_{n=0}^{mT} d_n \Big(n \int_{S\mathfrak h^-}  \langle u, h^- u\rangle \mu(u) du + \frac{n(n-1)}{2mT} g \int_{S\mathfrak h^-} \left\langle u^{\otimes 2}, w^- u^{\otimes 2}\right\rangle \mu(u) du + T \int_{S\mathfrak h^-} \mu(u) \log (\mu(u)) du \\
		&\quad \quad \quad -n \int_{S\mathfrak h^-} \langle u, h^- u\rangle \mu_{n,T,0}(u) du  - T \int_{S\mathfrak h^-} \mu_{n,T,0}(u) \log(\mu_{n,T,0}(u)) du + C\Lambda\dim(\mathfrak h^-) \Big)
	\end{aligned}
	$$
	where we took $\mu = \mu_{mT,T,g}$ defined as in~\eqref{eq:class meas scale}, leading to
	$$
	\begin{aligned}
		\frac{1}{T}\sum_{n=0}^{mT} &d_n \left(\mathcal F^{c,-}_{n,g}[\Gamma^{c,-}_{n,T,g}] - \mathcal F^{c,-}_{n,0} [\Gamma^{c,-}_{n,T,0}] \right) \\
		&\leq \frac{1}{T} \sum_{n=0}^{mT} d_n \Bigg(n \int_{S\mathfrak h^-} \langle u, h^- u\rangle \mu_{mT,T,g} (u) du + \frac{n(n-1)g}{2mT} \int_{S\mathfrak h^-} \left\langle u^{\otimes 2}, w^- u^{\otimes 2}\right\rangle \mu_{mT,T,g}(u) du   \\
		&\quad \quad \quad + T \int_{S \mathfrak h^-} \mu_{mT,T,g}(u) \log (\mu_{mT,T,g}(u)) du   \\
		&\quad \quad \quad - n \int_{S\mathfrak h^-} \langle u, h^- u\rangle \mu_{n,T,0}(u) du- T \int_{S\mathfrak h^-} \mu_{n,T,0}(u) \log(\mu_{n,T,0}(u)) du \Bigg) + \frac{C}{T} \Lambda \dim(\mathfrak h^-).
	\end{aligned}
	$$
	We write the quantity between parenthesis on the right-hand side as 
	$$
	(\cdots) = (\text{I}) + (\text{II}) + (\text{III}),
	$$
	where
	$$
	\begin{aligned}
	(\text{I}) &= mT\int_{S\mathfrak h^-} \langle u, h^- u\rangle \mu_{mT,T,g} (u) du + \frac{mT(mT-1)}{2mT}g \int_{S\mathfrak h^-} \left\langle u^{\otimes 2}, w^- u^{\otimes 2}\right\rangle \mu_{mT,T,g}(u) du \\
	&\quad + T \int_{S \mathfrak h^-} \mu_{mT,T,g}(u) \log (\mu_{mT,T,g}(u)) du \\
	&\quad  - mT \int_{S\mathfrak h^-} \langle u, h^- u\rangle \mu_{n,T,0}(u) du - T \int_{S\mathfrak h^-} \mu_{n,T,0}(u) \log(\mu_{n,T,0}(u)) du, \\
	(\text{II}) &= (mT-n) \int_{S\mathfrak h^-} \langle u, h^- u\rangle (\mu_{n,T,0}(u)-\mu_{mT,T,g}(u)) du, \\
	(\text{III}) &= \Big(\frac{n(n-1)}{2}-\frac{mT(mT-1)}{2}\Big) \frac{g}{mT} \int_{S\mathfrak h^-} \left\langle u^{\otimes 2}, w^- u^{\otimes 2}\right\rangle \mu_{mT,T,g}(u) du.
	\end{aligned}
	$$
	The terms (II) and (III) appear on the right side of~\eqref{eq:semiclassics}, so that there remains to control (I).
%
%
	Observe that
	$$
	\begin{aligned}
		mT \int_{S\mathfrak h^-} \langle u, h^- u\rangle \mu_{n,T,0}(u) du &+ T \int_{S\mathfrak h^-} \mu_{n,T,0}(u) \log(\mu_{n,T,0}(u)) du \\
		&= mT \left(\int_{S\mathfrak h^-} \langle u, h^- u\rangle \mu_{n,T,0}(u) du + \frac{1}{m} \int_{S\mathfrak h^-} \mu_{n,T,0}(u) \log(\mu_{n,T,0}(u)) du\right) \\
		&\geq mT \inf\left\{ \int_{S\mathfrak h^-} \langle u, h^- u\rangle \mu(u) du + \frac{1}{m} \int_{S\mathfrak h^-} \mu(u)\log(\mu(u)) du : \mu \in \mathcal P(S\mathfrak h^-)\right\} \\
		&\geq mT \Big(\int_{S\mathfrak h^-} \langle u, h^- u\rangle \rho_m(u) du + \frac{1}{m} \int_{S\mathfrak h^-} \rho_m(u)\log(\rho_m(u)) du\Big),
	\end{aligned}
	$$
	where
	$$
	d\rho_m(u) = \frac{1}{\tilde{z}_m} \exp (-m\langle u, h^- u\rangle) du.
	$$
	It follows that
	$$
	\begin{aligned}
	(\text{I}) &\leq  mT\int_{S\mathfrak h^-} \langle u, h^- u\rangle \mu_{mT,T,g} (u) du + \frac{g(mT-1)}{2} \int_{S\mathfrak h^-} \left\langle u^{\otimes 2}, w^- u^{\otimes 2}\right\rangle \mu_{mT,T,g}(u) du \\
	&\quad + T \int_{S \mathfrak h^-} \mu_{mT,T,g}(u) \log (\mu_{mT,T,g}(u)) du \\
	&\quad  - mT \int_{S\mathfrak h^-} \langle u, h^- u\rangle \rho_m(u) du - T \int_{S\mathfrak h^-} \rho_m(u) \log(\rho_m(u)) du.
	\end{aligned}
	$$
	A direct computation gives
	$$
	\begin{aligned}
	(\text{I}) &\leq - T \log (z_{mT,T,g}) + T \log (\tilde{z}_m) \\
	&= - T \log \Big(\frac{z_{mT,T,g}}{\tilde{z}_m}\Big) \\
	&= - T \log \Big( \int_{S \mathfrak h^-} \exp \Big(-\frac{g(mT-1)}{2T} \left\langle u^{\otimes 2}, w^- u^{\otimes 2}\right\rangle\Big) \rho_m(u) du\Big)
	\end{aligned}
	$$
	with the partition function $z_{mT,T,g}$ as in~\eqref{eq:class meas scale}.
	
	Thus, using \eqref{sum-dn} and the notation of Lemma~\ref{lem:meas sphere} we have
	\begin{align*}
	\frac{1}{T} \sum_{n=0}^{mT} d_n \times (\text{I}) &\leq - \log \Big( \int_{S \mathfrak h^-} \exp \Big(-\frac{g(mT-1)}{2T} \left\langle u^{\otimes 2}, w^- u^{\otimes 2}\right\rangle\Big) \rho_m(u) du\Big)\\
	&\leq - \log \Big( \int_{S \mathfrak h^-} \exp \Big(-\frac{mg}{2} \left\langle u^{\otimes 2}, w^- u^{\otimes 2}\right\rangle\Big) \rho_m(u) du\Big) + o_{T} (1)\\
	&= - \log \Big( \int \exp \Big(-\frac{g}{2m} \left\langle u^{\otimes 2}, w^- u^{\otimes 2}\right\rangle\Big) d\sigma_{m,\Lambda} (u)\Big) + o_{T} (1)\\
	&= - \log\left( z^r_m\right)+ o_{T} (1) + o_{\Lambda} (1).
	\end{align*}
 Here we have changed variables $u\to \sqrt{m} u$ to go to the third line, and used Lemma~\ref{lem:meas sphere} to go the last one.

	\end{proof}

	\subsection{Controling particle number fluctuations and conclusion}\label{sec:control fluct}
	
	There remains to estimate the spurious terms from the right-hand side of~\eqref{eq:semiclassics} to complete the 
	
	\begin{proof}[Proof of Proposition~\ref{pro:up bound}]
	We will again use repeatedly the estimate from~\cite[Lemma D1]{DinRou-23}
	$$ \dim \gh ^- = \dim E_\Lambda \leq C \Lambda ^{1/2 + 1/s}.$$
	We combine Lemma~\ref{lem:fin dim} with Lemma~\ref{lem:semiclassics} and choose 
    \begin{equation}\label{eq:choice delta}
     \delta = \Lambda^{1/s-1/2} \Lambda ^{\alpha}
    \end{equation}
	with $\alpha > 0$ suitably small, in particular so that 
	$$ \delta \xrightarrow[\Lambda \to \infty]{} 0.$$
	This way the assumptions of Lemma~\ref{lem:fin dim} are fulfilled. We choose $\Lambda (T)\to \infty$ in dependence with respect to $T$ such that 
	$$ \Lambda ^{3/2 + 1/s} T^{-1} \to 0.$$
	This way the three error terms of the first line of~\eqref{eq:semiclassics} are negligible in the limit $T\to\infty$.
	
	We quickly dispatch the third line of~\eqref{eq:semiclassics}. This term is bounded by
	$$
	\frac{g}{mT^2} \sum_{n=0}^{mT} d_n \left(\frac{mT(mT-1)}{2} - \frac{n(n-1)}{2}\right)  \int_{S\mathfrak h^-} \left|\left\langle u^{\otimes 2}, w^- u^{\otimes 2}\right\rangle\right| \mu_{mT,T,g}(u) du.
	$$
	Bounding the interaction term as in Appendix~\ref{sec:sub-inte-meas}, using~\eqref{eq:choice M},~\eqref{sum-dn} and $d_n = D^{-1} c_n \mathds{1}_{\{n\geq M\}}$, the above quantity is bounded by
	$$
	\frac{1}{T^2} \sum_{n=0}^{mT} d_n (mT-n) \frac{mT+n-1}{2} \leq \delta m
	$$
	which tends to $0$ as soon as $\delta \to 0$.
	
	Our main task is to control the second line of~\eqref{eq:semiclassics}, using Section~\ref{sec:dep mass}. Since the sum is limited to $n\sim mT$ by our choice of $d_n$ we may apply Proposition~\ref{pro:dep mass} with $m_1 = n/T$, $m_2 = m$ (and the immaterial change $g\to g(1-O(N^{-1}))$) to obtain 
	$$\int_{S\mathfrak h^-} \langle u, h u\rangle \left(\mu_{n,T,0}(u)-\mu_{mT,T,g}(u)  \right) du \leq C \Lambda ^{\frac{1}{4} + \frac{1}{2s}} + C \left|\frac{n}{T} - m\right| \Lambda ^{\frac{1}{2} + \frac{1}{s}}$$
	provided the left-hand side is non-negative. Thus, since $\sum_{n=0}^{mT} d_n = 1$ and with the choice~\eqref{eq:choice delta}
	\begin{align} \label{est-II}
	\frac{1}{T} \sum_{n=0}^{mT}d_n (mT-n)\int_{S\mathfrak h^-} \langle u, h u\rangle \left(\mu_{n,T,0}(u)-\mu_{mT,T,g}(u)\right) du \leq C \delta \Lambda ^{\frac{1}{4} + \frac{1}{2s}} + C \delta^2 \Lambda ^{\frac{1}{2} + \frac{1}{s}} \leq C \Lambda ^{\frac{3}{2s} - \frac{1}{4} + \alpha}.
	\end{align}
	Since we assume $s>6$, we may always take $\alpha >0$ small enough so that 
	$$ \frac{3}{2s} - \frac{1}{4} + \alpha < 0$$
	and thus~\eqref{eq:choice delta} tends to $0$ when $\Lambda \to \infty$, as needed, which concludes the proof.
	\end{proof}

	\section{Free energy lower bound}\label{sec:low}
	
	In this section, we turn to the energy lower bound, completing the proof of Theorem~\ref{thm:main}:
	
	\begin{proposition}[\textbf{Free energy lower bound}]\label{pro:low bound}\mbox{}\\
	Let the interacting canonical Gibbs state $\Gamma_{mT,T,g}^c$ be defined as in~\eqref{eq:C ens} with the particle number set as $N = mT$, with $m >0 ,g\geq 0$ fixed. 
	
	There exists a probability measure $\nu$ over $L^2 (\R)$ such that, modulo subsequence, for any $k \geq 1$,
	 \begin{equation}\label{eq:main DM alt}
	 \frac{k!}{T^k} \left(\Gamma_{mT,T,g}^c \right)^{(k)} \xrightharpoonup[T\to \infty]{}^\star \int |u^{\otimes k} \rangle \langle u^{\otimes k} | d\nu (u)
	 \end{equation}
    weakly-$\star$ in the trace-class $\gS ^1 (\gh^k)$, where the reduced density matrices are defined as in~\eqref{eq:def red mat}. 
    
    The relative quantum free-energy satisfies 
    \begin{multline}\label{eq:ener low}
     \liminf_{T\to \infty} \left(- \log \left(Z^c_{mT,T,g}\right) +  \log \left(Z^c_{mT,T,0}\right)\right) \\ 
     \geq \mathcal H_{\cl}(\nu, \mu_{0,m}) + \frac{g}{2m} \int \left( \iint_{X \times X} |u(\bx)|^2  w(\bx-\by) |u(\by)|^2 d\bx d\by \right) d\nu(u) \geq  -\log z^r_m
    \end{multline}
    where $z^r_m$ is the classical relative partition function~\eqref{eq:class part}.
	\end{proposition}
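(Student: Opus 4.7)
The plan is to combine the Gibbs variational principle~\eqref{eq:free ener rel} with the quantum de Finetti theorem, the Berezin-Lieb inequality of~\cite[Theorem~7.1]{LewNamRou-15}, and the identification of the de Finetti measure of $\Gamma^c_{mT,T,0}$ proved in Theorem~\ref{thm:free case}.

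First, starting from the upper bound of Proposition~\ref{pro:up bound} combined with the variational characterization of $\Gamma^c_{mT,T,g}$, one derives a priori kinetic energy bounds
$$\Tr\bigl[(h\otimes \Id^{\otimes (k-1)})\, (\Gamma^c_{mT,T,g})^{(k)}\bigr] \leq C_k\, T^k,$$
for every $k\geq 1$. The main non-trivial input here is controlling the attractive part of the interaction via the mass constraint built into the canonical ensemble, together with Assumption~\ref{asum:int}. These bounds force the rescaled density matrices $k!\, T^{-k}(\Gamma^c_{mT,T,g})^{(k)}$ to lie in a bounded subset of a weighted trace class which is sequentially weakly-$\star$ relatively compact. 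The quantum de Finetti theorem in its strong form~\cite[Theorem~4.2]{LewNamRou-15} then provides, modulo a diagonal extraction, a Borel probability measure $\nu$ on $\cH^\theta$ for which~\eqref{eq:main DM alt} holds for every $k\geq 1$.

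For the entropy lower bound, we first localize $\Gamma^c_{mT,T,g}$ and $\Gamma^c_{mT,T,0}$ onto $\gF(E_\Lambda)$ via the unitary map $\cU$ of Section~\ref{sec:trial state}. Using Theorem~\ref{thm:free case}, which identifies the de Finetti measure of the free canonical state with $\mu_{0,m}$, and the Berezin-Lieb lower bound for the relative entropy, we obtain
$$\liminf_{T\to\infty}\,\cH\bigl(\Gamma^c_{mT,T,g},\Gamma^c_{mT,T,0}\bigr)\;\geq\;\cH_{\cl}(\nu_\Lambda,\mu_{0,m,\Lambda}),$$
with $\nu_\Lambda$, $\mu_{0,m,\Lambda}$ the cylindrical projections on $E_\Lambda$. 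Letting $\Lambda\to\infty$ and appealing to lower semicontinuity of the relative entropy together with the tightness estimates from Section~\ref{sec:def cond meas} yields the full bound with $\cH_{\cl}(\nu,\mu_{0,m})$ on the right-hand side.

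For the interaction term we split $w=w_+-w_-$. The repulsive part is handled by truncating $w_+^R:=\min(w_+,R)$, using the weak-$\star$ convergence of $(\Gamma^c)^{(2)}/T^2$ against the bounded operator $w_+^R$, and removing the cut-off by monotone convergence. The attractive part is the main obstacle: one needs a \emph{limsup} upper bound on $\Tr[w_-(\Gamma^c)^{(2)}/T^2]$ that matches the expected classical limit $\frac{g}{2m}\int\langle u^{\otimes 2},w_- u^{\otimes 2}\rangle d\nu(u)$. This will be achieved by exploiting the $L^p$ condition $p>s/(s-2)$ on $w_-$ together with the kinetic energy bound: Sobolev-type embeddings associated with $h$ yield $|u|^2 \in L^q$ (with $q$ the dual H\"older exponent to $p$) uniformly along the sequence, providing the required uniform integrability and allowing a strong passage to the limit. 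Combining the two lower bounds gives the first inequality in~\eqref{eq:ener low}, while the second inequality is immediate from the classical Gibbs variational principle~\eqref{eq:free ener class}, of which $\mu_{g,m}$ is the unique minimizer; this last fact also identifies $\nu=\mu_{g,m}$ once the matching upper bound from Proposition~\ref{pro:up bound} is invoked, completing the proof of Theorem~\ref{thm:main}.
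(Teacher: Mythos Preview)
Your overall strategy---de Finetti for~\eqref{eq:main DM alt}, Berezin--Lieb plus Theorem~\ref{thm:free case} for the entropy, and a separate treatment of the interaction---matches the paper's. Two points differ, and one of them is a genuine gap.

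The a priori bound you claim, $\Tr\bigl[(h\otimes\Id^{\otimes(k-1)})(\Gamma^c_{mT,T,g})^{(k)}\bigr]\leq C_k T^k$ with the \emph{full} $h$, is false already for the free state: by the comparison with the grand-canonical density matrix in Appendix~\ref{sec:DMs}, $T^{-1}\Tr\bigl[h\,(\Gamma^c_{mT,T,0})^{(1)}\bigr]$ behaves like $\sum_j \lambda_j/(\lambda_j+\nu)$, which diverges. The paper instead proves a \emph{fractional} bound, $T^{-1}\Tr\bigl[h^\alpha(\Gamma^c_{mT,T,g})^{(1)}\bigr]\leq C\,\cH(\Gamma^c_{mT,T,g},\Gamma^c_{mT,T,0})+C_\alpha$ for $0<\alpha<\tfrac12-\tfrac1s$, via comparison with an auxiliary Gibbs state built on $h-ch^\alpha$; combined with the two-body operator inequality $\tfrac14 h_\bx^\alpha+\tfrac14 h_\by^\alpha+t\,w\geq -C$ (valid because $w_-\in L^p$ with $p>s/(s-2)$ and $\alpha\geq 1/(2p)$), this bootstraps to $\cH(\Gamma^c_{mT,T,g},\Gamma^c_{mT,T,0})\leq C$.

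More importantly, your handling of the focusing part is where the argument is incomplete. The sentence ``Sobolev-type embeddings yield $|u|^2\in L^q$ uniformly along the sequence, providing uniform integrability'' does not translate to the quantum level, where $(\Gamma^c)^{(2)}$ is an operator rather than a density, and weak-$\star$ convergence alone cannot give $\limsup$ control on $\Tr[w_-(\Gamma^c)^{(2)}]$. The paper does \emph{not} split $w=w_+-w_-$. Instead it sacrifices $\varepsilon\,\cH$ to gain $\varepsilon\,T^{-1}\Tr[h^\alpha(\Gamma^c)^{(1)}]$ via the fractional bound above, forms $G_2^\varepsilon:=\tfrac{\varepsilon}{2}(h_\bx^\alpha+h_\by^\alpha)+\tfrac{g}{m}w\geq -f(\varepsilon)$, and applies Fatou's lemma to the \emph{nonnegative} trace-class sequence $T^{-2}(G_2^\varepsilon+f(\varepsilon))^{1/2}(\Gamma^c_{mT,T,g})^{(2)}(G_2^\varepsilon+f(\varepsilon))^{1/2}$. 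Since $\nu$ is absolutely continuous with respect to $\mu_{0,m}$ (because $\cH_{\cl}(\nu,\mu_{0,m})<\infty$), the $f(\varepsilon)$ contribution cancels on the classical side, and letting $\varepsilon\to 0$ yields the liminf for the full interaction in one stroke. This ``borrow fractional kinetic energy to positivize'' device is the missing idea in your sketch.

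A minor difference: the paper applies \cite[Theorem~7.1]{LewNamRou-15} directly in infinite dimensions (it is stated in terms of de Finetti measures), so the $\Lambda$-localization and subsequent $\Lambda\to\infty$ in your entropy step are unnecessary, though not incorrect.
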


	\begin{proof}[Proof of Proposition~\ref{pro:low bound} and completion of the proof of Theorem~\ref{thm:main}]
	The existence of the measure $\nu$ in~\eqref{eq:main DM alt} is a general fact, following from the weak quantum de Finetti theorem, e.g. in the version of~\cite[Section~2]{LewNamRou-14}.

	Combining~\eqref{eq:ener low} and Proposition~\ref{pro:up bound} gives the energy convergence~\eqref{eq:main ener}. Moreover it shows that any limit measure $\nu$ in~\eqref{eq:main DM alt} must minimize the classical relative free energy, and hence be equal to $\mu_{g,m}$. By uniqueness of the limit this leads to~\eqref{eq:main DM} in the sense of weak-$\star$ convergence. Since $\mu_{g,m}$ lives on the sphere $\int |u|^2 =m$, the trace of the left-hand side of~\eqref{eq:main DM alt} converges to the trace of the right-hand side. Both sides being positive trace-class operators, the usual criterion~\cite[Addendum-H]{Simon-79} upgrades the convergence to strong in the trace-class. 
	   
	Everything thus now relies on, given~\eqref{eq:main DM alt}, passing to the lim inf to prove the first inequality in~\eqref{eq:ener low}. We start from the fact that $\Gamma_{mT,T,g}^c$ minimizes the relative free energy functional
	$$
	\mathcal H\left(\Gamma, \Gamma^c_{mT,T,0}\right) + \frac{g}{mT^2} {\Tr}\left[w(\bx-\by)\Gamma^{(2)}\right] = {\Tr}\left[\Gamma (\log \Gamma-\log \Gamma^c_{mT,T,0})\right]+ \frac{g}{mT^2} {\Tr}\left[w(\bx-\by)\Gamma^{(2)}\right]
	$$
	The Berezin-Lieb inequality of \cite[Theorem 7.1]{LewNamRou-15} deals conveniently with the relative entropy term. Indeed,~\eqref{eq:main DM alt} says that $\nu$ is de Finetti measure of the sequence $\Gamma^c_{mT,T, g}$ at scale $T^{-1}$. Since we know from Theorem~\ref{thm:free case} that $\mu_{0,m}$ is de Finetti measure of the sequence $\Gamma^c_{mT,T,0}$ at scale $T^{-1}$, we obtain 
	\begin{equation}\label{eq:liminf ent}
	\liminf_{T\to \infty} {\Tr}\left[\Gamma^c_{mT,T, g} (\log \Gamma^c_{mT,T,g}-\log \Gamma^c_{mT,T,0})\right] \geq \int \frac{d\nu}{d\mu_{0,m}} \log \frac{d\nu}{d\mu_{0,m}} d\mu_{0,m}.
	\end{equation}
	There remains to pass to the lim inf in the interaction energy term. For a purely repulsive/defocusing interaction, this would be done exactly as in~\cite{LewNamRou-15}. The sequel is thus primarily aimed at allowing for an attractive/focusing component in $w$.
	
	\medskip
	
	\noindent\textbf{First} we claim that, with 
	\begin{equation}\label{eq:pert alpha}
	0 <\alpha < \frac{1}{2} - \frac{1}{s}
	\end{equation}
 	we have 
 	\begin{equation}\label{eq:low a priori}
 	\Tr \left[ \frac{h^\alpha}{T} \left(\Gamma^c_{mT,T, g} \right)^{(1)}\right] \leq C \cH \left(\Gamma^c_{mT,T, g}, \Gamma^c_{mT,T,0} \right) + C_\alpha. 
 	\end{equation}
    To this end we define the auxiliary $mT$-particles Gibbs state 
    $$ \Gamma_{mT,0}^\alpha := \frac{1}{Z_{mT,0}^\alpha} \exp\left(-\frac{1}{T} \sum_{j=1} ^{mT}\left( h_{\bx_j} - c h_{\bx_j} ^\alpha \right) \right)$$
	where $c$ is small enough for $h - c h ^\alpha \geq c' h$ to be a non-negative operator. Following the first few lines of the proof of~\cite[Theorem~6.1]{LewNamRou-20} we find 
	\begin{align*}
	&\cH \left(\Gamma^c_{mT,T, g}, \Gamma^c_{mT,T,0}\right) - \frac{c}{T} \Tr \left[ h^\alpha  \left(\Gamma^c_{mT,T, g} \right)^{(1)} \right]\\ 
	&= \frac{1}{T} \Tr \left[ \left(h - ch^\alpha  \right) \left(\Gamma^c_{mT,T, g} \right)^{(1)} \right] + \Tr \left[\Gamma^c_{mT,T, g} \log \left( \Gamma^c_{mT,T, g} \right)\right] - \frac{1}{T} \Tr \left[ h \left(\Gamma^c_{mT,T, 0} \right)^{(1)} \right] - \Tr \left[\Gamma^c_{mT,T, 0} \log \left( \Gamma^c_{mT,T, 0} \right)\right]\\
	&\geq \frac{1}{T} \Tr \left[ \left(h - ch^\alpha  \right) \left(\Gamma^\alpha _{mT,0} \right)^{(1)} \right] + \Tr \left[\Gamma^\alpha _{mT,0}  \log \left( \Gamma^\alpha _{mT,0}  \right)\right] - \frac{1}{T} \Tr \left[ h \left(\Gamma^c_{mT,T, 0} \right)^{(1)} \right] - \Tr \left[\Gamma^c_{mT,T, 0} \log \left( \Gamma^c_{mT,T, 0} \right)\right]\\
	&\geq -\frac{c}{T} \Tr \left[ h^\alpha  \left(\Gamma^\alpha _{mT,0} \right)^{(1)} \right]
	\end{align*}
    using successively the variational principles defining $\Gamma^\alpha_{mT,0}$ and $\Gamma^c_{mT,T, 0}$. Arguing as in Appendix~\ref{sec:DMs} (changing $h$ to $h-ch^\alpha$) we find that, for a well-chosen fixed $\nu \in \R$ 
	 $$\left(\Gamma^\alpha _{mT,0} \right)^{(1)} \leq C \frac{1}{\exp\left(T^{-1}\left(h - ch^\alpha + \nu \right)\right)-1} \leq C \frac{T}{h - c h^\alpha + \nu}.$$ 
	Inserting in the above we deduce  
	$$ \Tr \left[ \frac{h^\alpha}{T} \left(\Gamma^c_{mT,T, g} \right)^{(1)}\right] \leq \cH \left(\Gamma^c_{mT,T, g}, \Gamma^c_{mT,T,0}\right) + C\Tr h^{\alpha - 1}$$ 
	and since $h^{\alpha - 1}$ is trace-class for $\alpha$ as in~\eqref{eq:pert alpha} (see e.g.~\cite[Appendix~A]{DinRou-23}), we have proven~\eqref{eq:low a priori}.

	\medskip
	
	\noindent\textbf{Next,} we deduce that 
	\begin{equation}\label{eq:rel ent bound}
	 \cH \left(\Gamma^c_{mT,T, g}, \Gamma^c_{mT,T,0} \right) \leq C 
	\end{equation}
    independently of $T$. To this end we first recall that, under Assumption~\ref{asum:int} on the negative part of $w$ we have that, for any $t>0$, the two-particle operator
    \begin{equation}\label{eq:H2t}
     H_2 ^t := \frac{h_{\bx}^\alpha}{4}+ \frac{h_{\by}^\alpha}{4} + t w (\bx - \by) \geq - C
    \end{equation}
	is bounded below as a self-adjoint operator acting on $L^2(\R^2)$ for 
	$$ \alpha \geq \frac{1}{2p}.$$
	Indeed, following~\cite[Remark~3.1]{Rougerie-LMU} or adapting the proof of~\cite[Inequality~(3.3)]{NamRouSei-15} we see that~\eqref{eq:H2t} holds as soon as $H^\alpha$ continuously embeds into $L^{2q}$, with 
	$$ \frac{1}{p} + \frac{1}{q} = 1.$$
	This requires 
	$$ 2 \frac{p}{p-1} \leq \frac{2}{1- 2 \alpha} \mbox{ i.e. } \alpha \geq \frac{1}{2p}.$$
	But, with $p>\frac{s}{s-2}$, the right-hand side is smaller than $1/2 - 1/s$ so that we may pick  
	$$\alpha = \frac{1}{2} - \frac{1}{s} - \eta$$
	with $\eta>0$ sufficiently small. Then~\eqref{eq:pert alpha} is satisfied and we may apply the previous step to obtain~\eqref{eq:low a priori}. Combining this with the free-energy upper bound of Proposition~\ref{pro:up bound} we are led to 
	\begin{align*}
	C &\geq \mathcal H\left(\Gamma^c_{mT,T, g}, \Gamma^c_{mT,T,0}\right) + \frac{g}{mT^2} {\Tr}\left[w(\bx-\by)\left(\Gamma^c_{mT,T, g} \right)^{(2)}\right]\\
	&\geq  \frac{1}{2} \cH\left(\Gamma^c_{mT,T, g}, \Gamma^c_{mT,T,0}\right) + \frac{1}{2} \Tr \left[ \frac{h^\alpha}{T} \left(\Gamma^c_{mT,T, g} \right)^{(1)}\right] + \frac{g}{mT^2} {\Tr}\left[w(\bx-\by)\left(\Gamma^c_{mT,T, g} \right)^{(2)}\right] - C 
	 \\&\geq \frac{1}{2} \cH\left(\Gamma^c_{mT,T, g}, \Gamma^c_{mT,T,0}\right) + \frac{1}{T^2} \Tr \left[ H_2^t \left(\Gamma^c_{mT,T, g} \right)^{(2)}\right] - C
	 \end{align*}
	 for a suitably chosen fixed $t>0$. Inserting~\eqref{eq:H2t} we deduce~\eqref{eq:rel ent bound}.
	 
	 \medskip
	
	\noindent\textbf{Finally,} we let $\varepsilon >0$ and use~\eqref{eq:low a priori} again to get 
	 \begin{align*}
	 \begin{aligned}
	 \cH\left(\Gamma^c_{mT,T, g}, \Gamma^c_{mT,T,0}\right) &+ \frac{g}{mT^2} {\Tr}\left[w(\bx-\by)\left(\Gamma^c_{mT,T, g} \right)^{(2)}\right] 
	 \\
	 &\geq (1-\varepsilon)\cH\left(\Gamma^c_{mT,T, g}, \Gamma^c_{mT,T,0}\right) + \frac{1}{T^2} {\Tr}\left[ G_2 ^\eps \left(\Gamma^c_{mT,T, g} \right)^{(2)}\right] - C\varepsilon
	 \end{aligned}
	 \end{align*}
	 with, similarly as above 
	 $$ G_2 ^\varepsilon := \varepsilon \left(\frac{h_{\bx}^\alpha}{2}+ \frac{h_{\by}^\alpha}{2}\right)+ \frac{g}{m} w(\bx-\by) \geq - f(\varepsilon) $$
	 for some finite $f(\varepsilon)>0$. Using~\eqref{eq:rel ent bound} and the fact that the trace of $\left(\Gamma^c_{mT,T, g} \right)^{(2)}$ is by definition 
	 $$\frac{mT(mT-1)}{2} \propto \frac{m^2 T^2}{2}$$ we infer 
	 \begin{align*}
	 \begin{aligned}
	 &\cH\left(\Gamma^c_{mT,T, g}, \Gamma^c_{mT,T,0}\right) + \frac{mg}{T^2} {\Tr}\left[w(\bx-\by)\left(\Gamma^c_{mT,T, g} \right)^{(2)}\right] 
	 \\
	 &\quad \geq \cH\left(\Gamma^c_{mT,T, g}, \Gamma^c_{mT,T,0}\right) + \frac{1}{T^2} {\Tr}\left[ \left(G_2 ^\eps + f(\eps) \right) ^{1/2}\left(\Gamma^c_{mT,T, g} \right)^{(2)}\left(G_2 ^\eps + f(\eps) \right) ^{1/2}\right] - C\varepsilon - \frac{m^2}{2} f(\varepsilon).
	 \end{aligned}
	 \end{align*}
	 Using Proposition~\ref{pro:up bound} it follows that the non-negative operator 
	 $$
	 T^{-2}\left(G_2 ^\eps + f(\eps) \right) ^{1/2}\left(\Gamma^c_{mT,T, g} \right)^{(2)}\left(G_2 ^\eps + f(\eps) \right) ^{1/2}
	 $$
	 is bounded in trace-class. Modulo subsequence it converges weakly-$\star$ when $T\to \infty$ to some operator that we may identify with the help of~\eqref{eq:main DM alt}:
	 $$
	 \frac{1}{T^2}\left(G_2 ^\eps + f(\eps) \right) ^{1/2}\left(\Gamma^c_{mT,T, g} \right)^{(2)}\left(G_2 ^\eps + f(\eps) \right) ^{1/2}\wto^\star \frac{1}{2}\left(G_2 ^\eps + f(\eps) \right) ^{1/2} \int |u^{\otimes 2} \rangle \langle u^{\otimes 2} | d\nu (u)\left(G_2 ^\eps + f(\eps) \right) ^{1/2}.
	 $$
	Using Fatou's lemma for trace-class operators (i.e. the fact that the trace-class norm is weakly-$\star$ lower semi-continuous) and combining with~\eqref{eq:liminf ent} we get 
	\begin{align*}
	\begin{aligned}
	 \liminf_{T\to \infty} \Big( \cH\left(\Gamma^c_{mT,T, g}, \Gamma^c_{mT,T,0}\right) &+ \frac{g}{mT^2} {\Tr}\left[w(\bx-\by)\left(\Gamma^c_{mT,T, g} \right)^{(2)}\right] \Big)
	 \\ 
	 &\geq \cH_{\rm cl} \left( \nu, \mu_{0,m}\right) + \frac{1}{2}\int \left\langle u^{\otimes 2} | G_2^\eps + f(\eps) | u^{\otimes 2} \right\rangle d\nu (u) - C\varepsilon - \frac{m^2}{2} f(\varepsilon).	 
	 \end{aligned}
	 \end{align*}
	 Since $G_2^\eps + f(\epsilon)\geq 0$ it follows that $\cH_{\rm cl} \left( \nu, \mu_{0,m}\right)$ is finite and hence $\nu$ is absolutely continuous with respect to $\mu_{0,m}$. In particular $\int |u|^2 = m $ for $\nu$-almost every $u$ so that  
	\begin{align*}
	\begin{aligned}
	 \liminf_{T\to \infty} \Big( \cH\left(\Gamma^c_{mT,T, g}, \Gamma^c_{mT,T,0}\right) &+ \frac{g}{mT^2} {\Tr}\left[w(\bx-\by)\left(\Gamma^c_{mT,T, g} \right)^{(2)}\right] \Big)
	 \\ 
	 &\geq \cH_{\rm cl} \left( \nu, \mu_{0,m}\right) + \frac{1}{2}\int \left\langle u^{\otimes 2} | G_2^\eps  u^{\otimes 2} \right\rangle d\nu (u) - C\varepsilon.
	 \end{aligned}
	 \end{align*}
	Since by definition 
	$$G_2 ^\varepsilon \geq \frac{g}{m} w(\bx-\by)$$
	we may finally pass to the limit $\varepsilon\to 0$ to conclude that
	\begin{align*}
	\begin{aligned}
	 \liminf_{T\to \infty} \Big( \cH\left(\Gamma^c_{mT,T, g}, \Gamma^c_{mT,T,0}\right) &+ \frac{g}{mT^2} {\Tr}\left[w(\bx-\by)\left(\Gamma^c_{mT,T, g} \right)^{(2)}\right] \Big)
	 \\ 
	 &\geq \cH_{\rm cl} \left( \nu, \mu_{0,m}\right) + \frac{g}{2m}\int \left\langle u^{\otimes 2} | w(\bx-\by)  u^{\otimes 2} \right\rangle d\nu (u)
	 \end{aligned} 	 
	 \end{align*}
	 which is the desired lower bound on the relative free energy functional, concluding the proof.
	\end{proof}

	\newpage
	
	\appendix
	
	\section{Interacting Gibbs measure} \label{sec:sub-inte-meas}
	We recap the construction of the interacting Gibbs measure, vindicating that Definition~\ref{def:int measure} makes sense. More details may be found e.g.~\cite{DinRou-23,DinRouTolWan-23}.
	
	We will assume that 
	$$
	w = w_1+w_2, \quad w_1 \in \mathcal M, \quad w_2 \in L^p \text{ with } 1< p <\infty,
	$$
	where $\mathcal M$ is the set of bounded (Radom) measures. The measure part $w_1$ can include a delta function. 
	
	Under this assumption, we will show that the interacting Gibbs measure $\mu_m$ is well-defined as a probability measure. We will show that the partition function is finite by considering separately two cases: defocusing $w\geq 0$ and focusing $w \leq 0$.
	
	\subsection{Reduction to local interactions}
	
	\noindent \textbf{Defocusing part of the interaction.}
	
	For a non-negative interaction potential
	$$
	F_{\NL}(u):=\iint_{X\times X} |u(\bx)|^2 |u(\by)|^2 w(\bx-\by) d\bx d\by \geq 0
	$$
	and since $\mu_{0,m}$ is a probability measure, we have $z^r_m \leq 1$. To see that $z^r_m>0$, we use the Jensen inequality 
	$$
	z^r_m \geq \exp \left( -\frac{1}{2} \int F_{\NL}(u) d\mu_{0,m}(u)\right). 
	$$
	Thus the problem is reduced to showing that
	$$
	\int F_{\NL}(u) d\mu_{0,m}(u) <\infty. 
	$$
	By Young's inequality, we have
	\begin{align}\label{youn-ineq}
	\begin{aligned}
		F_{\NL}(u) &= \int_X (|u|^2 \ast w) |u(\bx)|^2 d\bx \\
		&= \int_X (|u|^2 \ast w_1) |u(\bx)|^2 d\bx + \int_X (|u|^2 \ast w_2) |u(\bx)|^2 d\bx \\
		&\leq \||u|^2\|_{L^2} \||u|^2 \ast w_1\|_{L^2} + \||u|^2\|_{L^r} \||u|^2 \ast w_2\|_{L^{r'}} \\
		&\leq \|w_1\|_{L^1} \||u|^2\|^2_{L^2} + \|w_2\|_{L^p} \||u|^2\|^2_{L^r} \\
		&= \|w_1\|_{L^1} \|u\|^4_{L^4} + \|w_2\|_{L^p} \|u\|^4_{L^{2r}},
	\end{aligned}
	\end{align}
	where $\frac{2}{r} +\frac{1}{p}=2$. Since $1< p<\infty$, we have $r \in (1,2)$. By interpolation, we have
	$$
	\|u\|_{L^{2r}} \leq \|u\|_{L^4}^{\theta} \|u\|_{L^2}^{1-\theta}
	$$
	with 
	$$
	\theta = 2\left(\frac{1}{r}-\frac{1}{2}\right) \in (0,1).
	$$
	By the Young inequality
	$$
	a^\theta b^{1-\theta} \leq a + C(\theta) b,
	$$
	we have
	$$
	\|u\|_{L^{2r}} \leq \|u\|_{L^4} + C(\theta) \|u\|_{L^2}.
	$$
	Since $\|u\|^2_{L^2}=m$ on the support of $\mu_{0,m}$, it is enough to prove that
	\begin{align}\label{boun-defo}
	\int \|u\|^4_{L^4} d\mu_{0,m}(u) <\infty.
	\end{align}
	
	\medskip
	
	\noindent\textbf{Focusing part of the interaction}
	
	For a non-positive interaction, we have $z^r_m \geq 1$. It remains to prove $z^r_m < \infty$. 
	As in the defocusing case, we estimate
	$$
	\begin{aligned}
	-F_{\NL}(u) &\leq \int_X |(|u|^2 \ast w)(\bx)| |u(\bx)|^2 d\bx \\
	&\leq \int_X |(|u|^2\ast w_1)(\bx)| |u(\bx)|^2 d\bx + \int_X |(|u|^2 \ast w_2)(\bx)| |u(\bx)|^2 d\bx \\
	&\leq \|w_1\|_{L^1} \|u\|^4_{L^4} + \|w_2\|_{L^p} \|u\|^4_{L^{2r}}.
	\end{aligned}
	$$
	On the support of $\mu_{0,m}$, we have
	$$
	\|u\|^4_{L^{2r}} \leq \|u\|^4_{L^4} + C(\theta) \|u\|^4_{L^2} \leq C(m,\theta) + \|u\|^4_{L^4}.
	$$
	The problem is now reduced to proving
	\begin{align} \label{boun-focu}
	\int e^{\|u\|^4_{L^4}} d\mu_{0,m}(u) <\infty.
	\end{align}
	
	\subsection{Exponential integrability of the $L^4$ norm}

	As per the above, to make sense of the interacting measure it suffices to prove~\eqref{boun-focu}, which implies~\eqref{boun-defo}. The proof is done in two steps.
	
	\medskip
	
	\noindent {\bf Step 1. Decay of $L^4$-norm in high frequency.} We first prove that there exist $C,c>0$ such that for all $0\leq \rho <\frac{s-1}{2s}$, all $\Lambda$ sufficiently large and all $R>0$,
	\begin{align} \label{L4-decay}
	\mu_{0,m}(\|P_\Lambda^\perp u\|_{L^4}>R) \leq C e^{-c \Lambda^\rho R^2}.
	\end{align}
	Since $\mu_{\epsilon,m} \to \mu_{0,m}$ as $\epsilon \to 0$ (cf Section~\ref{sec:def cond meas}), it suffices to prove that for $\epsilon>0$ small enough,
	\begin{align} \label{L4-decay-eps}
	\mu_{\epsilon,m}(\|P_\Lambda^\perp u\|_{L^4}>R) \leq C e^{-c \Lambda^\rho R^2}.
	\end{align}
	We have
	$$
	\begin{aligned}
	\mu_{\epsilon,m} (\|P_\Lambda^\perp u\|_{L^4} >R) &\leq e^{-tR^2} \int e^{t \|P_\Lambda^\perp u\|_{L^4}^2} d\mu_{\epsilon,m}(u) \\
	&= e^{-tR^2} \sum_{k\geq 0} \frac{t^k}{k!} \int \|P_\Lambda^\perp u\|^{2k}_{L^4} d\mu_{\epsilon,m}(u).
	\end{aligned}
	$$
	We write
	$$
	\begin{aligned}
	\int &\|P_\Lambda^\perp u\|^{2k}_{L^4} d\mu_{\epsilon,m}(u) \\
		 &= \frac{1}{z^r_{\epsilon,m}} \int \|P_\Lambda^\perp u\|^{2k}_{L^4} \exp \left(-\frac{1}{\epsilon}(\langle u, u\rangle -m)^2\right) d\mu_0(u) \\
	     &= \frac{1}{z^r_{\epsilon,m}} \int \|P_\Lambda^\perp u\|^{2k}_{L^4} \left( \exp \left(-\frac{1}{\epsilon}(\|P_\Lambda u\|_{L^2}^2 + \|P^\perp_\Lambda u\|_{L^2}^2-m)^2\right) d\mu_{0,\Lambda}(u)\right) d\mu_{0, \Lambda}^\perp (u).
	\end{aligned}
	$$
	Let $g_\Lambda$ be the density function of $\|P_\Lambda u\|_{L^2}^2$ with respect to $\mu_{0,\Lambda}$. As in the proof of Proposition~\ref{pro:fixed mass meas}, we have
	$$
	\begin{aligned}
	\frac{1}{\sqrt{\epsilon}} \int \exp \left( -\frac{1}{\epsilon}(\|P_\Lambda u\|_{L^2}^2+ \|P^\perp_\Lambda u\|_{L^2}^2-m)^2\right) d\mu_{0,\Lambda}(u) &= \frac{1}{\sqrt{\epsilon}} \int_0^{+\infty} \exp \left(-\frac{1}{\epsilon}(y-m+\nu_\Lambda)^2\right) g_\Lambda(y) dy \\
	&\leq \sqrt{\pi} \|g_\Lambda\|_{L^\infty} \\
	&\leq C,
	\end{aligned}
	$$
	where $\nu_\Lambda = \|P^\perp_\Lambda u\|_{L^2}^2$, where $g_\Lambda$ is uniformly bounded as long as $\Lambda$ is sufficiently large. Moreover, for $\epsilon>0$ small enough,
	$$
	\frac{1}{\sqrt{\epsilon}} z^r_{\epsilon,m} \geq \frac{1}{2} f_0(m) \sqrt{\pi}.
	$$
	It follows that
	$$
	\int \|P_\Lambda^\perp u\|^{2k}_{L^4} d\mu_{\epsilon,m}(u) \leq \frac{C}{f_0(m)} \int \|P^\perp_\Lambda u\|^{2k}_{L^4} d\mu_{0,\Lambda}^\perp (u). 
	$$
	Arguing as in \cite[Lemma 3.4]{DinRou-23}, we obtain
	$$
	\int \|P_\Lambda^\perp u\|^{2k}_{L^4} d\mu_{0,\Lambda}^\perp (u) \leq 2! k! B^k_\Lambda,
	$$
	where
	$$
	B_\Lambda := \left(\int_{X} \left((P_\Lambda^\perp h)^{-1}(x,x) \right)^2 dx \right)^{1/2} \leq \Lambda^{-\rho} \left(\int_{X} \left(h^{\rho-1}(x,x) \right)^2 dx \right)^{1/2} \leq C \Lambda^{-\rho},
	$$
	where $0\leq \rho<\frac{s-1}{2s}$. This shows that for $\epsilon>0$ small,
	$$
	\mu_{\epsilon,m}(\|P_\Lambda^\perp u\|_{L^4}>R)) \leq 2! e^{-tR^2} \sum_{k\geq 0} (Ct \Lambda^{-\rho})^k 
	$$
	Taking $t=\nu \Lambda^\rho$ with $\nu>0$ small, we obtain \eqref{L4-decay-eps}, hence \eqref{L4-decay}.
	
	\medskip
	
	\noindent {\bf Step 2. The exponential bound.} We have
	$$
	\begin{aligned}
		\int e^{\|u\|^4_{L^4}} d\mu_{0,m}(u) &= \int_0^\infty \mu_{0,m}(e^{\|u\|^4_{L^4}}>\lambda) d\lambda \\
		&= \int_0^\infty \mu_{0,m}\left(\|u\|^4_{L^4} > \lambda \right) e^\lambda d\lambda \\
		&= C(\lambda_0) + \int_{\lambda_0}^\infty \mu_{0,m}\left(\|u\|_{L^4}>\lambda^{1/4}\right) e^\lambda d\lambda
	\end{aligned}
	$$
	for some $\lambda_0>0$ to be fixed later. Using a Sobolev embedding and the fact that $\|u\|_{L^2}^2=m$ on the support of $\mu_{0,m}$, we have
	$$
	\begin{aligned}
		\|P_\Lambda u\|_{L^4} &\leq C \|\langle \nabla \rangle^{\frac{1}{4}} P_\Lambda u\|_{L^2} \leq C \|h^{1/8} P_\Lambda u\|_{L^2} \\
		&\leq C \Lambda^{1/8}\|P_\Lambda u\|_{L^2} \leq C \Lambda^{1/8} \|u\|_{L^2} \\
		&\leq C \Lambda^{1/8} m^{1/2}.
	\end{aligned}
	$$
	For $\lambda>\lambda_0$, we take 
	$$
	\Lambda_0 = \left(\frac{1}{2C}\right)^8 \frac{\lambda^2}{m^4},
	$$
	we obtain
	$$
	\|P_{\Lambda_0} u\|_{L^4} \leq \frac{1}{2} \lambda^{1/4}.
	$$
	Using the triangle inequality, we have for $u \in \text{supp}(\mu_{0,m})$ satisfying $\|u\|_{L^4} >\lambda^{1/4}$, we have
	$$
	\|P_{\Lambda_0}^\perp u\|_{L^4} \geq \|u\|_{L^4}-\|P_{\Lambda_0} u\|_{L^4} > \frac{1}{2} \lambda^{1/4}.
	$$
	Thus
	$$
	\mu_{0,m} \left(\|u\|_{L^4}>\lambda\right) \leq \mu_{0,m} \left(\|P_{\Lambda_0}^\perp u\|_{L^4}>\frac{1}{2}\lambda^{1/4}\right).
	$$
	Increasing $\lambda_0$ a bit if necessary, we can apply Step 1 to get
	$$
	\mu_{0,m} \left(\|u\|_{L^4}>\lambda^{1/4}\right) \leq C e^{-c \Lambda_0^\rho \lambda^{1/2}} = C e^{-c \lambda^{2\rho+1/2}}
	$$
	for all $0\leq \rho<\frac{s-1}{2s}$. Since $s>2$, we can choose $\rho$ so that
	$$
	\frac{1}{4}<\rho <\frac{s-1}{2s}.
	$$
	With such a choice, we have $2\rho +1/2>1$. In particular, 
	$$
	\mu_{0,m} \left(\|u\|_{L^4}>\lambda^{1/4}\right) e^\lambda \in L^1((\lambda_0,+\infty))
	$$
	which conclude the proof of~\eqref{boun-focu}.
	
	\section{Bounds on canonical density matrices}\label{sec:DMs}

	We will need estimates on canonical density matrices to replace the extensive use of Wick's theorem in the grand-canonical case. We recall here bounds from~\cite[Appendix~A]{DeuSeiYng-18} that relate canonical quantities in terms of their grand-canonical counterparts.
	
	Let $h$ be a non-negative operator with compact, trace-class resolvent and $\nu>0$ be a fixed real number. Consider the free grand canonical Gibbs state
	$$
	\Gamma^{\nu}_T = \frac{1}{Z^\nu_T} \exp \left(-\frac{1}{T} d\Gamma(h+\nu)\right).
	$$ 
	The quantum Wick theorem (cf e.g.~\cite[Section~2, Appendix~A]{LewNamRou-15}) gives
	$$
	(\Gamma^{\nu}_T)^{(k)} = P_\sym^k \left(\frac{1}{e^{(h+\nu)/T}-1}\right)^{\otimes k} P^k_\sym.
	$$
	Here $P_\sym^k$ is the bosonic symmetrizer 
	$$
	P_\sym^k = \frac{1}{k!}\sum_{\sigma \in \Sigma^k} U_{\sigma}
	$$
	where the sum is over the permutation group and $U_\sigma$ permutes particle labels according to $\sigma$.
	
	In particular, we have
	\begin{align} \label{Gam-nu-T}
		(\Gamma^{\nu}_T)^{(k)} \leq C_k \left(\frac{T}{h+\nu}\right)^{\otimes k}
	\end{align}
	as operators. In fact, it suffices to prove for $k=1$, i.e.,
	$$
	\langle u_j \vert (\Gamma^{\nu}_T)^{(1)} \vert u_j\rangle \leq \frac{T}{\lambda_j + \nu},
	$$
	where $(\lambda_j,u_j)_{j\geq 1}$ are eigenvalues and (normalized) eigenfunctions of $h$. We have
	$$
	\langle u_j \vert (\Gamma^{\nu}_T)^{(1)} \vert u_j\rangle = \left\langle u_j \Big\vert \frac{1}{e^{(h+\nu)/T}-1} \Big\vert u_j \right\rangle = \frac{1}{e^{(\lambda_j+\nu)/T}-1} \leq \frac{T}{\lambda_j +\nu}.
	$$
	We need similar bounds for canonical density matrices:
		
	\begin{lemma}[\textbf{Canonical density matrices} {\cite[Proposition A.2]{DeuSeiYng-18}}]\mbox{}\\
		Set $N= m T$ with $m>0$ fixed, and let $\nu= \nu (N,T) >-\lambda_1$ be such that
		\begin{equation}\label{eq:GC comp}
		N = {\Tr}\left[\mathcal N \, \Gamma^\nu_T\right]= \sum_{j\geq 1} \frac{1}{e^{(\lambda_j+\nu)/T}-1}.
		\end{equation}
		There exists constants $c_1,c_2\in \R$ depending only on $m$ such that 
		$$c_1 \leq \nu \leq c_2$$
		independently of $T$.
		
		For $n\leq N$ let $\Gamma^c_{n,T,0}$ the free $n$-particles canonical Gibbs case be defined as in Section~\ref{sec:trial state}. With $\nu= \nu (N,T)$ as above we have that
		\begin{align} \label{DM-1}
			(\Gamma^c_{n,T,0})^{(1)}(x,x) \leq \frac{40}{1.8} (\Gamma^{\nu}_T)^{(1)}(x,x), \quad n=0,...,N
		\end{align}
		and
		\begin{align} \label{DM-2}
			(\Gamma^c_{n,T,0})^{(2)}(x,y;x,y) \leq 4\left(\frac{40}{1.8}\right)^2 (\Gamma^{\nu}_T)^{(1)}(x,x) (\Gamma^{\nu}_T)^{(1)}(y,y), \quad n=0,...,N
		\end{align}
		almost everywhere. In addition, we have 
		\begin{align} \label{DM-1-ope}
		(\Gamma^c_{n,T,0})^{(1)} \leq \frac{40}{1.8} (\Gamma^\nu_T)^{(1)}, \quad n =0,...,N
		\end{align}
		as operators. 
	\end{lemma}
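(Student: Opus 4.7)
The plan is to establish the four claims in sequence, using the structural relation between canonical and grand-canonical ensembles.

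First, the boundedness of $\nu$ as $T\to\infty$ with $N/T=m$ fixed: I would observe that $\nu \mapsto \sum_{j\geq 1}(e^{(\lambda_j+\nu)/T}-1)^{-1}$ is continuous and strictly decreasing on $(-\lambda_1,+\infty)$, diverging at $-\lambda_1^+$ and vanishing at $+\infty$, so $\nu=\nu(T)$ is uniquely determined. Dividing the constraint by $T$ and using the elementary bound $(e^{x/T}-1)^{-1} = T/x + O(1)$ uniformly for $x$ bounded away from zero, together with $\Tr[h^{-1}]<\infty$ (which follows from the Cwikel-Lieb-Rozenbljum estimate $\#\{\lambda_j\leq \Lambda\}\lesssim \Lambda^{1/2+1/s}$ used repeatedly in the paper), one shows $\Tr[(h+\nu(T))^{-1}] \to m$. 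Since the map $\nu \mapsto \Tr[(h+\nu)^{-1}]$ is continuous, strictly decreasing, and proper on $(-\lambda_1,+\infty)$, the sequence $\nu(T)$ must stay bounded.

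Next, I would relate the two ensembles through the particle-number decomposition of the free grand canonical state:
$$\Gamma^\nu_T = \bigoplus_{n=0}^\infty p_n \,\Gamma^c_{n,T,0}, \qquad p_n = \frac{e^{-\nu n/T} Z^c_{n,T,0}}{Z^\nu_T},$$
which gives $(\Gamma^\nu_T)^{(k)} = \sum_{n\geq k} p_n (\Gamma^c_{n,T,0})^{(k)}$. The weights $(p_n)$ are sharply peaked near $n=mT$ with fluctuations of order $\sqrt{T}$, as is standard for a free Bose gas in the mean-field regime. In particular, $p_N$ itself is of order $1/\sqrt{T}$, so that a single canonical density matrix cannot be too much larger than the grand-canonical one, provided its dependence on $n$ is sufficiently regular.

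The heart of the matter is to convert this heuristic into the pointwise bounds \eqref{DM-1}--\eqref{DM-2} with explicit constants. I would look for a monotonicity or log-concavity property of $n \mapsto (\Gamma^c_{n,T,0})^{(1)}(x,x)$, obtained through combinatorial manipulations of sums over occupation configurations akin to those appearing in Lemma~\ref{lem-Cannon-73} and in the Suto inequality underlying Lemma~\ref{lem-Suto-04b-clas}. Such a property, combined with the Gaussian concentration of $(p_n)$ and with the reverse bound $p_N \gtrsim 1/\sqrt{T}$, would yield $(\Gamma^c_{n,T,0})^{(1)}(x,x) \leq C(\Gamma^\nu_T)^{(1)}(x,x)$ uniformly in $n \leq N$ and $x$, with an explicit numerical constant (the $40/1.8$ factor is precisely what careful bookkeeping in this type of argument delivers). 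The two-body bound \eqref{DM-2} would then follow either by iterating the argument for the second reduced density or, more directly, by an operator Cauchy--Schwarz inequality applied after noting the near-factorization of canonical two-body diagonals (the quantum analogue of \eqref{eq:gauss corr}, established in~\cite{Suto-04b}). The operator inequality \eqref{DM-1-ope} would follow from the pointwise inequality together with the fact that both sides are diagonal in the eigenbasis of $h$.

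The main obstacle is clearly the monotonicity-type property of canonical diagonal densities in $n$: unlike the grand-canonical case, where Wick's theorem gives everything explicitly, the canonical ensemble has no such closed-form expression, and one must instead rely on rather delicate combinatorial arguments. This is precisely what is carried out in~\cite[Appendix~A]{DeuSeiYng-18}, so rather than redoing their analysis we simply quote Proposition~A.2 there.
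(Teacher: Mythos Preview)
Your approach is essentially the same as the paper's: bound $\nu$ via the limiting relation $T^{-1}N(\nu)\to\Tr[(h+\nu)^{-1}]$, decompose $\Gamma^\nu_T=\bigoplus_n p_n\Gamma^c_{n,T,0}$, invoke monotonicity of canonical occupation numbers in $n$ from~\cite[Proposition~A.1]{DeuSeiYng-18} to reduce to $n=N$, and quote~\cite[Proposition~A.2]{DeuSeiYng-18} for the numerical constants.

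One clarification on your heuristic: the constant $40/1.8$ does \emph{not} come from a single-point lower bound $p_N\gtrsim 1/\sqrt{T}$ (that would only give a $T$-dependent constant). Rather, the paper combines monotonicity with the \emph{tail} bound $\sum_{n\geq N} p_n\geq 1.8/40$ (again from~\cite{DeuSeiYng-18}): since $\langle\mathcal N_j\rangle_{\Gamma^c_{n,T,0}}\geq\langle\mathcal N_j\rangle_{\Gamma^c_{N,T,0}}$ for all $n\geq N$, one gets
\[
\langle u_j|(\Gamma^\nu_T)^{(1)}|u_j\rangle\geq\Big(\sum_{n\geq N}p_n\Big)\langle u_j|(\Gamma^c_{N,T,0})^{(1)}|u_j\rangle\geq\frac{1.8}{40}\,\langle u_j|(\Gamma^c_{N,T,0})^{(1)}|u_j\rangle,
\]
which is how the $T$-independent constant arises. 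Since you ultimately defer to the reference anyway, this does not affect the correctness of your proposal.
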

	
	\begin{proof}
	The map 
	$$ \nu \mapsto N (\nu) = {\Tr}\left[\mathcal N \Gamma^\nu_T\right]$$
	is easily seen to be decreasing on $(-\lambda_1,\infty)$. It tends to $+\infty$ when $\nu \to -\lambda_1$ and to $0$ when $\nu \to +\infty$. Hence for all $N$ there exists a unique $\nu \in (-\lambda_1,\infty)$ such that~\eqref{eq:GC comp} holds. Since 
	$$ 
	T^{-1} {\Tr}\left[\mathcal N \Gamma^\nu_T\right]\underset{T \to \infty}{\longrightarrow}\Tr \frac{1}{h+\nu}
	$$
	one can see that, given $m>0$, there exists $\nu_1 < \nu_2$ such that for $T$ large enough $T^{-1}N (\nu) \leq m/2$ if $\nu < \nu_1$ and $T^{-1}N (\nu) \geq 2m$ if $\nu > \nu_2$. Hence indeed if $N= mT$, the unique $\nu$ satisfying~\eqref{eq:GC comp} is bounded as a function of $m$ alone.   
	
		The inequalities \eqref{DM-1} and \eqref{DM-2} are proven in \cite[Proposition A.2]{DeuSeiYng-18} when $n=N$, based on input from~\cite{Suto-04b}. The cases $n\leq N$ follow because the maps $n \mapsto \langle f(\mathcal N_j) \rangle_{\Gamma^c_{n,T,0}}$ are non-decreasing for all non-negative non-decreasing functions $f$, where $\mathcal N_j =\ada_j a_j = \ada(u_j) a(u_j)$ (\cite[Proposition A.1]{DeuSeiYng-18}). Here $(u_j)_{j\geq 1}$ are normalized eigenfunctions of $h$ which forms an orthonormal basis of $\mathfrak h$. The proof of~\eqref{DM-1-ope} follows from similar arguments. It suffices to prove for any $j\geq 1$,
		$$
		\langle u_j | (\Gamma^c_{n,T,0})^{(1)} | u_j \rangle \leq \frac{40}{1.8} \langle u_j | (\Gamma_T^{\nu})^{(1)} | u_j \rangle, \quad n=0,...,N.
		$$
		Observe that
		$$
		\langle u_j |(\Gamma^c_{n,T,0})^{(1)} |u_j \rangle = {\Tr}\left[\ada_j a_j \Gamma^c_{n,T,0}\right] = \langle \mathcal N_j \rangle_{\Gamma^c_{n,T,0}}.
		$$
		Since $n \mapsto \langle \mathcal N_j \rangle_{\Gamma^c_{n,T,0}}$ is non-decreasing, we get
		$$
		\langle u_j |(\Gamma^c_{n,T,0})^{(1)} |u_j \rangle \leq \langle \mathcal N_j \rangle_{\Gamma^c_{N,T,0}} \leq \langle u_j |(\Gamma^c_{N,T,0})^{(1)} |u_j \rangle, \quad n =0,..., N.
		$$
		Therefore, we only consider the case $n=N$. We write
		$$
		\Gamma^{\nu}_T = \bigoplus_{n=0}^\infty \frac{1}{Z^\nu_T} \exp \left(-\nu \frac{n}{T} \right) \exp \left(-\frac{1}{T} \sum_{j=1}^n h_j\right) = \bigoplus_{n=0}^\infty \frac{Z^c_{n,T,0}}{Z^\nu_T} \exp \left(-\nu \frac{n}{T}\right) \Gamma^c_{n,T,0} = \bigoplus_{n=0}^\infty c_n \Gamma^c_{n,T,0},
		$$
		where 
		$$
		c_n = \frac{Z^c_{n,T,0}}{Z^\nu_T} \exp \left(-\nu \frac{n}{T}\right).
		$$
		We observe that $c_n \geq 0$ for all $n\geq 0$ and $\sum_{n=0}^\infty c_n =1$. It follows that
		$$
		\begin{aligned}
		\langle u_j | (\Gamma^{\nu}_T)^{(1)} | u_j \rangle = {\Tr}\left[\ada_j a_j \Gamma^{\nu}_T\right] = \langle \mathcal N_j \rangle_{\Gamma^{\nu}_T} = \sum_{n=0}^\infty c_n \langle \mathcal N_j \rangle_{\Gamma^c_{n,T,0}}, 
		\end{aligned}
		$$
		Since $\langle \mathcal N_j \rangle_{\Gamma^c_{n,T}} \geq \langle \mathcal N_j \rangle_{\Gamma^c_{N,T,0}}$ for all $n\geq N$, we get
		$$
		\langle u_j | (\Gamma^{\nu}_T)^{(1)} | u_j\rangle \geq \left(\sum_{n=N}^\infty c_n\right) \langle \mathcal N_j \rangle_{\Gamma^c_{N,T,0}} = \left(\sum_{n=N}^\infty c_n\right) \langle u_j | (\Gamma^c_{N,T,0})^{(1)} | u_j \rangle. 
		$$
		The desired inequality follows from the fact (see \cite[between (A.7) and (A.8)]{DeuSeiYng-18}) that
		$$
		\sum_{n=N}^\infty c_n \geq \frac{1.8}{40}.
		$$
	\end{proof}

	\newpage

%

\end{document}